\documentclass[a4paper, 12pt]{article}
\usepackage{graphicx}
\usepackage{pdfpages}
\usepackage[utf8]{inputenc}
\usepackage{tikz}
\usepackage{tikz-cd}
\usepackage{xstring}
\usepackage[normalem]{ulem}
\usepackage{cancel}
\usetikzlibrary{matrix,arrows,decorations.pathmorphing}
\usetikzlibrary{arrows.meta, positioning, shapes, calc}
\tikzset{commutative diagrams/.cd}
\usepackage{pst-node}

\usepackage[english]{babel}
\usepackage{amssymb,amsmath,amsfonts,bbm,pifont,upgreek,bbold,accents}  
\usepackage[colorlinks=true]{hyperref}
\hypersetup{urlcolor=blue, citecolor=red}
\usepackage{cancel}

\font\teneufm=eufm10
\font\seveneufm=eufm7
\font\fiveeufm=eufm5
\newfam\eufmfam
\textfont\eufmfam=\teneufm
\scriptfont\eufmfam=\seveneufm
\scriptscriptfont\eufmfam=\fiveeufm

\newcommand\beq[1]{\begin{equation}\label{#1} }
\newcommand{\eeq}{\end{equation} }

\newcommand\beqa[1]{\begin{eqnarray} \label{#1}}
\newcommand{\eeqa}{\end{eqnarray} }
\newcommand{\beqano}{\begin{eqnarray*} }
\newcommand{\eeqano}{\end{eqnarray*} }

\renewcommand{\theequation}{\arabic{section}.\arabic{equation}}

\newtheorem{theorem}{Theorem}
\newtheorem{example}[theorem]{Example}
\newtheorem{definition}[theorem]{Definition}
\newtheorem{proposition}[theorem]{Proposition}
\newtheorem{lemma}[theorem]{Lemma}
\newtheorem{sublemma}[theorem]{Sublemma}
\newtheorem{remark}[theorem]{Remark}
\newtheorem{notationalremark}[theorem]{Notational Remark}
\newtheorem{corollary}[theorem]{Corollary}
\newtheorem{assumption}[theorem]{Assumption}
\newtheorem{claim}[theorem]{Claim}
\newtheorem{conjecture}[theorem]{Conjecture}

\newtheorem{tools}{$\negsp\negsp$}[subsection]

\newcommand\thm[1]{\begin{theorem}\label{#1}}
\newcommand\thmtwo[2]{\begin{theorem}[#1]\label{#2}}
\newcommand\ethm{\end{theorem} }
\newcommand\dfn[1]{\begin{definition}\label{#1} \rm}
\newcommand\dfntwo[2]{\begin{definition}[#1]\label{#2} \rm}
\newcommand\edfn{\end{definition} }
\newcommand\pro[1]{\begin{proposition}\label{#1}}
\newcommand\protwo[2]{\begin{proposition}[#1]\label{#2}}
\newcommand\epro{\end{proposition} }
\newcommand\lem[1]{\begin{lemma}\label{#1}}
\newcommand\lemtwo[2]{\begin{lemma}[#1]\label{#2}}
\newcommand\elem{\end{lemma} }
\newcommand\sublem[1]{\begin{sublemma}\label{#1}}
\newcommand\sublemtwo[2]{\begin{sublemma}[#1]\label{#2}}
\newcommand\esublem{\end{sublemma} }
\newcommand\rem[1]{\begin{remark}\label{#1} \rm}
\newcommand\erem{\end{remark} }
\newcommand\notrem[1]{\begin{notationalremark}\label{#1} \rm}
\newcommand\enotrem{\end{notationalremark} }
\newcommand\cor[1]{\begin{corollary}\label{#1}}
\newcommand\cortwo[2]{\begin{corollary}[#1]\label{#2}}
\newcommand\ecor{\end{corollary} }
\newcommand\asmp[1]{\begin{assumption}\label{#1}}
\newcommand\asmptwo[2]{\begin{assumption}[#1]\label{#2}}
\newcommand\easmp{\end{assumption} }
\newcommand\clm[1]{\begin{claim}\label{#1}}
\newcommand\eclm{\end{claim} }

\expandafter\chardef\csname pre amssym.def
at\endcsname=\the\catcode`\@
\catcode`\@=11
\def\undefine#1{\let#1\undefined}
\def\newsymbol#1#2#3#4#5{\let\next@\relax
 \ifnum#2=\@ne\let\next@\msafam@\else
 \ifnum#2=\tw@\let\next@\msbfam@\fi\fi
 \mathchardef#1="#3\next@#4#5}
\def\mathhexbox@#1#2#3{\relax
 \ifmmode\mathpalette{}{\m@th\mathchar"#1#2#3}%
 \else\leavevmode\hbox{$\m@th\mathchar"#1#2#3$}\fi}
\def\hexnumber@#1{\ifcase#1 0\or 1\or 2\or 3\or 4\or 5\or 6\or 7\or
8\or
 9\or A\or B\or C\or D\or E\or F\fi}
\ifcase\@ptsize
 \font\tenmsb=msbm10
 \font\sevenmsb=msbm7
 \font\fivemsb=msbm5
\or
 \font\tenmsb=msbm10 scaled \magstephalf
 \font\sevenmsb=msbm7 scaled \magstephalf
 \font\fivemsb=msbm5  scaled \magstephalf
\or
 \font\tenmsb=msbm10 scaled \magstep1
 \font\sevenmsb=msbm7 scaled \magstep1
 \font\fivemsb=msbm5 scaled \magstep1
\fi
\newfam\msbfam
\textfont\msbfam=\tenmsb
\scriptfont\msbfam=\sevenmsb
\scriptscriptfont\msbfam=\fivemsb
\edef\msbfam@{\hexnumber@\msbfam}
\def\Bbb#1{\fam\msbfam\relax#1}
\def\widehat#1{\setboxz@h{$\m@th#1$}%
 \ifdim\wdz@>\tw@ em\mathaccent"0\msbfam@5B{#1}%
 \else\mathaccent"0362{#1}\fi}
\def\widetilde#1{\setboxz@h{$\m@th#1$}%
 \ifdim\wdz@>\tw@ em\mathaccent"0\msbfam@5D{#1}%
 \else\mathaccent"0365{#1}\fi}

\def\RIfM@{\relax\ifmmode}
\def\nonmatherr@#1{\errmessage{\string#1\space allowed only in math mode}}
\def\Bbb{\RIfM@\expandafter\Bbb@\else
 \expandafter\nonmatherr@\expandafter\Bbb\fi}
\def\Bbb@#1{{\Bbb@@{#1}}}
\def\Bbb@@#1{\fam\msbfam\relax#1}
\def\setboxz@h{\setbox\z@\hbox}
\def\wdz@{\wd\z@}
\catcode`\@=\csname pre amssym.def at\endcsname
\newcommand{\negsp}{\hspace{-.09truecm}}  

\newcommand{\integer}{{\Bbb Z}   }
\newcommand{\complex}{{\Bbb C}   }
\newcommand{\rational}{{\Bbb Q}  }

\renewcommand{\Im}{{\, \rm Im\, }}

\newcommand{{\cH}}{{\cal H} }

\newcommand{\Aut}{\rm Aut}
\newcommand{\Pic}{\rm Pic}
\newcommand{\Num}{\rm Num}
\newcommand{\Div}{\rm Div}
\newcommand{\Rat}{\rm Rat}
\newcommand{\Deg}{\rm deg}
\newcommand{\Gr}{\rm Gr}
\newcommand{\Dim}{\rm dim}
\newcommand{\Ann}{\rm Ann}
\newcommand{\Ker}{\rm Ker}
\newcommand{\Det}{\rm det}
\newcommand{\Fix}{\rm Fix}
\begin{document}

\begin{titlepage}
    \begin{center}
    \includegraphics[height=1.0cm]{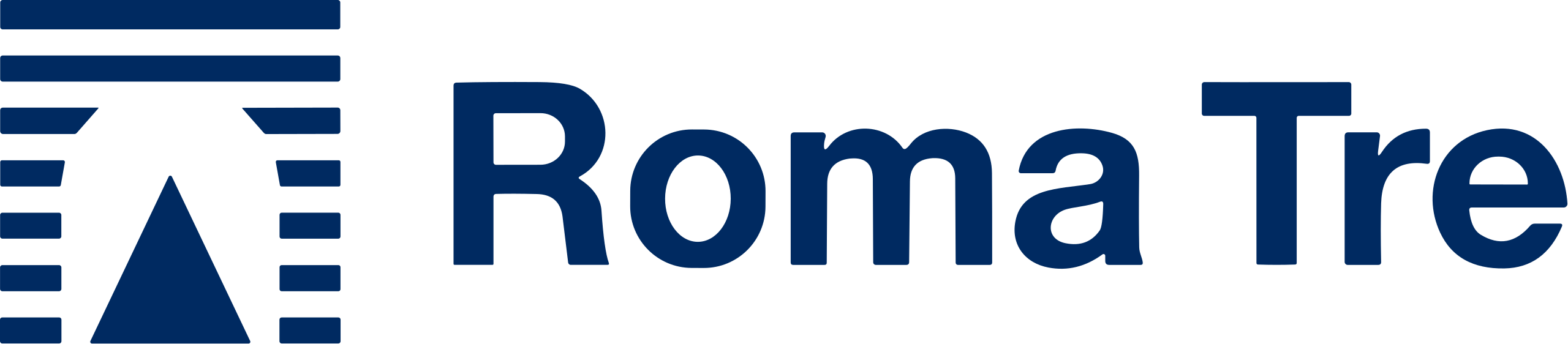}
        \vspace*{1.75cm}

        \large

        {CORSO DI DOTTORATO DI RICERCA IN MATEMATICA}

        \vspace*{1cm}
        
        {XXXVII CICLO DEL CORSO DI DOTTORATO}

         \vspace*{1cm}
         
         \LARGE
         
        \textbf{Coble surfaces: projective models and automorphisms with related topics}

        \normalsize
        \vspace*{4cm}

        \begin{minipage}[t]{0.47\textwidth}
	       {Relatore:} \vspace{0.3em} \\
              {\large \textbf{Prof. Alessandro Verra}} \vspace{1em}  \\
              {Coordinatore:} \vspace{0.3em} \\
              {\large \textbf{Prof. Alessandro Giuliani}}
        \end{minipage}
        \hfill
        \begin{minipage}[t]{0.47\textwidth}\raggedleft
	       {Dottorando:} \hspace{-0.9em} \vspace{0.3em} \\
              {\large \textbf{Federico Pieroni}} 
        \end{minipage}

        \vfill
        19 Novembre 2025
            
    \end{center}
\end{titlepage}


{\small
\tableofcontents}

\newpage
\section*{Abstract}
\addcontentsline{toc}{section}{Abstract}
Our goal will be the study of complex Coble surfaces. Originally, they were introduced by A.B. Coble in \cite{OriginalCoble1919} in $1919$, in the following way. Consider the complex projective plane $\mathbb{P}^2$, which we will define more precisely in the next Introduction. Inside $\mathbb{P}^2$, an irreducible curve of degree $d$ is the zero locus of an irreducible homogeneous polynomial $F = F(X_0, X_1, X_2)$ of degree $d$ in three complex variables. A singular point for such a curve is a point where all the partial derivatives $\frac{\partial F}{\partial X_0}, \frac{\partial F}{\partial X_1}, \frac{\partial F}{\partial X_2}$ simultaneously vanish. In the original definition, a Coble surface is obtained picking an irreducible curve $\overline C \subset \mathbb{P}^2$ of degree $6$ with $10$ singular points $p_1, \ldots, p_{10}$, and by performing what is called the blow - up of $\mathbb{P}^2$ at these points. The result is a smooth surface $X$, originally called a Coble surface. By construction, this surface possesses a holomorphic map $p : X \to \mathbb{P}^2$, which is almost everywhere an isomorphism. Indeed, the restriction on the open subsets $p : X \setminus p^{-1} \{p_1, \ldots, p_{10}\} \to \mathbb{P}^2 \setminus \{p_1, \ldots, p_{10}\}$ is invertible, so that we can think of $p^{-1}$ as a meromorphic inverse to $p$. This surface $X$ has a nice property, that is, the automorphism group $\Aut  (X)$ of holomorphic invertible transformations of $X$ in itself is infinite, see for example \cite{CantatDolg2012}, \cite{OriginalCoble1919}. For each of these transformations $f : X \to X$, one can consider the composition $p \circ f \circ p^{-1}$, which is a meromorphic map of $\mathbb{P}^2$ in itself, undefined at $p_1, \ldots, p_{10}$. Coble surfaces were introduced precisely with the aim to study subgroups of meromorphic transformations of $\mathbb{P}^2$ in itself, which was a classical topic in Algebraic Geometry in the early $20$-th century. Moreover, the fact that $\Aut  (X)$ is infinite is remarkable, since a conjecture of Coble himself, later proved by Hirschowitz in \cite{Hirschowitz1988}, stated that, if one starts with a set of $n \ge 9$ points $p_1, \ldots, p_n$ in $\mathbb{P}^2$, which do not satisfy any nontrivial algebraic constraint, then the blow up of $\mathbb{P}^2$ at these points admits only the identity automorphism. However, Coble surfaces do not contradict Hirschowitz theorem. Indeed, fixed a $10$-ple $(p_1, \ldots, p_{10})$, we are asking that there exists a homogeneous polynomial $F(X_0, X_1, X_2)$ of degree $6$ with $\frac{\partial F}{\partial X_i} (p_j) = 0$. One can easily show that this is actually a non trivial algebraic condition, which is not automatically satisfied by all the $10$-ples $p_1, \ldots, p_{10}$. Those which actually satisfy this condition can be used to give rise to a Coble surface.\\
Nowadays, the definition of a Coble surface has been modified in the following way. To any surface $X$, one can associate an abelian group, called the Picard group $\Pic (X)$ of $X$. By definition, a divisor $D$ on $X$ is a formal linear combination $D = n_1 C_1 +\cdots + n_r C_r$, where $C_i \subset X$ are curves, and the coefficients $n_i \in \integer$ are integers. The set of divisors is an abelian group, with the addition operation. A divisor is said effective if $n_i \ge 0$ for all indices $i$. We want to be able to deform divisors, so we consider equivalent two divisors $D_1, D_2$ if there exists a $1$-parameter deformation family $X_{\lambda} \subset X$, with $\lambda \in \mathbb{P}^1 = \complex \cup \{\infty\}$ which starts with $X_0 = D_1$ and it terminates with $X_{\infty} = D_2$. This equivalence relation is called rational equivalence, and the Picard group $\Pic (X)$ is the group of divisors modulo rational equivalence. A crucial role in the Picard group is played by canonical divisors. A divisor $D = \sum n_i C_i - \sum_j m_j C_j'$ is canonical if there exists a meromorphic differential form $\omega$ of degree $2$, with zeroes of order $n_i$ along $C_i$ and poles of order $m_j$ along $C_j'$. All canonical divisors lie in the same class of rational equivalence, denoted by $K_X$, the canonical class of $X$. This class contains effective divisors if and only if $X$ admits holomorphic $2$-differential forms.\\
The modern definition for a Coble surface is stated purely in terms of the canonical divisor class $K_X$. Indeed, we require three conditions to a Coble surface $X$:\\
i) $X$ must be rational, that is, there must exist open subsets $U \subset X, V \subset \mathbb{P}^2$ which are isomorphic.\\
ii) The anti - canonical divisor class $- K_X$ does not contain any effective divisor.\\
iii) The anti - bicanonical divisor class $- 2 K_X$ contains a unique effective divisor $C = C_1 + \cdots + C_n$, called the Coble curve of $X$.\\
The reason of this definition is that it closely resembles the definition of Enriques surfaces. This is a very well known family of surfaces, first introduced in the late $19$-th century by Enriques. For an Enriques surface, the requirement $i)$ on rationality is dropped, while conditions $ii), iii)$ are required respectively for the divisors $K_X, 2 K_X$, rather than their opposites.\\
Going back on Coble surfaces, the original definition given by Coble corresponds exactly to the irreducibility of the curve $C$ introduced in point iii).\\
In the first part of this work, we will discuss general properties of Coble surfaces, with a special focus on the case when $C$ is irreducible. We will underline many analogies between the behaviour of these surfaces and of Enriques surfaces. As an example, a very useful tool to study Enriques surfaces are isotropic sequences, see for example Knutsen \cite{Knutsen2020}. By definition, these are sequences of isolated elliptic curves ${\cal E}_1, \ldots, {\cal E}_r$ with intersection product ${\cal E}_i {\cal E}_j = 1 - \delta_{i, j}$. On a Coble surface, we will show that any isotropic sequence ${\cal E}_1, \ldots, {\cal E}_r$ of length $r \le 8$ can be extended to a maximal sequence ${\cal E}_1, \ldots, {\cal E}_{10}$. Going on, we can study lots of models for projective Coble surfaces. Among these, we cite the Bordiga - Coble model, which will be a special Bordiga surface which is also a Coble surface. Another interesting model, which underlines the link between Enriques surfaces and Coble surfaces, will be the case of Coble quintics in $\mathbb{P}^3$. These will be quintic surfaces containing a tetrahedron, and they will be nodal along three concurrent edges. We will show that the desingularization of such a surface is actually a Coble surface in the classical sense, and vice versa, any Coble surface admits such a polarization $|H|$. The similarities will become heavier once we will pass to the anti - adjoint linear system $|H - K_X|$. Using this we will come back to the classical construction of an Enriques sextic, which is nodal along all the six edges, with the extra condition that one of the vertices will be a $4th$-ple point.\\
The following sections will be dedicated to study of automorphisms on Coble surface, with a detailed attention to those with irreducible Coble curve $C$. For those surfaces, indeed, there exist a well - defined restriction homomorphism $\rho : \Aut  (X) \to \Aut  (C) \simeq PGL (2)$, and there exists an open conjecture about who $ker (\rho), Im (\rho)$ might
 be. We will show the construction made by Pompilj in $1937$, in an attempt to prove $ker (\rho) \ne 1$, and we will follow Coble counter - proof, which proved how this happens only for some families of Coble surfaces. Going on, we will focus on the case of involutions, proving that on an unnodal Coble surface $X$ with irreducible boundary $C$, any involution $i: X \to X$ is the lift of a Bertini involution. We will conclude this work by defining the coincidence loci for families of involutions, showing that they are always $2$-codimensional inside the Severi variety of $10$-nodal plane sextics.

\renewcommand{\theequation}{\arabic{equation}}
 
\newpage
\section*{Introduction}
\addcontentsline{toc}{section}{Introduction}
\paragraph*{General setup:}
The main topic of Algebraic Geometry is the study of algebraic varieties. By definition, one considers the complex projective space $\mathbb{P}^n (\complex) = \mathbb{P}^n$, which is the quotient $$\mathbb{P}^n := (\complex^{n + 1} \setminus \{0\}) / \simeq$$ where the equivalence relation $\simeq$ between nonzero vectors $v, w \in \complex^{n + 1} \setminus \{0\}$ is given by $$v \simeq w \quad {\rm\, if\, and\, only\, if\,} \quad w = \lambda v$$ for some scalar $\lambda \ne 0$. If $(X_0, \ldots, X_n) \in \complex^{n + 1}$ is any vector different from zero, its equivalence class with respect to $\simeq$ will be denoted as $[X_0, \ldots, X_n]$. Over the space $\mathbb{P}^n$, one puts the Zariski topology, where the closed subsets have the form $$V (F_1, \ldots, F_r) := \{x \in \mathbb{P}^n \quad {\rm\, such\, that\,} \quad F_1 (x) = \cdots = F_r (x) = 0\} \subset \mathbb{P}^n$$ for a finite collection of homogeneous polynomials $F_1, \ldots, F_r \in \complex[X_0, \ldots, X_n]$. The Zariski - closed subsets of $\mathbb{P}^n$ are called algebraic varieties. If $X \subset \mathbb{P}^n, Y \subset \mathbb{P}^m$ are varieties, a regular morphism $f : X \to Y$ will be a holomorphic function which locally has a polynomial structure, that is, it can be written as $$f (x) := [H_0 (x), \ldots, H_m (x)]$$ for a $(m + 1)$-ple of homogeneous polynomials $H_0, \ldots, H_m \in \complex[X_0, \ldots, X_n]$ with the same degree and with no common zeroes. We will consider mainly curves and surfaces, which are varieties of dimension $1$ and $2$ respectively. The simplest examples are given respectively by the projective line $\mathbb{P}^1$ and the projective plane $\mathbb{P}^2$.\\
The closest examples are given by rational varieties. An $n$-dimensional variety $X$ is said rational if there exist open subsets $U \subset X, V \subset \mathbb{P}^n$ which are isomorphic via regular functions $f : U \to V, g : V \to U$. Roughly speaking, this means that $X$ admits a meromorphic polynomial parametrization from $\mathbb{P}^n$, which is almost everywhere an isomorphism. In dimension $1$, one can show that the only rational smooth curve is actually $\mathbb{P}^1$ itself, see for example \cite{Shafarevich2013}. In dimension $2$, this is no longer true: there are smooth rational surfaces which are not isomorphic to $\mathbb{P}^2$. During the $20$-th century, Castelnuovo, Enriques and later Kodaira provided a classification of all families of complex surfaces. A key ingredient for their work is the notion of vector bundle. The idea is the following: given a smooth complex variety $X$, one can attach to any point $x \in X$ a $\complex$-vector space $V_x$, whose rank $r$ does not vary as $x$ moves in $X$. The union $V := \bigcup_{x \in X} V_x$ is still a variety, equipped with a natural surjective morphism $p : V \to X$, $p (v) := x$ if $v \in V_x$. As an example, one can simply consider $X \times \complex^r$, which attaches to any $x \in X$ the ``constant'' vector space $\complex^r$. Other natural constructions are given by the tangent vector bundle ${\cal T}_X \to X$, which is given by the glueing of the tangent spaces $T_x X, x \in X$, or its dual cotangent bundle $\Omega_X^1 := ({\cal T}_X)^*$, or its external $p$-th power $\Omega_X^p := \bigwedge^p \Omega_X^1$, for $1 \le p \le {\rm\, dim\,} X$. For any vector bundle $p : V \to X$, one can look for its sections, that is, regular functions $\sigma : X \to V$ such that $\sigma (x) \in V_x$ for any $x \in X$. The set of such sections is a vector space itself, denoted by $H^0 (X, V)$. In the examples above, a section of $X \times \complex^r \to X$ is just a $r$-ple of regular functions $X \to \complex$, and by a compactness argument, each of these must be a constant, so that $H^0 (X, \complex^r) = \complex^r$. Sections of the tangent bundle ${\cal T}_X$ are regular vector fields on $X$, and they may exist or not. A section of $\Omega_X^p$ is a holomorphic differential form of degree $p$.\\
A special role is played by line bundles, which are vector bundles of rank $1$. The reason is the following: given two line bundles $p_1 : L_1 \to X, p_2 : L_2 \to X$, one can construct the tensor line bundle $L_1 \otimes L_2$, which attaches to any point $x \in X$ the space $(L_1 \otimes L_2)_x := (L_1)_x \otimes (L_2)_x$, which has still rank $1$. 
The trivial line bundle $X \times \complex$ plays the role of zero element with respect to this operation, and any line bundle $L \to X$ satisfies admits an inverse element, namely the dual bundle $L^* \to X$, which satisfies $L \otimes L^* \simeq X \times \complex$. Hence the set of line bundles over a fixed variety $X$, equipped with the tensor product, has a structure of an abelian group. From now on, a line bundle $L \to X$ will be denoted by ${\cal O}_X (L)$, so that the tensor product between ${\cal O}_X (L_1), {\cal O}_X (L_2)$ will become simply ${\cal O}_X (L_1 + L_2)$, using the additive notation which comes from the commutativity of tensor product. The dual of ${\cal O}_X (L)$ will be ${\cal O}_X (- L)$, and the trivial line bundle $X \times \complex$ will be just ${\cal O}_X$. For any variety $X$, the top wedge power $\Omega_X^{{\rm\, dim\,} X}$ is called the canonical line bundle, and it is denoted as ${\cal O}_X (K_X)$. The abelian group of line bundles on any variety $X$ is called the Picard group, and it is denoted as $\Pic (X)$.\\
In the Castelnuovo - Enriques - Kodaira classification of surfaces, the canonical bundle ${\cal O}_X (K_X) = \Omega_X^2$ of a smooth surface $X$ plays a central role. As an example, Castelnuovo showed that the rationality of $X$ can be stated just in terms of ${\cal O}_X, \Omega_X^2$.\\
The aim of this Ph.D. Thesis is the study of linear systems on Coble surfaces. Historically, they were introduced for the following reason: a classical problem in Algebraic Geometry in the early $20$-th century was the study of birational transformations of the plane $\mathbb{P}^2$, which are meromorphic transformations of $\mathbb{P}^2$ in itself, with meromorphic inverse. The group of such transformations is denoted by $Cr (2)$ or $Cr (\mathbb{P}^2)$, the Cremona group in dimension $2$. Since this is a very large group, an idea was to look at some of its subgroups, in the following way. Starting from a finite set of points $p_1, \ldots, p_n$ in the plane $\mathbb{P}^2$, one can always build the blow up of $\mathbb{P}^2$ centered at $p_1, \ldots, p_n$. By construction, this will be a smooth algebraic surface $X = Bl_{p_1, \ldots, p_n} \mathbb{P}^2$, with a regular morphism $p : X \to \mathbb{P}^2$, with the property that $p^{-1} (p_i) \simeq \mathbb{P}^1$ for all the points $p_i$'s, and the restriction on the complementary open subsets $p: X \setminus p^{-1} \{p_1, \ldots, p_n\} \to \mathbb{P}^2 \setminus \{p_1, \ldots, p_n\}$ is invertible. Once we have our blow up $X$, any regular invertible automorphism $f : X \to X$ will induce a meromorphic transformation on $\mathbb{P}^2$, by considering the composition $\phi := p \circ f \circ p^{-1} : \mathbb{P}^2 \dasharrow \mathbb{P}^2$. The discontinuous arrow in this notation underlines that the composition is defined only as a meromorphic function, since the involved function $p^{-1}: \mathbb{P}^2 \dasharrow X$ exists only over the open subset $\mathbb{P}^2 \setminus \{p_1, \ldots, p_n\}$. In this way, the group $\Aut  (X)$ of regular transformation of $X$ in itself is identified with the subgroup of $Cr (\mathbb{P}^2)$ of meromorphic transformation which have indeterminacies at most at prescribed points $p_1, \ldots, p_n \in \mathbb{P}^2$. There is not too much restriction in this, since one can show that the indeterminacy locus of a meromorphic transformation $\phi: \mathbb{P}^2 \dasharrow \mathbb{P}^2$ is always a finite set, see for example \cite{Shafarevich2013}, so you can always consider its blow up to resolve the indeterminacies of $\phi$. As the finite set $\{p_1, \ldots p_n\}$ changes, the blown up surface $X$ changes too, so it has different automorphism group $\Aut  (X)$, and we will be able to describe different subgroups of $Cr (\mathbb{P}^2)$.\\
However, this strategy has some limits: if one picks too many points $p_1, \ldots, p_n$, the surface $X$ may admit just the identity $\mathbb{1}_X$ as automorphism into itself, so the induced subgroup in $Cr (\mathbb{P}^2)$ would be trivial. In the early $20$-th century this was just a conjecture, but Hirschowitz actually showed in \cite{Hirschowitz1988} that for almost every choice of $n \ge 9$ points in the plane, the blow up $X = Bl_{p_1, \ldots, p_n} \mathbb{P}^2$ does not admit any automorphism. Coble surfaces proved to be an exception to this fact. Indeed, they were originally defined by A. B. Coble in \cite{OriginalCoble1919} as a blow up of $\mathbb{P}^2$ in $10$ points, which must be singular points for a curve $\overline C \subset \mathbb{P}^2$ of degree $6$, and nonetheless they admit nontrivial automorphisms.\\
Nowadays, there is another definition for Coble surfaces, purely expressed in terms of the previously mentioned line bundle ${\cal O}_X (K_X)$, and it extends the original one given by Coble. The new definition, which we will se later, underlines how Coble surface provide a bridge between the world of rational surfaces, where they belong, and the world of Enriques surfaces. This is a class of non - rational surfaces known since the late $19$-th century, and it is object of an extremely rich literature, see for example \cite{Alexeev2023}, \cite{Ciliberto2023}, \cite{VerraReye1993}, \cite{Cossec1985}, \cite{EnriquesI2025}, \cite{Cossec1983}, \cite{Moduli2013}, \cite{EnriquesII2025}, \cite{ABriefIntro2016}, \cite{Knutsen2020}, \cite{Gebhard2024}, \cite{Schaffler2024}, \cite{Reye1882}, \cite{Verra2023}, \cite{Vinberg1983}.
\paragraph*{Overview on the work:}
The main results we will show are the following: in an unnodal Coble surface $X$ with irreducible boundary $C$, any isotropic sequence ${\cal E}_1, \ldots, {\cal E}_r$ with $r \le 8$ can be extended to a maximal isotropic sequence ${\cal E}_1, \ldots, {\cal E}_{10}$ (Theorem \ref{ndindex}, page 45). Under the same assumption, there are no involutions $i : X \to X$ which fix the curve $C$ (Proposition \ref{nifc}, page 69), and all the involutions of $\Aut  (X)$ are lifts of Bertini involutions (Theorem \ref{solobert}, page 70).\\ 
In Section $1$, we will give a brief introduction on the general setting, showing the tools we will use in all the work. A deep attention will be put upon the theory of divisors, remembering the construction of the Picard group $\Pic (X)$ for any smooth surface $X$. This group comes together a bilinear symmetric pairing $\Pic (X) \times \Pic (X) \to \integer$, which is just an application to surfaces of the much larger intersection theory on any kind of smooth variety. A very heavy role will be played by the canonical class divisor $K_X$. We will give remarks on some classical Theorems, which use this intersection product to establish some criteria for divisors to be effective, such as Riemann - Roch Theorem, or ample divisors, like the Mukai - Moishezon Criterion or Reider Theorem.\\
Following this way, we will briefly remember some concrete application of the study of $\Pic (X)$ to rational surfaces. Among rational surfaces, a special attention will be dedicated to Del Pezzo surfaces, and the definition of some automorphisms on them, like those induced by De - Jonquieres, Geiser or Bertini involutions of $\mathbb{P}^2$. We will also give some basic information on the construction of the Hirzebruch surfaces $\mathbb{F}_n$, since they will be needed in some parts of this Thesis.\\
Next part will be dedicated to giving some pre-requisites on Enriques surfaces. These were first introduced during the birational classification work by Enriques and Castelnuovo, as a negative example to the following question: is the rationality of a surface $X$ an equivalent condition to the cohomological properties $h^1 ({\cal O}_X) = h^0 ({\cal O}_X (K_X)) = 0$ ? It is well - known that this question has negative answer, since these cohomological conditions are satisfied by Enriques surfaces, which yet fail to be rational. We will briefly remember their birational model, which were originally discovered by Enriques. Namely he proved that sextic surfaces in $\mathbb{P}^3$ with double points along the six edges of a tetrahedron are birational models of what we call nowadays as Enriques surfaces. Other needed tools will be the Picard group $\Pic (X)$ and the numerical class group $\Num (X)$ of a general Enriques surface $X$. This will involve the definition of the lattice $\mathbb{E}_{10}$, which will show some analogies with main object of the thesis, the Coble surfaces.\\
\\
In Section $2$ we will finally give the definition of a Coble surface: it is a complex smooth rational surface $X$ which satisfies $h^0 ({\cal O}_X (- K_X)) = 0$ and $h^0 ({\cal O}_X (- 2 K_X)) = 1$. For us, the Coble curve, or boundary curve, of a Coble surface $X$ will be the unique effective divisor $C \in |- 2 K_X|$, with irreducible decomposition $C = C_1 + \cdots + C_n$, with $C_i \cap C_j = \emptyset$ for any $i \ne j$. It is well known that each of these surfaces comes from blowing up the projective plane $\mathbb{P}^2$ at a suitable number of points, placed in non-general position. We will provide examples of Coble surfaces with reducible boundary, and it was proved by Cossec, Dolgachev and Liedtke in \cite{EnriquesI2025} that $n \le 10$ always.\\
Most of the work of this Thesis will be carried over Coble surfaces $X$ with irreducible boundary, that is, $n = 1$. In this case, the birational description of $X$ becomes much easier, since every such surface is obtained via the blow up of $10$ points in $\mathbb{P}^2$ which are nodes for a reduced irreducible plane sextic curve $\overline C$, and conversely, every such blow up is a Coble surface. In this case, the strict transform of $\overline C$ becomes the Coble curve $C$. We will underline some analogies between Enriques surfaces and Coble surfaces with irreducible boundary. As an example, at a level of Picard group, the former satisfy $\Pic (X) = \mathbb{E}_{10} \oplus \integer_2 K_X$, while the latter satisfy $\Pic (X) = \mathbb{E}_{10} \oplus \integer K_X$.\\
Another nice analogy we will find is the following: it is well known that, for an Enriques surface $X$, you can build an isotropic sequence, that is, a sequence ${\cal E}_1, \ldots, {\cal E}_r$, where ${\cal E}_i \in \Pic (X)$ are the classes of isolated elliptic curves with intersection products ${\cal E}_i {\cal E}_j = 1 - \delta_{i, j}$, and that the maximum achievable value for the length $r$ is $r \le 10$. On a Coble surface $X$ we will show that this upper bound is always achieved, namely, any isotropic sequence ${\cal E}_1, \ldots, {\cal E}_r$ with $r \le 8$ can be extended to an isotropic sequence ${\cal E}_1, \ldots, {\cal E}_{10}$. A fundamental help in the proof will come from the fact that Coble curves always admit $(-1)$-curves, which is absolutely forbidden in Enriques surface.\\
We will conclude Section $2$ with a brief glimpse on why it it reasonable to think that the moduli space of Coble curves with irreducible boundary has dimension $9$. To do this, we will observe that the rationality of the sextic Coble curve $\overline C$ provides a morphism $\mathbb{P}^1 \to \mathbb{P}^2$ defined by a net in $H^0 ({\cal O}_{\mathbb{P}^1} (6))$, and hence a suitable parameter space for Coble curves is given by the Grassmannian $G = Gr (3, H^0 ({\cal O}_{\mathbb{P}^1} (6)))$, which is $12$-dimensional. Then we will use the GIT-quotient $G // PGL (2)$.\\
\\
Section $3$ will be dedicated to the study of projective models of Coble surfaces with irreducible boundary curve $C \in |- 2 K_X|$. The first one will be the Bordiga-Coble model, since it is a Bordiga surface, which is also a Coble surface, since its anti - bicanonical class contains a smooth rational curve, of degree $4$ and self intersection $-4$. Actually, Bordiga-Coble surfaces are special Bordiga surfaces. Indeed, a general Coble surface is embedded in $\mathbb{P}^4$ via the linear system of quartic curves passing through the $10$ base points, and the image is a Bordiga surface, as mentioned.\\
Another interesting case is given by case of quintic Coble surfaces: by definition, these surfaces are the normalization of a quintic surface $\overline X \subset \mathbb{P}^3$ of the form \begin{eqnarray*}\overline X &:=& \{[X_0, X_1, X_2, X_3] \in \mathbb{P}^3 \quad {\rm\, such\, that\,} \nonumber\\
&&\alpha X_0 X_2^2 X_3^2 + \beta X_0 X_1^2 X_3^2 + \gamma X_0 X_1^2 X_2^2 + X_1 X_2 X_3 q = 0\} \end{eqnarray*} where $\alpha, \beta, \gamma$ are nonzero constants, and $q$ is a quadric in $\mathbb{P}^3$. The similarities with the Enriques sextic are evident. We will prove that on any Coble surface there is always a big polarization $H$ with $H^2 = 5$ which realizes such a representation, and conversely, the normalization of any such quintic surface is actually a Coble surface. The similarities with the Enriques case become even stronger if we pass to the anti - adjoint linear system $|H - K_X|$, which realizes $X$ as an actual Enriques sextic in $\mathbb{P}^3$, with the additive condition that it has a $4$-ple singular point at a vertex of the coordinate tetrahedron.\\
We go on, introducing two other models for Coble surfaces, both in $\mathbb{P}^3$. The first one has degree $3$, and it will be interesting since it will be the first concrete example of a nodal Coble surface, that is, a Coble surface containing a $(-2)$-curve. The second one has degree $4$ and contains a double line, and we consider it as a $K3$-Coble surface, since it can be thought as a limit of smooth quartics in $\mathbb{P}^3$, which are special cases of $K3$ surfaces.\\
\\
From Section $4$ on, we start to consider biregular automorphisms of Coble surfaces. The reason why we do this is the following: any automorphism $f : X \to X$ from a Coble surface $X$ to itself must preserve the canonical divisor $K_X$, that is, $f_* (K_X) = K_X$ at a level of divisors on $X$. Multiplying by $-2$, the consequence is that $f (C) = C$, that is, $C$ is set - theoretically preserved by $f$. When $C$ is irreducible, this fact becomes strongly relevant, since it provides a restriction morphism between groups $\rho: \Aut  (X) \to \Aut  (C) \simeq PGL (2)$. Coble posed the question about which subgroups $Ker (\rho), Im (\rho)$ can be, and in particular, he conjectured that $\ker (\rho) = \mathbb{1}_X$ for a general Coble surface $X$.\\
We will reconstruct the attempt given by G. Pompilj in $1937$ in his article ``Sulle trasformazioni cremoniane del piano che posseggono una curva di punti uniti'' (see \cite{Pompilj1940}). His aim was to provide a counter - example to Coble conjecture, and to do so, he proceeded in the following way. He started with an irreducible curve $\overline C \subset \mathbb{P}^2$ of degree $6$, with $10$ nodes in points $p_1, \ldots, p_7, A, B, C \in Sing(\overline C)$. Then he considered the plane birational Bertini involution $i_A : \mathbb{P}^2 \dasharrow \mathbb{P}^2$, associated to the $8$-ple $p_1, \ldots, p_7, A$, and he observed that $i_A$ becomes regular when you pass to the blow - up $X$, which is our Coble surface. The same holds by symmetry for the Bertini involutions $i_B, i_C$. He then considered a triple of automorphisms $T_A, T_B, T_C : X \to X$, defined as $T_A := i_B \circ i_C$, $T_B := i_C \circ i_A$, $T_C := i_A \circ i_B$. The automorphism $T_A$ can also be defined by looking at the elliptic pencil $\pi_A : X \to \mathbb{P}^1$, whose fibers are the strict transforms of sextic curves with nodes at $p_1, \ldots, p_7, A$. Indeed, $T_A$ preserves each fiber of $\pi_A$, as it acts simply as the addition of a divisor of degree $0$ on each smooth fiber of geometric genus $1$. Again, the same holds for $T_B, T_C$ with respect to the elliptic fibrations $\pi_B, \pi_C$ similarly defined. Pompilj claimed that the composition $R := T_C \circ T_B \circ T_A = (i_A \circ i_B \circ i_C)^2$ acts as the identity on the curve $C \in |- 2 K_X|$. We will show the path followed by Coble in \cite{Coble1939}, where he proved that the condition ${(i_A \circ i_B \circ i_C)^2}|_C = \mathbb{1}_C$ is not automatically true, as Pompilj claimed, but it is a divisorial condition in a suitable parameter space.\\
\\
In the final Section $5$, we will focus our attention on the biregular involutions defined on a Coble $X$, still under the assumption that $C \in |-2 K_X|$ is irreducible. We start by a very simple observation: no involution $i$ on such a Coble surface $X$ can satisfy the Coble conjecture $i|_C = \mathbb{1}_C$. To prove this, we will use a result from Bayle - Beauville in \cite{Bayle2000}, which explicitly enlists all possible minimal pairs $(X, i)$, with $X$ a smooth rational surface and $i : X \to X$ a minimal involution, that is, an involution which is not induced by another pair $(X', i')$ by blowing up an $i'$-symmetric finite set. It will be a straight - forward proof to show that none of the minimal models $(X, i)$ admits $X$ to be a Coble surface. Together with the hypothesis that $i|_C = \mathbb{1}_C$, this will lead us to the existence of a $(-1)$-curve $E \subset X$ such that $i (E) = E$. But then we will invoke Castelnuovo's Contractibility Criterion, and a result from Dolgachev - Zhang in \cite{ZhangDolgachev2001} on smoothness of fixed loci to find a contradiction.\\
As we already mentioned, we will also prove that any involution $i : X \to X$ is a lift of a Bertini involution, under the assumption that the Coble curve $C$ is irreducible and the surface is unnodal. Again, we will invoke the same result from \cite{Bayle2000}, to  We will do this by a straight - forward procedure, excluding one by one all the cases provided by \cite{Bayle2000}. To do this, a crucial help will come from the quintic model in $\mathbb{P}^3$ we talked about in Section $3$. In all the steps of the proof, we will show that the hypothesis of irreducibility for $|- 2 K_X|$ cannot be removed, providing examples of how all the cases actually happens when $|- 2 K_X|$ is reducible. We will also point out that the converse is not true, showing that the Bertini involution admits lifts also on Coble surfaces with reducible boundary.\\
The final part of this Section will be dedicated to repeating the same reasoning, but for families. We will build a suitable quasi - projective parameter space $\tilde V$ of triples $\tilde V := \{(X, E, H)\}$, where $X$ is a Coble surface, $E \subset X$ is a $(-1)$-curve, and $|H| : X \to \mathbb{P}^2$ is a polarization of degree $1$ which contracts $E$. In other words, $|H|$ realizes $X$ as a blow up of $\mathbb{P}^2$ in $10$ points, and $E$ is a marked choice of one of the $10$ exceptional divisors. This space will come together with a family $\pi: {\cal Y} \to \tilde V$, a divisor ${\cal E} \subset {\cal Y}$, and a line bundle ${\cal H} \in \Pic ({\cal Y})$, such that the pre - image of a point $\pi^{-1} (X, E, H)$ is $X$ itself, the restriction ${\cal E}|_{\pi^{-1} (X, E, H)} = E$ and the polarization ${\cal H}|_{\pi^{-1} (X, E, H)}$ is equal to $H$. On the family ${\cal Y}$ we will talk about rationally determined automorphism ${\cal G} : {\cal Y} \to {\cal Y}$, to denote biregular tranformations which live in every $\Aut  (X)$, while $X$ moves in a family of Coble surfaces. The aim is to construct, for a rationally determined involution ${\cal I}: {\cal Y} \to {\cal Y}$, what is the coincidence locus of ${\cal I}$. By definition, this will be the set of triples $(X, E, H)$ such that $i_{C \cap E} = \mathbb{1}_{C \cap E}$, with $i := {\cal I}|_X$. The aim of this part will be to study the geometry of this loci; invoking the classification result from Section $4$, we will show that the coincidence loci are always $2$-codimensional in $\tilde V$.\\\\
The final Appendix contains some un - finished computations, showing an alternative strategy to classify involutions on unnodal Coble surfaces with irreducible Coble curve. These calculations were made before the proof of the Proposition \ref{nifc} and Theorem \ref{solobert} in Section $3$. The idea was to take any involution $i : X \to X$ such that $i|_C = \mathbb{1}_C$, an exceptional curve $E \subset X$, and look at the behaviour of the linear system $E + i (E)$. However, none of the proof of Section $3$ requires these computations, which until now remain suspended.

\newpage
\section{Generalities on surfaces}
We work on the complex field $\complex$. A surface (respectively, a curve) will be a Zariski - closed subvariety of some projective space $\mathbb{P}^N$, of dimension $2$ (respectively $1$). Unless otherwise specified, we will assume that surfaces and curves are smooth and irreducible.\\
For any smooth irreducible variety $X$, we will denote by $\Pic (X)$ the Picard group of $X$, that is, the abelian group of line bundles on $X$, with group operation given by the tensor product.
\subsection{Line bundles, divisors and intersection theory on surfaces}
We refer to \cite{3264AndAllThat2016} for the contents of this subsection.\\
If $X$ is smooth and irreducible, the group $\Div (X)$ of divisors on $X$ is the free abelian group generated by closed subvarieties of $X$ of codimension $1$. The subgroup $\Rat (X) \subset \Div (X)$ is the group of divisors of rational functions on $X$. The Chow group of $X$ in codimension $1$ is the quotient $CH^1 (X) = \Div (X) / \Rat (X)$.\\
We will repeatedly use the existence of an isomorphism $$CH^1 (X) \simeq \Pic (X).$$ With some ambiguity, a divisor $D$ will be both an element in $\Div (X)$ and its class in $CH^1 (X)$. The corresponding line bundle in $\Pic (X)$ will be denoted by ${\cal O}_X (D)$. The $i$-th cohomology space of the line bundle ${\cal O}_X (D)$ will be denoted by $H^i ({\cal O}_X (D))$, with the space $H^0 ({\cal O}_X (D))$ consisting of global regular sections (provided they exist) of ${\cal O}_X (D)$. The dimension ${\rm dim}_\complex H^i ({\cal O}_X (D))$ will be denoted by $h^i ({\cal O}_X (D))$.\\
We will denote by $K_X \in \Pic(X)$ the (isomorphism class of) the line bundle $\Omega_X^{dim\, X}$.\\
\begin{theorem}[Serre duality]
If $D$ is any divisor on a smooth variety $X$ of dimension $n$, then for any $k \in \{0, \ldots, n\}$ we have $$h^k ({\cal O}_X (D)) = h^{n - k} ({\cal O}_X (K_X - D)).$$
\end{theorem}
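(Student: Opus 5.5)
The plan is to derive Serre duality from a perfect pairing on cohomology. The ingredients are the cup product and a trace map on top-degree cohomology of the canonical bundle. Throughout, $X$ is smooth and projective (it sits in some $\mathbb{P}^N$), of dimension $n$, and we write $L = {\cal O}_X (D)$.

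\textbf{Step 1: the pairing.} Since ${\cal O}_X (D) \otimes {\cal O}_X (K_X - D) \simeq {\cal O}_X (K_X)$, the cup product gives a bilinear map
$$H^k ({\cal O}_X (D)) \times H^{n-k} ({\cal O}_X (K_X - D)) \to H^n ({\cal O}_X (K_X)).$$
Using Dolbeault cohomology, I would write this concretely: a $(0,k)$-form with values in $L$ wedged with a $(0,n-k)$-form with values in $K_X \otimes L^{*}$ is an $(n,n)$-form.

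\textbf{Step 2: the trace.} Next I establish $H^n ({\cal O}_X (K_X)) \simeq \complex$, with the isomorphism given by integration over $X$. This is the statement $h^{n,n} = h^{0,0} = 1$ on a compact connected Kähler manifold; here $X$ is irreducible, hence connected, and Kähler because it is projective.

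\textbf{Step 3: nondegeneracy.} This is the main step, and I would use Hodge theory. Choose a hermitian metric on $L$ and the Fubini--Study (Kähler) metric on $X$. By Hodge theory, $H^k ({\cal O}_X (D))$ is identified with the space of $\bar\partial$-harmonic $L$-valued $(0,k)$-forms. Define the conjugate-linear Hodge star
$$\bar * : {\cal A}^{0,k} (L) \to {\cal A}^{n,n-k} (L^*),$$
built from the Hodge star, complex conjugation and the metric isomorphism $\bar L \simeq L^*$. This operator satisfies $\bar\partial^* = \pm\, \bar * \,\bar\partial\, \bar *$, so it sends harmonic forms to harmonic forms. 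Moreover, for a nonzero harmonic $\alpha$ we have $\int_X \alpha \wedge \bar * \alpha = \|\alpha\|^2 \ne 0$. Finally, ${\cal A}^{n,n-k} (L^*) = {\cal A}^{0,n-k} (K_X \otimes L^*)$ computes $H^{n-k} ({\cal O}_X (K_X - D))$. Together these show that the pairing of Step 1 is nondegenerate on the left; by the symmetric argument it is nondegenerate on the right as well.

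\textbf{Step 4: conclusion.} Both cohomology spaces are finite dimensional, since $X$ is compact and the harmonic spaces are kernels of elliptic operators. A nondegenerate pairing between finite-dimensional spaces identifies each with the dual of the other, so
$$h^k ({\cal O}_X (D)) = h^{n-k} ({\cal O}_X (K_X - D)).$$

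The main obstacle is Step 3, specifically the fact that the harmonic theory is compatible with $\bar *$, i.e. that $\bar\partial^* = \pm\,\bar * \,\bar\partial\, \bar *$. Since the paper treats this as a classical result, I would quote it from \cite{3264AndAllThat2016}, or equivalently from Griffiths--Harris, rather than reprove elliptic regularity. An alternative algebraic route also works: reduce to $\mathbb{P}^N$ via $\mathrm{Ext}^{N-n}(-, \omega_{\mathbb{P}^N})$ and the computation of the cohomology of ${\cal O}_{\mathbb{P}^N}(m)$. That route needs the Cohen--Macaulay property of smooth $X$ and the identification $\omega_X = {\cal E}xt^{N-n}({\cal O}_X, \omega_{\mathbb{P}^N})$, which I would likewise cite.
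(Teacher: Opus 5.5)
The paper gives no proof of this statement. Serre duality is quoted as classical background (the subsection points to \cite{3264AndAllThat2016}) and is only used as a black box, e.g.\ for $h^2(\mathcal{O}_X(-K_X)) = h^0(\mathcal{O}_X(2K_X))$. So there is no argument in the paper to match. What you propose is the standard Hodge-theoretic proof, and it is correct.

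The core step is right. Since $\bar\partial^*_L = -\bar*\,\bar\partial_{L^*}\,\bar*$ and $\bar*$ is bijective, a harmonic $\alpha$ satisfies $\bar\partial(\bar*\alpha)=0$. Hence $\bar*\alpha$ defines a class in $H^{n-k}(\mathcal{O}_X(K_X-D))$ via $\mathcal{A}^{n,n-k}(L^*)=\mathcal{A}^{0,n-k}(K_X\otimes L^*)$, and $\int_X\alpha\wedge\bar*\alpha=\|\alpha\|^2\neq 0$. Running this on both sides gives injections of each space into the dual of the other, hence equal finite dimensions.

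Three points to tighten.
\begin{itemize}
\item Your argument computes Dolbeault (analytic) cohomology, while $h^k(\mathcal{O}_X(D))$ in the paper is the cohomology of an algebraic line bundle. You should invoke Dolbeault's theorem and Serre's GAGA to identify the two. GAGA needs $X$ projective, which, as you rightly note, is built into the paper's convention that varieties are closed in some $\mathbb{P}^N$. Without properness the statement fails, e.g.\ for $D=0$ on $\mathbb{A}^1$.
\item Step 2 is unnecessary. Nondegeneracy only uses that $\int_X$ is a well-defined functional on $\bar\partial$-cohomology, which is Stokes' theorem.
\item If you do want $H^n(\mathcal{O}_X(K_X))\simeq\mathbb{C}$, derive it from $H^{n,n}(X)=H^{2n}(X,\mathbb{C})\simeq\mathbb{C}$. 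Do not derive it from ``$h^{n,n}=h^{0,0}$'', which is exactly the case $D=K_X$, $k=n$ of the statement being proved.
\end{itemize}

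Your algebraic alternative (duality on $\mathbb{P}^N$ plus $\mathcal{E}xt$ and Cohen--Macaulayness, as in Hartshorne III.7) avoids both GAGA and Hodge theory and works over any field. The analytic proof is more self-contained once harmonic theory is granted, and it extends verbatim to holomorphic vector bundles on compact K\"ahler manifolds.
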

We will denote by $K_X \in \Pic (X)$ the (isomorphism class of) the line bundle $\Omega_X^{dim\, X}$.\\
If $X$ is a surface, intersection theory provides a bilinear symmetric pairing $CH^1 (X) \times CH^1(X) \to \integer$, which automatically induces a bilinear pairing $\Pic (X) \times \Pic(X) \to \integer$. For any pair of different irreducible curves $C_1, C_2$, the product $C_1 C_2$ is a non-negative integer number, as it counts the cardinality of $C_1 \cap C_2$ with the appropriate multiplicities. Conversely, the self-intersection $C^2$ of a curve $C$ can be any integer number.\\
The construction of $CH^1 (X)$ is functorial: if $f : X \to Y$ is a regular morphism between smooth irreducible surface, then it induces a pull-back $$f^* : CH^1 (Y) \to CH^1 (X)$$ and a push-forward $$f_* : CH^1 (X) \to CH^1 (Y).$$ The pull-back $f^*$ of a curve $C \in CH^1 (Y)$ is defined passing through the pull-back of the corresponding line bundle ${\cal O}_Y (C)$, while the push-forward $f_*$ of an effective irreducible curve $C \subset X$ is given as $f_* (C) = m C'$ if $C' \subset Y$ is the image of $C$, and ${\rm deg\,} (f: C \to C') = m$, with the convention that $m = 0$ if $f(C)$ is a point. This constructions respect the relation of linear equivalence, and satisfy $$f^* (C_1) C_2 = C_1 f_* (C_2)$$ for all $C_1 \in CH^1 (Y), C_2 \in CH^1 (X)$. Moreover, if $f$ is generically finite of degree $d$, then $$f^* (C_1) f^* (C_2) = d C_1 C_2$$ for any $C_1, C_2 \in CH^1 (Y)$.

\subsection{Projective models via line bundles}
For any divisor $D$, the set of effective divisors rationally equivalent to $X$ will be denoted by $|D|$. If $V \subset H^0 ({\cal O}_X (D))$ is a linear subspace of sections, it induces a rational map $f_{|V|}: X \dasharrow \mathbb{P} (V^*) \simeq \mathbb{P}^{dim\, V - 1}$ in the following way. The projective dual $\mathbb{P}(V^*)$ parametrizes hyperplanes of $V$. For any $x \in X$ define $$f_{|V|} (x) := \{\sigma \in V {\rm\, s.\,t.\,} \sigma (x) = 0\} \subset V.$$ This map is undefined at the base locus of $V$, which is made of common zeroes for all sections in $V$. Thus $f_{|V|}$ is a morphism if and only if $V$ is basepoint-free. If $\sigma_0, \ldots, \sigma_n$ is a base for $V$, the map $f_{|V|} : X \dasharrow \mathbb{P}^n$ is given by $$f_{|V|} (x) = [\sigma_0 (x), \ldots, \sigma_n (x)],$$ where the evaluations $\sigma_i (x)$ are computed with respect to a common local trivialization of ${\cal O}_X (D)$ around $x$. Two trivializations differ by the multiplication for a nowhere zero factor around $x$, hence the projective point $f_{|V|} (x)$ is well defined, and it depends only on the choice of the basis $\sigma_0, \ldots, \sigma_n$. This determines a family of different maps $f_{|V|}$, but all of them differ by the post-composition with an element in $\mathbb{P}GL(n + 1)$, which acts transitively on the set of basis of $V$. This writing also makes clear that the indeterminacy locus for $f_{|V|}$ is exactly the set of common zeroes of $V$, given by $\sigma_0 (x) = \cdots = \sigma_n (x) = 0$.\\
We denote by $f_{|D|}$ the map induced choosing $V$ as the complete linear system $V = H^0 ({\cal O}_X (D))$.\\
\begin{definition}
Let $X$ be any variety.\\
A line bundle ${\cal O}_X (D)$ is very ample if $f_{|D|}$ is a regular embedding.\\
A line bundle ${\cal O}_X (D)$ is ample if there exists a natural $n \ge 1$ such that $n D$ is very ample.\\
A line bundle ${\cal O}_X (D)$ is big if there exists a natural $n \ge 1$ such that the rational morphism $f_{|n D|}$ is birational on its image.\\
A divisor $D$ is nef (numerically effective) if $D C \ge 0$ for any curve $C \subset X$.
A divisor $D$ is numerically equivalent to $0$ if it is orthogonal to all $\Pic (X)$. In this case we write $D \simeq_{num} 0$.\\
We denote by $\Num (X)$ the quotient $\Num (X) := \Pic (X) / \simeq_{num}$.
\end{definition}
Note that there exist a natural surjective homomorphism $\Pic (X) \to \Num (X)$, which is an isomorphism if and only if there are no nontrivial divisors $D \simeq_{num} 0$. Moreover, $\Num (X)$ is always torsion-free, since a divisor $D$ is orthogonal to all $\Pic (X)$ if and only if all its multiples $m D$ are.
We will use repeatedly the following facts:
\begin{theorem}[Nakai-Moishezon Criterion]
On a surface $X$, a line bundle ${\cal O}_X (D)$ is ample if and only if $D^2 > 0$ and $D C > 0$ for any curve $C \subset X$.
\end{theorem}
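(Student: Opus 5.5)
Since the Nakai--Moishezon Criterion is a statement about an arbitrary smooth projective surface $X$, I will prove the two implications separately.

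The necessity direction is the easy one. If $D$ is ample, some multiple $nD$ is very ample, so $f_{|nD|}$ embeds $X$ in $\mathbb{P}^N$ and $nD$ is the class of a hyperplane section $H$. For any curve $C \subset X$, the hyperplane meets the projective curve $f(C)$ in $\deg f(C) > 0$ points, so $nD\cdot C = \deg f(C) > 0$. In the same way, $(nD)^2 = H^2 = \deg f(X) > 0$, since two general hyperplane sections meet in finitely many points of the surface. Dividing by $n$ and $n^2$ gives $D C > 0$ and $D^2 > 0$.

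For sufficiency I follow the classical argument, for instance Hartshorne V.1.10. Assume $D^2 > 0$ and $DC > 0$ for every curve $C$.

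\emph{Step 1: some multiple of $D$ is effective.} Fix a very ample $H$. By Riemann--Roch,
\[
h^0(nD) + h^2(nD) \ge \tfrac12 n^2 D^2 - \tfrac12 n D K_X + \chi({\cal O}_X),
\]
which tends to infinity. By Serre duality $h^2(nD) = h^0(K_X - nD)$. This vanishes for large $n$: if $K_X - nD$ were effective, intersecting with $H$ would give $H K_X \ge n\,HD$, and $HD > 0$ because $H$ is represented by an effective curve. Hence $nD$ is effective for $n \gg 0$, and I replace $D$ by this multiple; ampleness is unaffected.

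\emph{Step 2: restrict to the support.} Write $D = \sum a_i C_i$ with $a_i > 0$ and let $Z = \sum a_i C_i$ be the corresponding (possibly nonreduced) curve. For each irreducible $C_i$, $D|_{C_i}$ has positive degree and is therefore ample on $C_i$. A standard dévissage then shows $D|_Z$ is ample on the scheme $Z$: filter ${\cal O}_Z$ by ideal sheaves with successive quotients supported on the reduced components, and use that ampleness on a one-dimensional projective scheme can be tested on the reduced irreducible components. Hence $H^1({\cal O}_Z(nD)) = 0$ for $n \gg 0$, and ${\cal O}_Z(nD)$ is globally generated.

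\emph{Step 3: lift sections and conclude.} Use the sequence
\[
0 \to {\cal O}_X((n-1)D) \to {\cal O}_X(nD) \to {\cal O}_Z(nD) \to 0.
\]
Its long exact sequence shows that $h^1({\cal O}_X(nD))$ is non-increasing for large $n$, hence eventually constant. Once it is constant, $H^0(X, nD) \to H^0(Z, nD)$ is surjective. Since ${\cal O}_Z(nD)$ is globally generated, sections of $nD$ have no base points on $Z$. Off $Z$ there are no base points either, because $Z \in |D|$ gives a section of $nD$ nonvanishing outside $Z$. So $|nD|$ is base point free and defines a morphism $f : X \to \mathbb{P}^N$. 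This morphism contracts no curve, since $nD\cdot C > 0$ for every $C$, so $f$ is finite. The pullback of an ample line bundle under a finite morphism is ample, so $nD$, and hence $D$, is ample.

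The main obstacle is Step 2: proving ampleness of $D|_Z$ on a nonreduced curve and getting the cohomology vanishing there. I would handle it either through the filtration argument or by citing the fact that ampleness on a projective scheme can be checked on its reduced irreducible components. The rest is bookkeeping with Riemann--Roch, Serre duality, and the long exact sequence.
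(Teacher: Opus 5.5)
Your proof is correct. It is the classical argument of Hartshorne, Theorem V.1.10:
\begin{itemize}
\item Riemann--Roch and Serre duality make a multiple of $D$ effective;
\item ${\cal O}_Z(D)$ is ample on the support curve $Z$;
\item the stabilization of $h^1({\cal O}_X(nD))$ lets you lift sections from $Z$, so $|nD|$ is base point free;
\item the resulting morphism is finite, and pulling back ${\cal O}(1)$ under it gives ampleness.
\end{itemize}

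The paper gives no proof of this criterion. It only quotes it in Section 1 as a standard tool, so there is no competing argument to compare against. The facts you use without proof are exactly the standard ones Hartshorne cites at those points:
\begin{itemize}
\item positive degree implies ample on a possibly singular integral curve, for instance via the normalization and descent of ampleness along finite surjective maps;
\item ampleness on $Z$ can be tested on $Z_{\rm red}$ and its irreducible components;
\item proper plus quasi-finite implies finite.
\end{itemize}
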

\begin{theorem}[Hodge Index Theorem]\cite{TieLuo1990}\label{HIT}
If ${\cal O}_X (D)$ is a big line bundle on a smooth surface $X$, and $C \subset X$ is any divisor such that $D C = 0$, then $C^2 \le 0$ and equality holds if and only if $C \simeq_{num} 0$.
\end{theorem}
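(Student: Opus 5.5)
The plan is to reduce the statement to the classical form of the Hodge Index Theorem, where the form is tested against a class of positive self-intersection. The whole difficulty is producing such a class from bigness alone.

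\emph{Step 1 (the case $D^2>0$).} Suppose first that $D^2>0$ and $DC=0$. Suppose also, for contradiction, that $C^2>0$. Fix an ample $A$ and set $C'=C-\frac{AC}{AD}D$ (using $AD>0$, which holds since $D$ is big). Then $AC'=0$ and $C'^2=C^2+\left(\frac{AC}{AD}\right)^2D^2>0$. Riemann--Roch applied to $mC'$ or $-mC'$ (after clearing denominators) gives $h^0({\cal O}_X(\pm mC'))\ge \frac{m^2}{2}C'^2+O(m)$, because the $h^2$ term is killed by Serre duality for $m\gg 0$. So $mC'$ or $-mC'$ is effective and nonzero, which contradicts $AC'=0$ with $A$ ample. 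Hence $C^2\le 0$.

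For the equality case, assume $C^2=0$ but $C\not\simeq_{num}0$. Choose $B$ with $BC\neq 0$ and replace $B$ by $B-\frac{BD}{D^2}D$, so that $BD=0$. The classes $C+tB$ are still orthogonal to $D$, and $(C+tB)^2=2t\,BC+t^2B^2$ is positive for small $t$ of the right sign. This contradicts the inequality just proved.

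\emph{Step 2 (from big to $D^2>0$).} This step must turn the hypothesis ``$f_{|nD|}$ birational'' into a positive class. I would write $nD=M+F$ with $F$ the fixed part and $M$ the mobile part. After resolving base points by $\sigma:\tilde X\to X$, the linear system $|\sigma^*M-\sum e_iE_i|$ is base-point free and birational onto a surface. Its self-intersection is therefore the degree of the image, which is positive, so $M^2>0$.

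\emph{Main obstacle.} The hypothesis is $DC=0$, not $MC=0$, and this is where I expect the argument to break. In fact it cannot be completed as stated. On the blow-up of $\mathbb{P}^2$ at one point, with $H$ the pullback of a line and $E$ the exceptional curve, take $D=H+2E$. This $D$ is big, since $|D|$ contains $|H|$ and is birational. But $D^2=-3$, while $C=2H+E$ satisfies $DC=2-2=0$ and $C^2=3>0$.

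So the statement needs the extra hypothesis $D^2>0$. For instance, $D$ nef and big suffices, since then $D^2>0$. With that hypothesis Step 1 alone proves it, and Step 2 becomes unnecessary. I would therefore prove the theorem in this corrected form and check that every later application of Theorem~\ref{HIT} in the text uses a class $D$ with $D^2>0$, typically a polarization $H$.
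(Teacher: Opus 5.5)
The paper gives no proof of this statement; it only cites \cite{TieLuo1990}. You are right that with bigness as the sole hypothesis the statement is false, and your example works even with the paper's own definition of bigness. On $Bl_p\mathbb{P}^2$ one has $(H+2E)E=-2$ and $(H+E)E=-1$, so $|H+2E|=|H|+2E$. Hence $f_{|D|}$ is the blow-down, which is birational, while $DC=2-2=0$ and $C^2=3$. Even $D=H+E$ with $C=D$ already breaks the equality clause.

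The right hypothesis is $D^2>0$, e.g.\ $D$ nef and big. The paper's applications are compatible with it. In Remark~\ref{contractedcurves} the class $E+\overline E$ is nef with square $2k+2>0$. In Remark~\ref{noc} one should take $H$ ample on $Y$ rather than merely big, so that $p^*H$ is nef and big. Dropping Step~2 is the correct call.

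One step of your Step~1 needs repair. After reducing to a class $C'$ with $AC'=0$ and $C'^2>0$, you say the $h^2$ term in Riemann--Roch is ``killed by Serre duality'' for $m\gg0$. Serre duality only gives $h^2({\cal O}_X(mC'))=h^0({\cal O}_X(K_X-mC'))$. The usual reason this vanishes is that its degree against an ample class tends to $-\infty$. Here, however, $A(K_X-mC')=AK_X$ does not depend on $m$, so nothing forces vanishing.

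The standard fix goes as follows.
\begin{itemize}
\item Replace $C'$ by an integral multiple. Then $A':=C'+nA$ is ample for $n\gg0$, and $A'C'=C'^2>0$.
\item Hence $A'(K_X-mC')<0$ for $m\gg0$, which gives $h^0({\cal O}_X(K_X-mC'))=0$.
\item Riemann--Roch then gives $h^0({\cal O}_X(mC'))\ge\frac{m^2}{2}C'^2+O(m)$.
\item So $mC'$ is a nonzero effective divisor with $A\cdot mC'=0$, contradicting the ampleness of $A$.
\end{itemize}
Alternatively, observe that $h^0({\cal O}_X(K_X-mC'))$ is uniformly bounded, because its $A$-degree is fixed.

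With this patch, Step~1 proves the corrected statement, together with your perturbation argument for the equality case. In that argument $BC$ is indeed unchanged by subtracting a multiple of $D$, since $DC=0$.
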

\begin{theorem}[Riemann-Roch Theorem on surfaces]
If $C$ is any divisor on a smooth surface $X$, then $$h^0 ({\cal O}_X (D)) - h^1({\cal O}_X (D)) + h^2 ({\cal O}_X (D)) = h^0 ({\cal O}_X) - h^1({\cal O}_X) + h^2 ({\cal O}_X) + \frac{C(C - K_X)}{2}$$
\end{theorem}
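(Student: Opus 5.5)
We read the statement with $D$ in place of the misprinted $C$: for every divisor $D$ on the smooth projective surface $X$,
$$\chi({\cal O}_X(D)) = \chi({\cal O}_X) + \frac{D(D-K_X)}{2},$$
where $\chi(F) := h^0(F) - h^1(F) + h^2(F)$. The plan is the classical reduction to Riemann--Roch on curves. We measure the defect
$$q(D) := \chi({\cal O}_X(D)) - \chi({\cal O}_X) - \tfrac12 D(D-K_X)$$
and show that it vanishes identically. Trivially $q(0) = 0$, so it suffices to show that $q$ does not change when we subtract a smooth irreducible curve, and that every divisor is a difference of such curves.

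\textbf{Step 1: the defect is unchanged by a smooth curve.} Let $C \subset X$ be a smooth irreducible curve of genus $g$ and $D$ any divisor. Tensor the ideal sequence $0 \to {\cal O}_X(-C) \to {\cal O}_X \to {\cal O}_C \to 0$ by ${\cal O}_X(D)$ to get
$$0 \to {\cal O}_X(D-C) \to {\cal O}_X(D) \to {\cal O}_C(D|_C) \to 0 .$$
Additivity of $\chi$ along the long exact cohomology sequence gives $\chi({\cal O}_X(D)) - \chi({\cal O}_X(D-C)) = \chi({\cal O}_C(D|_C))$.
\begin{itemize}
\item Riemann--Roch on the curve $C$ gives $\chi({\cal O}_C(D|_C)) = DC + 1 - g$, since $\deg D|_C = DC$.
\item The adjunction formula $K_C = (K_X + C)|_C$ gives $2g - 2 = C(C + K_X)$.
\end{itemize}
Substituting, one checks directly that
$$\tfrac12 D(D-K_X) - \tfrac12 (D-C)(D-C-K_X) = DC - \tfrac12 C(C+K_X) = DC + 1 - g .$$
Hence $q(D) = q(D - C)$ for every divisor $D$ and every smooth irreducible curve $C$.

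\textbf{Step 2: every divisor is a difference of smooth curves.} Fix a very ample $H$ on $X$, which exists since $X \subset \mathbb{P}^N$. By Serre's theorem, for $m \gg 0$ both $D + mH$ and $mH$ are very ample. Bertini's theorem then provides smooth irreducible curves $A \in |D + mH|$ and $B \in |mH|$, so $D \sim A - B$. Since $\chi$ and the intersection form depend only on the linear equivalence class, we may compute with $A - B$.

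\textbf{Step 3: conclusion.} Applying Step 1 with $C = A$ to the divisor $A - B$ gives $q(A - B) = q(-B)$. Applying Step 1 with $C = B$ to the divisor $0$ gives $q(0) = q(-B)$. Therefore $q(D) = q(0) = 0$, which is the formula.

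\textbf{Main obstacle.} Step 1 is routine bookkeeping. The only real input is Step 2, namely the existence of smooth irreducible members of the very ample systems $|D + mH|$ and $|mH|$. This rests on Serre vanishing and generation by global sections, together with Bertini's theorem. We take these as standard facts from \cite{3264AndAllThat2016}, as the paper does throughout.
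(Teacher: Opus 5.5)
The paper gives no proof of this theorem. It only recalls it among the preliminaries of Section 1, with \cite{3264AndAllThat2016} as the blanket reference, so there is no argument of the paper to compare yours against.

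Your proof is correct. It is the classical argument, essentially the one in Hartshorne, \emph{Algebraic Geometry}, Chapter V, Theorem 1.6, and you are right to read the misprinted $C$ in the statement as $D$. The two ingredients work as follows:
\begin{itemize}
\item The defect $q$ is invariant under $D \mapsto D - C$ for a smooth irreducible curve $C$. This follows from the twisted ideal sequence, Riemann--Roch on $C$, and adjunction, and the identity $\tfrac12 D(D-K_X) - \tfrac12 (D-C)(D-C-K_X) = DC + 1 - g$ is computed correctly.
\item Writing $D \sim A - B$ with $A, B$ smooth irreducible, and applying invariance twice, gives $q(D) = q(-B) = q(0) = 0$.
\end{itemize}

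Three standard inputs are used silently and are worth stating:
\begin{itemize}
\item You use $\deg \mathcal{O}_X(D)|_C = D\cdot C$ for an arbitrary divisor $D$ and a smooth curve $C$. This is how the intersection pairing is characterized, and it fits the paper's description of the pairing via intersection counts.
\item Bertini must give members of $|D+mH|$ and $|mH|$ that are irreducible, not only smooth. This holds because $\dim X = 2$, so ample effective divisors are connected, and smooth plus connected implies irreducible.
\item Projectivity of $X$ is needed to produce the very ample $H$. This is part of the paper's standing convention that surfaces are closed subvarieties of some $\mathbb{P}^N$.
\end{itemize}

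An equally standard alternative defines the pairing as $D\cdot D' = \chi(\mathcal{O}_X) - \chi(\mathcal{O}_X(-D)) - \chi(\mathcal{O}_X(-D')) + \chi(\mathcal{O}_X(-D-D'))$. With that definition the theorem follows in one line from Serre duality. Your route instead takes the geometric pairing as given and pays with Bertini plus Riemann--Roch on curves, which matches how the paper itself introduces intersection numbers.
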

\begin{theorem}[Reider's Theorem]\cite{Reider1988}
Let $X$ be a smooth surface, and let $H \in \Pic (X)$ be a nef divisor.\\
{\rm Part\,} I): If $H^2 > 4$, and the adjoint linear system $H + K_X$ has a base point $p \in X$, then there exists an effective divisor $D$ containing $p$, such that one of the following statements is true:\\
i) $H D = 0$ and $D^2 = -1$.\\
ii) $H D = 1$ and $D^2 = 0$.\\
{\rm Part\,} II) If $H^2 > 8$ and the adjoint linear system $H + K_X$ does not separate two points $p, q \in X$, then there exists an effective divisor $D$ containing both $p, q$, such that one of the following statements is true:\\
i) $H D = 0$ and $D^2 \in \{-1, -2\}$.\\
ii) $H D = 1$ and $D^2 \in \{0, -1\}$.\\
iii) $H D = 2$ and $D^2 = 0$.\\
iv) $H = 3 D$ and $D^2 = 1$.
\end{theorem}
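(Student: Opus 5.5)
The statement is quoted from \cite{Reider1988}, and the paper uses it as a black box. The proof I would reconstruct is Reider's original one, via the Serre correspondence and Bogomolov's instability theorem.

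\emph{Step 1 (construct a rank 2 bundle).} Let $Z$ be the zero-dimensional subscheme in question: $Z = \{p\}$ in Part I, and $Z = \{p,q\}$ (or a length-$2$ scheme at $p$ if $p=q$) in Part II. Suppose $H+K_X$ fails to separate $Z$. Then, using $H^1({\cal O}_X(K_X+H))=0$ (Kawamata--Viehweg, valid since $H$ is nef and big), the restriction map $H^0({\cal O}_X(K_X+H))\to H^0({\cal O}_Z)$ is not surjective. Hence
$$H^1({\cal I}_Z(K_X+H)) \ne 0, \qquad \text{so by Serre duality} \qquad {\rm Ext}^1({\cal I}_Z(H),{\cal O}_X)\ne 0 .$$
Because $Z$ is minimal for this failure, the Cayley--Bacharach property holds. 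I would then invoke the Serre construction to obtain a locally free extension
$$0\to{\cal O}_X\to{\cal F}\to{\cal I}_Z(H)\to 0, \qquad c_1({\cal F})=H,\quad c_2({\cal F})=\ell(Z)\in\{1,2\}.$$

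\emph{Step 2 (instability).} The hypotheses give
$$c_1^2-4c_2 \;=\; H^2-4\ell(Z) \;>\; 0$$
in both parts: $H^2>4$ with $\ell(Z)=1$, and $H^2>8$ with $\ell(Z)=2$. By Bogomolov's theorem ${\cal F}$ is therefore unstable. There is a saturated sub-line bundle ${\cal O}_X(M)\subset{\cal F}$ with quotient ${\cal I}_W(H-M)$, where $W$ is zero-dimensional, such that $(2M-H)^2>0$ and $(2M-H)$ lies in the positive cone, i.e.\ $(2M-H)\cdot A>0$ for every ample $A$. Set $D:=H-M$. Since ${\cal O}_X(M)$ cannot factor through the trivial subsheaf ${\cal O}_X$, the composite ${\cal O}_X(M)\to{\cal I}_Z(H)$ is nonzero. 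This gives a nonzero section of ${\cal I}_Z(D)$, so $D$ is an effective divisor containing $Z$. The Chern class computation gives
$$M\cdot D+\ell(W)=c_2=\ell(Z),$$
and hence $M\cdot D\le \ell(Z)$.

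\emph{Step 3 (numerics, the main obstacle).} Write $M-D$, which lies in the positive cone. Because $H$ is nef, $H\cdot(M-D)\ge 0$, which gives $H\cdot D\le M\cdot D+D^2$. Hodge index (Theorem \ref{HIT}) applied to $M-D$ and $H$ then yields
$$(H\cdot D)^2\ge H^2D^2 \qquad\text{and}\qquad (M-D)^2>0 .$$
Combining these with $H^2>4\ell(Z)$ and $M\cdot D\le\ell(Z)$, I expect to force $0\le H\cdot D\le \ell(Z)$ and $D^2\le H\cdot D-\ldots$, and then to bound $D^2$ from below.

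The delicate point is the case analysis. If $H\cdot D=0$, Hodge index forces $D^2<0$, and $M\cdot D=-D^2\le\ell(Z)$ leaves $D^2=-1$ or $-2$. If $H\cdot D=1$, the bounds give $D^2\in\{0,-1\}$; if $H\cdot D=2$, they give $D^2=0$. In Part I the constraint $\ell(Z)=1$ kills all options except $(0,-1)$ and $(1,0)$. The exceptional case iv), $H=3D$ with $D^2=1$, arises exactly when equality holds in Hodge index, so that $H$ and $D$ are numerically proportional. I anticipate spending most effort in this last step, carefully excluding the borderline equalities using the strict inequality $(2M-H)^2>0$.
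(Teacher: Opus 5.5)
The paper gives no proof of its own (it only cites \cite{Reider1988}). Your Steps 1--2 follow Reider's argument correctly in outline. Kawamata--Viehweg is not needed there: non-surjectivity of $H^0({\cal O}_X(K_X+H))\to H^0({\cal O}_Z)$ already gives $H^1({\cal I}_Z(K_X+H))\neq 0$.

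\textbf{The main gap is Step 3, which is where the theorem is actually proved.}

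The inequality you draw from nefness, $H\cdot D\le M\cdot D+D^2$, is just the identity $H\cdot D=(M+D)\cdot D$. So nefness has not been used at all. Your expectation $0\le H\cdot D\le \ell(Z)$ is also false in Part II: the configuration $H\cdot D=3$, $D^2=1$, $H^2=9$ is precisely case iv). Nothing in your case analysis rules out $H\cdot D\ge 3$ (resp. $H\cdot D\ge 2$ in Part I).

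The missing ingredient is the strict inequality $H\cdot(M-D)>0$, i.e. $2\,H\cdot D<H^2$. It follows from Hodge index, because $M-D=2M-H$ has positive square and lies in the positive cone, while $H$ is nef and big. Put $x=H\cdot D$ and $\ell=\ell(Z)$. Then:
\begin{itemize}
\item $D^2=x-M\cdot D\ge x-\ell$;
\item $H^2D^2\le x^2$;
\item $H^2>2x$.
\end{itemize}
If $x>\ell$, these give $2x(x-\ell)<H^2(x-\ell)\le x^2$, so $x<2\ell$. Hence $x\le 1$ in Part I and $x\le 3$ in Part II. For $x=3$ you get $D^2\ge 1$ and $9\le H^2\le 9/D^2$, which forces $H^2=9$, $D^2=1$ and equality in Hodge index. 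So $H\equiv 3D$, only numerically, which is how ``$H=3D$'' has to be read. The cases $x\le\ell$ then go as you describe.

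\textbf{There is also a problem in Step 1.}

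You fix $Z=\{p,q\}$ and then invoke minimality of $Z$ to get Cayley--Bacharach. But suppose $p$ is a base point of $|K_X+H|$ and $q$ is not. Then Cayley--Bacharach fails for $\{p,q\}$, the minimal non-separated scheme is $\{p\}$, and your construction only produces a divisor through $p$.

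This is not merely a defect of your proof. If non-separation means that $H^0({\cal O}_X(K_X+H))\to{\cal O}_{\{p,q\}}$ is not onto, the statement itself fails in this situation. Take $X=Bl_{x_0}\mathbb{P}^2$ with $H=4L$. Then $K_X+H=L+E$ and $|L+E|=E+|L|$, so every $p\in E$ is a base point. For $q\notin E$, every effective $D\ni q$ has $H\cdot D\ge 4$, and $H\not\equiv 3D$.

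So Part II should be read, as usual, for $p,q$ not base points of $|K_X+H|$. Under that assumption Cayley--Bacharach for $\{p,q\}$ does hold, and your Step 1 goes through.
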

\begin{proposition}
If $D \in \Pic (X)$ is an effective divisor, and $V \subset H^0 ({\cal O}_X (D))$ is a subsystem with no base locus, then the map $f := f_{|V|} : X \to \mathbb{P}^{{\rm dim\,} V - 1}$ is a regular morphism. In this case we have $$D^2 = ({\rm deg\,} f|_X : X \to f(X))({\rm deg\,} f(X))$$ and for any irreducible, reduced, effective divisor $R$, we will use $$D R = ({\rm deg\,} f|_R : R \to f(R))({\rm deg\,} f(R)).$$ These two formulas remain true even when ${\rm dim\,} f(R) < R$, adopting the convention that ${\rm deg\,} f(R) = 0$ in this case.
\end{proposition}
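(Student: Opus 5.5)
The statement has two parts. The first says that $f := f_{|V|}$ is a morphism. The second gives two degree formulas: $D^2 = (\deg f|_X)(\deg f(X))$, and $DR = (\deg f|_R)(\deg f(R))$ for an irreducible reduced curve $R$.

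\textbf{Regularity.} Choose a basis $\sigma_0, \ldots, \sigma_n$ of $V$ and write $f = [\sigma_0, \ldots, \sigma_n]$ in local trivializations, as described above. The map is defined exactly off the common zero locus of the $\sigma_i$. Since $V$ has no base locus, this locus is empty, so $f$ is a regular morphism.

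\textbf{The formula for $DR$.} The key observation is that ${\cal O}_X(D) \simeq f^*{\cal O}_{\mathbb{P}^n}(1)$, because hyperplane sections pull back to the divisors of sections in $V$. Restrict to $R$. Then $DR = \deg(f^*{\cal O}(1)|_R)$. By the projection formula for the finite map $f|_R : R \to f(R)$, this equals $\deg(f|_R)\cdot \deg_{{\cal O}(1)} f(R)$, and the latter factor is $\deg f(R)$.

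Concretely, since $V$ is base-point free, a general hyperplane $L$ meets $f(R)$ transversally in $\deg f(R)$ smooth points away from the branch points of $f|_R$. Each of these points has exactly $\deg f|_R$ reduced preimages on $R$. The corresponding divisor $f^*L \in |V| \subset |D|$ therefore meets $R$ in $(\deg f|_R)(\deg f(R))$ points, all transversally. If $f(R)$ is a point, a general $L$ avoids it, so $f^*L \cap R = \emptyset$ and $DR = 0$, which matches the stated convention.

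\textbf{The formula for $D^2$.} Use the functoriality formula recalled in the previous subsection, $f^*(C_1)f^*(C_2) = d\,C_1C_2$, applied to $f : X \to \overline{f(X)}$ with $d = \deg f|_X$. Taking two general hyperplane sections gives $D^2 = f^*H \cdot f^*H' = d\,(H\cdot H')_{f(X)} = d\deg f(X)$.

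Alternatively, and avoiding any singularities of $f(X)$: two general members of $|V|$ are pullbacks of two general hyperplanes $H, H'$. Their intersection $H\cap H'\cap f(X)$ consists of $\deg f(X)$ general points of $f(X)$, lying over which $f$ is étale of degree $d$. Each such point has $d$ reduced preimages, and the intersection there is transversal.

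If $\dim f(X) \le 1$, two general hyperplanes have disjoint pullbacks, or more precisely $f^*H \cdot f^*H'$ is supported on fibres with zero total class, so $D^2 = 0$, consistent with the convention $\deg f(X) = 0$.

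The main obstacle is justifying transversality and the count of preimages at general points. This is where generic smoothness (Bertini's theorem in characteristic $0$) is used: there is a dense open set of the image over which $f$ is étale, and general linear sections meet the image only inside that open set.
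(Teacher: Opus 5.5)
Your proof is correct and follows the route the paper relies on. The paper states this proposition without proof, having just recalled two ingredients: that $f_{|V|}$ is regular exactly off the common zero locus of $V$, and the projection formulas $f^*(C_1)C_2 = C_1 f_*(C_2)$ and $f^*(C_1)f^*(C_2) = d\,C_1C_2$. These are the same ingredients you use, and your transversality count correctly avoids any reliance on smoothness of $f(X)$.

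One small point: the hedge about ``fibres with zero total class'' when $\dim f(X)\le 1$ is unnecessary. A general codimension-two linear space misses the curve $f(X)$, so $f^*H$ and $f^*H'$ are disjoint, and that alone gives $D^2=0$.
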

\begin{definition}\label{nodality}
For any $k > 0$, a $(- k)$-curve on $X$ will be an irreducible smooth curve $C \subset X$ such that $C^2 = - k$.\\
We say that a smooth surface $X$ is nodal if it contains a $(-2)$-curve.
\end{definition}
The following fact is due to Castelnuovo, and we will use it repeatedly:
\begin{theorem}
A $(-1)$-curve on a smooth surface $X$ can be contracted. In other words, there exists a smooth surface $X'$, a regular birational morphism $\pi: X \to X'$, and a point $x' \in X'$ such that $\pi (C) = x'$ and $\pi : X - C \to X' - \{x'\}$ is an isomorphism.
\end{theorem}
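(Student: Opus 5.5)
The plan is to prove Castelnuovo's contractibility criterion by constructing the contraction as the morphism defined by a suitable linear system, and then checking that the image is smooth at the image point of $C$. The first step is to identify $C$ through adjunction. Since $C^2 = -1$ and $C$ is smooth and irreducible, adjunction gives $2g(C) - 2 = C^2 + K_X C$. The hypothesis that $C$ is a $(-1)$-curve in Castelnuovo's sense means $C \simeq \mathbb{P}^1$, so $K_X C = -1$. We therefore work with a rational curve satisfying $C^2 = K_X C = -1$.

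The main construction goes as follows.
\begin{itemize}
\item Choose a very ample divisor $H$ on $X$, replaced if necessary by a multiple, so that $H^1({\cal O}_X(H)) = 0$. Set $k := HC > 0$.
\item Consider $L := H + kC$. Then $LC = k - k = 0$, so $L$ should contract $C$.
\item Show by induction on $i = 1, \ldots, k$ that $H^1({\cal O}_X(H + iC)) = 0$. Use the restriction sequence
$$0 \to {\cal O}_X(H + (i-1)C) \to {\cal O}_X(H + iC) \to {\cal O}_C(H + iC) \to 0.$$
Here ${\cal O}_C(H + iC) \simeq {\cal O}_{\mathbb{P}^1}(k - i)$ with $k - i \ge 0$, so its $H^1$ vanishes and every section on $C$ lifts.
\item Deduce that $|L|$ is base point free. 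Away from $C$ this holds because $|H| + kC \subset |L|$. Along $C$ it holds because $H^0({\cal O}_X(L)) \to H^0({\cal O}_C) = \complex$ is surjective.
\item Let $\pi := f_{|L|} : X \to X'$, with $X'$ normal; if needed, replace $X'$ by its normalization, or first take $H$ of large degree. Then $\pi$ contracts $C$ to a point $x'$, because $LC = 0$. It is an isomorphism off $C$, because sections of $|H| + kC$ already separate points and tangents there.
\end{itemize}

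The main obstacle is the smoothness of $X'$ at $x'$. The plan here is to show that the maximal ideal $\mathfrak{m}_{x'}$ is generated by two elements. Concretely, take two sections of $L - C = H + (k-1)C$ whose restrictions to $C$ give a basis of $H^0({\cal O}_{\mathbb{P}^1}(1))$. Their surjectivity onto $C$ follows from the $i = k - 1$ step of the vanishing above. Multiplying these sections by the equation of $C$, which is a section of ${\cal O}_X(C)$, gives two elements of $H^0({\cal O}_X(L))$ vanishing on $C$. Their ratios with a section not vanishing on $C$ give functions $u, v$ near $x'$.

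We would then verify that $u, v$ generate $\mathfrak{m}_{x'}$, in two steps.
\begin{itemize}
\item By Zariski's main theorem, or the theorem on formal functions, $\widehat{{\cal O}}_{X',x'} = \varprojlim H^0({\cal O}_{nC})$.
\item Using ${\cal I}_C^n / {\cal I}_C^{n+1} \simeq {\cal O}_{\mathbb{P}^1}(n)$, together with the vanishing $H^1({\cal O}_{\mathbb{P}^1}(n)) = 0$, one shows inductively that the completed local ring is $\complex[[u, v]]$.
\end{itemize}
Hence $X'$ is smooth at $x'$, and $\pi$ is the required contraction. The graded computation, showing that the monomials in $u, v$ of degree $n$ span $H^0({\cal O}_{\mathbb{P}^1}(n))$, is the delicate part. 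Everything else is standard cohomology vanishing.
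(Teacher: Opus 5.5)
The paper gives no proof of this statement. It quotes it as Castelnuovo's classical contractibility criterion and uses it as a black box. Your proposal is the standard textbook proof, essentially Hartshorne, Theorem V.5.7. You contract $C$ by $|H+kC|$, pass to the normalization of the image, and identify the completion of the local ring at the image point with $\complex[[u,v]]$. That identification comes from the theorem on formal functions, ${\cal I}_C^n/{\cal I}_C^{n+1}\simeq{\cal O}_{\mathbb{P}^1}(n)$, and the isomorphism $\mathrm{Sym}^n H^0({\cal O}_{\mathbb{P}^1}(1))\to H^0({\cal O}_{\mathbb{P}^1}(n))$, which gives $H^0({\cal O}_{nC})\simeq\complex[u,v]/(u,v)^n$ by induction. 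The argument is correct.

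Compared with the paper's bare citation, your version makes visible what is really assumed. You are right to put $C\simeq\mathbb{P}^1$ into the hypothesis, and this is not cosmetic. Definition~\ref{nodality} only asks for a smooth irreducible curve with $C^2=-k$, and under that reading the statement is false. For example, blow up ten points on a smooth plane cubic: its strict transform is a smooth genus-one curve with self-intersection $-1$. It cannot be contracted to a smooth point, since a birational morphism of smooth surfaces contracting one irreducible curve is a single point blow-up, whose exceptional curve is rational. On the other hand, $K_XC=-1$ is never used later; only $C\simeq\mathbb{P}^1$ and $C^2=-1$ enter.

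A few steps should be tightened.
\begin{enumerate}
\item Lifting a basis of $H^0({\cal O}_{\mathbb{P}^1}(1))$ to $H^0({\cal O}_X(H+(k-1)C))$ needs $H^1({\cal O}_X(H+(k-2)C))=0$. Your induction gives this only when $k\ge 2$. Either take a larger multiple of $H$, which Serre vanishing allows, or, if $k=1$, use that $|H|$ embeds $C$ as a line, so $H^0({\cal O}_X(H))\to H^0({\cal O}_C(H))$ is onto.
\item The normalization is not optional, and the alternative ``take $H$ of large degree'' is not justified as stated. The equality of the completion of ${\cal O}_{X',x'}$ with $\varprojlim H^0({\cal O}_{nC})$ rests on $\pi_*{\cal O}_X={\cal O}_{X'}$. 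That is Zariski's main theorem for the proper birational $\pi$ onto a \emph{normal} $X'$, so you need both results, not ``Zariski's main theorem or formal functions''.
\item The theorem on formal functions produces the limit over the thickenings cut out by $m_{x'}^n{\cal O}_X$. You may replace these by ${\cal I}_C^n$ because ${\cal I}_C$ is the radical of $m_{x'}{\cal O}_X$, so the two systems are cofinal.
\item ``Isomorphism off $C$'' also requires $\pi^{-1}(x')=C$. This is supplied by the divisors $H'+kC$ with $H'$ avoiding a given point.
\item Finally, regularity of the completion gives regularity of ${\cal O}_{X',x'}$.
\end{enumerate}
With these additions the proof is complete.
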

\subsection{Rational surfaces}
A key role will be played by rational surfaces. 
\begin{definition}
A surface $X$ is rational if there exists an open subset $U \subset X$ and an open subset $V \subset \mathbb{P}^2$ such that $U \simeq V$.
\end{definition}
This geometric property has a purely cohomological description, thanks to the following Criterion:
\begin{theorem}[Castelnuovo Criterion]
A surface $X$ is rational if and only if $H^1 ({\cal O}_X) = H^0 ({\cal O}_X (K_X)) = H^0 ({\cal O}_X (2 K_X)) = 0$. 
\end{theorem}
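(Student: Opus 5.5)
Castelnuovo's Criterion is a classical theorem, so the plan follows the standard route through the theory of minimal models. It characterises rationality by $q = h^1({\cal O}_X) = 0$ and $P_2 = h^0({\cal O}_X(2K_X)) = 0$. The condition $p_g = h^0({\cal O}_X(K_X)) = 0$ is implied by $P_2 = 0$, since $K_X$ effective forces $2K_X$ effective.

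The forward direction is easy. The quantities $h^1({\cal O}_X)$, $h^0({\cal O}_X(K_X))$ and $h^0({\cal O}_X(2K_X))$ are birational invariants of smooth projective surfaces. For plurigenera, a pluricanonical form extends across codimension-two indeterminacy loci. For $q$, one uses $h^0(\Omega^1_X)$ and Hodge symmetry. All three vanish on $\mathbb{P}^2$, so they vanish on any rational $X$.

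For the converse, assume $q = P_2 = 0$. Contract $(-1)$-curves repeatedly using Castelnuovo's contractibility theorem above. This terminates because each contraction lowers the Picard rank by one, and the invariants are unchanged. We may therefore assume $X$ is minimal. If $X$ is $\mathbb{P}^2$ or a ruled surface over a curve $B$, then $q = g(B) = 0$, so $X = \mathbb{F}_n$ is rational.

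The key step is to produce a rational curve $C$ with $C^2 \ge 0$, or more usefully a smooth rational curve with $C^2 \ge 0$, since then $|C|$ yields a ruling or $\mathbb{P}^2$. The following steps, in order, aim to show $X$ is not minimal of non-negative Kodaira type.

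\emph{Step 1.} Riemann-Roch with $\chi({\cal O}_X) = 1$ gives $h^0(-K_X) \ge 1 + K_X^2$. By Serre duality, $h^2(-K_X) = h^0(2K_X) = 0$.

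\emph{Step 2.} Suppose $K_X^2 > 0$. Then $-K_X$ is effective and nonzero, hence $X$ is not of non-negative Kodaira dimension. Adjunction applied to components of an anticanonical divisor then gives a curve $C$ with $K_X C < 0$. From this one extracts a rational curve of non-negative self-intersection.

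\emph{Step 3.} If $K_X^2 \le 0$, apply Riemann-Roch to $D = K_X + H$ and its multiples, for $H$ a hyperplane section. Using $p_g = P_2 = 0$, find the largest $n$ with $|nK_X + H|$ nonempty. A general member $C$ of that system then satisfies $K_X C < 0$, and an adjunction computation forces $C$ to have rational components with $C^2 \ge 0$, unless $K_X^2 < 0$. If $K_X^2 < 0$ on a minimal surface with $q = 0$, Noether's formula and a further argument give a contradiction.

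The hardest step is the case $K_X^2 < 0$ in Step 3, and the extraction of a smooth rational curve with $C^2 \ge 0$. I would handle it by following the classical argument of Beauville or Barth--Hulek--Peters--Van de Ven: on a minimal non-ruled surface $K_X$ is nef, so $K_X^2 \ge 0$. Nefness of $K_X$ itself comes from the cone theorem for surfaces, where a $K$-negative extremal curve is either a $(-1)$-curve or gives a ruling. Given that this theorem is classical and is stated in the text without proof, I would cite Beauville's book for these details rather than reproduce them.
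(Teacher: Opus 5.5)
The paper gives no proof of this criterion; it quotes it as classical background. Your sketch therefore has to stand on its own, and as written the converse has a genuine gap at its crux.

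\textbf{Where the gap is.} Your Step 2 sentence ``from this one extracts a rational curve of non-negative self-intersection'' is the whole content of the theorem, and nothing you say supports it. A $K_X$-negative curve need not be rational: a smooth cubic is an irreducible member of $|-K_{\mathbb{P}^2}|$ of genus one. On a minimal surface, $K_X\cdot C<0$ only yields $C^2\ge 0$.

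Step 3 does not rescue this, because the criterion you use there, $K_X\cdot D<0$ for $D\in|H+nK_X|$, fails in general.
\begin{itemize}
\item When $K_X^2>0$, $K_X\cdot D=K_X\cdot H+nK_X^2$ can be $\ge 0$. On $\mathbb{P}^2$ with $H=3L$ the maximal $n$ is $1$ and $D=0$.
\item When $K_X^2<0$, you have not shown that a maximal $n$ exists, since $K_X\cdot H$ may be non-negative.
\end{itemize}

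\textbf{The fallback on nefness.} Your fallback, ``on a minimal non-ruled surface $K_X$ is nef'', is circular if taken from Beauville. There it is deduced from Enriques' ruledness criterion, whose $q=0$ case is exactly this theorem. Taken from Mori's cone theorem it is legitimate, but then it finishes the proof by itself and Steps 2--3 are superfluous.
\begin{itemize}
\item If $K_X$ is not nef, a $K_X$-negative extremal contraction on a minimal surface gives $\mathbb{P}^2$ or a geometrically ruled surface. The $\mathbb{P}^2$ case (Picard number one) is missing from your list.
\item If $K_X$ is nef, then $K_X^2\ge 0$ and Step 1 makes $-K_X$ effective. Nefness and effectivity of $-K_X$ give $K_X\cdot H=0$ for $H$ ample. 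This forces $-K_X=0$, so $P_2=1$, a contradiction.
\end{itemize}

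\textbf{An elementary repair.} If you want to avoid the cone theorem, your $|H+nK_X|$ idea works for every sign of $K_X^2$, provided the contradiction is drawn differently.

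First, $|H+mK_X|=\emptyset$ for $m\gg 0$. Otherwise, an irreducible $C$ with $K_X\cdot C<0$ cannot have $C^2\ge 0$: such a $C$ is nef, while $(H+mK_X)\cdot C\to-\infty$. By adjunction $C$ would then be a $(-1)$-curve. On a minimal surface this makes $K_X$ nef, and the argument above gives $K_X\sim 0$.

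Next, let $n\ge 0$ be maximal and $D\in|H+nK_X|$. Choose $H$ not numerically proportional to $K_X$, so that $D\neq 0$. In Picard number one, Noether's formula gives $K_X^2=9$ and $K_X\equiv -3L$ with $L^2=1$, and $H=L$ works.

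Since $|K_X+D|=|H+(n+1)K_X|=\emptyset$, every $0<D'\le D$ has $|K_X+D'|=\emptyset$. Riemann--Roch with $\chi({\cal O}_X)=1$ and $h^2({\cal O}_X(K_X+D'))=h^0({\cal O}_X(-D'))=0$ then gives $p_a(D')\le 0$. Hence each component of $D$ is a smooth rational curve, and $D^2+K_X\cdot D\le -2$.

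Suppose every component $C_i$ had $C_i^2<0$.
\begin{itemize}
\item Minimality gives $C_i^2\le -2$, hence $K_X\cdot C_i\ge 0$.
\item Then $D\cdot C_i=H\cdot C_i+nK_X\cdot C_i>0$, so $D^2>0$.
\item So $D^2+K_X\cdot D>0$, contradicting $D^2+K_X\cdot D\le -2$.
\end{itemize}
Thus some component is a smooth rational curve with $C^2\ge 0$. The standard argument finishes: blow up $C^2$ points of $C$, and use $q=0$ to get a base-point-free pencil of rational curves.
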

There are two types of rational surfaces: those which do not contain $(-1)$-curves, and those which do. A rational surface without $(-1)$-curves is a minimal rational surface.\\
The simplest example is the projective plane $\mathbb{P}^2$. The Picard group is $\Pic (X) = \integer L$, where $L$ is the class of any line. The intersection product is defined by $L^2 = 1$, and the canonical class divisor is $K_{\mathbb{P}^2} = - 3 L$. For any $d > 0$, the class of a curve of degree $d$ is $d L$.\\
Another example of minimal rational surface is the product $\mathbb{P}^1 \times \mathbb{P}^1$. The fibers $F_1, F_2$ of the two fibrations generate the Picard group $\Pic (\mathbb{P}^1 \times \mathbb{P}^1) = \integer F_1 \oplus \integer F_2$, and the intersection product is given by $F_1^2 = F_2^2 = 0$, $F_1 F_2 = 1$. The canonical class divisor is $K_{\mathbb{P}^1 \times \mathbb{P}^1} = -2 F_1 - 2 F_2$. For any pair of naturals $a, b \ge 0$, a curve of type $(a, b)$ will be a divisor in the linear system ${\cal O}_{\mathbb{P}^1 \times \mathbb{P}^1} (a, b)$, defined by a bi-homogeneous equation of bi-degree $(a, b)$.
\subsubsection{Hirzebruch surfaces}
The remaining minimal rational surfaces are the Hirzebruch surfaces, $\mathbb{F}_n$. They are defined for any $n \ge 0$ as the projectified rank-$2$ vector bundle ${\cal O}_{\mathbb{P}^1} \oplus {\cal O}_{\mathbb{P}^1} (- n)$ over $\mathbb{P}^1$. We will denote by $F$ the class of a fiber, and by $C_{- n}$ the unique negative section, given by the projectified sub-bundle $\mathbb{P} ({\cal O}_{\mathbb{P}^1}) \subset \mathbb{F}_n$. The classes $C_{- n}, F$ generate the Picard group of the surface, and the intersection product is determined by $C_{- n}^ 2 = -n$, $F^2 = 0$ and $C_{- n} F = 1$. The canonical class is given by $K_{\mathbb{F}_n} = - 2 C_{- n} - (n + 2) F$.\\ For $n = 0$, $\mathbb{F}_0$ is just $\mathbb{P}^1 \times \mathbb{P}^1$, while $\mathbb{F}_1$ is the unique non-minimal Hirzebruch surface, because of the presence of the curve $C_{-1}$.
\subsubsection{Blow ups of $\mathbb{P}^2$}
All the others rational surfaces are blow-ups of $\mathbb{P}^2$ or $\mathbb{F}_n$, with $n \ne 1$. We will deal a lot with blow-ups of $\mathbb{P}^2$ at $N$ points $p_1, \ldots, p_N$. If $X$ is such a blow-up, then $\Pic (X)$ is a free abelian group, of rank $N + 1$, generated by the class $L$ of a line not containing any of the $p_i$'s, and the exceptional curves $E_1, \ldots, E_N$ over the base points. The intersection rules are $L^2 = 1, L E_i = 0$ and $E_i E_j = - \delta_{i, j}$. Let $\pi : X \to \mathbb{P}^2$ be the blow-down morphism. The class of any irreducible (smooth or not) curve in $C \subset X$, different from the $E_i$'s, can be decomposed in this basis as $C = d L - m_1 E_1 \cdots - m_N E_N$, where $d > 0$ is the degree of the plane curve $\pi (C) \subset \mathbb{P}^2$, and $m_i \ge 0$ is the multiplicity of $\pi(C)$ at $p_i$.\\
Conversely, any linear system of the form $|d L - m_1 E_1 \cdots - m_N E_N|$ contains the strict transforms of curves of degree $d$, and multiplicity at least $m_i$ at $p_i$.\\
We will specify in any situation wether we are allowing infinitely near blow-ups, since they produce curves of self intersection lesser or equal than $(-2)$.
\subsection{Del Pezzo surfaces}
\begin{definition}
A smooth surface $X$ is a Del Pezzo surface if is rational and the anti - canonical linear system $- K_X$ is ample.\\
The self intersection $(- K_X)^2$ is called the degree of the Del Pezzo surface $X$.
\end{definition}
\begin{proposition}
Assume $X$ is a blow up of $\mathbb{P}^2$ at $N$ points. Then $X$ is a Del Pezzo surface if and only if $N \le 8$, and in this case its degree is $9 - N$.
\end{proposition}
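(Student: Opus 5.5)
The plan is to treat the degree computation and the implication ``Del Pezzo $\Rightarrow N \le 8$'' as formal consequences of the intersection rules of \S1.3.2. The converse then goes through the Nakai--Moishezon Criterion. The statement is only true for points in \emph{general position}: no three on a line, no six on a conic, no eight on a cubic singular at one of them, and no infinitely near points. Otherwise a line through three of the points gives a curve $C$ with $-K_X C \le 0$. I will assume this hypothesis explicitly.

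\textbf{Step 1 (degree and necessity).} Take the basis $L, E_1, \ldots, E_N$. Adjunction and the blow-up formula give $K_X = -3L + E_1 + \cdots + E_N$. Hence $K_X^2 = 9 - N$ by $L^2 = 1$, $L E_i = 0$ and $E_i E_j = -\delta_{i,j}$. If $-K_X$ is ample, Nakai--Moishezon forces $K_X^2 > 0$, so $N \le 8$, and the degree is $(-K_X)^2 = 9 - N$.

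\textbf{Step 2 ($-K_X$ is effective and moves).} Assume $N \le 8$ with points in general position. Apply Riemann--Roch to $-K_X$ on the rational surface $X$, using $h^0({\cal O}_X)=1$, $h^1({\cal O}_X)=h^2({\cal O}_X)=0$, and $h^2({\cal O}_X(-K_X)) = h^0({\cal O}_X(2K_X)) = 0$ by Serre duality and Castelnuovo. This gives
$$h^0({\cal O}_X(-K_X)) \ge 1 + K_X^2 = 10 - N \ge 2.$$
So $|-K_X|$, the system of strict transforms of plane cubics through $p_1, \ldots, p_N$, is at least a pencil, and $(-K_X)^2 > 0$ already holds.

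\textbf{Step 3 (positivity on curves).} Let $C$ be an irreducible curve.
\begin{itemize}
\item If $C = E_i$, then $-K_X E_i = 1 > 0$.
\item If $C$ is not a fixed component of $|-K_X|$, pick a member $D \in |-K_X|$ not containing $C$. Then $-K_X C = DC \ge 0$, with equality only if $C$ is disjoint from some member of $|-K_X|$.
\item If $C$ is a fixed component of $|-K_X|$, then $C$ lies in every cubic through the points. For $N \le 8$ points in general position this cannot happen: the base locus of the cubics is finite (just the points themselves, plus a ninth point when $N = 8$).
\end{itemize}

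\textbf{Step 4 (excluding $-K_X C = 0$), the main obstacle.} Suppose $-K_X C = 0$, so $C$ is disjoint from a general member of $|-K_X|$.
\begin{itemize}
\item For $N \le 7$, $|-K_X|$ is base-point free of dimension $\ge 2$. The induced map is nonconstant on every curve, because a net of cubics separates curves in the plane. This contradicts $DC = 0$.
\item For $N = 8$, $|-K_X|$ is a pencil. The condition $-K_X C = 0$ forces $C$ to be a component of a member of the pencil, i.e.\ of a reducible cubic through the eight points. Writing $C = dL - \sum m_i E_i$ with $3d = \sum m_i$ and $d \le 2$, the only possibilities are a line through three of the points or a conic through six of them. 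Both are excluded by general position.
\end{itemize}
The case $N = 8$ needs extra care in this step, because of the ninth base point and the possibility of singular cubics with a node at some $p_i$. Having $-K_X C > 0$ for every curve and $(-K_X)^2 > 0$, Nakai--Moishezon then gives ampleness of $-K_X$.
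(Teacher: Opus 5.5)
\paragraph{Verdict: genuine gap in Step 4.} You are right that the converse needs the general-position hypothesis. The paper quotes this proposition as a classical fact, with no proof and without that hypothesis. Steps 1 and 2 are correct. The gap is in Step 4, which is where the whole content of the converse lies.
\begin{itemize}
\item For $N\le 7$, the sentence ``a net of cubics separates curves in the plane'' only restates $-K_X\cdot C>0$ and never invokes general position. Without general position it is false. Take seven points with $p_1,p_2,p_3$ on a line $\ell$. Every cubic of the net either contains $\ell$ or meets it only at $p_1,p_2,p_3$. So the strict transform $L-E_1-E_2-E_3$ is contracted, even though $\dim|-K_X|\ge 2$.
\item For $N=8$, the reduction to $d\le 2$ is wrong. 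A member of the pencil can be $C+E_i$ with $C=3L-2E_i-\sum_{j\ne i}E_j$, the strict transform of an \emph{irreducible} plane cubic through the eight points with a double point at $p_i$. This member is reducible on $X$ but not in the plane, and $-K_X\cdot C=9-2-7=0$. You list ``no eight on a cubic singular at one of them'' among your hypotheses and flag this case at the end, but the argument never uses it.
\item The third bullet of Step 3 (no fixed components of $|-K_X|$) is asserted, not proved.
\end{itemize}

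\paragraph{A uniform repair for all $N\le 8$.} Let $C\ne E_i$ be irreducible with $-K_X\cdot C\le 0$, and write $C=dL-\sum m_iE_i$ with $d\ge 1$.
\begin{itemize}
\item If $C$ is a fixed component of $|-K_X|$, then $\pi(C)$ is a common component of at least a pencil of plane cubics. Hence $d\le 2$.
\item Otherwise $-K_X\cdot C=0$. For $x\in C$, the members of $|-K_X|$ through $x$ form a nonempty subsystem. Each of them must contain $C$, since otherwise $D\cdot C\ge 1$. So $\pi(C)$ is a component of a plane cubic through the points, and $d\le 3$.
\end{itemize}
Now bound $\sum m_i$ in each degree.
\begin{itemize}
\item Lines and irreducible conics are smooth, so $m_i\le 1$. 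General position gives $\sum m_i\le 2$, resp.\ $\le 5$, hence $-K_X\cdot C\ge 1$.
\item An irreducible cubic has at most one double point, so $\sum m_i\le N+1\le 9$. Equality holds only if $N=8$ and the cubic passes through all eight points and is singular at one of them, which general position excludes. Hence again $-K_X\cdot C\ge 1$.
\end{itemize}
Thus no irreducible curve has $-K_X\cdot C\le 0$, and $-K_X\cdot E_i=1$. Nakai--Moishezon then applies. This route never needs base-point freeness of $|-K_X|$ or the absence of fixed components.
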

The projective plane $\mathbb{P}^2$ and the product $\mathbb{P}^1 \times \mathbb{P}^1$ are the only cases of Del Pezzo surfaces with divisible anti-canonical bundle, that is, the anti canonical is a positive multiple of another very ample line bundle.\\
For $\mathbb{P}^2$ we have $$- K_{\mathbb{P}^2} = 3 L$$ which defines the Veronese embedding of $\mathbb{P}^2$ as a surface of degree $9$ in $\mathbb{P}^9$, but of course we can just pick $L$ itself as a very ample line bundle.\\
For $\mathbb{P}^1 \times \mathbb{P}^1$ we have $$- K_{\mathbb{P}^1 \times \mathbb{P}^1} = 2 (F_1 + F_2)$$ which realizes $\mathbb{P}^1 \times \mathbb{P}^1$ as a surface of degree $8$ inside $\mathbb{P}^8$. Again, we can just pick $F_1 + F_2$ as a very ample line bundle, which identifies $\mathbb{P}^1 \times \mathbb{P}^1$ with a smooth quadric $Q \subset \mathbb{P}^3$, and the embedding defined by $- K_{\mathbb{P}^1 \times \mathbb{P}^1}$ is just the composition of this Segre embedding with the Veronese embedding $\mathbb{P}^3 \to \mathbb{P}^9$ defined by the complete linear system of quadrics. Since $Q$ itself is one of the quadrics of the Veronese embedding, the image of the composition $$\mathbb{P}^1 \times \mathbb{P}^1 \to \mathbb{P}^3 \to \mathbb{P}^9$$ is actually contained in a hyperplane $\mathbb{P}^8 \subset \mathbb{P}^9$.\\
\subsubsection{Del Pezzo surfaces of degree in $\{3, \ldots, 8\}$}
Each of these surfaces is obtained as a blow up of $\mathbb{P}^2$ in a set of $N$ points, with $N \in \{1, \ldots, 6\}$. If $X$ is such a surface, with the notation we adopted we have $$\Pic (X) = \integer L \oplus \integer E_1 \cdots \oplus \integer E_N$$ and $$- K_X = 3 L - E_1 \cdots - E_N$$ which corresponds to the strict transforms of plane cubics passing through the $N$ base points. If the base points are in general position, the anti canonical linear system induces an embedding $$X \subset \mathbb{P}^d$$ as a surface of degree $d$, with $$d = 9 - N = (- K_X)^2$$ An important role in this representation is played by $(-1)$-curves, thanks to the following fact:
\begin{proposition}
The $(-1)$-curves in $X$ are in $1 : 1$ correspondence with the straight lines of $\mathbb{P}^d$ contained in $X$.\\
Moreover, each of this surfaces contains a finite number of lines.
\end{proposition}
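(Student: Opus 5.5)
The plan is to prove the two claims in turn: that lines on $X\subset\mathbb{P}^d$ correspond bijectively to $(-1)$-curves, and that there are only finitely many of them. Throughout, I work with the anticanonical embedding $\phi=f_{|-K_X|}: X\hookrightarrow\mathbb{P}^d$, so the hyperplane class is $H=-K_X$.

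\emph{Lines give $(-1)$-curves.} Let $\ell\subset\mathbb{P}^d$ be a line contained in $X$. It is an irreducible smooth rational curve, and the degree formula from the Proposition on base point free systems gives $-K_X\cdot\ell=1$. Adjunction on a smooth rational curve reads $\ell^2+K_X\ell=2g-2=-2$, hence $\ell^2=-1$, so $\ell$ is a $(-1)$-curve.

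\emph{$(-1)$-curves give lines.} Let $E$ be a $(-1)$-curve, so $E\cong\mathbb{P}^1$ and $E^2=-1$. Adjunction gives $K_X E=-2-E^2=-1$, so $-K_X\cdot E=1$. By the degree formula, $1=\deg(\phi|_E)\cdot\deg\phi(E)$, and both factors are positive because $\phi$ is an embedding. Therefore $\phi(E)$ is a curve of degree $1$, i.e.\ a line. Since the two assignments are both the identity on the underlying curves, they are mutually inverse, and the correspondence is bijective.

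\emph{Finiteness.} This is the only step with real content. Write $E=aL-\sum b_iE_i$ in the basis of the blow-up description, where $a\ge0$. The conditions $E^2=-1$ and $K_XE=-1$ become
\[
\sum b_i^2=a^2+1, \qquad \sum b_i=3a-1.
\]
By Cauchy--Schwarz, $(3a-1)^2\le N(a^2+1)$. Since $N\le6<9$, this bounds $a$, and the first equation then bounds every $b_i$. Hence only finitely many classes $E$ are possible.

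It remains to see that each class contains only one curve. Two distinct irreducible curves $E\ne E'$ in the same class would satisfy $EE'\ge0$, whereas $EE'=E^2=-1$. So each class is represented by at most one curve, and the number of $(-1)$-curves is finite.

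For an explicit count, solving the Diophantine system yields exactly the classes $E_i$, $L-E_i-E_j$ and $2L-\sum_{k\ne i}E_k$ (the last only when $N=6$), but this enumeration is not needed for finiteness.

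The main point to check is that $\phi$ really is an embedding, which holds here because the $N$ points are assumed to be in general position. Given that, the degree formula applies on every curve, and the rest of the argument is formal.
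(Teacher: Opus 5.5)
Your proof is correct. The paper gives no argument for this proposition: it quotes it as a classical fact and then lists the $(-1)$-classes for $N=1,\dots,6$, so in the paper finiteness is effectively read off from that enumeration.

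Your route is self-contained, and it differs from the paper's in the following ways.
\begin{itemize}
\item The bijection comes from adjunction together with the degree formula for the very ample $-K_X$. This is the standard argument.
\item Finiteness comes from the bound $(3a-1)^2\le N(a^2+1)$ on the degree, plus the fact that an irreducible curve of negative square is the only irreducible curve in its class. This avoids any case analysis. The finiteness of $(-1)$-curves then needs only $N\le 8$ and no general position. General position and $N\le 6$ enter only through the embedding, and hence through the correspondence with lines.
\item What the paper's enumeration buys in addition is the actual counts $1,3,6,10,16,27$ and the configuration of the lines.
\end{itemize}

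There is one slip in your parenthetical list. For $N=5$, the value $a=2$ gives equality in Cauchy--Schwarz, so $b_1=\dots=b_5=1$. This produces the class $2L-E_1-\dots-E_5$ of the conic through all five points, which the paper does list. Your description ``$2L-\sum_{k\ne i}E_k$, only when $N=6$'' misses it.

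Also, the Diophantine system only produces candidate classes. That each candidate is represented by an irreducible curve is exactly where general position is needed.

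Neither point affects your proof, since, as you say, the enumeration is not used.
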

It is also easy to list the $(-1)$-curves in each of these surfaces, still under the assumption that the $N$ base points are in general position:\\
{\bf N = 1:} There is only one line, the exceptional divisor of the base point.\\
{\bf N = 2:} There are $3$ lines, which live in the classes $E_1, E_2, L - E_1 - E_2$. These curves are displaced in a chain of length $3$.\\
{\bf N = 3:} There are $6$ lines, which live in the classes $E_1, E_2, E_3, L - E_1 - E_2, L - E_1 - E_3, L - E_2 - E_3$. They are displaced in an hexagon.\\
{\bf N = 4:} There are $10$ lines, which live in the classes $E_1, \cdots E_4$ and $L - E_i - E_j$ for all pairs $i < j$.\\
{\bf N = 5:} There are $16$ lines, which live in the classes $E_1, \cdots E_5$, plus all the $L - E_i - E_j$ for all pairs $i < j$, and the class $2 L - E_1 \cdots - E_5$.\\
{\bf N = 6:} There are $27$ lines, which live in the classes $E_1, \cdots E_6$, plus all the $L - E_i - E_j$ for all pairs $i < j$, and the classes $2 L - E_1 \cdots - \hat E_i \cdots - E_5$.
\subsubsection{Del Pezzo surfaces of degree $2$, and the Geiser involution}
A Del Pezzo surface of degree $2$ is a blow - up of $\mathbb{P}^2$ at $N = 7$ points $p_1, \ldots, p_7$. Let $X$ be such a surface. With the notation we adopted, the linear system $- K_X$ is given by $- K_X = 3 L - E_1 \cdots - E_7$, which consists of strict transforms of cubics through the base points $p_1, \ldots, p_7$. We have $$\Dim\, |- K_X| = 2$$ thus we have the induced morphism $f_{|- K_X|} : X \to \mathbb{P}^2$. The degree of $f$ is simply given by $${\rm deg\,} f = (- K_X)^2 = 2$$ This is the first case when $|- K_X|$ is not very ample, but just ample. The branch locus of $f$ is a smooth curve $B \subset \mathbb{P}^2$ of degree $4$, which is a non - hyperelliptic curve of genus $3$. The $(-1)$-curves in $X$ are still a finite number, namely $56$, and they are in $2 : 1$ correspondence with the set of bitangent lines of $B$, which are $28$.\\
Conversely, any double cover of $\mathbb{P}^2$ branched along such a curve $B$ is a Del Pezzo surface of degree $2$. The deck involution $i$ with respect to this double cover is called Geiser involution. For any point $x \in X$, the point $i (x)$ is the ninth base point for the pencil of cubics defined by $p_1, \ldots, p_7, x$.\\
The linear system $|- 2 K_X|$ is very ample, since it induces an embedding $|- 2 K_X| : X \to \mathbb{P}^6$, which identifies $X$ as a surface of degree $8$.
\subsubsection{Del Pezzo surfaces of degree $1$, and the Bertini involution}
A Del Pezzo surface $X$ of degree $1$ is the blow up of $\mathbb{P}^2$ at $8$ points $p_1, \ldots, p_8$. The anti - canonical divisor $- K_X$ has the form $$- K_X = 3 L - E_1 \cdots - E_8$$ so it consists of strict transforms of cubics through $p_1, \ldots, p_8$. Such cubic curves form a pencil, with a ninth base point $p_9$, which we do not blow up. The system $- 2 K_X = 6 L - 2 E_1 - \cdots - 2 E_8$ is made up of sextics with $8$ nodes, and $$h^0 ({\cal O}_X (- 2 K_X)) = 4$$ Let $F, G$ be generators for $H^0 ({\cal O}_X (- K_X))$, then the three forms $F^2, F G, G^2$ generate $Sym^2 H^0 ({\cal O}_X (- K_X)) \subset H^0 ({\cal O}_X (- 2 K_X))$, so we can pick a fourth generator $H$ to have $$H^0 ({\cal O}_X (- 2 K_X)) = Span (F^2, F G, G^2, H)$$ and the corresponding map $$f := f_{|- 2 K_X|} : X \to \mathbb{P}^3$$ takes the form $$[F^2, F G, G^2, H] : X \to \mathbb{P}^3$$ If we put coordinates $[X_0, X_1, X_2, X_3]$ on $\mathbb{P}^3$ we see then that the image of $X$ is the cone $$S := \{X_1^2 = X_0 X_2\}$$ which has the vertex $v$ in the point $v = [0, 0, 0, 1]$. The point $p_9$ is defined by equations $$p_9 = \{F = G = 0\}$$ on $X$, hence it is sent by $f$ on the vertex $v$: $$f(p_9) = v$$ Moreover $${\rm deg\,} f = \frac{(- 2 K_X)^2}{({\rm deg\,} f(X))} = 2$$ so that $f$ is a double cover of the cone. The restriction of $f$ to any elliptic curve ${\cal E}$ in $|- K_X|$ is a double cover of a generatrix line of $S$, so ${\cal E}$ contains $4$ ramification points of $f$. One of them is of course $p_9$, while the other $3$ move with ${\cal E}$, hence they describe a curve $R$ in $X$. The union $R \cup p_9$ is the ramificarion locus of $f$, and it corresponds to the fixed locus of the deck involution, called the Bertini involution on $X$.\\
The $(-1)$-curves of $X$ are $240$, and they are in $2:1$ correspondence with the set of plane sections of the cone which are totally tangent to the branch locus $B := f(R)$.\\
Conversely, any double cover of $S$ branched on the vertex and a non - hyperelliptic curve of genus $4$ is a Del Pezzo surface of degree $1$.
\subsubsection{The De-Jonquieres involution}
We recall here the construction of the De-Jonquieres involution of degree $d$.\\
Let $C_d$ be a curve of degree $d \ge 3$ in $\mathbb{P}^2$, with a unique singular point $p_0 \in Sing\, C_d$, of multiplicity $d - 2$. Consider the plane birational involution $i$ given as follows: for any line $L$ through $p_0$, write the divisor ${C_d}|_L$ as $${C_d}|_L = (d - 2) p_0 + a + b$$ Let $i|_L$ be the involution on $L$ with $a, b$ as fixed points, and let $i$ be the glueing of all the $i|_L$'s. Of course $i$ is undefined at $p_0$. Moreover, there are finitely many lines $L_i$ such that the restricted divisor ${C_d}|_{L_i}$ takes the form $${C_d}|_{L_i} = (d - 2) p_0 + 2 p_i$$ The points $p_i$ correspond to the ramification points of the double cover induced by the projection from $p_0$ $\pi_{p_0} : C_d \dasharrow \mathbb{P}^1$. Since $C_d$ has geometric genus $d - 2$, the number of $p_i$'s is $2 d - 2$. Clearly $i$ is undefined along $L_i$ too.\\
Let write it in coordinates: $C_d$ has a unhomogeneous local equation $$F_{d - 2} (x, y) + F_{d - 1} (x, y) + F_d (x, y) = 0$$ with $F_k$ an homogeneous polynomial of degree $k$. The discriminant equation $$F_{d - 1}^2 - 4 F_{d - 2} F_d = 0$$ describes the union of the lines $L_1, \ldots, L_{2 d - 2}$. The involution $i$ has the form $$i(x, y) = (-x \cdot \frac{F_{d - 1} + 2 F_{d - 2}}{2 F_d + F_{d - 1}}, -y \cdot \frac{F_{d - 1} + 2 F_{d - 2}}{2 F_d + F_{d - 1}}).$$ The three polynomials $x (F_{d - 1} + 2 F_{d - 2}), y (F_{d - 1} + 2 F_{d - 2}), 2 F_d + F_{d - 1}$ generate the net of curves of degree $d$, passing through $p_1, \cdots, p_{2 d - 2}$ and having multiplicity $d - 1$ at $p_0$.\\
Let $X$ be the blow up of $\mathbb{P}^2$ at $p_0, \ldots, p_{2 d - 2}$. The surface $X$ has a fibration $|F|$ in rational curves, namely the strict transforms of lines through $p_0$. On $X$ the involution $i$ becomes biregular, and it preserves each fiber of $|F|$. There are exactly $2 d - 2$ singular fibers, given by the union of the exceptional divisor associated to $p_i$, and the strict transform of the line $L_i$. The involution $i$ switches the component of each singular fiber, fixing the unique singular point. The fixed locus is given by construction by the strict transform of $C_d$, which is a bisection of $|F|$.
\subsection{Some Lattice theory}
We will keep this subsection to give some preliminaries about lattice theory.
\begin{definition}
A lattice $\Lambda$ is a free abelian group of finite rank, endowed with a bilinear symmetric product $< \cdot, \cdot > : \Lambda \times \Lambda \to \integer$.\\
For $a, b \in \Lambda$, we will denote by $a b$ the product $<a, b>$.\\
If $\Lambda_1, \Lambda_2$ are lattices, we denote by $\Lambda_1 \oplus \Lambda_2$ the lattice equipped with the product $(a_1, b_1) (a_2, b_2) := a_1 a_2 + b_1 b_2$.\\
An isometry of lattices $f : \Lambda_1 \to \Lambda_2$ is a $\integer$-isomorphism $f$ such that $f(a) f(b) = a b$ for all $a, b \in \Lambda_1$.
\end{definition}
We will denote by $\integer^{1, 10}$ the lattice of rank $11$ with signature $(1, 10)$, given as follows: it is generated by a list of elements $e_0, \ldots, e_{10}$, with the products $e_i e_j = 0$ for $i \ne j$, $e_0^2 = 1$, $e_1^2 = \cdots = e_{10}^2 = -1$. Let $k := - 3 e_0 + e_1 + \cdots + e_{10} \in \integer^{1, 10}$, and let its orthogonal $$k^\perp:= \{v \in \integer^{1, 10} {\rm\, s.\, t.\,} v \cdot k = 0\}.$$ Since $k^2 = -1$, it is possible to perform the Gram-Schmidt algorithm in $\integer^{1, 10}$ to write any vector $v \in \integer^{1, 10}$ as $v = (v + (v \cdot k) k) - (v \cdot k) k$ to show that $$\integer^{1, 10} = k^\perp \oplus \integer k$$ 
\begin{definition}
We denote by $\mathbb{E}_{10}$ the sublattice $\mathbb{E}_{10}:= k^\perp \subset \integer^{1, 10}$.
\end{definition}
A useful basis to deal with $\mathbb{E}_{10}$ is defined by the following elements: $\alpha_0 := e_0 - e_1 - e_2 - e_3, \alpha_1 := e_1 - e_2, \cdots, \alpha_9 := e_9 - e_{10}$.
The intersection product of this base are given by:\\
i) $\alpha_i^2 = -2$ for all $i = 0, \cdots, 9$.\\
ii) $\alpha_i \alpha_{i + 1} = 1$ for all $i = 1, \cdots, 8$.\\
iii) $\alpha_0 \alpha_3 = 1$.\\
iv) $\alpha_i \alpha_j = 0$ for all other cases.
\begin{definition}
For any $\alpha \in \mathbb{E}_{10}$ with $\alpha^2 = - 2$, consider the map $\rho_{\alpha} : \mathbb{E}_{10} \to \mathbb{E}_{10}$, $$\rho_{\alpha} (x) := x + (x \cdot \alpha) \alpha$$
\end{definition}
The map $\rho_{\alpha}$ is an isometric involution of $\mathbb{E}_{10}$, meaning that $\rho_{\alpha}^2 = \mathbb{1}$.
\begin{definition}
Let $O(\mathbb{E}_{10})$ be the orthogonal group of isometries of $\mathbb{E}_{10}$ in itself. The Weyl group $W(\mathbb{E}_{10}) \subset O (\mathbb{E}_{10})$ is the subgroup generated by all the $\rho_{\alpha}$'s, with $\alpha^2 = - 2$.
\end{definition}
Note that $W(E_{10})$ is a normal subgroup, since for any isometry $g \in O ({\mathbb{E}}_{10})$ we have $g \rho_{\alpha} g^{-1} = \rho_{g (\alpha)}$.
In \cite{ABriefIntro2016}, Dolgachev showed that $$O (\mathbb{E}_{10}) = W(\mathbb{E}_{10}) \times {\pm \mathbb{1}}$$
\subsection{Enriques surfaces}
An important example of non-rational surfaces we will deal with are Enriques surfaces.
\begin{definition}
An Enriques surface is a smooth surface $X$ satisfying $$h^1 ({\cal O}_X) = h^0 ({\cal O}_X (K_X)) = 0$$ and $$h^0 ({\cal O}_X (2 K_X)) = 1.$$
\end{definition}
\begin{proposition}
On an Enriques surface $X$, the canonical divisor is a $2$-torsion element in $\Pic (X)$, that is, $$2 K_X = 0.$$
\end{proposition}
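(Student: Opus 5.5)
The idea is to compute the Euler characteristic $\chi({\cal O}_X)$, then use Riemann--Roch on $K_X$ and $-K_X$ to show that $-K_X$ is effective unless $K_X$ is trivial or torsion, and finally rule out both effective cases.

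\textbf{Step 1: $\chi({\cal O}_X)=1$.} Since $h^1({\cal O}_X)=0$ and $h^2({\cal O}_X)=h^0({\cal O}_X(K_X))=0$ by Serre duality, we get $\chi({\cal O}_X)=1$.

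\textbf{Step 2: $K_X^2=0$ via Noether's formula.} We have $12\chi({\cal O}_X)=K_X^2+e(X)$. Since $b_1=0$, the Euler number is $e(X)=2+b_2$. If $p_g=0$ and $q=0$, Hodge theory gives $b_2=h^{1,1}=10-K_X^2$. Substituting, $12=K_X^2+12-K_X^2$, which is an identity, so this route gives no information. I will therefore argue differently and use the plurigenera.

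\textbf{Step 3: Riemann--Roch on $2K_X$.} We have $h^0(2K_X)=1$, so write $2K_X=D$ with $D\ge 0$. Riemann--Roch gives
\[
1-h^1(2K_X)+h^0(-K_X)=1+K_X^2 .
\]

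\textbf{Step 4: Riemann--Roch on $-K_X$.} We get
\[
h^0(-K_X)-h^1(-K_X)+h^0(2K_X)=1+K_X^2,
\]
so $h^0(-K_X)\ge K_X^2$.

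\textbf{Step 5: $K_X^2\ge 0$.} Suppose $K_X^2<0$. Since $2K_X=D$ is effective and $D^2=4K_X^2<0$, we would like to rule this out. An effective divisor can certainly have negative self-intersection, so this needs an argument. Standard theory tells us that when $\kappa=0$ the minimal model has $K$ numerically trivial. That uses the classification, which the paper presumably assumes as known.

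\textbf{Step 6: $K_X\equiv 0$ numerically.} I will argue directly. Suppose $K_X^2>0$. Then Step 4 gives $h^0(-K_X)\ge 1$, so $-2K_X$ is effective. But $2K_X$ is effective as well, so $2K_X=0$ as a divisor and hence $K_X^2=0$, a contradiction.

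Suppose $K_X^2=0$ but $K_X\not\equiv 0$. Two subcases arise:
\begin{itemize}
\item If $h^0(-K_X)>0$, then $-K_X$ and $2K_X$ are both effective, which forces $K_X=0$.
\item Otherwise $h^0(-K_X)=0$, and I handle it by contracting $(-1)$-curves $E$ with $K_XE<0$. Pass to the minimal model, where $K$ is nef and $K^2\ge 0$. The case $K^2>0$ is excluded as above, so $K^2=0$ on the minimal model. There $2K=D$ is nef. If $D\ne 0$, Riemann--Roch applied to $nK$ yields growth of $h^0(nK)$, contradicting $\kappa=0$.
\end{itemize}
Note also that $K_X$ nonzero and $2K_X=0$ is consistent.

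\textbf{Step 7: conclusion.} Once $2K_X$ is an effective divisor numerically trivial against an ample class, it must be $0$. Hence $2K_X=0$ in $\Pic(X)$.

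\textbf{Main obstacle.} The step I expect to be hardest is showing that $X$ is minimal, or handling the case where it is not. On a non-minimal surface with $h^0(2K_X)=1$, the divisor $2K_X$ contains exceptional curves. I would argue that an Enriques surface, by definition in this paper, is taken minimal (this is implicit in the classification the paper cites). With minimality, $K_X$ is nef, and the argument in Steps 6 and 7 closes.
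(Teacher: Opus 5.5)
\textbf{There is a genuine gap in the second subcase of Step 6, and also in Step 5 wherever you appeal to $\kappa(X)=0$.} The hypotheses in the paper's definition are $q=p_g=0$ and $P_2=1$. Nothing bounds $h^0({\cal O}_X(nK_X))$ for $n\ge 3$, so $\kappa(X)=0$ is not available as something to contradict.

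The mechanism you invoke also fails. If $K_X$ is nef with $K_X^2=0$, Riemann--Roch gives $\chi({\cal O}_X(nK_X))=\chi({\cal O}_X)+\frac{n(n-1)}{2}K_X^2=1$ for every $n$. So it yields no growth of $h^0(nK_X)$.

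Step 5 can be repaired honestly. On a minimal surface with $P_2\ge 1$, $K_X$ is nef: a curve $C$ with $K_X C<0$ would be a negative component of the member of $|2K_X|$, hence a $(-1)$-curve. So $K_X^2\ge 0$ without any appeal to $\kappa=0$. After that, all the content of the statement sits in your residual case: $K_X$ nef, $K_X^2=0$, $K_X\not\equiv 0$, $h^0(-K_X)=0$.

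\textbf{That residual case cannot be excluded from the stated hypotheses, even after adding minimality as you propose.} Take a Dolgachev surface: a relatively minimal elliptic fibration $f:X\to\mathbb{P}^1$ with $p_g=q=0$ and two multiple fibres $2F_1$ and $3F_2$.
\begin{itemize}
\item The canonical bundle formula gives $K_X=f^*{\cal O}_{\mathbb{P}^1}(-1)+F_1+2F_2$.
\item Hence $2K_X\sim f^*{\cal O}_{\mathbb{P}^1}(-2)+2F_1+4F_2\sim F_2$, so $P_2=h^0({\cal O}_X(F_2))=1$.
\item $K_X\equiv\frac{1}{6}F$ is nef, so $X$ is minimal, and it lands exactly in your residual case.
\item Nevertheless $2K_X\sim F_2\neq 0$, and $\kappa(X)=1$.
\end{itemize}

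So the proposition is only true under the standard definition of an Enriques surface, which requires $X$ minimal with $\kappa(X)=0$, equivalently $K_X\equiv 0$ numerically. The paper states the proposition without proof, tacitly relying on that standard setting. Your closing remark that blow-ups force a minimality assumption is right, but minimality alone is not enough.

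Once $K_X\equiv 0$ is among the hypotheses, your Step 7 is the entire proof. $2K_X$ is effective because $P_2=1$, and it is numerically trivial, so it is zero. Steps 2--6 are then unnecessary.
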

The classical example of an Enriques surface is the following:
\begin{proposition}\label{ClassicalEnriques}
Let $T \subset \mathbb{P}^3$ be the tetrahedron $$T := \bigcup_{0 \le i < j \le 3} \{X_i = X_j = 0\}$$ and let $\overline X \subset \mathbb{P}^3$ be a surface of degree $6$ with double points along all lines in $T$. If $\overline X$ is generic, then the normalization $\nu : X \to \overline X$ is a smooth Enriques surface.
\end{proposition}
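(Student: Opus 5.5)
We would prove Proposition \ref{ClassicalEnriques} through the double cover picture, since the six edges of the tetrahedron are most naturally the branch-type locus of an étale quotient. Take the Cremona transformation $\sigma : [X_0, X_1, X_2, X_3] \mapsto [X_0^{-1}, X_1^{-1}, X_2^{-1}, X_3^{-1}]$. A general sextic double along the six edges has equation
$$F = a X_0^2X_1^2X_2^2 + b X_0^2X_1^2X_3^2 + c X_0^2X_2^2X_3^2 + d X_1^2X_2^2X_3^2 + X_0X_1X_2X_3\, Q(X_0,\ldots,X_3) = 0,$$
with $Q$ a quadric. This normal form comes from the fact that every monomial of $F$ must vanish to order $\ge 2$ along each $\{X_i = X_j = 0\}$. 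Now consider the quadric pencil-type map $\phi : \mathbb{P}^3 \dasharrow \mathbb{P}^3$ given by $[Y_i] \mapsto [Y_i^2]$. Pulling back $F$ via $X_i = Y_i^2$ and dividing by $Y_0^2Y_1^2Y_2^2Y_3^2$ gives a surface $Y$ in the complete intersection model:
$$a Y_3^{-2}\cdots + Q(Y^2) = 0 .$$
Equivalently, and more cleanly, we would introduce the classical K3 model: the intersection $S \subset \mathbb{P}^5$ of three quadrics in variables $Z_0, \ldots, Z_5$ that are invariant under $Z_k \mapsto \pm Z_k$ on a suitable half of the coordinates. Concretely, we take $S$ as the normalization of the double cover of $\overline X$ defined by $\sqrt{X_0X_1X_2X_3}$.

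The key steps, in order, are as follows.

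(1) Write the double cover $S \to \overline X$ and show that, for generic coefficients, $S$ is a smooth complete intersection of three quadrics in $\mathbb{P}^5$. This makes $S$ a K3 surface: it has $K_S = 0$ by adjunction and $h^1(\mathcal{O}_S) = 0$ by Lefschetz. Smoothness follows from Bertini applied to the base-point-free invariant net of quadrics.

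(2) Exhibit the covering involution $\tau$, for example $\tau(Z) = (Z_0, Z_1, Z_2, -Z_3, -Z_4, -Z_5)$ in suitable coordinates, and show it is fixed-point free on $S$ for generic choices. Its fixed loci are the two planes $\{Z_3 = Z_4 = Z_5 = 0\}$ and $\{Z_0 = Z_1 = Z_2 = 0\}$, each meeting $S$ in the intersection of three general conics in $\mathbb{P}^2$. That intersection is empty.

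(3) Identify $X := S/\tau$ with the normalization of $\overline X$. The quotient map realizes $S/\tau \to \overline X$ as finite and birational. It is an isomorphism off the edges, and $X$ is smooth, hence normal, so $X = \nu^{-1}(\overline X)$.

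(4) Deduce the invariants. We have $\chi(\mathcal{O}_X) = \chi(\mathcal{O}_S)/2 = 1$, and $h^1(\mathcal{O}_X) \le h^1(\mathcal{O}_S) = 0$. The form $p_g(X)$ equals the dimension of the $\tau$-invariant part of $H^0(K_S) = \mathbb{C}\omega$. Since $\tau$ is free, Lefschetz forces $\tau^*\omega = -\omega$, so $p_g = 0$. Finally, $K_X$ is nontrivial with $2K_X$ pulling back to $2K_S = 0$ and $\pi_*$ giving $\mathcal{O}_X \oplus \mathcal{O}_X(K_X)$, hence $h^0(2K_X) = 1$.

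The main obstacle is step (1)/(3): matching the explicit sextic normal form with the quotient of the complete intersection of three quadrics, and checking genericity transversality. Transversality means that the double cover is unramified along the preimages of the edges, so that the normalization really is the smooth quotient rather than acquiring extra singularities at the vertices of $T$, where $\overline X$ has triple points. We would first try to handle the vertices by a local computation in the affine chart $X_0 = 1$, verifying that the invariant quadric net has no base points there for generic $a, b, c, d, Q$.
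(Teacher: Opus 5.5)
The paper states this proposition as classical background and gives no proof, so your argument has to stand on its own. It does not: the step that carries all the content is asserted rather than constructed, and the model you use is the wrong one.

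Taking $S$ to be the normalized double cover of $\overline X$ given by $w=\sqrt{X_0X_1X_2X_3}$ is the right instinct. But step (1) then claims this $S$ is a complete intersection of three split quadrics $q_i(u)+r_i(v)=0$ in $\mathbb{P}^5$, and you never produce $Z_0,\dots,Z_5$ from $X_0,\dots,X_3,w$. The numerics rule out the natural candidate:
\begin{itemize}
\item The pull-back of $\mathcal{O}_{\mathbb{P}^3}(1)$ to $S$ has self-intersection $2\cdot 6=12$, while a complete intersection of three quadrics in $\mathbb{P}^5$ has degree $8$.
\item For $\tau(u,v)=(u,-v)$ the invariant linear forms are only $u_0,u_1,u_2$, so $S/\tau$ maps naturally to $\mathbb{P}^2$, not onto a sextic in $\mathbb{P}^3$.
\end{itemize}
Hence in step (3) there is no map $S/\tau\to\overline X$ at all, and that finite birational map is exactly what the proposition is about. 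Extracting a tetrahedral sextic from the $\mathbb{P}^5$ model would need a degree-$6$ class built from half-fibres, i.e.\ the Enriques theory you are trying to establish. Three smaller problems: the substitution $X_i=Y_i^2$ is a $(\mathbb{Z}/2)^3$-cover that plays no role, the expression displayed after it is not a polynomial, and connectedness of the normalized double cover is never checked.

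The missing idea is to use $w$ to build four anti-invariant coordinates $Y_i:=w/X_i$. They satisfy $X_iY_i=X_jY_j$ and $Y_iY_j=X_kX_l$ for $\{i,j,k,l\}=\{0,1,2,3\}$. These are the nine quadrics cutting out the Segre threefold $\mathbb{P}^1\times\mathbb{P}^1\times\mathbb{P}^1\subset\mathbb{P}^7$: take $X_0=x_0y_0z_0$, $X_1=x_0y_1z_1$, $X_2=x_1y_0z_1$, $X_3=x_1y_1z_0$, $Y_0=x_1y_1z_1$, $Y_1=x_1y_0z_0$, $Y_2=x_0y_1z_0$, $Y_3=x_0y_0z_1$, so that $w=x_0x_1y_0y_1z_0z_1$. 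Since $\prod_{j\ne i}X_j=wY_i$ and $X_0X_1X_2X_3=w^2$, the sextic factors as $F=w^2\bigl(\sum_i a_iY_i^2+Q(X)\bigr)$, where $a_i$ is the coefficient of $\prod_{j\ne i}X_j^2$. So the K3 cover is the $(2,2,2)$-divisor $S=\{\sum_i a_iY_i^2+Q(X)=0\}$, with the following properties.
\begin{itemize}
\item $S$ is a K3 surface by adjunction.
\item $S$ is smooth for general coefficients by Bertini, since the invariant $(2,2,2)$-forms are spanned by the $X_iX_j$ and $Y_i^2$ and so have no base points.
\item The involution is $\iota\colon Y\mapsto -Y$, i.e.\ $(x_1,y_1,z_1)\mapsto(-x_1,-y_1,-z_1)$. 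Its eight fixed points are $(X,Y)=(e_i,0)$ and $(0,e_i)$, and $S$ avoids them exactly when $q_{ii}\neq0$ and $a_i\neq0$. This is the concrete content of ``generic''.
\item The projection $(X,Y)\mapsto X$ is $\iota$-invariant and is a morphism on $S$, because its only base points on the Segre threefold are the four points $(0,e_i)$, which $S$ avoids.
\item The projection is finite because $\mathcal{O}_S(1,1,1)$ is ample, and it maps $S$ onto $\overline X$ with degree $12/6=2$.
\end{itemize}
It therefore induces a finite birational morphism $S/\iota\to\overline X$ from a smooth surface, i.e.\ the normalization. Your step (4) then goes through unchanged.
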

Enriques surfaces are the best known example of surfaces with $\Pic (X) \ne \Num (X)$. Indeed, since $K_X$ is a torsion divisor, we have $K_X D = 0$ for any $D \in \Pic (X)$.
\begin{proposition}
For an Enriques surface $X$, the subgroup in $\Pic (X)$ of divisors numerically equivalent to $0$ coincides with the subgroup $\integer_2$ generated by $K_X$. As a consequence, $$\Pic (X) = \Num (X) \oplus \integer_2 K_X$$
\end{proposition}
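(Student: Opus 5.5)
We have to prove that the numerically trivial divisors on an Enriques surface $X$ form exactly the subgroup $\{0, K_X\} \simeq \integer_2$, and hence that $\Pic (X) = \Num (X) \oplus \integer_2 K_X$.

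\textbf{Step 1: $K_X$ is numerically trivial.} By the preceding Proposition, $2 K_X = 0$. Hence $2 (K_X D) = 0$ for every $D \in \Pic (X)$, so $K_X D = 0$. Also $K_X \ne 0$, since $h^0 ({\cal O}_X (K_X)) = 0 \ne 1 = h^0 ({\cal O}_X)$. So $\integer_2 K_X$ is contained in the numerically trivial subgroup.

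\textbf{Step 2: the converse inclusion.} Let $D \simeq_{num} 0$. Then $D^2 = 0$ and $D K_X = 0$, so Riemann--Roch gives
$$h^0 ({\cal O}_X (D)) - h^1 ({\cal O}_X (D)) + h^2 ({\cal O}_X (D)) = \chi ({\cal O}_X).$$
For an Enriques surface $h^1 ({\cal O}_X) = 0$. By Serre duality $h^2 ({\cal O}_X) = h^0 ({\cal O}_X (K_X)) = 0$. Hence $\chi ({\cal O}_X) = 1$.

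Also by Serre duality, $h^2 ({\cal O}_X (D)) = h^0 ({\cal O}_X (K_X - D))$. So at least one of $D$ and $K_X - D$ is linearly equivalent to an effective divisor.

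Now use the following fact: an effective divisor $E$ that is numerically trivial must be $0$. To see this, fix an ample $H$ (it exists because $X$ is projective). If $E \ne 0$ is effective, then $H E > 0$, contradicting $E \simeq_{num} 0$. Both $D$ and $K_X - D$ are numerically trivial, so either $D = 0$ or $K_X - D = 0$, i.e. $D = K_X$ in $\Pic (X)$.

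\textbf{Step 3: the splitting.} The surjection $\Pic (X) \to \Num (X)$ has kernel $\integer_2 K_X$, so there is an exact sequence
$$0 \to \integer_2 K_X \to \Pic (X) \to \Num (X) \to 0.$$
As remarked after the definition of $\Num (X)$, the group $\Num (X)$ is torsion-free. It is also finitely generated, being a quotient of $\Pic (X)$. A finitely generated torsion-free abelian group is free, so the sequence splits and gives the stated decomposition.

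The finite generation of $\Pic (X)$ uses $h^1 ({\cal O}_X) = 0$: this kills $\Pic^0$, so $\Pic (X) \simeq NS (X)$, which is finitely generated by the N\'eron--Severi theorem. This is the one input that is not stated earlier in the paper and has to be imported. The other delicate point is Step 2, where one must apply Riemann--Roch with $\chi ({\cal O}_X) = 1$ and Serre duality correctly.
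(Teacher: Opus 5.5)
Your proof is correct and complete. The paper gives no proof of this statement: it quotes it as standard background in the Enriques section, right after the unproved fact $2K_X = 0$. So there is no argument of the paper to compare yours with.

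What you wrote is the standard proof:
\begin{itemize}
\item Riemann--Roch with $\chi({\cal O}_X) = 1$.
\item Serre duality, to get that $D$ or $K_X - D$ is effective.
\item An ample class has positive degree on any nonzero effective divisor, so a numerically trivial effective divisor is $0$.
\item The torsion splits off because ${\Num (X)}$ is finitely generated and torsion-free.
\end{itemize}
You are right to check $K_X \neq 0$ via $h^0({\cal O}_X(K_X)) = 0$; this is what makes the kernel $\integer_2$ rather than trivial.

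The one input you import, finite generation of ${\Pic (X)}$, can also be obtained without naming the N\'eron--Severi theorem. Use the exponential sequence: $h^1({\cal O}_X) = 0$ makes $c_1 : {\Pic (X)} \to H^2(X, \integer)$ injective. Then $H^2(X,\integer)$ is finitely generated because $X$ is a compact manifold. This is the same content as your ``$\Pic^0(X) = 0$ plus N\'eron--Severi'', just packaged in one step.
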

\begin{theorem}\cite{Cossec1985}
For an Enriques surface $X$, the lattice $\Num (X)$ is isometric to $\mathbb{E}_{10}$.
\end{theorem}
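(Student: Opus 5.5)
The plan is to show that $\Num (X)$ is an even, unimodular, indefinite lattice of signature $(1,9)$, to check that $\mathbb{E}_{10}$ has the same invariants, and then to invoke the classification of indefinite unimodular lattices (Milnor / Serre). That classification says such a lattice is determined up to isometry by its rank, signature and parity.

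\textbf{Step 1: rank and signature.} Since $h^1({\cal O}_X) = h^0({\cal O}_X(K_X)) = 0$, the exponential sequence gives $\Pic (X) \simeq H^2(X,\integer)$. Then $\Num (X) = H^2(X,\integer)/\mathrm{torsion}$, and the intersection pairing is the cup product pairing. By Poincaré duality this pairing is unimodular on $H^2(X,\integer)/\mathrm{torsion}$.

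For the rank, we have $\chi({\cal O}_X) = 1$, and $K_X^2 = 0$ because $2K_X = 0$. Noether's formula $12\chi({\cal O}_X) = K_X^2 + c_2$ gives $c_2 = 12$. Since $b_1 = b_3 = 0$, this yields $b_2 = 10$.

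For the signature, $p_g = 0$ implies $h^{1,1} = b_2 = 10$, so the Hodge index theorem (\ref{HIT}) gives signature $(1,9)$. In particular the lattice is indefinite.

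\textbf{Step 2: evenness.} For any $D \in \Pic (X)$, apply the Riemann--Roch Theorem. We have $K_X D = 0$ because $K_X$ is torsion, so
$$\chi({\cal O}_X(D)) = 1 + \frac{D^2}{2}.$$
The left-hand side is an integer, so $D^2$ is even.

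\textbf{Step 3: invariants of $\mathbb{E}_{10}$.} Since $k^2 = -1$, the decomposition $\integer^{1,10} = k^\perp \oplus \integer k$ shows that $k^\perp$ is unimodular. It has rank $10$ and signature $(1,9)$. For parity, write $v = \sum a_i e_i$. The condition $v \cdot k = 0$ reads $3a_0 + \sum_{i \ge 1} a_i = 0$. Then
$$v^2 = a_0^2 - \sum_{i \ge 1} a_i^2 \equiv a_0 + \sum_{i \ge 1} a_i \equiv a_0 - 3a_0 \equiv 0 \pmod 2,$$
so $\mathbb{E}_{10}$ is even.

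\textbf{Step 4: conclusion.} By the classification, both lattices are isometric to $U \oplus E_8(-1)$. Hence $\Num (X) \simeq \mathbb{E}_{10}$.

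The main obstacle is Step 1, specifically the identification of $\Num (X)$ with $H^2(X,\integer)/\mathrm{torsion}$, which is what lets us use Poincaré duality for unimodularity. Once that is in place, the remaining steps are short computations followed by an appeal to the lattice classification theorem.
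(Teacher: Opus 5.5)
Your proof is correct. The paper gives no argument for this statement; it only cites \cite{Cossec1985}. So there is no internal proof to match, and what you wrote is the standard self-contained derivation: compute the invariants of $\mathrm{Num}(X)$, then apply the Milnor--Serre classification of indefinite unimodular forms.

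Compared with the bare citation, your route shows exactly where each Enriques hypothesis enters:
\begin{itemize}
\item $p_g=0$ makes $\mathrm{Pic}(X)\to H^2(X,\mathbb{Z})$ surjective. Hence $\mathrm{Num}(X)$ is all of $H^2(X,\mathbb{Z})/\mathrm{torsion}$ rather than a proper sublattice, and this is what Poincar\'e duality turns into unimodularity.
\item $q=0$ gives injectivity of that map and $b_1=b_3=0$ for the rank count.
\item $K_X$ being torsion gives both $K_X^2=0$ in Noether's formula and $DK_X=0$ in the parity argument.
\end{itemize}

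Two small points are worth one sentence each:
\begin{itemize}
\item The equality $\mathrm{Num}(X)=H^2(X,\mathbb{Z})/\mathrm{torsion}$ holds because a pairing that is unimodular on the free quotient has radical exactly the torsion subgroup.
\item For the signature, the divisor form of the Hodge index theorem (Theorem \ref{HIT}, applied with an ample class) already suffices once $\mathrm{Num}(X)\otimes\mathbb{R}=H^2(X,\mathbb{R})$. The remark $h^{1,1}=10$ is not needed.
\end{itemize}

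Your Step 3 could alternatively be read off from the root basis $\alpha_0,\dots,\alpha_9$ listed in the paper: all $\alpha_i^2=-2$, and the $T_{2,3,7}$ Gram matrix has determinant $-1$. That shortcut, however, presupposes that these vectors form a $\mathbb{Z}$-basis of $k^\perp$, which your direct coordinate computation avoids having to check.
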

One of the most known invariants to classify polarized Enriques $(X, H)$ surfaces is the following:
\begin{definition}
A polarized Enriques surface is a pair $(X, H)$, with $H \in \Pic (X)$ the class of a big and nef divisor.\\
The $\phi$-invariant of the polarized pair $(X, H)$ is by definition: $$\phi (H) := {\rm\, min\,} \{H {\cal E}, {\rm\, with\,} {\cal E} {\rm\, a\, rigid\, elliptic\, curve\, in\,} X\}$$
\end{definition}
The previous definition is always well defined, as any Enriques surface contains rigid elliptic curves ${\cal E}$. These are also known as half - fibers, as the double $|2 {\cal E}|$ can always move in a basepoint - free pencil of elliptic curves, which has exactly two double fibers, namely ${\cal E}$ and ${\cal E} + K_X$.\\
Moreover, it was shown in \cite{EnriquesI2025} that the equality $\phi (H) = 1$ corresponds to a polarization $H$ with base points, since otherwise we the polarization $H$ would induce an isomorphism between a smooth elliptic curve ${\cal E}$ and $\mathbb{P}^1$. Moreover, the authors also classify which classes $H \in \mathbb{E}_{10}$ actually correspond to big and nef polarizations which achieve $\phi (H) = 1$.\\
The next value $\phi (H) = 2$ is achieved for example by Enriques surfaces $(X, H)$ which arise from Proposition \ref{ClassicalEnriques}, with $H$ the natural polarization induced from ${\cal O}_{\mathbb{P}^3} (1)$. Indeed, the normalizations of all the $6$ edges of the tetrahedron $T$ are rigid elliptic curves, and they are double covers over the corresponding lines.\\
The next value $\phi (H) = 3$ plays a special role:
\begin{definition}\label{Reyepol}
We call $(X, H)$ a Fano polarization if $H^2 = 10$ and $\phi (H) = 3$.\\
One immediately sees that a Fano polarization provides a map $|H| : X \to \mathbb{P}^5$.\\
A Fano polarization $(X, H)$ is called a Fano - Reye polarization if the image of $X$ is contained in a smooth quadric hypersurface of $\mathbb{P}^5$.
\end{definition}
The reason of the previous definition is the following: we know that $\Num (X) = \mathbb{E}_{10}$ has rank $10$. Assume for a moment that we able to find a sequence ${\cal E}_1, \ldots, {\cal E}_{10}$ of half - fibers, with intersection products ${\cal E}_i {\cal E}_j = 1 - \delta_{i, j}$. Such a sequence is known in literature as a maximal isotropic sequence. By a purely lattice - theoretical argument, the $\rational$-divisor $H := \frac{1}{3} ({\cal E}_1 + \cdots + {\cal E}_{10}) \in \mathbb{E}_{10} \otimes \rational$ is integer, that is, it actually belongs to $\mathbb{E}_{10}$, and it satifies $H^2 = 10$ and $\phi (H) = 3$. The minimum product $H {\cal E} = 3$ is achieved for example on all the ${\cal E}_i$'s.\\
Of course this works provided a maximal isotropic sequence of half - fibers, which not always exists. Moreover, it is a very hard work to find the maximal lengths for such sequences, see for example \cite{Ciliberto2023}, \cite{Knutsen2020}, \cite{Schaffler2024}.\\
The definition of a Fano - Reye polarization in \ref{Reyepol} is due to the following construction:
\begin{definition}\label{ClassicalReye} \cite{Cossec1983} \cite{Reye1882}
Let $W \subset H^0 ({\cal O}_{\mathbb{P}^3} (2))$ be a vector space of dimension $4$ of quardrics of $\mathbb{P}^3$. The surface $$Reye(W) := \{l \in Gr (1, \mathbb{P}^3) {\rm\, s.\, t.\,} l \subset Bs (|P|) {\rm\, for\, a\, pencil\,} |P| \subset |W|\} \subset$$ $$ \subset Gr (1, \mathbb{P}^3) \subset \mathbb{P}^5$$ is called a classical Reye congruence.
\end{definition}
The following fact is very well - known:
\begin{theorem}\cite{VerraReye1993}
If $W \subset H^0 ({\cal O}_{\mathbb{P}^3} (2))$ is a general $4$-dimensional $\complex$-vector subspace, then $Reye(W)$ is a smooth nodal Enriques surface, of degree $10$ in $\mathbb{P}^5$. 
\end{theorem}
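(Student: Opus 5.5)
The plan is to avoid working directly in the Grassmannian and instead realize $Reye(W)$ as a free $\integer_2$-quotient of a $K3$ surface; this is the classical approach of \cite{Cossec1983}.

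\textbf{Step 1: the cover.} Fix a basis $Q_0, \ldots, Q_3$ of $W$ and let $B_0, \ldots, B_3$ be the associated symmetric bilinear forms. Set
$$S := \{(x, y) \in \mathbb{P}^3 \times \mathbb{P}^3 \ {\rm s.\,t.\,} \ B_i (x, y) = 0, \ i = 0, \ldots, 3\},$$
a complete intersection of four divisors of bidegree $(1,1)$. For general $W$, a Bertini-type argument applied to the base-point-free system of $(1,1)$-forms, cut down to the $4$-dimensional symmetric ones, should show that $S$ is smooth. The points of the diagonal lying on $S$ are exactly the base points of the net $|W|$, and eight general quadrics have no common point, so $S$ misses the diagonal. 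By adjunction, $K_S = (-4,-4) + 4(1,1) = 0$. A Lefschetz argument gives $h^1({\cal O}_S) = 0$. Hence $S$ is a $K3$ surface.

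\textbf{Step 2: the quotient.} The swap $\sigma (x, y) = (y, x)$ preserves $S$ because the $B_i$ are symmetric, and it acts freely since $S$ misses the diagonal. So $X := S/\sigma$ is smooth with $\chi({\cal O}_X) = 1$. The $2$-form of $S$ is anti-invariant, so $h^0(K_X) = 0$ and $2K_X = 0$, and $\pi_1$ is $\integer_2$, so $h^1({\cal O}_X) = 0$. Therefore $X$ is an Enriques surface.

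\textbf{Step 3: identification with $Reye(W)$.} Define $\varphi : X \to Gr(1, \mathbb{P}^3)$ by $[(x,y)] \mapsto \langle x, y\rangle$. Let $l = \langle x, y\rangle$. A quadric $Q \in W$ contains $l$ iff $Q(x) = Q(y) = B(x,y) = 0$. On $S$ the condition $B(x,y) = 0$ holds for all of $W$, so $l$ lies in every quadric of the pencil $\{Q \in W : Q(x) = Q(y) = 0\}$, i.e. $l \in Reye(W)$.

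For the converse, take $l \subset Bs(|P|)$ for a pencil $P \subset W$. Restriction gives a map $W \to H^0({\cal O}_l(2))$ of rank $\le 2$, whose image is a pencil of binary quadrics. For general $W$ this pencil has a unique pair of common apolar points $\{x, y\} \subset l$, which is exactly a point of $X$. This shows $\varphi$ is bijective onto $Reye(W)$. Injectivity of the differential should follow from the same local computation, using that for general $W$ the restricted pencil never degenerates to a square.

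Under the Plücker embedding, $\varphi^* {\cal O}(1)$ pulls back to $S$ as the restriction of ${\cal O}(1,1)$. Therefore
$$H^2 = \tfrac12 (h_1 + h_2)^2 (h_1 + h_2)^4 = \tfrac12 \binom{6}{3} = 10,$$
so $Reye(W) \subset \mathbb{P}^5$ has degree $10$.

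\textbf{Step 4: nodality.} I would produce $(-2)$-curves from the $10$ nodes of the quartic symmetroid $\{\det = 0\} \subset |W|$, which correspond to rank-$2$ quadrics $Q_0 = \Pi_1 \cup \Pi_2 \in W$ with singular line $m = \Pi_1 \cap \Pi_2$. The candidate curves are the loci of Reye lines meeting $m$ in a special way, namely pairs $(x, y)$ with $x \in m$. I would show each such locus is a smooth rational curve, using that $Q_0$ imposes no condition there and the remaining equations cut a conic. Then its self-intersection is $-2$, because $K_X$ is numerically trivial.

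I expect Step 4 to be the main obstacle: pinning down exactly which locus is rational and checking that it is irreducible and smooth. Steps 1–2 are routine. The generality checks in Step 3, that $\varphi$ is an embedding, need care but are standard.
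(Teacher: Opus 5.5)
The paper gives no proof of this statement. It is quoted from \cite{VerraReye1993} as a well-known fact, so there is no argument of the paper to match.

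The closest material is the Bordiga subsection, which carries out your Steps 1--2 in dual form. There $\tilde S=\pi^{-1}(\Lambda)\subset(\mathbb{P}^3)^*\times(\mathbb{P}^3)^*$ is cut out by four symmetric bilinear equations and has trivial canonical class. The swap acts freely when $\Lambda\cap{\cal Q}_1=\emptyset$, and the quotient is an Enriques surface, embedded as $\Lambda\cap{\cal Q}_2\subset\mathbb{P}^5$ by the \emph{symmetric} $(1,1)$-forms. Your Pl\"ucker map instead uses the antisymmetric forms $x_iy_j-x_jy_i$, i.e.\ the anti-invariant half of $H^0({\cal O}_S(1,1))$. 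So for the same web, the two degree-$10$ models are given by polarizations differing by $K_X$.

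Your route (the Reye congruence as the free $\integer_2$-quotient of the K3 complete intersection, as in Cossec) is the classical one and is sound. It needs only small repairs.

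\textbf{Steps 1 and 3.} In Step 1, $|W|$ is a web, and the fact you need is that \emph{four} general quadrics of $\mathbb{P}^3$ have no common point, not eight.

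In Step 3, the embedding is cleaner than your ``square'' condition suggests. The map $\varphi$ is the restriction of $\{x,y\}\mapsto\langle x,y\rangle$, whose fibre over a line $l$ is $\mathrm{Sym}^2 l\cong\mathbb{P}^2$, and the $B_i$ restrict to \emph{linear} forms there. So the scheme-theoretic fibre $\varphi^{-1}(l)$ is a linear space, and it is a reduced point exactly when $W\to H^0({\cal O}_l(2))$ has rank $2$. For general $W$ no line lies in a net of $|W|$: for a fixed line this is a codimension-$6$ condition on ${\rm Gr}(4,10)$, and lines form a $4$-dimensional family. Hence $\varphi$ is injective and unramified, so a closed embedding. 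That $x\neq y$ comes from the empty base locus, as you say.

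\textbf{Step 4.} The curve is not a conic. For $x\in m$ the form $B_0(x,\cdot)$ vanishes identically, and $y$ spans the kernel of the $3\times 4$ matrix with rows $x^{T}A_1,x^{T}A_2,x^{T}A_3$. Thus $\Gamma=\{(x,y)\in S : x\in m\}$ has class $(h_1+h_2)^3$ in $m\times\mathbb{P}^3$, i.e.\ bidegree $(1,3)$: it is the graph of a cubic map $m\to\mathbb{P}^3$, with Pl\"ucker degree $4$. Two things must be checked.
\begin{itemize}
\item[(i)] The matrix has rank $3$ at every $x\in m$, i.e.\ no second quadric of $W$ is singular at a point of $m$. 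For a fixed point, ``a pencil of $W$ singular there'' is a codimension-$4$ condition on ${\rm Gr}(4,10)$, and points vary in $\mathbb{P}^3$, so general $W$ avoids it. Then $\Gamma\cong m\cong\mathbb{P}^1$.
\item[(ii)] $\Gamma\cap\sigma(\Gamma)=\emptyset$; otherwise the image of $\Gamma$ in $X$ is singular and is not a $(-2)$-curve. A common point would be a pair $x,y\in m$ with $(x,y)\in S$, making $m$ itself a Reye line. This is excluded once $W\to H^0({\cal O}_m(2))$ has rank $3$, which holds for general $W$. The case $\sigma(\Gamma)=\Gamma$ cannot occur, since every involution of $\mathbb{P}^1$ has fixed points.
\end{itemize}
With (i) and (ii), the image of $\Gamma$ is a smooth rational curve, and $K_X\equiv 0$ gives self-intersection $-2$.

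As a consistency check, the Reye lines meeting $m$ form a hyperplane section. It is the image of $\Gamma$ plus the two plane cubics of Reye lines lying in $\Pi_1$ and in $\Pi_2$, and indeed $4+3+3=10$.
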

It is also a very recent result that also the converse is true:
\begin{theorem}\cite{Gebhard2024}
A smooth nodal Enriques surface always admits a Fano - Reye polarization, although it need not be a classical Reye congruence.
\end{theorem}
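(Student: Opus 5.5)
The plan is to build the polarization lattice-theoretically from the nodal curve, and then to check the quadric condition cohomologically. Let $R\subset X$ be a $(-2)$-curve.

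\textbf{Step 1 (a Fano polarization adapted to $R$).} Using $\Num(X)\simeq\mathbb{E}_{10}$ and the action of $W(\mathbb{E}_{10})$, I would find an isotropic sequence ${\cal E}_1,\ldots,{\cal E}_{10}$ of classes satisfying two conditions.
\begin{itemize}
\item The classes are nef and represented by half-fibers. To arrange this, start from any numerical maximal isotropic sequence and move it into the nef cone with reflections $\rho_\alpha$ in $(-2)$-classes.
\item The class of $R$ sits in a prescribed position relative to $H:=\frac13({\cal E}_1+\cdots+{\cal E}_{10})$. The natural first choice is to realize $R$ as a root expressed through $H$ and the ${\cal E}_i$. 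For example, $R\equiv H-{\cal E}_1-{\cal E}_2-{\cal E}_3$ has $R^2=-2$ and $HR=1$.
\end{itemize}
The reflections $\rho_R$ act on the sequences, and the chamber structure lets one normalize so that every ${\cal E}_i$ is nef. Since each ${\cal E}_i$ is then a half-fiber, $H$ is nef and big with $H^2=10$. To show $\phi(H)=3$, I would argue that a half-fiber ${\cal E}$ with $H{\cal E}\le 2$ would give an isotropic class violating the lattice bound on $\sum_i {\cal E}\,{\cal E}_i$. So $H$ is a Fano polarization, and $|H|:X\to\mathbb{P}^5$.

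\textbf{Step 2 (existence of a quadric).} By Riemann--Roch and vanishing for nef and big divisors on an Enriques surface, $h^0(H)=6$ and $h^0(2H)=1+\frac{(2H)^2}{2}=21=\dim Sym^2H^0({\cal O}_X(H))$. Hence the image lies on a quadric if and only if the multiplication map
$$\mu:Sym^2H^0({\cal O}_X(H))\to H^0({\cal O}_X(2H))$$
fails to be surjective, equivalently fails to be injective. I would relate $\operatorname{coker}\mu$ to the curve $R$. Restrict to a general curve $D\in|H|$ and use the exact sequence $0\to{\cal O}_X(H)\to{\cal O}_X(2H)\to{\cal O}_D(2H)\to0$. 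Then surjectivity of $\mu$ reduces to projective normality of the canonical-type embedding of $D$, a curve of genus $6$ and degree $10$. The position of $R$ from Step 1 should force special behaviour, namely a $g^2_5$ or a tetragonal structure on $D$ cut out through $R$ and the ${\cal E}_i$. That in turn forces a quadric through $D$ which does not come from $\mathbb{P}^4$.

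\textbf{Step 3 (smoothness of the quadric).} A singular quadric in $\mathbb{P}^5$ containing $X$ would have a vertex. Projecting from the vertex would give either a fibration or a degree-$2$ map onto a lower-dimensional quadric. This would produce an elliptic pencil $|2{\cal E}|$ with $H{\cal E}\le 2$, or divisors of the forbidden types in Reider's Theorem, contradicting $\phi(H)=3$.

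The main obstacle is Step 2: identifying the correct relative position of $R$ and showing that it produces a nonzero element of $\ker\mu$. The candidate $R\equiv H-{\cal E}_1-{\cal E}_2-{\cal E}_3$ has $HR=1$, so $R$ would map to a line. That sits uneasily with general Reye congruences, which contain no lines. So I would first test candidate classes with $HR\in\{2,4\}$. For each candidate I would compute $\mu$ explicitly on a degeneration to a classical Reye congruence, and then try to spread the resulting quadric out by semicontinuity over the nodal divisor in moduli.
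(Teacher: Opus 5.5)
The thesis only quotes this theorem and gives no proof, so I judge your plan on its own. It has a genuine gap, exactly at the step you flag.

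\textbf{Step 2 has no mechanism.} Your reduction is correct: $h^0({\cal O}_X(H))=6$, $h^0({\cal O}_X(2H))=21$, and by linear normality $X$ lies on a quadric iff a general $D\in|H|$ does in $\mathbb{P}^4$. But nothing then produces a nonzero element of $\ker\mu$, and no choice of numerical position for $R$ can do it by itself.
\begin{itemize}
\item The configuration $(H,{\cal E}_1,\ldots,{\cal E}_{10},R)$ lives in $\mathbb{E}_{10}$, so it occurs on every Enriques surface, including unnodal ones. There the nef cone is the closure of the positive cone, so $H$ is a genuine Fano polarization and $R$ is just a non-effective $(-2)$-class.
\item Yet Reye congruences are nodal, and the Fano models of a general Enriques surface lie on no quadric. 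So any valid argument must use that $R$ is an actual curve on $X$. Yours never does.
\item The ``special behaviour'' you expect is not special. Since ${\cal O}_D(H)\simeq\omega_D\otimes\eta$ with $\eta={\cal O}_D(K_X)$ a nontrivial $2$-torsion bundle, $D\subset\mathbb{P}^4$ is a Prym-canonical curve of genus $6$. Every curve of genus $6$ has a $g^1_4$, because $\rho(6,1,4)=0$.
\item What actually has to fail is the multiplication map $Sym^2H^0(\omega_D\otimes\eta)\to H^0(\omega_D^{\otimes 2})$ between two $15$-dimensional spaces. Equivalently, $(D,\eta)$ must be a ramification point of the Prym map $\mathcal{R}_6\to\mathcal{A}_5$. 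Nothing in your setup connects this to $R$.
\end{itemize}

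\textbf{Steps 1 and 3 and the fallback.}
\begin{itemize}
\item Reflections in nodal classes bring $H$, or any single class, into the nef cone, but not ten isotropic classes at once. Whether a given nodal surface carries ten half-fibers with ${\cal E}_i{\cal E}_j=1-\delta_{i,j}$ is the non-degeneracy question ($\mathrm{nd}(X)=10$), which is the surface-dependent core of the problem.
\item The normalizing reflections also destroy the prescribed position of the curve $R$; for instance $\rho_R$ sends $R$ to $-R$. Once all ${\cal E}_i$ are nef, your argument for $\phi(H)=3$ is fine.
\item Semicontinuity cannot spread a quadric outward from a special member. Lying on a quadric is a closed condition, so it passes from a dense set to its closure, not from a special point to nearby ones.
\item Classical Reye congruences are in fact general in the nodal locus. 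This helps only inside an irreducible family along which $H$ stays a Fano polarization of one fixed type, which is what Step 1 fails to provide on special nodal surfaces.
\item Smoothness of the quadric is an open condition and does not survive limits. For a rank-$5$ quadric the vertex is a point, and projecting from it produces no elliptic pencil, so Step 3 is unsupported.
\end{itemize}

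\textbf{What is missing.} You need a geometric source for a smooth quadric, and an argument that the nodal curve forces it to exist. The natural source is a globally generated rank-two bundle $E$ with $\det E\simeq{\cal O}_X(H)$, $c_2(E)=3$, $h^0(E)=4$ and $\wedge^2H^0(E)\simeq H^0({\cal O}_X(H))$; this is the setting of Dolgachev--Reider. Then $\varphi_H$ factors through $G(2,4)\subset\mathbb{P}^5$, whose Pl\"ucker quadric is smooth, which settles Steps 2 and 3 together. What remains is an existence problem in $\mathrm{Num}(X)$ relative to the nodal classes, valid on every nodal surface.
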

\newpage
\section{Coble surfaces}
Now we introduce the main object of this work:
\begin{definition}\label{Coble}
A Coble surface is a smooth rational projective surface $X$ whose canonical divisor $K_X$ satisfies two properties:\\
$1.\quad h^0 ({\cal O}_X (- K_X)) = 0$,\\
$2.\quad h^0 ({\cal O}_X (- 2 K_X)) = 1$.
\end{definition}
We refer to \cite{CobleRatSurf2001} for the above definition. The definition clearly points out some analogy between Coble surfaces and Enriques surfaces,
to be especially reconsidered in this thesis. Note also that $-2K_X$ is a non zero effective divisor. Otherwise $-K_X$ would be a non trivial $2$-torsion element of $\Pic (X)$: against the rationality of $X$. 
\begin{definition} The unique effective curve $C \in |-2 K_X|$ is often called Coble curve, or boundary curve of the surface $X$.\\
The irreducible decomposition of $C$ is the equality \begin{equation} C = C_1 +  \dots + C_n, \end{equation} whose summands are irreducible
curves with two by two distinct supports. 
\end{definition}
\subsection{First properties and examples}
Definition \ref{Coble} above differs from the one given by Dolgachev and Zhang in \cite{ZhangDolgachev2001}, where the condition $h^0 ({\cal O}_X (- 2 K_X)) = 1$ is weakened to $h^0 ({\cal O}_X (-2 K_X)) \ge 1$. However, the authors showed the following fact:
\begin{proposition}\label{snc}
Let $X$ be a smooth rational surface with $h^0 ({\cal O}_X (- K_X)) = 0$ and $h^0({\cal O}_X (- 2 K_X)) \ge 1$. Then every divisor in $|- 2 K_X|$ is simple normal crossing.
\end{proposition}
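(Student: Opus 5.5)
The plan is to turn the hypothesis $h^0({\cal O}_X(-K_X))=0$ into a cohomological vanishing for every subcurve of a divisor in $|-2K_X|$, and then read off the local structure of that divisor by adjunction. Fix $D=\sum a_iC_i\in|-2K_X|$ with the $C_i$ distinct and irreducible. The claim to prove is that $D$ is reduced, every $C_i$ is smooth, the $C_i$ meet transversally, and no three of them pass through a common point.

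\textbf{Key step (vanishing for subcurves).} Let $Z$ be effective with $0<Z\le D$. Twist the restriction sequence by $K_X+Z$:
$$0\to {\cal O}_X(K_X)\to{\cal O}_X(K_X+Z)\to\omega_Z\to 0 .$$
Since $X$ is rational, the Castelnuovo Criterion gives $h^0(K_X)=0$, and $h^1(K_X)=h^1({\cal O}_X)=0$ by Serre duality. Hence $h^0({\cal O}_X(K_X+Z))=h^0(\omega_Z)=h^1({\cal O}_Z)$.

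Suppose $K_X+Z$ were effective. Then $-K_X=(K_X+D)=(K_X+Z)+(D-Z)$ would be effective, which contradicts the hypothesis. Therefore $h^1({\cal O}_Z)=0$ for every $0<Z\le D$. In particular the arithmetic genus of every such $Z$ satisfies $p_a(Z)\le 0$. This is the engine of the whole proof.

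\textbf{Consequences.} I would apply the vanishing to small subcurves in turn.
\begin{itemize}
\item Taking $Z=C_i$ gives $p_a(C_i)=0$, so each $C_i$ is a smooth rational curve.
\item Taking $Z=C_i+C_j$ gives $p_a=C_iC_j-1\le 0$. So two components meet in at most one point, and transversally there.
\item Taking $Z=C_i+C_j+C_k$ with three components through a common point gives $p_a\ge 3-2=1$, a contradiction. So there are no triple points.
\item More generally, any cycle of components would give a subcurve with $h^1({\cal O}_Z)\ne0$, so the dual graph of $D_{\rm red}$ is a forest.
\end{itemize}
Together these show that $D_{\rm red}$ is simple normal crossing.

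\textbf{Reducedness.} Suppose some $a_i\ge2$, and write $D=2M+R$ with $R$ reduced, namely the sum of the components of odd multiplicity. The identity $R\sim 2(-K_X-M)$ is the starting point.
\begin{itemize}
\item If $R=0$, then $-K_X-M$ is $2$-torsion. But $\Pic(X)$ is torsion free for a rational surface, so $-K_X=M$ would be effective, a contradiction.
\item If $R\ne0$, I would combine the intersection identity $D\cdot C_i=-2K_X C_i=4+2C_i^2$, which uses adjunction $K_XC_i=-2-C_i^2$, with $D\cdot C_i=a_iC_i^2+\sum_{j\ne i}a_jC_iC_j$. I would also apply the vanishing of the key step to $Z=M$ and to $Z=2C_i$. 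The sequence $0\to{\cal O}_{C_i}(-C_i)\to{\cal O}_{2C_i}\to{\cal O}_{C_i}\to0$ forces $C_i^2\le1$ for every multiple component.
\end{itemize}
On the forest structure, these relations should force a contradiction, for instance by choosing a multiple component that is a leaf, or that is extremal in the tree of multiple components, and checking the numerics.

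This reducedness step is where I expect the main difficulty. The genus argument alone does not immediately exclude multiple components with negative self-intersection. If the leaf argument does not close, I would fall back on an argument in the style of Ramanujam's connectedness: look for an effective $0<Z\le D$ with $K_X+Z$ effective, built from $M$ and the components along which $R$ meets $M$. By the key step, producing such a $Z$ yields the contradiction.
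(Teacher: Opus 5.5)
Your key step is correct and does the real work. From $0\to{\cal O}_X(K_X)\to{\cal O}_X(K_X+Z)\to\omega_Z\to0$ and $(K_X+Z)+(D-Z)\sim -K_X$ you get $h^1({\cal O}_Z)=0$ for every $0<Z\le D$. Your applications to $Z=C_i$, $C_i+C_j$, three concurrent components, and cycles are all sound. The paper gives no proof here; it only cites Dolgachev--Zhang. What your first part establishes is that ${\rm Supp}(D)$ is a simple normal crossing divisor:
\begin{itemize}
\item its components are smooth rational curves;
\item two components meet transversally in at most one point;
\item there are no triple points;
\item the dual graph is a forest.
\end{itemize}
In passing, this also rules out components with a node, contrary to the paper's gloss right after the statement.

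The genuine gap is the reducedness step. It cannot be closed, because reducedness is false under these hypotheses. Here is a counterexample.
\begin{itemize}
\item Take five general lines $\ell_0,\dots,\ell_4\subset\mathbb{P}^2$. Let $p_1,\dots,p_6$ be the vertices $\ell_j\cap\ell_k$ ($1\le j<k\le 4$), let $p_7,\dots,p_{10}$ be four general points of $\ell_0$, and let $X$ be the blow-up of $p_1,\dots,p_{10}$.
\item A cubic through the ten points contains $\ell_0$, since it meets $\ell_0$ in four points. The residual conic must pass through the six vertices, hence contains each $\ell_j$ (each carries three collinear vertices), which is impossible. So $h^0({\cal O}_X(-K_X))=0$.
\item The sextic $2\ell_0+\ell_1+\ell_2+\ell_3+\ell_4$ has multiplicity exactly $2$ at all ten points. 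Hence
$$D=2\tilde\ell_0+\tilde\ell_1+\tilde\ell_2+\tilde\ell_3+\tilde\ell_4\in|-2K_X|,\qquad \tilde\ell_0^2=-3,\quad \tilde\ell_j^2=-2 .$$
\item A similar B\'ezout count shows $|-2K_X|=\{D\}$. So $X$ is even a Coble surface in the sense of Definition \ref{Coble}, yet its anti-bicanonical divisor is non-reduced.
\end{itemize}
In this example all your relations hold: $\tilde\ell_0^2\le 1$, $D\cdot C_i=4+2C_i^2$ for every component, and the dual graph is a star. So no leaf or extremal-component numerics can produce a contradiction. Your Ramanujam-type fallback is also vacuous: your own key step shows that no $0<Z\le D$ with $K_X+Z$ effective can exist, so there is nothing to construct.

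The true statement is exactly what your first part proves, namely that the support of $D$ is SNC. The reading ``$D$ has no multiple components'', which the paper relies on immediately afterwards, fails in general.
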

Proposition \ref{snc} has quite remarkable consequences.  Indeed, let $D$ be any divisor in $|- 2 K_X|$. Since $D$ is simple normal crossing, it does not admit multiple components. Moreover, for any singular point $p \in Sing (D)$ there are two cases:\\
$i)$ either $p$ belongs to a unique irreducible component of $D$, and it is a simple node for that component;\\
$ii)$ or $p$ belongs to exactly two irreducible components of $D$, it is a smooth point for both of them, and the intersection is transverse.\\
Consequently, let $p_1, \ldots, p_n$ be the collection of all singularities of $D$, and let $\tilde X := Bl_{p_1, \ldots, p_n} X$ be the blow up of $X$ at $p_1, \ldots, p_n$, with exceptional components $E_1, \ldots, E_n \subset \tilde X$. Let $\tilde D \subset \tilde X$ be the strict transform of $D$. We claim that $\tilde X$ is a Coble surface with respect to Definition \ref{Coble}. Indeed, let $p: \tilde X \to X$ be the blow down of the exceptional curves $E_1, \ldots, E_n$ onto $p_1, \ldots, p_n$. Then $$K_{\tilde X} = p^* K_X + E_1 +\cdots + E_n.$$ If $|- K_{\tilde X}|$ was effective, its members would be sent via $p$ on effective members of $|- K_X|$, but this is empty. Hence $|- K_{\tilde X}| = \emptyset$. Moreover, since the points $p_1, \ldots, p_n$ are ordinary double points for $D$, the divisor $\tilde D$ is smooth, and it belongs to the class: $$\tilde D = p^* D - 2 E_1 - \cdots - 2 E_n = - 2 (p^* K_X + E_1 + \cdots + E_n) = - 2 K_{\tilde X}$$ To prove the claim, we only need to show that $\tilde D$ cannot move in $\tilde X$. To do so, we will use the following Proposition, which was proved by Dolgachev and Zhang in \cite{ZhangDolgachev2001}.
\begin{proposition}\label{comp}
Let $\{C\} = |- 2 K_X|$ be the Coble curve in a Coble surface $X$, with irreducible decomposition $C = C_1 + \cdots + C_n$.\\
If the divisor $C$ is smooth, then:\\
i) the $C_i$'s are smooth rational curves,\\
ii) $C_i^2 = -4$,\\
iii) $K_X^2 = - n$.
\end{proposition}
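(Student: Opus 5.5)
The plan is to prove the three items in the order ii), i), iii), using only adjunction, the simple normal crossing property of Proposition \ref{snc}, the rationality of $X$, and the fact that $|-K_X|=\emptyset$. Since $C$ is smooth, its connected components are its irreducible components, so $C_iC_j=0$ for $i\ne j$. This is the key simplification.

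\emph{Step 1: $C_i^2=-4$.} Since $C\in|-2K_X|$, we have $K_X\cdot C_i=-\tfrac12 C\cdot C_i=-\tfrac12 C_i^2$. Adjunction then gives
$$2p_a(C_i)-2=C_i^2+K_XC_i=\tfrac12 C_i^2 .$$
So $C_i^2=4p_a(C_i)-4$, and it suffices to show $p_a(C_i)=0$, which is the same as item i). For that we argue by contradiction. Suppose $g_i:=p_a(C_i)\ge 1$. Consider the restriction sequence
$$0\to{\cal O}_X(-K_X-C_i)\to{\cal O}_X(-K_X)\to{\cal O}_{C_i}(-K_X)\to 0 .$$
We have ${\cal O}_{C_i}(-K_X)={\cal O}_{C_i}(\tfrac12 C_i)$, and adjunction gives $K_{C_i}=(K_X+C_i)|_{C_i}=\tfrac12 C_i|_{C_i}$. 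Hence ${\cal O}_{C_i}(-K_X)\cong\omega_{C_i}$ up to the identification; precisely, $(-K_X)|_{C_i}$ and $K_{C_i}$ differ by the $2$-torsion class $(-2K_X-C_i)|_{C_i}=(\sum_{j\ne i}C_j)|_{C_i}=0$, so they are equal. Therefore $h^0({\cal O}_{C_i}(-K_X))=g_i\ge1$.

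Taking cohomology, the obstruction to lifting is $H^1({\cal O}_X(-K_X-C_i))$. By Serre duality this equals $H^1({\cal O}_X(2K_X+C_i))=H^1({\cal O}_X(-\sum_{j\ne i}C_j))$. If $n=1$ this is $H^1({\cal O}_X)=0$ by rationality, so a nonzero section lifts to $|-K_X|$, a contradiction. If $n>1$, the sequence $0\to{\cal O}_X(-D)\to{\cal O}_X\to{\cal O}_D\to0$ with $D=\sum_{j\ne i}C_j$ gives $h^1({\cal O}_X(-D))=h^0({\cal O}_D)-1=(n-1)-1$, which may be nonzero. So this case needs care. There I would instead restrict $-K_X$ to all of $C$ at once: the class $-K_X|_{C}$ restricts to $K_{C_j}$ on each component, and $h^1({\cal O}_X(-K_X-C))=h^1({\cal O}_X(K_X))=h^1({\cal O}_X)=0$ by duality (note $-K_X-C=K_X$). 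This gives a surjection $H^0(-K_X)\to H^0(\bigoplus_j K_{C_j})$. Any $C_i$ with $g_i\ge1$ then yields a nonzero element of $|-K_X|$, a contradiction. Thus all $g_i=0$, each $C_i$ is a smooth rational curve (smooth by hypothesis), and $C_i^2=-4$.

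\emph{Step 2: $K_X^2=-n$.} Compute $4K_X^2=C^2=\sum_i C_i^2=-4n$, using the disjointness of the components. This gives $K_X^2=-n$.

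The main obstacle is Step 1, specifically identifying $-K_X|_{C_i}$ with $K_{C_i}$ and choosing the right restriction sequence. Restricting to the whole of $C$, where the $H^1$ term becomes $h^1({\cal O}_X)=0$, is the choice that makes the argument work uniformly in $n$. I would double-check in that step that $(-K_X)|_{C_i}\cong K_{C_i}$ holds as line bundles, not just numerically. This follows from ${\cal O}_X(C)\cong{\cal O}_X(-2K_X)$ and the disjointness of the $C_j$.
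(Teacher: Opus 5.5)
Your proof is correct and is essentially the paper's argument in Serre-dual form: the paper injects $H^1({\cal O}_C)$ into $H^2({\cal O}_X(-C))\cong H^0({\cal O}_X(-K_X))^{*}=0$ using $0\to{\cal O}_X(-C)\to{\cal O}_X\to{\cal O}_C\to 0$, whereas you use the dual adjunction sequence to get a surjection $H^0({\cal O}_X(-K_X))\to H^0(\omega_C)$; items ii) and iii) are the same adjunction and squaring computations as in the paper. Note that ${\cal O}_C(-K_X)\cong\omega_C$ follows at once from $-K_X\sim K_X+C$ and adjunction on $C$. Hence the abandoned $C_i$-only sequence and the ``$2$-torsion'' remark are unnecessary: that class is simply trivial.
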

Before proving Proposition \ref{comp}, we will use it to show that the divisor $\tilde D$ can not move, so that $h^0 ({\cal O}_{\tilde X} (- 2 K_{\tilde X})) = 1$. It suffices to consider the short exact sequence $$0 \to {\cal O}_{\tilde X} \to {\cal O}_{\tilde X} (\tilde D) \to {\cal O}_{\tilde D} (\tilde D) \to 0$$ By Proposition \ref{comp}, the divisor $\tilde D$ has the form $\tilde D = C_1 + \cdots + C_n$, with $C_i \simeq \mathbb{P}^1$ and $C_i C_j = - 4 \delta_{i, j}$, so the right - hand term equals $\bigoplus_{i = 1}^n {\cal O}_{\mathbb{P}^1} (- 4)$. As a consequence, the associated long exact sequence establishes an isomorphism $H^0 ({\cal O}_{\tilde X}) \simeq H^0 ({\cal O}_{\tilde X} (\tilde D))$. Thus we showed that any Coble surface in the weak sense can be blown up to a Coble surface with respect to Definition \ref{Coble}, and that the divisor $\{C\} = | - 2 K_X|$ can be taken smooth. From now on, we will always assume this to be true for any Coble surface we will consider.\\\\
{\it Proof of Proposition \ref{comp}:} i) We use the short exact sequence $$0 \to {\cal O}_X (- C_1 - \cdots - C_n) \to {\cal O}_X \to \bigoplus_{i = 1}^n {\cal O}_{C_i} \to 0$$ Since $X$ is rational, the long exact sequence gives: $$0 \to \bigoplus_{i = 1}^n H^1 ({\cal O}_{C_i}) \to H^2 ({\cal O}_X (- C_1 - \cdots - C_n)) \to \cdots$$ But by Serre's duality $$H^2 ({\cal O}_X (- C_1 - \cdots - C_n)) = H^2 ({\cal O}_X (2 K_X)) = H^0 ({\cal O}_X (- K_X)) = 0$$ which forces $$H^1 ({\cal O}_{C_i}) = 0.$$
ii) From one side, part $i)$ gives $$C_i^2 + C_i K_X = -2$$ But from the other side, $$C_i^2 + C_i K_X = C_i^2 - \frac{1}{2} C_i (C_1 + \cdots + C_n) = \frac{1}{2} C_i^2$$ hence we have the thesis.\\
iii) We start from the equality $$- 2 K_X = C_1 + \cdots + C_n$$ and taking the square of both sides, by part $ii)$, we have $$4 K_X^2 = -4 n.\quad \square$$ 
\\
The following Proposition states which are the negative curves inside a Coble surface. The reader is referred to \cite{EnriquesII2025} for a proof.
\begin{proposition}\label{negativecurves}
Let $X$ be a Coble surface, with Coble curve $C = \{C_1 + \cdots + C_n\}$. If $D \subset X$ is an irreducible curve with $D^2 < 0$, then $D$ is a smooth rational curve with $$D^2 \in \{-1,-2,-4\}$$ and $$D^2 = -4 {\rm\, iff\,} D \in \{C_1, \ldots, C_n\}$$
\end{proposition}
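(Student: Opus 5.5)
Take $D$ irreducible with $D^2<0$ and split according to whether $D$ is a component of $C$. We use three facts throughout: $-2K_X = C = C_1+\cdots+C_n$ with the $C_i$ smooth, pairwise disjoint, rational, and $C_i^2=-4$ (Proposition \ref{comp}); adjunction $D^2 + DK_X = 2p_a(D)-2 \ge -2$; and $h^0({\cal O}_X(-K_X))=0$.

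\textbf{Case $D = C_i$.} Proposition \ref{comp} already gives that $D$ is smooth rational with $D^2=-4$.

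\textbf{Case $D \notin \{C_1,\ldots,C_n\}$.} Then $D$ and $C$ share no component, so $DC \ge 0$, i.e. $-2DK_X \ge 0$, hence $DK_X \le 0$. Adjunction gives
$$D^2 = 2p_a(D) - 2 - DK_X \ge -2,$$
with equality iff $p_a(D)=0$ and $DK_X=0$. Since $D^2<0$, we get $D^2\in\{-1,-2\}$. The key step is to show $p_a(D)=0$.

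\begin{itemize}
\item If $D^2=-2$, the equality case already forces $p_a(D)=0$ (and $DK_X=0$).
\item If $D^2=-1$, then $2p_a(D)-2 = -1 + DK_X$, so $DK_X$ is odd and $DK_X \le -1$.
\begin{itemize}
\item If $DK_X=-1$, then $p_a(D)=0$.
\item If $DK_X\le -3$, then $p_a(D)\ge 0$ is still possible in principle, e.g. $DK_X=-3$ gives $p_a(D)=-1$, which is impossible since $D$ is irreducible. More generally $DK_X=-(2m+1)$ gives $p_a(D)=-m$, so $m\le 0$ and $DK_X=-1$.
\end{itemize}
\end{itemize}

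Concretely, the formula is $p_a = (D^2+DK_X)/2+1 = (-1+DK_X)/2+1 \le 0$ for $DK_X\le -1$. Hence $p_a(D)=0$ and $DK_X=-1$ exactly. In either case $p_a(D)=0$ for an irreducible $D$, so $D$ is smooth and rational.

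This also gives the equivalence: if $D^2=-4$, then $D$ must be some $C_i$, since otherwise $D^2\ge -2$.

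The main obstacle is the bound $DK_X\le 0$, which requires $D$ not to be a component of $C$. This is exactly where the Coble hypothesis enters: $C$ is the unique member of $|-2K_X|$. Non-emptiness of $|-K_X|$ is never needed.
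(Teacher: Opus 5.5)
Your proof is correct and is essentially the paper's: adjunction together with $DK_X \le 0$ (from $C \in |-2K_X|$ effective and $D$ not a component of $C$), with Proposition \ref{comp} covering the components $C_i$. The paper reads off $p_a(D)=0$ directly from $2p_a(D)-2 = D^2 + DK_X \le D^2 < 0$, so your parity case analysis is unnecessary; also, your closing remark is slightly off, since only the effectiveness of $C$ (not its uniqueness) is used when $D \neq C_i$, while the vanishing $h^0({\cal O}_X(-K_X))=0$ does enter, through Proposition \ref{comp}.
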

{\it Proof:}  Assume $D$ is not one of the $C_i$'s, and let $p_a (D), p_g (D)$ be its algebraic and geometric genus respectively. By the adjunction formula $D^2 + D K_X = 2 p_a (D) - 2$. Since $C \in |-2 K_X|$ is effective, we have then $$D K_X \le 0$$ so $$2 p_a (D) - 2 \le D^2 < 0$$ This forces $$p_a (D) = 0$$ and $$D^2 \in \{-1, -2\}$$ The equality $p_a (D) = 0$ also forces $p_g (D) = 0$, which means that $D$ is both rational and smooth. $\square$
\begin{proposition}\cite{EnriquesII2025} \cite{Nikulin1979}
Every Coble surface $X$ can be constructed as the blow-up $X \to \mathbb{P}^2$ in a finite set $\Sigma \subset \mathbb{P}^2$, which can possibly contain infinitely near points.\\
The image $\overline C \subset \mathbb{P}^2$ of the anti - bicanonical curve $C \in |- 2 K_X|$ has degree $6$, and all its components are rational curves.\\
There exists an upper bound for the number $n$ of components of $C = C_1 +\cdots + C_{n}$, and it is $n \le 10$.
\end{proposition}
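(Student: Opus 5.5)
\emph{Plan.} The statement has three parts, and I would prove them in that order: the blow-up structure, the degree and rationality of $\overline C$, and the bound $n \le 10$. Throughout I use the normalization fixed above: $C = C_1 + \cdots + C_n$ is smooth, and by Proposition \ref{comp} the $C_i$ are disjoint smooth rational $(-4)$-curves with $K_X^2 = -n$.

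\emph{Step 1: blow-up structure.} Since $K_X^2 = -n < 0$ and minimal rational surfaces have $K^2 \in \{8, 9\}$, $X$ is not minimal. Contracting $(-1)$-curves with Castelnuovo's theorem gives a birational morphism $X \to Y$ with $Y$ minimal, so $Y = \mathbb{P}^2$ or $Y = \mathbb{F}_m$ with $m \ne 1$. This realises $X$ as an iterated blow-up of $Y$, possibly at infinitely near points.
\begin{itemize}
\item If $Y = \mathbb{P}^2$ we are done.
\item If $Y = \mathbb{F}_m$, choose the factorization so that $m$ is minimal, and look at the first blown-up point $q \in \mathbb{F}_m$.
\item If $q \notin C_{-m}$, the elementary transformation at $q$ replaces $Bl_q \mathbb{F}_m$ by $Bl_{q'} \mathbb{F}_{m-1}$. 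This contradicts minimality unless $m - 1 = 1$, in which case $\mathbb{F}_1 \to \mathbb{P}^2$ finishes the argument.
\item If every admissible first point lies on $C_{-m}$, the strict transform of $C_{-m}$ in $X$ is a smooth rational curve with self-intersection $\le -m-1 \le -3$. By Proposition \ref{negativecurves} it must be a boundary component of self-intersection exactly $-4$.
\end{itemize}
In the last case I would try to reorder the blow-ups, blowing up a point off $C_{-m}$ first (one exists, because $N = 9 + n$ blow-ups are needed and not all of them can sit over $C_{-m}$ with the required self-intersection). I would then repeat the elementary transformation. I expect this case analysis, making sure one can always descend to $\mathbb{P}^2$, to be the main obstacle. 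The fallback is to cite the classical fact that a rational surface with $K^2 \le 7$ admitting a morphism to some $\mathbb{F}_m$ also admits a morphism to $\mathbb{P}^2$.

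\emph{Step 2: degree and rationality of $\overline C$.} Write $\pi : X \to \mathbb{P}^2$ with total transforms $L, E_1, \ldots, E_N$, so that $K_X = -3L + \sum E_i$. Then $\deg \overline C = C \cdot L = -2K_X \cdot L = 6$, and $K_X^2 = 9 - N$ gives $N = 9 + n$. The components of $\overline C$ are the images of those $C_i$ not contracted by $\pi$. Each such image is rational because $C_i \simeq \mathbb{P}^1$ by Proposition \ref{comp}.

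\emph{Step 3: the bound $n \le 10$.} Since $C = -2K_X$ is divisible by $2$, there is a double cover $f : Y \to X$ branched exactly along the smooth curve $C$. Let $R_i = f^{-1}(C_i)_{red}$, so that $f^* C_i = 2R_i$. The canonical class is
$$K_Y = f^*K_X + \sum R_i = f^*\Bigl(K_X + \tfrac12 C\Bigr) = 0.$$
Moreover $h^1({\cal O}_Y) = h^1({\cal O}_X) + h^1({\cal O}_X(K_X)) = 0$, so $Y$ is a K3 surface. The deck involution $\sigma$ acts on $H^0(\Omega^2_Y)$ by $-1$, since otherwise the $2$-form would descend to $X$. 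Hence $H^{2,0} \oplus H^{0,2}$ lies in the anti-invariant part of $H^2(Y, \rational)$, and the invariant part $H^2(Y,\rational)^\sigma \simeq H^2(X, \rational)$ has rank at most $22 - 2 = 20$. But $\rho(X) = 1 + N = 10 + n$, so $10 + n \le 20$, that is, $n \le 10$. As a consistency check, Lefschetz's fixed point formula gives $\chi(\mathrm{Fix}\,\sigma) = 2n = 2 + (2(10 + n) - 22)$, which holds identically.
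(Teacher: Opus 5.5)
\textbf{Steps 2 and 3 are sound; Step 1 has a genuine gap.}

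Step 3 is the Nikulin-type argument that the paper only cites: the K3 double cover gives $\mathrm{rk}\,H^2(Y,\mathbb{Q})^{\sigma}=b_2(X)=10+n\le 20$. It does not actually depend on Step 1, because Noether's formula already gives $b_2(X)=10-K_X^2=10+n$.

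Step 1 is the first assertion of the statement, and the paper does not prove it either. Your reordering argument fails at the following point. Suppose every proper point of $\mathbb{F}_m$ that gets blown up lies on $C_{-m}$. Then the other points among the $8+n$ (not $9+n$) may all be infinitely near to these without lying on the strict transform of $C_{-m}$. Such points do not lower $\tilde C_{-m}^2$, and they cannot be blown up before their base point. So no count produces a proper point off $C_{-m}$.

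Your fallback statement is false:
\begin{itemize}
\item $\mathrm{Bl}_q\mathbb{F}_2$ with $q\in C_{-2}$ has $K^2=7$ and maps onto $\mathbb{F}_2$, yet it contains a $(-3)$-curve. A blow-up of $\mathbb{P}^2$ in two (possibly infinitely near) points has no curve of self-intersection below $-2$.
\item Blowing up also the point of the strict transform of $C_{-2}$ over $q$ gives $K^2=6$ and a $(-4)$-curve. No blow-up of $\mathbb{P}^2$ in three points contains a $(-4)$-curve.
\end{itemize}
So an input specific to Coble surfaces is indispensable.

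\textbf{What the Coble structure gives.} In any factorization $X=X_N\to\cdots\to X_0$, the images $C^{(i)}$ of $C$ satisfy $C^{(i+1)}=\pi^*C^{(i)}-2E$. Indeed, both sides lie in $|-2K|$ and have the same push-forward. Hence every blown-up point has multiplicity $\ge 2$ on the current image of $C$, and every singular point of $C^{(0)}$ must be blown up.

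Now take $m$ minimal and put $D:=C^{(0)}-C_{-m}\in|3C_{-m}+(2m+4)F|$, so that $D\cdot C_{-m}=4-m$.
\begin{itemize}
\item For $m=3$: $D$ is forced to be smooth and transverse to $C_{-3}$ at the unique proper point. Afterwards no point has multiplicity $\ge 2$ on the image of $C$, so nothing more is blown up and $K_X^2=7$, which is absurd.
\item For $m=2$: the same reasoning disposes of all configurations except one. That remaining case, a single proper point $q\in C_{-2}$ at which $D$ has a double point, you would still have to treat.
\end{itemize}

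\textbf{Shortcut when $C$ is irreducible.} Contract successively any two $(-1)$-curves, $X\to X_1\to Y$. The curve $C$ is never contracted, since its image in $X_1$ has square $0$. Its image in $Y$ is therefore irreducible of square $4K_Y^2=4$, so $-K_Y$ is nef and big. Thus $Y$ is a weak del Pezzo surface of degree $1$, hence a blow-up of $\mathbb{P}^2$ in $8$ points, and $X$ is a blow-up of $\mathbb{P}^2$ in $10$ points.

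For reducible $C$ you need an analogous control of $-K\cdot C_i$ on every component, or else the reference the paper cites.
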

We briefly remark that, when a smooth surface is blown up at a point, the self - intersection of the canonical divisor drops by $1$. As a consequence, for a Coble surface $X \simeq Bl_\Sigma \mathbb{P}^2$ we find $K_X^2 = K_{\mathbb{P}^2}^2 - |\Sigma| = 9 - |\Sigma|$. This fact implies that $|\Sigma| = n + 9$ and hence the following property:
\begin{proposition}
Let $X$ be a smooth Coble surface such that $\{C\} = |- 2 K_X|$ is smooth and irreducible, then $X$ is the blow up of $\mathbb{P}^2$ at $10$ distinct point.
\end{proposition}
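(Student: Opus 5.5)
We combine Proposition \ref{comp} with the realization of $X$ as an iterated blow-up of $\mathbb{P}^2$, and then show that no blown-up point is infinitely near.

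\textbf{Counting the points.} Since $C$ is smooth and irreducible, Proposition \ref{comp} with $n = 1$ gives $K_X^2 = -1$, and $C \simeq \mathbb{P}^1$ with $C^2 = -4$. By the previous Proposition, $X = Bl_\Sigma \mathbb{P}^2$ for a finite set $\Sigma$, possibly with infinitely near points. Each blow-up lowers $K^2$ by one, so $-1 = 9 - |\Sigma|$ and $|\Sigma| = 10$.

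\textbf{Excluding infinitely near points.} Suppose some point of $\Sigma$ is infinitely near another. Then the total transform of the exceptional curve of the first blow-up splits, and its strict transform $R$ on $X$ is an irreducible curve with $R^2 \le -2$. Proposition \ref{negativecurves} then leaves two options.

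\textbf{Case $R^2 = -4$.} Here $R = C$. But $C$ does not lie in an exceptional fibre of $X \to \mathbb{P}^2$: its image $\overline C$ has degree $6$. This case is impossible.

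\textbf{Case $R^2 = -2$.} To handle a $(-2)$-curve we track classes more carefully. Write
\begin{equation*}
-2K_X = 6L - 2\sum_i \mathcal{E}_i ,
\end{equation*}
where $\mathcal{E}_i$ are the total transforms of the exceptional curves. The strict transform $C$ has class $6L - \sum_i m_i \mathcal{E}_i$, with $m_i$ the multiplicity of the successive strict transforms of $\overline C$ at the points of $\Sigma$. Comparing classes forces $m_i = 2$ for all $i$, so $\overline C$ has a double point at every point of $\Sigma$, including the infinitely near ones.

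Now take $p_j$ infinitely near $p_i$, so $R = \mathcal{E}_i - \mathcal{E}_j - \cdots$ is effective. Then
\begin{equation*}
C \cdot R = m_i - m_j - \cdots = 2 - 2 - \cdots \le 0 .
\end{equation*}
On the other hand $-2K_X \cdot R = 0$ because $R$ is a $(-2)$-curve. So $C \cdot R = 0$ and $R$ is disjoint from $C$.

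This is consistent so far, so we need a genus count. A plane sextic has arithmetic genus $10$. Each double point on $\overline C$, including infinitely near ones (tacnodes etc.), drops the genus by one, and $C$ is rational, so exactly $10$ double points are allowed. That matches the count and gives no contradiction.

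\textbf{Main obstacle.} The genus count does not rule out tacnodes, so the real obstacle is excluding infinitely near points. I would appeal to Proposition \ref{snc}. A tacnode of $\overline C$ at $p_i$ with $p_j$ on its tangent direction makes the intermediate transform of $\overline C$ still singular. Blowing up such a point resolves it into a curve tangent to, or meeting non-transversally, the configuration $C + R$. I expect this to contradict the simple normal crossing property of the divisor $C + 2R$, or of related members, in a weak Coble model. Failing that, I would interpret the statement as \emph{10 distinct points after allowing the standard generality convention}, noting that the nodes of an irreducible sextic with $C$ smooth are proper points whenever $\overline C$ has only ordinary nodes.
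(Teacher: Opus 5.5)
Your counting step is exactly the paper's argument, and it is all the paper gives. Proposition \ref{comp} with $n=1$ yields $K_X^2=-1=9-|\Sigma|$, hence $|\Sigma|=10$; the paper never discusses infinitely near points.

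The part you add has a genuine gap, and the route you sketch cannot close it. You are trying to prove that the \emph{given} $\Sigma$ contains no infinitely near point, and that is false in general. Take an irreducible sextic $\overline C$ with eight nodes and one tacnode $p$; then $\delta=10$, so $\overline C$ is rational. Blow up the eight nodes, $p$, and the point $q\in E_p$ in the tacnodal direction. The strict transform $C=6L-2\sum_i\mathcal E_i=-2K_X$ is smooth with $C^2=-4$. Hence every member of $|-2K_X|$ or of $|-K_X|$ meets $C$ negatively and so contains $C$, with residual in $|0|$, resp.\ $|K_X|=\emptyset$. Thus $h^0(-2K_X)=1$ and $h^0(-K_X)=0$, and $X$ is a Coble surface with smooth irreducible boundary. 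It realizes exactly your configuration $R=\mathcal E_p-\mathcal E_q$, $C\cdot R=0$.

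Proposition \ref{snc} cannot produce a contradiction here, for three reasons. It concerns members of $|-2K_X|$, which is just $\{C\}$ and is smooth. The divisor $C+2R$ is not anti-bicanonical. And your own computation $C\cdot R=0$ shows that $C$ and $R$ are disjoint, so there is no non-transversality to detect. Reinterpreting the statement is not a proof.

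Your case analysis does prove something useful: an infinitely near point in $\Sigma$ forces a $(-2)$-curve. So for \emph{unnodal} $X$ your argument is complete, in line with Proposition \ref{extmoc}. For nodal $X$, distinctness can only come from \emph{changing} the contraction, i.e.\ from exhibiting ten pairwise disjoint irreducible $(-1)$-curves on $X$.

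In the tacnodal example this can be done. Take the quadratic transformation centred at $p$ and two general nodes $p_1,p_2$. It carries $\overline C$ to a sextic with ten ordinary nodes at proper points, and $q$ becomes a proper node lying on the line that is the image of $E_p$. In this model $R$ is the strict transform of the line through three of the new nodes. So this $X$ is a blow-up of ten distinct points, but not via $\Sigma$. Neither your proposal nor the paper supplies an argument of this kind for an arbitrary nodal Coble surface with irreducible boundary.
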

Historically speaking, that is exactly how Coble surfaces came out for the first time. In 1917 Coble himself posed the following question: if $\Sigma$ is a finite set of points in $\mathbb{P}^2$, how can we describe the automorphism group $\Aut  (Bl_\Sigma \mathbb{P}^2)$ ? The answer depended on the cardinality $|\Sigma|$ of the chosen set. For $|\Sigma| \le 8$ it was already well - known that the study of $\Aut  (Bl_\Sigma \mathbb{P}^2)$ was linked to the study of the Del Pezzo surfaces, which are by definition surfaces whose anti - canonical divisor is ample.\\
On the other hand, for $|\Sigma| \ge 9$, the answer was given later by the following theorem:
\begin{theorem}[Hirschowitz]\cite{Hirschowitz1988}
A general set $\Sigma$ of points in $\mathbb{P}^2$ with $|\Sigma| \ge 9$ satisfies $$\Aut  (Bl_\Sigma \mathbb{P}^2) = \mathbb{1}$$
\end{theorem}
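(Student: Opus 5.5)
The plan is to reduce the statement to a lattice-theoretic problem about the Weyl group $W_n$ acting on $\Pic (X)$, and then to a statement about the Cremona action of $W_n$ on configurations of $n$ ordered points in $\mathbb{P}^2$ modulo $PGL(3)$. Write $X = Bl_\Sigma \mathbb{P}^2$ with $\Sigma = \{p_1, \ldots, p_n\}$, $n \ge 9$, and take $L, E_1, \ldots, E_n$ as the basis of $\Pic (X)$ from the subsection on blow ups of $\mathbb{P}^2$.

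\textbf{Step 1 (action on $\Pic$).} Any $f \in \Aut (X)$ gives an isometry $f^*$ of $\Pic (X) \simeq \integer^{1, n}$. It fixes $K_X$, and it maps effective classes to effective classes. For general $\Sigma$, the irreducible curves of negative self-intersection on $X$ are exactly the $(-1)$-curves: a curve of self-intersection $\le -2$ would impose a nontrivial algebraic condition on $\Sigma$, and there are only countably many such conditions. Hence $f^*$ permutes the $(-1)$-curves and preserves the nef cone. The classes $f^* E_i$ are $n$ disjoint $(-1)$-classes, so by the standard characterization (Nagata) $f^*$ lies in the Weyl group $W_n \subset O(K_X^\perp)$ generated by the reflections in $E_i - E_j$ and $L - E_i - E_j - E_k$.

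\textbf{Step 2 (reduction to fixed points of the Cremona action).} Contract the curves $f(E_1), \ldots, f(E_n)$ instead of $E_1, \ldots, E_n$. The map $p \circ f^{-1}$ then exhibits $X$ again as a blow up of $\mathbb{P}^2$, with point set projectively equivalent to $\Sigma$. Therefore the ordered configuration $[p_1, \ldots, p_n] \in (\mathbb{P}^2)^n / PGL(3)$ is a fixed point of the birational Cremona action of $w = f^*$ on the configuration space (Coble, Dolgachev--Ortland). It then suffices to prove the following.
\begin{itemize}
\item[(a)] For every $w \ne 1$ in $W_n$, the fixed locus of $w$ is a proper Zariski-closed subset of the open set where $w$ is defined.
\end{itemize}
Since $W_n$ is countable, a very general $\Sigma$ avoids all these fixed loci, so $f^* = 1$. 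Then $f$ preserves each $E_i$ and the class $L$. Thus $f$ descends to an element of $PGL(3)$ fixing $p_1, \ldots, p_n$, which are $n \ge 4$ points in general position, and such an element is the identity. (If ``general'' must mean Zariski-general rather than very general, a finiteness argument on the relevant $w$ would be needed; I would first aim for the very general statement.)

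\textbf{Step 3 (main obstacle: proving (a)).} Here I would degenerate to configurations lying on a smooth plane cubic $\Gamma$. For points on $\Gamma$, the configuration is encoded by a homomorphism $\chi : K_X^\perp \to \Pic^0 (\Gamma) \simeq \Gamma$, and the Cremona action of $W_n$ becomes the linear action $\chi \mapsto \chi \circ w^{-1}$. Since $K_X^\perp$ has finite rank and $\Gamma$ is uncountable, I can choose $\chi$ injective. Such a $\chi$ can be fixed by $w$ only if $w$ acts trivially on $K_X^\perp$, and hence on all of $\Pic (X)$, which means $w = 1$. The fixed locus of $w \ne 1$ is closed in its domain of definition. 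If it were everything, it would contain these cubic configurations, or at least their limits, which is a contradiction. The delicate point, which I expect to be the hardest, is checking that the chosen cubic configurations lie in the domain where $w$ is defined, or else handling the indeterminacy by a closure argument.
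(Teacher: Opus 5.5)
The paper gives no proof of this statement; it only quotes Hirschowitz. So I am judging your outline on its own merits. Your route is the classical one: reduce to the Cremona action of $W_n$ on ordered point sets, then specialize to points on a smooth cubic, where that action becomes linear. It does prove the statement for \emph{very general} $\Sigma$, and that is the correct reading. For $n=9$, Halphen configurations of index $m$ (nine points on a cubic with $\sum p_i-3h$ of order $m$ in $\mathrm{Pic}^0$) form countably many hypersurfaces whose union is Zariski dense. These surfaces have infinite automorphism groups, so no finiteness argument can produce a Zariski-open version. Three points need repair.

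First, drop the claim in Step 1 that on a general blow-up the only irreducible curves of negative square are $(-1)$-curves. Your parameter count does not prove it: a class $D$ with negative expected dimension can still be effective at very general points, and excluding this is exactly the hard part. For $n\ge 10$ the claim is the open $(-1)$-curves conjecture, a weak form of SHGH that implies Nagata's conjecture. Nothing in your argument depends on it. Any automorphism permutes $(-1)$-curves and preserves the nef cone. Moreover, $f^*\in W_n$ follows from the Kantor--Du Val--Nagata theorem: $(f^*L,f^*E_1,\dots,f^*E_n)$ is again a geometric marking, since contracting the $f^{-1}(E_i)$ yields a smooth rational surface with $K^2=9$, i.e.\ $\mathbb{P}^2$.

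Second, in Step 3 the fixed-point condition is not $\chi\circ w^{-1}=\chi$. If $[\Sigma]$ is fixed, the projectivity carrying $w(\Sigma)$ to $\Sigma$ maps the transformed cubic onto $\Gamma$. This holds because the cubic through $\Sigma$ is unique: for $n\ge 10$ by B\'ezout, and for $n=9$ because $\chi(K_X)\neq 0$. However, the induced automorphism of $\Gamma$ may act on $\mathrm{Pic}^0(\Gamma)$ by $-1$, or by $\mu_4,\mu_6$ when $j(\Gamma)=1728,0$. Taking $j(\Gamma)\neq 0,1728$, fixedness means $\chi\circ w^{-1}=\pm\chi$, so injectivity only gives $w=\pm\mathrm{id}$ on $K_X^\perp$. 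You must exclude the minus sign separately. For $n=9$, $K_X\in K_X^\perp$ is fixed by $w$. For $n\ge 10$, such a $w$ gives $w(L)\cdot L=-1-18/(n-9)<0$ while $w(L)^2=1$. So $w$ would swap the two halves of the positive cone, which no element of $W_n$ does.

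Third, your ``delicate point'' closes without any limit argument. Write $w$ as a word in the simple reflections and induct. For points on a smooth cubic with $\chi$ injective, coincidences and collinearities correspond exactly to $\chi(E_a-E_b)=0$ and $\chi(L-E_a-E_b-E_c)=0$. Hence each quadratic transformation in the word is defined at such a configuration. It sends $\Gamma$ to a smooth cubic, because the strict transform $3L-E_1-E_2-E_3$ meets each contracted line once. It also replaces $\chi$ by $\chi\circ s$, which is still injective. Consequently these configurations lie in the domain of the composite for every word, and there the composite is literally the change of marking. In particular those domains are nonempty opens, so a very general $\Sigma$ lies in all of them. This is what legitimately turns ``$f^*=w$'' into ``$[\Sigma]$ is a fixed point of $w$'' in Step 2.
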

Coble was looking for non general configurations $\Sigma$ such that $|\Sigma| \ge 9$ and the group $\Aut  (Bl_\Sigma \mathbb{P}^2)$ is nontrivial. Finite subsets of $\mathbb{P}^2$ with this property are known as Cremona - special subsets, and are deeply described by Cantat - Dolgachev, see for example \cite{CantatDolg2012}.
\begin{lemma}\cite{CantatDolg2012}
If $\overline C \subset \mathbb{P}^2$ is an irreducible sextic curve with nodes at ten points $p_1, \ldots, p_{10}$, then $$X := Bl_{p_1, \ldots, p_{10}} \mathbb{P}^2$$ is a Coble surface, and $\Aut  (X)$ is an infinite discrete group.
\end{lemma}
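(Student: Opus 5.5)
The statement has two parts: $X$ is a Coble surface, and $\Aut(X)$ is an infinite discrete group. I treat them separately.

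\textbf{$X$ is Coble.} Set $p : X \to \mathbb{P}^2$ and $K_X = -3L + E_1 + \cdots + E_{10}$. Let $C$ be the strict transform of $\overline C$. Since each $p_i$ is a node, $C = 6L - 2E_1 - \cdots - 2E_{10} = -2K_X$, so $|-2K_X|$ is nonempty. Rationality is clear.

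To get $h^0({\cal O}_X(-K_X)) = 0$, suppose a cubic $\Gamma$ passes through all ten nodes. Then $\Gamma \cdot \overline C \ge \sum 2 = 20 > 18$. As $\overline C$ is irreducible of degree $6$, Bezout forces $\overline C \subset \Gamma$, which is absurd. Equivalently, $C \cdot (-K_X) = 2K_X^2 = -2 < 0$, so the irreducible curve $C$ would be a fixed component of any member of $|-K_X|$, leaving an effective class $-K_X - C = K_X$; this is impossible on a rational surface.

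To get $h^0({\cal O}_X(-2K_X)) = 1$, note that an irreducible sextic with ten nodes has geometric genus $10 - 10 = 0$. So $C$ is smooth rational with $C^2 = 36 - 40 = -4$. Now use the sequence $0 \to {\cal O}_X \to {\cal O}_X(C) \to {\cal O}_C(C) = {\cal O}_{\mathbb{P}^1}(-4) \to 0$, exactly as in the argument following Proposition \ref{comp}. Together with $h^0({\cal O}_X) = 1$ it gives $h^0({\cal O}_X(C)) = 1$.

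\textbf{$\Aut(X)$ is infinite and discrete.} Discreteness should be the easy half. Any automorphism preserves $K_X$, hence fixes $C$ and acts on $\Pic(X) \simeq \integer^{1,10}$ preserving $k = K_X$, so by isometries of $k^\perp = \mathbb{E}_{10}$. The kernel of $\Aut(X) \to O(\Pic X)$ consists of automorphisms fixing every $E_i$ and $L$. These descend to projectivities of $\mathbb{P}^2$ fixing ten points, no four of which are collinear: a line through four nodes would meet $\overline C$ with multiplicity at least $8 > 6$. So the kernel is trivial, $\Aut(X)$ embeds in the discrete group $O(\mathbb{E}_{10})$, and it is therefore discrete.

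For infiniteness I would construct Bertini involutions. For each $j$, choose eight of the remaining nine points; say for $j = 10$ the points $p_1, \ldots, p_8$. The blow-up at those eight is a degree $1$ Del Pezzo surface, provided the points are in general position. This follows from Bezout with $\overline C$: no three collinear ($6 < 6$ fails), no six on a conic ($12 > 12$ fails), no eight on a cubic singular at one of them. The Bertini involution $\beta$ of that surface is an involution of $\mathbb{P}^2$ whose indeterminacy is at $p_1, \ldots, p_8$. The key claim is that it lifts regularly to $X$. The curve $\overline C$ corresponds to the class $-2K_{Y_8} - 2E_9 - 2E_{10}$ on the eight-point surface $Y_8$. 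Using the Bertini action $D \mapsto -D + (D\cdot K)\cdot 2K$ on $K^\perp$, one should check that $\beta$ preserves the rational curve $\overline C$ and swaps or fixes the nodes $p_9, p_{10}$. Indeed, $\beta$ maps the unique member of $|6L - 2\sum_{i \le 8} E_i|$ through the data to itself, because $\overline C$ is the unique sextic with those nodes. Hence $\beta$ lifts to $X$.

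Then I would take two such lifts $\beta_a$, $\beta_b$ and compute their composition on $\Pic(X)$. I expect the composition to have infinite order, for instance because it preserves an elliptic fibration class and acts on it by a nontrivial translation, or because its matrix on $\mathbb{E}_{10}$ has spectral radius greater than $1$. I expect this last step to be the main obstacle: I need to verify that the lift is regular and to exhibit an element of infinite order. The cleanest route is probably the lattice computation, showing that $\beta_a\beta_b$ acts unipotently but nontrivially on $\mathbb{E}_{10}$, which forces infinite order.
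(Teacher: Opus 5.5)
The Coble verification and the discreteness argument (the kernel of $\mathrm{Aut}(X)\to O(\mathrm{Pic}\,X)$ is trivial, so the identity component is trivial) are fine. For context, the paper gives no proof of this lemma: it cites Cantat--Dolgachev, and the relevant ingredients only appear later, as the lifting of Bertini involutions and Proposition~\ref{PompBert}.

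\textbf{Main gap: infiniteness is never proved.} You only say you expect $\beta_a\beta_b$ to have infinite order, but this is the real content of the second half and must be computed. It is easy if you choose the two involutions well. Take the lifts $i_B,i_C$ attached to $p_1,\dots,p_7,B$ and to $p_1,\dots,p_7,C$, where $A,B,C$ are the other three nodes.
\begin{itemize}
\item Set $K_B=K_X-E_A-E_C$. Then $i_B$ fixes $K_X,E_A,E_C$ and sends $E_B\mapsto -E_B-2K_B$; symmetrically for $i_C$.
\item Put $F=C+2E_A=-2K_X+2E_A$ and $g=i_B\circ i_C$. Then $g_*F=F$, $g_*(E_C-E_B)=(E_C-E_B)+2F$ and $g_*E_B=E_B+2(E_C-E_B)+F$.
\item Hence
\[ g_*^{\,n}(E_B)=E_B+2n(E_C-E_B)+(2n^2-n)F , \]
so $g$, which is the translation $T_A$ of Proposition~\ref{PompBert}, has infinite order.
\end{itemize}
An alternative argument: a finite group of automorphisms fixes an ample class. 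In general position, the invariant subspace of the lift omitting $p_i,p_j$ is $\langle K_X,E_i,E_j\rangle_{\mathbb Q}$. Two such subspaces for different pairs meet in $\langle K_X\rangle$ or $\langle K_X,E_i\rangle$, and neither contains a class of positive square.

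\textbf{The lifting step is circular.} You justify $\beta(\overline C)=\overline C$ ``because $\overline C$ is the unique sextic with those nodes''. But $\beta(\overline C)$ is a member of $|-2K_{Y_8}|$ singular at $\beta(p_9),\beta(p_{10})$. Uniqueness therefore applies only once you already know $\beta\{p_9,p_{10}\}=\{p_9,p_{10}\}$, which is what you are trying to prove. The correct reason is that $\beta$ is the deck involution of $\phi_{|-2K_{Y_8}|}\colon Y_8\to Q\subset\mathbb P^3$. It therefore preserves every member of $|-2K_{Y_8}|$ and of $|-K_{Y_8}|$. So $\beta(\overline C)=\overline C$, and $\beta$ permutes its two singular points. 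The paper, by intersecting with the member of $|-K_{Y_8}|$ through $p_9$, even shows each of $p_9,p_{10}$ is fixed.

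\textbf{The general-position claim is false.} In your three cases Bezout only gives intersection numbers $\ge 6$, $\ge 12$, $\ge 18$. These are equalities, not contradictions. Three collinear nodes, six nodes on a conic, and eight on a cubic singular at one of them correspond exactly to the $(-2)$-curves $L-E_i-E_j-E_k$, $2L-E_{i_1}-\dots-E_{i_6}$ and $3L-2E_i-E_{j_1}-\dots-E_{j_7}$ on $X$. Such configurations occur; for example, three collinear nodes are easy to produce by choosing a rational parametrization suitably. What does hold is the following:
\begin{itemize}
\item $\pi^*(-K_{Y_8})=\frac12 C+E_9+E_{10}$ is nef of square $1$, so $Y_8$ is a weak del Pezzo surface and $\beta$ still exists biregularly.
\item However, $\beta_*$ is then no longer $-1$ on $K^\perp$, since it sends $(-2)$-curves to effective classes.
\end{itemize}
So your formula $D\mapsto -D+2(D\cdot K)K$, and the computations above, are valid only when the surfaces involved have no $(-2)$-curves, e.g.\ when $X$ is unnodal. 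The nodal case needs a separate argument.
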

This was actually the original definition for a Coble surface, given by Coble himself in \cite{OriginalCoble1919}, and it was later generalized to Definition \ref{Coble}, so to include also reducible sextics, with more than $10$ nodes.\\
Note that for a generic finite subset $\Sigma \subset \mathbb{P}^2, |\Sigma| = 10$, there is no sextic $\overline C$ nodal at $\Sigma$, because $10$ nodes correspond to $30$ generally indipendent linear conditions over the $\complex$-vector space $H^0 ({\cal O}_{\mathbb{P}^2} (6 L))$, which has dimension $28$. When $\overline C$ actually does exist, the surface $X := Bl_\Sigma \mathbb{P}^2$ satisfies the requirements of Definition \ref{Coble}. In this case, the Coble curve $C$ is the proper transform of $\overline C$ in $X$.\\
Vice versa, if $X$ is a Coble surface (with respect to Definition \ref{Coble}), and $\pi: X \to \mathbb{P}^2$ is a blow-down map, then $$K_X = -3 L + E$$ where $L$ is the hyperplane section of $\mathbb{P}^2$, and the support of $E$ is contracted by $\pi$. If we put ourselves in the simplest situation, where we have no infinitely near points, then $E$ is a smooth divisor, and the requirement $$h^0 ({\cal O}_X (- 2 K_X)) = h^0 ({\cal O}_X (6 L - 2 E)) =1$$ correspond exactly to the existence of a sextic curve with nodes at the points in $\pi(E)$.\\ Meanwhile, the condition $$h^0 ({\cal O}_X (- K_X)) = h^0 ({\cal O}_X (3 L - E)) = 0$$ requires that no plane cubic curve contains all the points in $\pi (E)$.
\begin{example}
We will show in this example how to build a Coble surface $X$ whose Coble curve $\{C\} =|- 2 K_X|$ has $n = 2$ irreducible components. We know that we need to start with a sextic curve $\overline C \subset \mathbb{P}^2$ whose irreducible components are all rational. Moreover, the number of these components must necessarily be equal to $2$. Thus there are only three possibilities:\\
i) $\overline C = \overline C_5 + L$, where $\overline C_5$ is a rational quintic curve and $L$ is a line.\\
ii) $\overline C = \overline C_4 + C_2$, where $\overline C_4$ is a rational quartic, and $C_2$ is a smooth conic.\\
iii) $\overline C = \overline C_3^{(1)} + \overline C_3^{(2)}$, where $C_3^{(i)}$ are two rational cubics.\\
If we assume that the singularities of $\overline C_5, \overline C_4, \overline C_3^{(i)}$ are just simple nodes, their rationality forces $6$ nodes for $\overline C_5$, $3$ nodes for $\overline C_4$ and $1$ node for each $\overline C_3^{(i)}$. In total, each of these models has $11$ nodes. The blow up $X = Bl_{11} \mathbb{P}^2$ gives a Coble surface, where the strict transforms of the two components of $\overline C$ become the disjoint components of the Coble curve $C \in |- 2 K_X|$. The condition $|- K_X| = \emptyset$ corresponds to asking that no cubic curve passes through all the $11$ base points, and this is assured by counting the intersection multiplicities with the components of $\overline C$.
\end{example}
\begin{remark}\label{occhio}
The three cases in this example do not exhaust all the possible constructions for a Coble surface with two boundary components: there is another remaining case. Let $p_1, \ldots, p_8 \in \mathbb{P}^2$ be eight points in general position, and let $\overline X$ be their blow up $$\overline X := Bl_{p_1, \ldots, p_8} \mathbb{P}^2.$$ With the usual notation for the lattice $\Pic (X)$, a dimensional count shows that $$h^0 ({\cal O}_{\overline X} (6 L - 2 E_1 - \cdots - 2 E_7 - 3 E_8)) = 1$$ This divisor class contains the strict transform of the unique plane sextic curve with nodes at $p_1, \ldots, p_7$ and a triple point at $p_8$. Let $D \subset \overline X$ be this divisor. Then the sum $D + E_8$ equals $$D + E_8 = 6 L - 2 E_1 \cdots - 2 E_8 = -2 K_{\overline X}$$ and the two components meet at $D E_8 = 3$ points. Note that $D$ is a $(-1)$-curve too. Let $X$ be the blow up of the three intersection points of $D, E_8$, and let $\tilde D, \tilde E_8 \subset X$ be their strict transforms. Then $$\tilde D + \tilde E_8 = - 2 K_X$$ and the two components are disjoint $(- 4)$-curves, so that $X$ is again a Coble surface. Note that the total number of points we blew up is still $11$.
\end{remark}
\begin{example}
We now describe the costruction of a Coble surface $X$ with $n = 6$ irreducible components in the anti - bicanonical divisor $|- 2 K_X|$. We already know we need to start with a plane sextic curve $\overline C \subset \mathbb{P}^2$, but now we choose it in a very peculiar way, that is $\overline C = L_1 \cup \cdots \cup L_6$ is the union of six plane lines in general position. If the lines $L_i$ are general enough, the reducible curve $\overline C$ will have ${6 \choose 2} = 15$ singular points, namely the intersections of different components. Then let $X := Bl_{15} \mathbb{P}^2$ be the blow up of the plane at these points. The strict transform $C_i \subset X$ of each $L_i$ is a curve of self - intersection $-4$, since we blew $5$ points on $L_i$, and of course $C_i \cap C_j = \emptyset$.\\
It is also easy to show that Coble surfaces of this type are degenerations of Coble surfaces with irreducible anti - bicanonical divisor. Indeed, let $V \subset \mathbb{P}^6$ be the Del Pezzo surface of degree $6$, and let $V^* \subset {\mathbb{P}^6}^*$ the dual variety, that is $$V^* := \{H \subset \mathbb{P}^6 {\rm\, such\, that\,} H {\rm\, is\, tangent\, to\,} V\}$$ Pick two disjoint linear subspaces $\Lambda, \Gamma \subset \mathbb{P}^6$ with ${\rm dim\,} \Lambda = 2, {\rm dim\, } \Gamma = 3$, and $\Gamma \cap V = \emptyset$ and consider the projection from $\Gamma$, $$\pi_{\Gamma} : \mathbb{P}^6 \setminus \Gamma \to \Lambda \simeq \mathbb{P}^2$$ The embedding $V \subset \mathbb{P}^6$ is determined by the anti canonical divisor of $V$, hence the generic hyperplane sections of $V$ is a smooth elliptic curve. If $H \in V^*$, then the intersection $V \cap H$ is a curve of degree $6$ and geometric genus $0$. Then $\pi_{\Gamma} (V \cap H)$ has the same degree and geometric genus, hence $\pi_{\Gamma} (V \cap H)$ is an irreducible rational sextic plane curve, for a generic $H \in V^*$. But now we recall that $V$ contains a very peculiar section $H_0$, consisting of an hexagon of lines $$V \cap H_0 =  L_1' + \cdots + L_6'$$ with $$L_i \cap L_j = 1 {\rm\, if\,} |i - j| = 1 {\rm\, mod\,} 6$$ and $$L_i' \cap L_j' = 0 {\rm\, if\,} |i - j| \ge 2 {\rm\, mod\,} 6$$ The projection $\pi_{\Gamma}$ is linear, hence $\pi_{\Gamma} (L_1' + \cdots + L_6')$ consists of $6$ plane lines. Thus, we can fix a quasi-projective curve $R \subset V^*$ passing through the point corresponding to $H_0$, and we get an induced family of plane sextics, with a central element which is totally reducible.
\end{example}
\begin{example}
It is easy to show an example of a Coble surface with a maximal number $n = 10$ of irreducible components in the anti - bicanonical divisor. Let $S$ be the Del Pezzo surface of degree $5$ obtained blowing up $4$ points $p_1, p_2, p_3, p_4$ of $\mathbb{P}^2$, with no three of them collinear. It is well known that the linear system $-K_S$  defines a regular map $j : S \to \mathbb{P}^5$, which identifies $S$ as a quintic surface inside $\mathbb{P}^5$. We denote by $E_1, E_2, E_3, E_4 \subset S$ the exceptional curves associated to the $4$ base points; for each couple $(i, j)$ with $1 \le i < j \le 4$ let $L_{i, j} \subset S$ be the strict transform of the line through $p_i, p_j$. The curves $E_i, L_{i, j}$ are smooth rational $(-1)$-curves in $S$, and hence the linear system $|- K_S|$ embeds them as lines in $\mathbb{P}^5$. In total we have $10$ lines, and each of them touches $3$ of the others, leading to $15$ total intersection points. Let $$\pi : S' \to S$$ be the blow up of these $15$ points. Then $S'$ is a Coble surface. Indeed, let us denote by $F_1, \ldots, F_{15} \subset S'$ the new exceptional curves. We have $$- K_{S'} = \pi^* (- K_S) - F_1 - \cdots - F_{15} = \pi^*  {\cal O}_S (1) - F_1 - \cdots - F_{15}$$ Since no hyperplane of $\mathbb{P}^5$ contains all the $15$ intersection points, we have $|- K_{S'}| = \emptyset$. On the other hand, the relation $$\sum_i E_i + \sum_{i, j} L_{i, j} = - 2 K_S$$ holds in $\Pic(S)$, and the divisor on the left hand side is nodal at the $15$ points. If $\overline E_i, \overline L_{i, j} \subset S'$ are the strict transforms of the $10$ lines, then $$\sum_i \overline E_i + \sum_{i < j} \overline L_{i, j} = \pi^*( \sum_i E_i + \sum_{i, j} L_{i, j}) - 2 F_1 - \cdots - 2 F_{15} = - 2 K_{S'}$$ so that $$|- 2 K_{S'}| \ne \emptyset$$ The $K3$ double cover $V \to S'$ branched over the smooth divisor $\sum_i \overline E_i + \sum_{i < j} \overline L_{i, j}$ is called the Vinberg ``most algebraic'' $K3$ surface, see for example \cite{EnriquesII2025},\cite{Vinberg1983}, and it is a ``rigid'' surface. Indeed, its moduli are determined only by the choice of the first $4$ points $p_1, p_2, p_3, p_4 \in \mathbb{P}^2$, but you can always move a $4$-ple of $\mathbb{P}^2$ to another one by an element of $PGL (3)$.
\end{example}

\subsection{An extension result}
The following is a nice property of $(-1)$-curves on a Coble surfaces.
\begin{proposition}\label{extmoc}
Let $X$ be an unnodal Coble surface with irreducible Coble curve $\{C\} = |- 2 K_X|$, and let $E_1, \ldots, E_s \subset X$ be a set of disjoint $(-1)$-curves, with $s \le 8$. Then it can be completed to a $10$-ple $E_1, \ldots, E_{10}$ of pairwise disjoint $(-1)$-curves, such that the blowing down of $E_1, \ldots, E_{10}$ is $\mathbb{P}^2$.
\end{proposition}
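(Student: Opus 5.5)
Contracting the disjoint curves $E_1, \ldots, E_s$ gives a birational morphism $\sigma : X \to Y$ onto a smooth rational surface $Y$ with $K_Y^2 = K_X^2 + s = s - 1$, by Proposition \ref{comp} iii) with $n = 1$. The plan is to keep finding further $(-1)$-curves disjoint from the ones already chosen, one at a time, until there are $10$ of them, and then to check that the final contraction is $\mathbb{P}^2$.

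\textbf{Inductive step.} Suppose $E_1, \ldots, E_t$ have been found with $t \le 9$. Let $\sigma_t : X \to Y_t$ be the contraction, so $K_{Y_t}^2 = t - 1 \le 8$.
\begin{itemize}
\item The image $\overline C = \sigma_t(C)$ lies in $|-2K_{Y_t}|$. Indeed $-2K_X = \sigma_t^*(-2K_{Y_t}) - 2\sum E_i$, so $\overline C$ has a double point at each contracted point.
\item $h^0(-K_{Y_t}) = 0$. If $D \in |-K_{Y_t}|$, then $2D$ and $\overline C$ both lie in $|-2K_{Y_t}|$. Since $C \cdot E_i = 2$, the curve $\overline C$ is irreducible with exactly $t$ nodes, so its geometric genus is $g = p_a(\overline C) - t = (1 + 3K_{Y_t}^2) - t = 2t - 2$. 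For $t \ge 2$ we have $g \ne 0$, which contradicts $C \cong \mathbb{P}^1$; so this case cannot occur. The small values of $t$ need a separate remark, since the plane model shows $h^0(-K_{Y_t}) > 0$ there.
\item If $Y_t$ is minimal, then $Y_t$ is $\mathbb{P}^2$, $\mathbb{F}_0$ or $\mathbb{F}_n$ with $n \ge 2$, and $K_{Y_t}^2 \in \{8, 9\}$. This forces $t \ge 9$.
\end{itemize}
So for $t \le 8$ the surface $Y_t$ is not minimal and contains a $(-1)$-curve $F$. Its strict transform $\tilde F \subset X$ has $\tilde F^2 = -1 - \sum m_i^2$ with $m_i = \tilde F \cdot E_i$. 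By Proposition \ref{negativecurves}, $\tilde F^2 \in \{-1, -2, -4\}$. Unnodality excludes $-2$, and $-4$ would make $\tilde F = C$, which is absurd because $C \cdot E_i = 2$. Hence $m_i = 0$ for all $i$, and $\tilde F$ is a $(-1)$-curve disjoint from $E_1, \ldots, E_t$. Setting $E_{t+1} := \tilde F$ continues the induction up to $t = 9$.

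\textbf{Reaching $t = 10$.} This is the main obstacle. After nine curves, $Y_9$ has $K^2 = 8$ and could a priori be $\mathbb{F}_0$ or $\mathbb{F}_n$ with $n \ge 2$ instead of $\mathbb{F}_1$. This is exactly where $s \le 8$ and unnodality are needed.
\begin{itemize}
\item \emph{Excluding $\mathbb{F}_n$, $n \ge 2$.} The negative section $C_{-n}$ has strict transform with self-intersection at most $-2$. It must therefore be a $(-4)$-curve, i.e.\ $C$ itself, and I would rule this out by comparing intersection numbers with $K$.
\item \emph{Excluding $\mathbb{F}_0$ via $t = 8$.} At $t = 8$ the surface $Y_8$ has $K^2 = 7$ and is a blow-up of $\mathbb{P}^2$ in two points, hence has three $(-1)$-curves forming a chain. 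The aim is to choose $E_9$ so that $Y_9 = \mathbb{F}_1$. Contracting an end curve of the chain gives $\mathbb{F}_1$, while contracting the middle curve gives $\mathbb{F}_0$. At least one end curve is available provided the chain's strict transforms are disjoint from $E_1, \ldots, E_8$, and the argument above guarantees this for every $(-1)$-curve of $Y_8$.
\end{itemize}
Then $Y_9 = \mathbb{F}_1$, the lift of its $(-1)$-section gives $E_{10}$, and $Y_{10} = \mathbb{P}^2$. As a consistency check, $K_X^2 = -1 = 9 - 10$.

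If the $t = 8$ choice cannot be controlled this way, the fallback is a lattice argument. One uses the classes $L, E_1, \ldots, E_{10}$ of an arbitrary plane model and applies Weyl reflections of $\mathbb{E}_{10}$ fixing $E_1, \ldots, E_s$. The reflection group of $\langle E_1, \ldots, E_s \rangle^\perp$ acts transitively enough on exceptional configurations when $s \le 8$. Unnodality ensures that every class of an exceptional configuration is represented by an actual $(-1)$-curve.
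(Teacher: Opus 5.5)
Your main line is correct and is essentially the paper's proof. The paper also contracts a non-extendable family of disjoint $(-1)$-curves and uses unnodality and Proposition~\ref{negativecurves} to see that the contraction is minimal; your computation $\tilde F^2=-1-\sum m_i^2$ spells out what the paper compresses into one line. It then rules out $\mathbb{F}_m$, $m\ge 2$, via the negative section, and repairs the $\mathbb{P}^1\times\mathbb{P}^1$ case using the two rulings through a point $p_9$ not coming from the original curves. The only difference is timing. The paper swaps $E_9$ for the two fibre transforms after landing on $\mathbb{P}^1\times\mathbb{P}^1$, while you pick an end of the chain already on $Y_8$. On $Bl_{p_9}(\mathbb{P}^1\times\mathbb{P}^1)$ the chain is (fibre, exceptional curve, fibre), so your final ten curves are exactly the paper's $E_1,\dots,E_8,F_1,F_2$.

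Three repairs are needed.

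\begin{enumerate}
\item Delete the bullet claiming $h^0(-K_{Y_t})=0$; nothing later uses it, and it is false for every $t\ge 1$. Riemann--Roch gives $h^0(-K_{Y_t})\ge 1+K_{Y_t}^2=t$. Your genus count uses $p_a=1+3K^2$ instead of the correct $p_a(\overline C)=1+K_{Y_t}^2=t$. So $\overline C$ has geometric genus $t-t=0$, and nothing is contradicted.

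\item The claim that $Y_8$ is the blow-up of $\mathbb{P}^2$ at two distinct points needs a justification. An irreducible $D\subset Y_8$ with $D^2\le -2$ would have strict transform of self-intersection $\le -2$. By unnodality and Proposition~\ref{negativecurves} that strict transform would be $C$, whereas $\sigma_8(C)^2=-4+4\cdot 8=28$. This excludes $Bl_p\mathbb{F}_m$ with $m\ge 2$ and infinitely near points. Alternatively, as the paper effectively does, contract any $(-1)$-curve of $Y_8$. If the result is $\mathbb{F}_0$, then $Y_8=Bl_{p_9}\mathbb{F}_0$ visibly has the chain.

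\item Comparing $K$-degrees on $X$ cannot exclude $\tilde C_{-n}=C$. Every smooth rational $(-4)$-curve has $K_X$-degree $2$ by adjunction, so the comparison gives nothing. Use instead $\tilde C_{-n}\cdot E_i\le 1<2=C\cdot E_i$, as in your inductive step. Another option is $C_{-n}\cdot F=1\ne 4=-2K_{Y_9}\cdot F$, using $\sigma_9(C)\in|-2K_{Y_9}|$.
\end{enumerate}

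The lattice fallback is not needed.
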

{\it Proof:} Let's extend  $E_1, \ldots, E_s$ to a family $E_1, \ldots, E_r$ satisfying $E_i E_j = - \delta_{i, j}$ of maximal length $r$, with $r \ge s$. Let $$\pi: X \to \overline X$$ be the contraction of all the $E_i$'s. Since $X$ is unnodal, the smooth rational surface $\overline X$ is minimal, which forces either $$\overline X = \mathbb{P}^2$$ or $$\overline X = \mathbb{F}_m, m \ne 1$$ where $\mathbb{F}_m$ denotes the $m$-th Hirzebruch surface.\\
The first step is to exclude $\mathbb{F}_m$ for $m \ge 2$. This $\mathbb{F}_m$ contains a $(-m)$-curve, which would force $X$ to contain a curve $D$ with $D^2 \le -2$. Since $X$ is unnodal, $D$ should be the anti-bicanonical curve. But then the curve $\pi(D)$ is nodal, a contradiction since the $(-m)$-curve in $\mathbb{F}_m$ is smooth.\\
Then we are left with only two cases: $$\overline X = \mathbb{P}^2$$ or $$\overline X = \mathbb{F}_0 = \mathbb{P}^1 \times \mathbb{P}^1$$
Of course $$K_X = \pi^* K_{\overline X} + E_1 + \cdots + E_r$$ and taking squares one finds: $$-1 = K_X^2 = K_{\overline X}^2 - r$$ which gives $$\overline X = \mathbb{P}^2 {\rm\, iff\,} r = 10$$ and $$\overline X = \mathbb{P}^1 \times \mathbb{P}^1 {\rm\, iff\,} r = 9$$ In the first case we are done.\\
In the second case we extended $E_1,\ldots, E_s$ to a collection $E_1, \ldots, E_9 \subset X$ of disjoint $(-1)$-curves, and the blowing down of $E_1, \ldots, E_9$ is $\mathbb{P}^1 \times \mathbb{P}^1$. Let $$\pi : X \to \mathbb{P}^1 \times \mathbb{P}^1$$ be the blow down map, with $$p_1, \ldots, p_9 \in \mathbb{P}^1 \times \mathbb{P}^1$$ the points $$p_i := \pi (E_i)$$
Note that a fiber $F$ of one of the two rulings of $\mathbb{P}^1 \times \mathbb{P}^1$ cannot contain two distinct $p_i, p_j$, otherwise the strict transform of $F$ in $X$ is a $(-2)$-curve, which is forbidden by hypothesis.\\
Using this, and the fact that $s \le 8$, we can blow up $p_9$ on $\mathbb{P}^1 \times \mathbb{P}^1$ and blow down the fibers $F_1, F_2$ of the two rulings through $p_9$. This substitutes $\mathbb{P}^1 \times \mathbb{P}^1$ with the required $\mathbb{P}^2$. The final family of $(-1)$-curves will be $E_1, \ldots, E_8, F_1, F_2.$ $\square$\\
\\\\
Note that $s < 9$ is a sharp condition. Indeed, given a curve $\overline C \subset \mathbb{P}^1 \times \mathbb{P}^1$ of type $(4, 4)$ with nodes in $9$ points $p_1, \ldots, p_9 \in \mathbb{P}^1 \times \mathbb{P}^1$, we can consider the blow up $X := Bl_{p_1, \ldots, p_9} \mathbb{P}^1 \times \mathbb{P}^1$. Then $X$ is a Coble surface, but the family $E_1, \ldots, E_9$ of exceptional curves associated to $p_1, \ldots, p_9$ cannot be extended to a $10$-ple with the desired properties. $\square$
\\Note that it is hard to hope that the extension of $E_1, \ldots, E_s$ to $E_1, \ldots, E_{10}$ is unique. For example, if $s = 7$, a family $$E_1, \ldots, E_7$$ can be extended to $$E_1, \ldots, E_{10}$$ But we can also consider the lines joining two of the last three nodes $$\hat{E_8} \in |L - E_9 - E_{10}|, \hat{E_9} \in |L - E_8 - E_{10}|, {\hat E}_{10} \in |L - E_8 - E_9|$$ and the family $E_1, \ldots, E_7, \hat{E_8}, \hat{E_9}, {\hat E}_{10}$ still works as well. For $s \ge 2$, counting how many the completions of $E_1, \ldots, E_s$ are is the same as counting how many sets of $10 - s$ disjoint $(-1)$-curves are contained in a Del Pezzo surface of degree $s - 1$.\\
Here we generalize the result of Proposition \ref{extmoc} to possibly nodal Coble surface with irreducible boundary $\{C\} = |- 2 K_X|$.
\begin{definition}
Let $X$ be any smooth surface. A connected $(-1)$-chain in $X$ is an effective divisor $E = F_1 + \cdots + F_r$, with $$F_i \simeq \mathbb{P}^1$$ $$F_1^2  = \cdots = F_{r - 1}^2 = - 2$$ $$F_r^2 = -1$$ $$F_i F_{i + 1} = 1, F_i F_j = 0 \,{\rm\, for\,}\, |i - j| \ge 2$$
If $E$ is irreducible, we simply require that $E$ is a smooth rational curve of self intersection $-1$.
Given a chain $E = F_1 + \cdots + F_m$, the length $l(E)$ is the number $m$ of its irreducible components.\\
If $E_1, \ldots, E_n$ is a set of disjoint $(-1)$ chains, we set the length $l(E_1 + \cdots + E_n) := l(E_1) + \cdots + l(E_n)$ as the number of all irreducible components.\\
An extension of $E_1, \ldots, E_n$ is a set of disjoint $(-1)$-chains $E_1', \ldots, E_m'$, such that $m \ge n$ and $E_i' \ge E_i$ for $i = 1, \ldots, n$.
\end{definition}

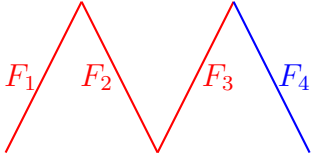
\begin{figure}[h!]
\begin{tikzpicture}
\draw [red,thick,mark={}, smooth] (-2,-1) -- (-1,1);
\draw [red,thick,mark={}, smooth] (-1,1) -- (0, -1);
\draw [red,thick,mark={}, smooth] (0, -1) -- (1,1);
\draw [blue,thick,mark={}, smooth] (1, 1) -- (2, -1);
\node [red] at (-1.8,0) {$F_1$};
\node [red] at (-0.8,0) {$F_2$};
\node [red] at (0.8,0) {$F_3$};
\node [blue] at (1.8,0) {$F_4$};
\end{tikzpicture}
\caption{A $(-1)$-chain with length $4$. The blue component $F_4$ is a $(-1)$-curve, the red components are $(-2)$-curves.}
\end{figure}

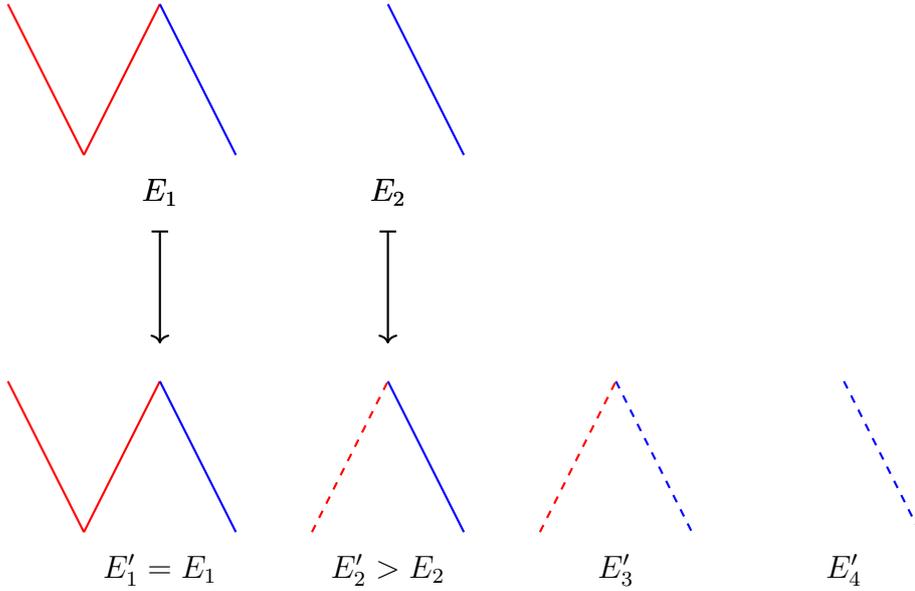
\begin{figure}[h!]
\begin{tikzpicture}
\draw [red,thick,mark={}, smooth] (-7, 4) -- (-6,2);
\draw [red,thick,mark={}, smooth] (-6, 2) -- (-5,4);
\draw [blue,thick,mark={}, smooth] (-5, 4) -- (-4, 2);
\draw [blue,thick,mark={}, smooth] (-2, 4) -- (-1, 2);
\node at (-5,1.5) {$E_1$};
\node at (-2,1.5) {$E_2$};
\draw [|->,thick] (-5,1) --(-5, -0.5);
\draw [|->,thick] (-2,1) --(-2, -0.5);
\draw [red,thick,mark={}, smooth] (-7, -1) -- (-6,-3);
\draw [red,thick,mark={}, smooth] (-6, -3) -- (-5,-1);
\draw [blue,thick,mark={}, smooth] (-5, -1) -- (-4, -3);
\draw [red,thick,mark={},dashed, smooth] (-3, -3) -- (-2,-1);
\draw [blue,thick,mark={}, smooth] (-2, -1) -- (-1, -3);
\draw [red,thick,mark={},dashed, smooth] (0, -3) -- (1,-1);
\draw [blue,thick,mark={},dashed, smooth] (1, -1) -- (2, -3);
\draw [blue,thick,mark={},dashed, smooth] (4, -1) -- (5, -3);
\node at (-5,1.5) {$E_1$};
\node at (-2,1.5) {$E_2$};
\node at (-5,-3.5) {$E'_1 = E_1$};
\node at (-2,-3.5) {$E'_2 > E_2$};
\node at (1,-3.5) {$E'_3$};
\node at (4, -3.5) {$E'_4$};
\end{tikzpicture}
\caption{With the same colors as above, the set $E_1', E_2', E_3', E_4'$ is an extension of $E_1, E_2$. The extra components are dashed.}
\end{figure}

By the Castelnuovo Criterion, any set of disjoint $(-1)$-chains $E_1, \ldots, E_n$ on a surface $X$ can be contracted to a set of points $p_1, \ldots, p_n$ on a smooth surface $Y$, with \begin{eqnarray}\label{length}K_Y^2 - K_X^2 = l(E_1) + \cdots + l (E_n),\end{eqnarray} see Figure \ref{figura}.
Also note that, given a smooth curve $D \subset Y$, with strict transform $\tilde D \subset X$, for any point $p_i$ which lies inside $D$, we can take the corresponding chain $E_i = F_1 + \cdots + F_{r_i}$. There exists a unique $F_{s_i}$ touching $\tilde D$, for some $1 \le s_i \le r_i$ and these numbers satisfy \begin{eqnarray}\label{diff}\sum_{p_i \in D} s_i = D^2 - \tilde D^2,\end{eqnarray} see Figure \ref{figura}.
\\
\\
\\
\\
\begin{figure}[h!]
\begin{tikzpicture}
\node [green] at (-4, 3) {$\tilde D$};
\draw [green, thick, smooth] (-3.5,3) to [out = 270, in = 60] (-4,2) to [out = 240, in = 120] (-4,0) to [out = 300, in = 60] (-4, -2) to [out= 240, in = 90] (-4.5, -3);
\node [red] at (-4.7,0) {. . .};
\node [red] at (-3.3, 0) {. . .}; 
\draw [red, thick, smooth] (-4.5, -1) -- (-3.5, 1);
\node [red] at (-3.5, 0.5) {$F_s$};
\draw [red, thick, smooth] (-7, 1) -- (-6, -1);
\node [red] at (-7, 0) {$F_1$};
\draw [red, thick, smooth] (-6, -1) -- (-5, 1);
\draw [red, thick, smooth] (-3, -1) -- (-2, 1);
\draw [blue, thick, smooth] (-2, 1) -- (-1, -1);
\node [blue] at (-1, 0) {$F_r$};
\draw [|->, thick] (0, 0) -- (3, 0);
\node [green] at (4, 3) {$D$};
\draw [green, thick, smooth] (4.5,3) to [out = 270, in = 60] (4,2) to [out = 240, in = 120, mark=*, ultra thick] (4,0) to [out = 300, in = 60] (4, -2) to [out= 240, in = 90] (3.5, -3);
\fill [green] (4,0) circle (2pt);
\node [green] at (4.5, 0) {$p$}; 
\end{tikzpicture}
\caption{We can imagine to contract the $(-1)$-chain $F_1 + \cdots + F_r$ on the point $p$ starting from the $(-1)$-curve $F_r$. This turns $F_{r - 1}$ into a $(-1)$-curve, so it can be contracted as well, and we proceed backwards to $F_1$. Each step makes the self - intersection $K_X^2$ jump by $1$, hence we get formula \ref{length}. Meanwhile, the self - intersection $\tilde D^2$ is affected only by $F_s, F_{s - 1}, \ldots, F_1$, and each of these produces a jump by $1$. The sum of the contributions of all $(-1)$-chains involved gives formula \ref{diff}.}\label{figura}
\end{figure}
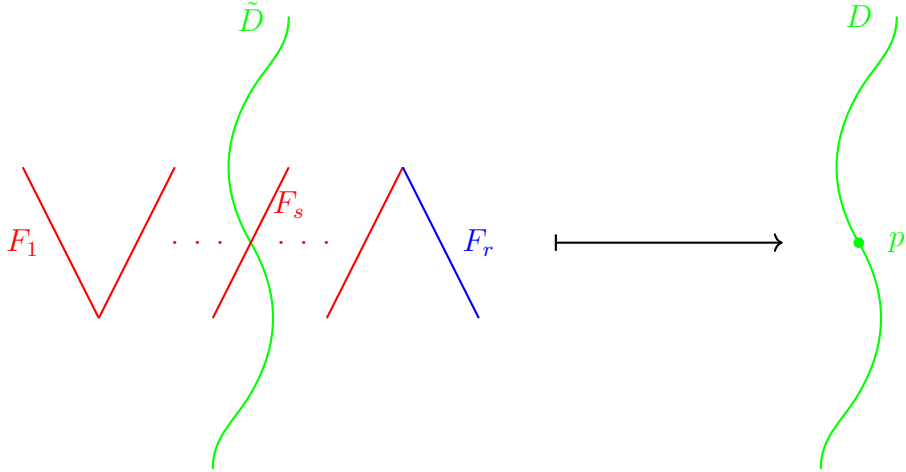

\begin{proposition}
Let $X$ be a smooth Coble surface with irreducible Coble curve $C$. Then any set $E_1, \ldots, E_n$ of disjoint $(-1)$-chains with length $l(E_1 + \cdots + E_n) \le 8$ can be extended to a set $E_1', \cdots E_m'$ with length $10$, such that the contraction of $E_1', \ldots, E_m'$ is $\mathbb{P}^2$.
\end{proposition}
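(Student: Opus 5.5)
The argument follows the proof of Proposition \ref{extmoc}, with $(-1)$-chains in place of $(-1)$-curves and the total length $l$ in place of the number of curves. First I extend $E_1, \ldots, E_n$ to a set of disjoint $(-1)$-chains $E_1', \ldots, E_m'$ that is maximal with respect to extension. Such a maximal extension exists: by formula (\ref{length}), each contraction raises $K^2$ by the length, and the contracted surface $Y$ is smooth rational with $K_Y^2 \le 9$. Since $K_X^2 = -1$ by Proposition \ref{comp}, every extension has length at most $10$. Let $\pi : X \to Y$ be the contraction of $E_1', \ldots, E_m'$.

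Next I show that $Y$ is minimal. If $Y$ contained a $(-1)$-curve $G$, consider its strict transform $\tilde G \subset X$. If $\pi(G)$ meets none of the $p_i$, then $\tilde G$ is a new disjoint $(-1)$-chain. Otherwise $G$ passes through some points $p_i$. The case to aim for is $\tilde G$ being a $(-2)$-curve attached to the end $F_{r_i}$ of a chain through $p_i$: then the chain can be lengthened, or reversed and lengthened, and this contradicts maximality. I expect this to be the main obstacle. One has to control how $\tilde G$ meets the chains using formula (\ref{diff}) and Proposition \ref{negativecurves}. Since $\tilde G$ is not $C$, we have $\tilde G^2 \in \{-1, -2\}$, so $\sum s_i \le 1$. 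Hence $G$ passes through at most one $p_i$ and meets that chain at $F_1$, and the $(-1)$-curve end of the chain must be re-chosen so that $\tilde G$ can be appended. I would first try to handle this by contracting $G$ on $Y$ and pulling back, so that $\tilde G$ together with the chain forms a single contractible configuration. This gives a longer chain, or a new one, of length $l+1$.

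So $Y$ is $\mathbb{P}^2$ or $\mathbb{F}_k$ with $k \neq 1$. Now I exclude $k \ge 2$. The negative section $\sigma$ has strict transform $\tilde\sigma$ with $\tilde\sigma^2 \le -k \le -2$. If $\tilde\sigma^2 = -4$, then $\tilde\sigma = C$ by Proposition \ref{negativecurves}, but $C$ is irreducible with image a curve of class $-2K_Y$, which is nodal, not a smooth section. Otherwise $\tilde\sigma^2 = -2$, which forces $k = 2$ and $\sigma$ disjoint from all $p_i$. I then rule out $\mathbb{F}_2$ as well. Here $-2K_Y \cdot \sigma = 0$, so $\pi(C)$ is disjoint from $\sigma$. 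Since $C = \pi^*(-2K_Y) - \sum(\text{multiplicities})$ and $C \cdot \tilde\sigma = 0$, this is consistent, so instead I use maximality. The $(-2)$-curve $\tilde\sigma$ together with a fiber through some $p_i$ gives, after elementary transformations, a longer chain. Alternatively, I would replace $Y$ by $\mathbb{F}_0$ via an elementary transformation at some $p_i$, keeping the total length.

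Once $Y \in \{\mathbb{P}^2, \mathbb{P}^1 \times \mathbb{P}^1\}$, the relation $-1 = K_Y^2 - l$ gives $l = 10$ or $l = 9$ respectively. In the first case we are done. In the second case, the hypothesis $l(E) \le 8$ guarantees some chain $E_j'$ not containing any original component, or a component beyond the original $E_i$'s. The corresponding point $p$ on $\mathbb{P}^1 \times \mathbb{P}^1$ (I choose it to be the image of the last added component) is blown up once more, and the two ruling fibers through it are contracted. Where these fibers pass through other $p_i$, they become $(-2)$-curves, and they are recorded as extending the corresponding chains, now with $(-1)$-chains allowed. This produces $\mathbb{P}^2$ with total length $10$ and contains the original $E_i$.
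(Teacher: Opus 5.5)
Your skeleton is the paper's: take a maximal extension, prove the contracted surface $Y$ is minimal by prepending a $(-2)$-curve to the first component $F_1$ of a chain, exclude $\mathbb{F}_k$ for $k\ge 3$, then treat $\mathbb{P}^2$, $\mathbb{P}^1\times\mathbb{P}^1$ and $\mathbb{F}_2$. Your exclusion of $k\ge 3$ is in fact tidier than the paper's. The genuine gap is the $\mathbb{F}_2$ case, which is the only new ingredient compared with Proposition \ref{extmoc}, where unnodality removed it.

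Your sentence that $\tilde\sigma$ ``together with a fiber through some $p_i$ gives, after elementary transformations, a longer chain'' is not an argument. Your alternative is false as stated: an elementary transformation of $\mathbb{F}_2$ centred at a point off the $(-2)$-section lands in $\mathbb{F}_1$, not $\mathbb{F}_0$. You also never check that the modified configuration is again a disjoint union of $(-1)$-chains containing the original $E_i$, which is exactly where $l\le 8$ must be used.

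The paper's argument runs as follows.
\begin{itemize}
\item The original chains are tails of the $E_j'$. Since $8<9$, some $E_m'=F_1+\cdots+F_{r'}$ has $F_1$ not belonging to any original chain.
\item No $p_i$ lies on $C_{-2}$, so $\tilde C_{-2}$ is a $(-2)$-curve meeting the strict transform $\tilde F$ of the fibre $F$ through $p_m$ once.
\item Since $\tilde F\neq C$, we have $\tilde F^2\in\{-1,-2\}$, i.e. $\sum_{p_i\in F}s_i\le 2$.
\end{itemize}
Three cases remain.
\begin{itemize}
\item If $\tilde F^2=-1$, replace $E_m'$ by the two chains $E_m'-F_1$ and $\tilde C_{-2}+\tilde F$.
\item If $\tilde F^2=-2$ and $p_m$ is the only base point on $F$, then $\tilde F$ meets $F_2$; replace $E_m'$ by $\tilde C_{-2}+\tilde F+(E_m'-F_1)$.
\item If $F$ also contains $p_{m-1}$, replace $E_{m-1}',E_m'$ by $\tilde C_{-2}+\tilde F+E_{m-1}'$ and $E_m'-F_1$.
\end{itemize}
Each time the length rises from $9$ to $10$, and only the non-original $F_1$ is discarded. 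Your elementary transformation, once corrected to land in $\mathbb{F}_1$, works only after exactly this case analysis; your minimality step then prepends $\tilde C_{-2}$.

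Three smaller points.
\begin{itemize}
\item Because $F_1$ is discarded, the new family is not an extension of $E_1',\dots,E_m'$. It contradicts maximality only if you choose $E'$ of maximal total length among extensions of $E_1,\dots,E_n$, not merely maximal for the extension order. Alternatively, conclude directly: length $10$ forces $K^2=9$ for the contraction, hence $\mathbb{P}^2$.
\item In the $\mathbb{P}^1\times\mathbb{P}^1$ step nothing is ``blown up once more''. The map $X\to Y$ already factors through $\mathrm{Bl}_{p_j}Y$ with exceptional curve $F_1$. The new configuration drops $F_1$ and adds the strict transforms $\tilde G_1,\tilde G_2$ of the two rulings through $p_j$. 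Besides your case ($G_k$ passes through another $p_i$, and $\tilde G_k$ is prepended to $E_i'$), you must allow $\tilde G_k$ to meet $F_2$, when the second centre over $p_j$ lies on $G_k$. Then $\tilde G_k$ is prepended to $E_j'-F_1$. The bound $\tilde G_k^2\ge -2$ rules out anything else.
\item In the minimality step nothing needs to be re-chosen: $\tilde G+F_1+\cdots+F_r$ is already a $(-1)$-chain with $\tilde G$ as its new first component.
\end{itemize}
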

{\it Proof:} The proof will be very similar to the proof of Proposition \ref{extmoc}.\\
Let $E_1', \cdots E_m' \subset X$ be an extension of $E_1, \ldots, E_n$ of maximal length, and let $Y$ be the surface obtained via the contraction of $E_1', \ldots, E_m'$, with base points $p_1, \ldots, p_m \in Y$. We claim that $Y$ does not contain any $(-1)$ curve.\\ On the countrary, assume $E \subset Y$ to be a $(-1)$ curve, and let $\tilde E \subset X$ be its strict transform. If $E$ does not contain any of the $p_i$, then $\tilde E$ is still a $(-1)$ curve, so we can build a greater set $E_1', \ldots, E_m', E$, which contradicts the maximality of the length. Then $E$ must contain some of the $p_i$'s, so $$\tilde E^2 \le -2$$ By Proposition \ref{negativecurves}, the equality $$\tilde E^2 = -2$$ must hold. The relation \eqref{diff} applied to $E, \tilde E$ gives $$\sum_{p_i \in E} s_i = 1$$ This means that $E$ contains exactly one base point, say $p_1 \in E$ while $p_2, \ldots, p_m \not\in E$, and the chain $E_1'$ has the shape $E_1' = F_1 + \cdots + F_{r_1}$, with $\tilde E$ touching $F_1$ at exactly one point, and disjoint from the other components. Hence we can build another extension, namely $\tilde E + E_1', E_2', \ldots, E_m'$, which again is impossible by maximality.\\
Thus $Y$ does not contain any $(-1)$-curve, so it is a smooth minimal rational surface, that is $$Y = \mathbb{P}^2$$ or $$Y = \mathbb{F}_m$$ for some $m \ne 1$. We want to examine all the possibilities.\\
First of all, remember that $\mathbb{F}_m$ contains a curve of self intersection $-m$. Hence, if $m \ge 3$, then $X$ contains a curve of self intersection smaller or equal than $-3$, which is forbidden by Proposition \ref{negativecurves}.\\
So we are left with $3$ cases: $Y = \mathbb{P}^2, \mathbb{P}^1 \times \mathbb{P}^1$ or $\mathbb{F}_2$. By \eqref{length}, we can now compute the length $$l(E_1' + \cdots + E_m') = K_Y^2 - K_X^2 = K_Y^2 + 1$$ which gives $$l(E_1' + \cdots + E_m') = 10 \ {\rm\, if\,} \ Y = \mathbb{P}^2$$ and $$l(E_1' + \cdots + E_m') = 9 \ {\rm\, if\,} \ Y = \mathbb{P}^1 \times \mathbb{P}^1 \ {\rm\, or\,} \ \mathbb{F}_2$$
If $Y = \mathbb{P}^2$ the proof is complete.\\
If $Y = \mathbb{P}^1 \times \mathbb{P}^1$ we act as in the proof of Proposition \ref{extmoc}, blowing up a base point, and blowing down the strict transforms of the fibers passing through it, and we get a family of $(-1)$-chains of strictly higher length, which is absurd.\\
Finally, if $Y = \mathbb{F}_2$, note that none of the points $p_1, \ldots, p_m$ can lie on the $(-2)$-section $C_{-2} \subset \mathbb{F}_2$, otherwise the strict transform $\tilde C_{-2} \subset X$ would have $$\tilde C_{-2} \le -3$$ which is impossible.\\
By definition, the original set $E_1, \ldots, E_n$ satisfies $n \le m$ and $E_i \le E_i'$ for $i = 1, \ldots, n$. By hypothesis, $l(E_1) + \cdots + l(E_n) \le 8$, while $l(E_1') + \cdots + l(E_m') = 9$. So we can write the last extended $(-1)$-chain $E_m'$ as $$E_m' = F_1 + \cdots + F_{r'}$$ with $F_1$ not appearing in the original $E_1, \dots, E_n$. Consider the corresponding base point $p_m \in Y$, and let $F$ be the fiber in $\mathbb{F}_2$ passing through $p_m$. Let  $\tilde F \subset X$ be its strict transform. Proposition \ref{negativecurves} gives $$\tilde F^2 \in \{-1, -2\}$$ If $\tilde F^2 = -1$, relation \ref{diff} states that $p_m$ is the only base point lying in $F$, and $\tilde F$ touches the first component $F_1$. So we can substitute the $(-1)$-chain $E_m'$ of length $r'$ with two disjoint $(-1)$-chains, namely $(E_m' - F_1)$ and $\tilde C_{-2} + \tilde F$, with length $r' + 1$. This is impossible by maximality.\\
Similarly, if $\tilde F^2 = -2$ we have $$\sum_{p_i \in F} s_i = 2$$ which means that $F$ can contain at most two base points. If $F$ contains only $p_m$ as base point, then $\tilde F$ touches $F_2$ at one point, so we can throw away $F_1$ to build a longer chain $\tilde C_{-2} + \tilde F + (E_{m}' - F_1)$, which is a connected $(-1)$-chain of length $r + 1$. Again, this is forbidden by maximality.\\
If another base point, say $p_{m - 1}$, lies inside $F$ together with $p_m$, we need to write down its corresponding $(-1)$-chain. So $$E_{m - 1}' = F_1^{(m -1)} + \cdots + F_s^{(m - 1)}$$ where $F_s^{(m - 1)}$ is a $(-1)$-curve, while the other components have self-intersection $-2$. Moreover, $\tilde F$ touches at one point both $F_1$ and $F_1^{(m - 1)}$, which gives the possibility to build two disjoint chains, that are $\tilde C_{-2} + \tilde F + E_{m - 1}'$ and $E_m' - F_1$. Since their complexive length is $r + s + 1$, this contradicts the maximality of $E_1', \ldots, E_m'$.\\
This exhausts all the possibilities, so the proof is complete. $\square$
\\\\Exceptional curves inside Coble surfaces are closely related to elliptic curves. Indeed we first claim that a Coble surface $X$ with irreducible Coble curve $C$ satisfies: \begin{eqnarray}\label{nohuno} h^1 ({\cal O}_X (- K_X)) = 0. \end{eqnarray} This comes from the application of the Riemann - Roch Theorem to the divisor $- K_X$, so that: $$h^0 ({\cal O}_X (- K_X)) - h^1 ({\cal O}_X (- K_X)) + h^2 ({\cal O}_X (- K_X)) = 1 + \frac{(- K_X) (- 2 K_X)}{2} = 1 + K_X^2 = 0.$$ But $h^0 ({\cal O}_X (- K_X)) = 0$ by Definition \ref{Coble}, and $h^2 ({\cal O}_X (- K_X)) = h^0 ({\cal O}_X (2 K_X)) = 0$, because of Serre's duality and the rationality of $X$. This proves equality \eqref{nohuno}.\\
Now for any $(-1)$-curve $E \subset X$ inside a Coble surface $X$ with irreducible Coble curve $C$, we have a short exact sequence: $$0 \to {\cal O}_X (- K_X) \to {\cal O}_X (E - K_X) \to {\cal O}_E (E - K_X) \to 0$$ Together with the relations \eqref{nohuno} and $${\cal O}_E (E - K_X) = {\cal O}_{\mathbb{P}^1} (E(E - K_X)) = {\cal O}_{\mathbb{P}^1},$$ this induces an isomorphism $$H^0 ({\cal O}_X (E - K_X)) \simeq \complex.$$ Hence the divisor ${\cal E} := E - K_X$ is effective and rigid, that is, it cannot move. The adjunction formula immediately proves that ${\cal E}$ is an elliptic curve. The following proposition shows also that also the converse is true:
\begin{proposition}\label{ellpenc}
Let ${\cal E}$ be an elliptic curve on a Coble surface $X$, with $C \in |- 2 K_X|$ irreducible anti - bicanonical divisor. If ${\cal E}$ is rigid, then the divisor $2 {\cal E}$ moves in a basepoint - free pencil of elliptic curves.\\
Conversely, every base point-free pencil of elliptic curves ${\cal E'}$ has the form ${\cal E'} = 2 {\cal E}$, with ${\cal E}$ an isolated elliptic curve.
\end{proposition}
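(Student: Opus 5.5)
Given a rigid elliptic curve ${\cal E}$, I first compute its numerics. By adjunction, $p_a({\cal E}) = 1$ gives ${\cal E}^2 + {\cal E} K_X = 0$. Since ${\cal E}$ is irreducible and different from $C$ (the curve $C$ is rational), we have ${\cal E} C \ge 0$, hence ${\cal E} K_X \le 0$. Next apply Riemann--Roch to ${\cal E}$: $\chi({\cal O}_X({\cal E})) = 1 + \frac{{\cal E}({\cal E} - K_X)}{2} = 1 - {\cal E} K_X$. Here $h^2({\cal O}_X({\cal E})) = h^0({\cal O}_X(K_X - {\cal E})) = 0$, so $h^0 \ge 1 - {\cal E}K_X$. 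Rigidity means $h^0 = 1$, which forces ${\cal E} K_X = 0$ and ${\cal E}^2 = 0$. Thus ${\cal E} C = 0$, so ${\cal E}$ is disjoint from $C$, and ${\cal O}_{\cal E}(K_X)$ has degree $0$.

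The key step is identifying ${\cal O}_{\cal E}({\cal E})$. From $-2K_X = C$ with $C \cap {\cal E} = \emptyset$ we get ${\cal O}_{\cal E}(2K_X) \simeq {\cal O}_{\cal E}$. By adjunction, $\omega_{\cal E} = {\cal O}_{\cal E}({\cal E} + K_X)$ is trivial on the elliptic curve, so ${\cal O}_{\cal E}({\cal E}) \simeq {\cal O}_{\cal E}(-K_X)$. This is therefore $2$-torsion, and it is nontrivial. Indeed, from $0 \to {\cal O}_X \to {\cal O}_X({\cal E}) \to {\cal O}_{\cal E}({\cal E}) \to 0$ and $h^1({\cal O}_X) = 0$, rigidity gives $h^0({\cal O}_{\cal E}({\cal E})) = 0$, so the restriction is a nontrivial degree-$0$ bundle.

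Now consider $0 \to {\cal O}_X({\cal E}) \to {\cal O}_X(2{\cal E}) \to {\cal O}_{\cal E}(2{\cal E}) \simeq {\cal O}_{\cal E} \to 0$. To lift the constant section I need $H^1({\cal O}_X({\cal E})) = 0$. From the first sequence, $h^1({\cal O}_X({\cal E}))$ injects into $h^1({\cal O}_{\cal E}({\cal E})) = 0$, since that bundle is nontrivial of degree $0$. Hence $h^0(2{\cal E}) = 2$. The pencil $|2{\cal E}|$ has $(2{\cal E})^2 = 0$ and a member $2{\cal E}$ whose support is smooth, so it has no base points and no fixed part. Its general member is an elliptic curve: it has arithmetic genus $1$ by adjunction since $K_X \cdot 2{\cal E} = 0$, and it is smooth by Bertini, being irreducible because $2{\cal E}$ is not composite with a pencil. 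The point I expect to be hardest is excluding the case where the general member is reducible, i.e.\ the pencil is $2$ times a pencil. This is ruled out because ${\cal E}$ itself does not move.

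For the converse, let $|{\cal E}'|$ be a base-point-free elliptic pencil, giving a fibration $f : X \to \mathbb{P}^1$. Then ${\cal E}'K_X = 0$, so $C$ is vertical. Since $C$ is irreducible, rational and has $C^2 = -4 < 0$, it lies inside a fiber. By the canonical bundle formula for the relatively minimal elliptic fibration on a rational surface ($\chi({\cal O}_X) = 1$), $K_X = -{\cal E}' + \sum (m_i - 1) F_i$, with $F_i$ the reduced multiple fibers. I would then argue that $-2K_X = C$ effective with $C$ not moving forces exactly one multiple fiber, of multiplicity $2$. Indeed, $-K_X = {\cal E}' - \sum (m_i - 1)F_i$ must have $h^0 = 0$, while $-2K_X$ must be effective, and the bound $h^0 \le 1$ leaves only the possibility $m_1 = 2$. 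Then ${\cal E}' = 2{\cal E}$ with ${\cal E} = F_1$ an isolated elliptic curve, which is rigid by the first part. This case analysis is where care is needed, in particular checking that $X$ is not relatively non-minimal; that would need $(-1)$-curves in fibers, which is impossible since their intersection with $K_X$ is nonzero.
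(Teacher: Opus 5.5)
Your forward direction is sound and is essentially the paper's argument. The only differences are that you get ${\cal E}K_X=0$ from Riemann--Roch on $X$ rather than on ${\cal E}$, and that you handle the irreducibility of the general member more carefully than the paper does. One justification needs correcting: the fixed part of $|2{\cal E}|$ is not excluded because the support of $2{\cal E}$ is smooth (${\cal E}$ may be singular). It is excluded because the fixed part would have to be ${\cal E}$, which would give $h^0({\cal O}_X({\cal E}))=2$.

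The converse has a genuine gap. The elliptic fibration given by $|{\cal E}'|$ is \emph{never} relatively minimal on a Coble surface with irreducible $C$, so Kodaira's formula $K_X=-{\cal E}'+\sum(m_i-1)F_i$ is false on $X$.

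Your dismissal of $(-1)$-curves in fibers runs backwards. For a relatively minimal elliptic fibration, $K_X$ is numerically a rational multiple of the fiber, so $K_X\cdot D=0$ for every vertical curve $D$ and $K_X^2=0$. But you have just shown that $C$ is vertical, and $K_X\cdot C=-C^2/2=2$; moreover $K_X^2=-1$. Concretely, the fiber through $C$ is $C+2E$ with $E$ a $(-1)$-curve (in the blow-up model, $C+2E_i\in|6L-2\sum_{j\ne i}E_j|$), which is exactly the curve the paper produces.

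Taken at face value, your formula even contradicts the conclusion you want. With a single double fiber $2F_1$ it gives $-K_X={\cal E}'-F_1=F_1$, which is effective, against $h^0({\cal O}_X(-K_X))=0$. So your ``$m_1=2$'' case analysis cannot go through as written.

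The fix is to pass to the relatively minimal model $\pi:X\to Y$. Since $K_Y^2=0$ and $K_X^2=-1$, $\pi$ contracts exactly one $(-1)$-curve $E$, and $K_X=\pi^*K_Y+E$. On $Y$, $-K_Y$ is effective by Riemann--Roch, which rules out two or more multiple fibers. So $K_Y=-{\cal E}'_Y+(m-1)F_1$ with $mF_1={\cal E}'_Y$ and $m\ge1$.
\begin{itemize}
\item If $m=1$, then $-K_X=\pi^*{\cal E}'_Y-E$ is effective (take the fiber through $\pi(E)$).
\item If $m\ge3$, then $|2F_1|=\{2F_1\}$, so effectivity of $-2K_X=2\pi^*F_1-2E$ forces $\pi(E)\in F_1$, and then $-K_X=\pi^*F_1-E$ is effective.
\end{itemize}
Hence $m=2$ and $\pi(E)\notin F_1$, so ${\cal E}:=\pi^*F_1$ is an isolated elliptic curve with ${\cal E}'=2{\cal E}$.

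This repaired route identifies $Y$ as a Halphen surface of index $2$, the viewpoint the paper mentions after the proof, but it relies on Kodaira's formula and standard facts about Halphen surfaces. The paper instead stays on $X$:
\begin{itemize}
\item it shows $D={\cal E}'+K_X$ has $h^0=1$;
\item it splits $D$ over two fibers as $E+{\cal E}$ using $h^0(-K_X)=0$ and $h^0({\cal E}'-K_X)=1$;
\item it then identifies the fiber $C+2E$ directly.
\end{itemize}
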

{\it Proof:} 
We start from the short exact sequence \begin{eqnarray}\label{iec} 0 \to {\cal O}_X \to {\cal O}_X ({\cal E}) \to {\cal O}_{\cal E} ({\cal E}) \to 0\end{eqnarray} which remains exact on global sections: $$0 \to H^0 ({\cal O}_X) \to H^0 ({\cal O}_X ({\cal E})) \to H^0 ({\cal O}_{\cal E} ({\cal E})) \to 0.$$ Since ${\cal E}$ is rigid, we deduce $$H^0 ({\cal O}_{\cal E} ({\cal E})) = 0.$$ The adjunction formula gives $${\cal O}_{\cal E} ({\cal E} + K_X) = {\cal O}_{\cal E}$$ thus $${\cal O}_{\cal E} ({\cal E}) = {\cal O}_{\cal E} (- K_X).$$ In particular, $$\Deg\, {\cal O}_{\cal E} ({\cal E}) = \frac{1}{2} {\cal E} C \ge 0.$$ If the inequality was sharp, then Riemann-Roch Theorem would imply $h^0 ({\cal O}_{\cal E} ({\cal E})) > 0$ which is false, hence $$\Deg\, {\cal O}_{\cal E} ({\cal E}) = 0$$ Consequently, $${\cal O}_{\cal E} (2 {\cal E}) = {\cal O}_{\cal E} (- 2 K_X) = {\cal O}_{\cal E} (C)$$ is an effective line bundle of degree $0$, thus ${\cal O}_{\cal E} (2 {\cal E}) = {\cal O}_{\cal E}$. In other words, ${\cal O}_{\cal E} ({\cal E})$ is a $2$-torsion divisor, and by Serre duality: $$h^1 ({\cal O}_{\cal E} ({\cal E})) = h^0 ({\cal O}_{\cal E} ({\cal E})) = 0$$ The long exact sequence induced by \eqref{iec} now implies that $$h^1 ({\cal O}_X ({\cal E})) = 0.$$ Finally we look at the sequence $$ 0 \to {\cal O}_X ({\cal E}) \to {\cal O}_X (2 {\cal E}) \to {\cal O}_{\cal E} (2 {\cal E}) = {\cal O}_{\cal E} \to 0,$$ which gives $$h^0 ({\cal O}_X (2 {\cal E})) = 2.$$
It is easy to show that the pencil $|2 {\cal E}|$ has no base points or base components: indeed, a base component of $2 {\cal E}$ can only be ${\cal E}$ itself. But if ${\cal E}$ was a base component, then we would have $h^0 ({\cal O}_X (2 {\cal E} - {\cal E})) = h^0 ({\cal O}_X (2 {\cal E})) = 2$, which is a contradiction with the rigidity of ${\cal E}$. Finally, if the pencil $|2 {\cal E}|$ had a base point, this should lie on the reduced curve ${\cal E}$. Since $|2 {\cal E}|$ has no base components, the intersection product ${\cal E} (2 {\cal E}) = 2 \, \Deg\, {\cal O}_{\cal E} ({\cal E}) = 0$ shows that the generic element of $|2 {\cal E}|$ does not touch ${\cal E}$, and hence this pencil has no base points.\\
Conversely, assume ${\cal E'}$ is a base point free pencil of elliptic curves: necessarily we have $${\cal O}_{\cal E'} ({\cal E'}) = {\cal O}_{\cal E'}$$ and by adjunction formula also $${\cal O}_{\cal E'} (K_X) = {\cal O}_{\cal E'}$$ Consider the divisor $$D:= {\cal E'} + K_X.$$ We first show that it is effective. With this purpose, we consider the sequence $$0 \to {\cal O}_X (K_X) \to {\cal O}_X ({\cal E'} + K_X) \to {\cal O}_{\cal E'} ({\cal E'} + K_X) \to 0$$ which induces an isomorphism $$H^0 ({\cal O}_X ({\cal E'} + K_X)) \simeq H^0 ({\cal O}_{\cal E'} ({\cal E'} + K_X)) = H^0 ({\cal O}_{\cal E'}) \simeq \complex.$$ Hence $D$ is effective, and the relation $${\cal E'} D = {\cal E'} ({\cal E'} + K_X) = 0$$ yields that every component of $D$ is contained in a fiber of ${\cal E'}$. Nonetheless, we also have $$H^0 ({\cal O}_X ({\cal E'} - D)) = H^0 ({\cal O}_X (- K_X)) = 0$$ so one fiber of $|{\cal E'}|$ is not sufficient to contain all of $D$. On the other side, \begin{eqnarray}\label{parteuno} H^0 ({\cal O}_X (2 {\cal E'} - D)) = H^0 ({\cal O}_X ({\cal E'} - K_X)) \end{eqnarray} This quantity can be computed via the short exact sequence $$0 \to {\cal O}_X (- K_X) \to {\cal O}_X ({\cal E'} - K_X) \to {\cal O}_{\cal E'} ({\cal E'} - K_X) \to 0$$ which induces an isomorphism \begin{eqnarray}\label{partedue}H^0 ({\cal O}_X ({\cal E'} - K_X)) \simeq H^0 ({\cal O}_{\cal E'} ({\cal E'} - K_X)) = H^0 ({\cal O}_{\cal E'}) = \complex \end{eqnarray} Of course, the vanishing \eqref{nohuno} was used to derive this isomorphism. Putting together the relations \eqref{parteuno} and \eqref{partedue} we find that two fibers of ${\cal E'}$ are sufficient to contain all of $D$, so we have a disjoint decomposition $$D = E + {\cal E}$$ Moreover, the product $$D C = ({\cal E'} + K_X) (- 2K_X) = 2$$ forces, up to the order, $$E C = 2 \ {\rm\, and\,} \ {\cal E} C = 0$$ Let $p_a (E)$ denote the algebraic genus of $E$. Since $C$ and $E$ touch each other, they belong to the same fiber of ${\cal E'}$, so that $C + E \le {\cal E'}$. This forces $p_a (C + E) \le 1$, but we also know that $p_a (C + E) = p_a (C) + p_a (E) + 1 = p_a (E) + 1$. The only possibility is that $$p_a (E) = 0$$ and by adjunction formula $$E^2 = 2 p_a (E) - 2 - E K_X = - 2 + \frac{E C}{2} = -1,$$ so that $E$ is a smooth $(-1)$-curve. Now we compute the intersection product between $E$ and the effective divisor ${\cal E'} - C - E$, and we find $$({\cal E'} - C - E) E = -1$$ Hence $E$ is necessarily a component of ${\cal E'} - C - E$. So we showed that $$C + 2 E \le {\cal E'}$$ and since both these two divisors generate base point-free pencils, we have $$C + 2 E = {\cal E'}$$ and thus $${\cal E'} - 2 {\cal E} = {\cal E'} - 2 ({\cal E'} + K_X - E) = - {\cal E'} + C + 2 E = 0$$ that is $${\cal E'} = 2 {\cal E} \ \ \square$$\\
The previous Proposition is a glimpse on a very large theory, about Halphen pencils. By definition, a Halphen pencil of index $m$ is a rational surface $Y$ such that the divisor $|- m K_Y|$ defines a relatively minimal and basepoint - free pencil of curves, with exactly one multiple fiber, of multiplicity $m$. The blow - down of the curve $E$ in Proposition \ref{ellpenc} defines a Halphen pencil of index $2$, with $2 {\cal E}$ as the unique double fiber. For a wider overview on Halphen pencils, see \cite{MirandaZanardini2022}, \cite{Zanardini2022}.\\
Note also that Proposition \eqref{ellpenc} also showed that ${\cal E} + K_X$ is the class of a $(-1)$-curve in $X$, for any isolated elliptic curve ${\cal E}$. We also remark that if we start with two isolated elliptic curves ${\cal E}_1 = E_1 - K_X, {\cal E}_2 = E_2 - K_X$, then ${\cal E}_1 {\cal E}_2 = E_1 E_2 + 1$. Combining this fact with the results from Propositions \ref{extmoc} and \ref{ellpenc}, we finally get to the main result of this subsection:
\begin{theorem}\label{ndindex}
Let $X$ be an unnodal Coble surface, with irreducible boundary curve $C$. Then any sequence ${\cal E}_1, \ldots, {\cal E}_r$ of isolated elliptic curves satisfying ${\cal E}_i {\cal E}_j = 1 - \delta_{i, j}$ and $r \le 8$ can be extended to a sequence ${\cal E}_1, \ldots, {\cal E}_{10}$ with the same property.
\end{theorem}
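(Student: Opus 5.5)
The plan is to translate the statement about isolated elliptic curves into the statement about disjoint $(-1)$-curves, apply Proposition \ref{extmoc}, and translate back. By Proposition \ref{ellpenc} and the remark after it, every isolated elliptic curve ${\cal E}_i$ determines a $(-1)$-curve $E_i := {\cal E}_i + K_X$. Conversely, every $(-1)$-curve $E$ gives a rigid elliptic curve $E - K_X$, as shown just before Proposition \ref{ellpenc} using the vanishing \eqref{nohuno}. The two maps $E \mapsto E - K_X$ and ${\cal E} \mapsto {\cal E} + K_X$ are mutually inverse on divisor classes. Since each side has a unique effective member (a $(-1)$-curve is rigid, and ${\cal E}$ is rigid by assumption), they give a bijection between $(-1)$-curves and isolated elliptic curves.

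First I compare intersection numbers. For $E_i = {\cal E}_i + K_X$ we have
$$E_i E_j = {\cal E}_i {\cal E}_j + K_X({\cal E}_i + {\cal E}_j) + K_X^2 .$$
Here $K_X^2 = -1$ by Proposition \ref{comp} with $n=1$. Also ${\cal E}_i K_X = -\frac{1}{2}{\cal E}_i C = 0$, because the proof of Proposition \ref{ellpenc} shows $\Deg\, {\cal O}_{\cal E}({\cal E}) = \frac12 {\cal E} C = 0$. Hence $E_i E_j = {\cal E}_i {\cal E}_j - 1$. This is the relation ${\cal E}_1{\cal E}_2 = E_1E_2+1$ noted before the statement, and it also holds for $i=j$, giving $E_i^2 = -1$.

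So the isotropic condition ${\cal E}_i{\cal E}_j = 1-\delta_{i,j}$ is equivalent to $E_iE_j = -\delta_{i,j}$. For $i\neq j$ this says $E_iE_j=0$, and since $E_i \neq E_j$ are irreducible curves, they are disjoint. The given sequence therefore produces $r \le 8$ pairwise disjoint $(-1)$-curves $E_1,\ldots,E_r$. The $E_i$ are distinct, because ${\cal E}_i{\cal E}_j = 1 \neq 0 = {\cal E}_i^2$.

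Next I apply Proposition \ref{extmoc}, whose hypotheses (unnodal, irreducible $C$, $s=r\le 8$) are exactly those of the theorem. It extends the family to pairwise disjoint $(-1)$-curves $E_1,\ldots,E_{10}$. I then set ${\cal E}_k := E_k - K_X$ for $k = r+1,\ldots,10$. For $k \le r$ this agrees with the original ${\cal E}_k$, because ${\cal E}_k = E_k - K_X$ as classes and both are rigid effective. Each new ${\cal E}_k$ is an effective rigid elliptic curve by the argument before Proposition \ref{ellpenc}. The same intersection identity gives ${\cal E}_i{\cal E}_j = E_iE_j + 1 = 1-\delta_{i,j}$.

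The main obstacle is the identification of ${\cal E}_i + K_X$ with an honest irreducible $(-1)$-curve, and not merely a class of self-intersection $-1$. This is exactly the content of the converse part of Proposition \ref{ellpenc}, where $D = {\cal E}' + K_X$ with ${\cal E}' = 2{\cal E}$ splits as $E + {\cal E}$ and $E$ is a smooth $(-1)$-curve. Relating $2{\cal E}$ to that situation gives $E = {\cal E} + K_X$, which is precisely what lets us apply Proposition \ref{extmoc}, a statement about actual curves. A secondary point is that the new ${\cal E}_k$ are irreducible elliptic curves and not just effective classes of arithmetic genus one; the paper asserts this via adjunction, and I would rely on it.
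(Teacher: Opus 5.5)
Your proposal is correct and follows essentially the paper's own route: the paper also passes from each ${\cal E}_i$ to the $(-1)$-curve $E_i = {\cal E}_i + K_X$ via Proposition \ref{ellpenc}, uses ${\cal E}_i{\cal E}_j = E_iE_j + 1$ to turn the isotropic condition into pairwise disjointness, extends by Proposition \ref{extmoc}, and translates back with ${\cal E}_k = E_k - K_X$. You actually supply more detail than the paper does, namely ${\cal E}_iK_X = 0$, $K_X^2=-1$, and why ${\cal E}+K_X$ is the $(-1)$-curve produced in the converse part of Proposition \ref{ellpenc}; for the last point, the equality $2{\cal E} = 2{\cal E}''$ forces ${\cal E}={\cal E}''$ because $\Pic (X)$ is torsion-free and both curves are rigid.
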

In the classical literature of Enriques surfaces, a sequence ${\cal E}_1, \ldots, {\cal E}_r$ of isolated elliptic curves with intersection products ${\cal E}_i {\cal E}_j = 1 - \delta_{i, j}$ is known as an isotropic sequence. Cossec proved in \cite{Cossec1985} that Theorem \ref{ndindex} is true also on unnodal Enriques surfaces. For the study of this problem on Enriques surfaces, including nodal ones, we refer to \cite{Ciliberto2023}, \cite{Knutsen2020}, \cite{Schaffler2024}.

\newpage
\subsection{Moduli space of Coble surfaces}  
Nowadays, Coble surfaces are studied also for other reasons: the two conditions of Definition \ref{Coble} imply that for every Coble surface $X$ there exists a map $\pi: \tilde X \to X$, where $\tilde X$ is a smooth $K3$ surface and $\pi$ is a double cover, ramified over the Coble curve. As a consequence, Coble surfaces are closely related to Enriques surfaces, which are quotients of $K3$ surfaces by a fixed point - free involution. More precisely, we refer to \cite{Moduli2013}, where the authors proved the following result:
\begin{theorem}\cite{Moduli2013}
The coarse moduli space of nodal Enriques surfaces ${\cal M}_{En, nod}$ and of Coble surfaces ${\cal M}_{Co}$ are both rational varieties of dimension $9$.
\end{theorem}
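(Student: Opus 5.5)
The plan is to handle the two moduli spaces separately: for each, find an explicit parameter space, reduce modulo the natural reductive group, and read off both the dimension and rationality from that quotient. The dimension counts are straightforward. The real content is the rationality of two specific invariant-theoretic quotients, and I propose to obtain it from a slice or no-name argument.

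For Coble surfaces I would restrict to the general member, which has irreducible boundary. By the results of Section 2 such an $X$ is $Bl_{p_1,\ldots,p_{10}}\mathbb{P}^2$, where the $p_i$ are the nodes of an irreducible rational sextic $\overline C$. Conversely, every such sextic produces a Coble surface, via the lemma from \cite{CantatDolg2012}. So a dense open subset of ${\cal M}_{Co}$ is birational to the set of $10$-nodal plane sextics modulo $PGL(3)$.
\begin{itemize}
\item Normalizing $\overline C$ gives a map $\mathbb{P}^1\to\mathbb{P}^2$ defined by a net $W\subset H^0({\cal O}_{\mathbb{P}^1}(6))$.
\item Two sextics are projectively equivalent exactly when the corresponding nets differ by $PGL(2)$.
\item The moduli are therefore birational to $G/\!/PGL(2)$, where $G=Gr(3,H^0({\cal O}_{\mathbb{P}^1}(6)))$.
\item This gives $\dim=12-3=9$. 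I would check that the stabilizer of a general net is finite, and that the Coble surface determines the sextic up to the finitely many choices of blow-down, so the map to moduli is generically finite.
\end{itemize}
For rationality I would use Plücker duality, $Gr(3,7)\cong Gr(4,7)$, or equivalently the complementary net of apolar forms. This turns the problem into a quotient of a Grassmannian of binary forms by $SL(2)$. I would then take a slice: fix three of the zeros of a distinguished member of the net at $0,1,\infty$. This kills $PGL(2)$ up to a finite group and leaves an open subset of a rational variety modulo a finite group. I expect this finite-group step to be the main obstacle. I would try to choose the normalization so that the finite residual group acts linearly on an affine space, where Noether-type results for small groups apply, or fall back on the no-name lemma applied to the vector bundle of bases of $W$ over $\mathbb{P}(S^6\otimes\complex^3)/GL(2)$.

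For nodal Enriques surfaces I would use the Reye congruence model. By \cite{VerraReye1993}, $Reye(W)$ for a general web $W$ of quadrics in $\mathbb{P}^3$ is a nodal Enriques surface. By the converse direction, in \cite{Gebhard2024} or already generically by Cossec, the general nodal Enriques surface arises this way, with $W$ determined up to $PGL(4)$ and finite ambiguity. This gives ${\cal M}_{En,nod}$ birational to $Gr(4,S^2\complex^4)/PGL(4)$, of dimension $24-15=9$. Rationality would again follow from a slice: for a general web, put four of the quadrics simultaneously into a normal form, using the six base points of a general sub-net or the Hessian quartic symmetroid. This reduces the $PGL(4)$-quotient to a finite quotient of a linear space. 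Alternatively, one can compare both spaces through the $K3$ double cover, since both are birational to arithmetic quotients of the same $9$-dimensional period domain, as in \cite{Moduli2013}. In that case rationality of one implies rationality of the other, and the Coble side has already been handled.
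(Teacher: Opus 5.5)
Your dimension counts are right and match what the paper actually does. The paper only sketches $\dim \mathrm{Gr}(3,H^0(\mathcal{O}_{\mathbb{P}^1}(6)))-\dim PGL(2)=12-3=9$ and takes rationality from \cite{Moduli2013}. Your count $24-15=9$ for webs of quadrics is also fine, since for dimension a dominant generically finite map suffices.

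For rationality, however, there is a genuine gap before any invariant theory starts: generically finite is not birational, and here the degree is not $1$. A general Coble surface has infinitely many contractions to $\mathbb{P}^2$. There are finitely many up to $\mathrm{Aut}(X)$, but more than one. For example, the quadratic transformation centred at $p_8,p_9,p_{10}$ contracts $E_1,\dots,E_7,\hat E_8,\hat E_9,\hat E_{10}$ (the alternative completion noted after Proposition \ref{extmoc}), with line class $L'=2L-E_8-E_9-E_{10}$. The two resulting $10$-nodal sextics are projectively equivalent only if some $g\in\mathrm{Aut}(X)$ satisfies $g^*L'=L$. For very general $X$, every automorphism lifts to the K3 double cover branched along $C$ and acts on its transcendental lattice by $\pm1$. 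By gluing, it therefore acts trivially on the discriminant group of $\mathrm{Pic}(X)(2)$, i.e.\ on $\mathrm{Pic}(X)/2\mathrm{Pic}(X)$; this is the statement that $\mathrm{Aut}(X)$ acts through $W(\mathbb{E}_{10})(2)$. Since $L'-L=L-E_8-E_9-E_{10}\notin 2\,\mathrm{Pic}(X)$, the two sextics are inequivalent. So $\mathrm{Gr}(3,7)/\!/PGL(2)\dasharrow{\cal M}_{Co}$ has degree at least $2$ (in fact very large), and rationality does not descend along such a map. The paper's notation ${\cal M}_{Co}:=U/\!/PGL(2)$ is harmless for counting dimensions but not for this. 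Even a complete proof that $\mathrm{Gr}(3,7)/\!/PGL(2)$ is rational would give only unirationality of ${\cal M}_{Co}$, which already follows from $\mathrm{Gr}(3,7)$ being rational. The Enriques half has the same defect: ``$W$ determined up to $PGL(4)$ and finite ambiguity'' gives only a generically finite map from $\mathrm{Gr}(4,S^2\mathbb{C}^4)/PGL(4)$. You would need the Fano--Reye polarization to be unique up to automorphisms, which you do not address.

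Even granting a degree-one model, the rationality of the quotients is not proved.
\begin{itemize}
\item You do not say how the ``distinguished member'' of a net is chosen $PGL(2)$-equivariantly. After normalizing three of its roots at $0,1,\infty$, what remains is a finite residual action (permuting choices of roots) on a variety that is not a linear representation, so Noether-type results for linear actions do not apply. This residual quotient, which you yourself call the main obstacle, is the whole content of the rationality claim.
\item The no-name lemma needs a $G$-vector bundle over a base on which $G$ acts generically freely. The bases of $W$ form a $GL(3)$-torsor over $\mathrm{Gr}(3,7)$, not such a bundle. On $S^6\otimes\mathbb{C}^3$ one must divide by $GL(3)\times SL(2)$ at once, and you give no equivariant bundle structure that would make the lemma applicable.
\item On the Enriques side, a general net of quadrics in $\mathbb{P}^3$ has eight base points, not six. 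Choosing a sub-net of the web is a three-parameter choice, so that normalization is not a slice either.
\item Two quotients of the same period domain are birational only if the arithmetic groups agree up to conjugacy; commensurable but different groups give only finite correspondences. So ``rationality of one implies rationality of the other'' is a substantive lattice-theoretic statement, essentially the core of \cite{Moduli2013}, not a formality.
\end{itemize}
What is missing overall is a parameter space mapping with degree one onto each moduli space, together with an actual rationality argument for the resulting quotient.
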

Our goal is to give an idea about why it is reasonable that ${\cal M}_{Co}$ has dimension $9$: a rational sextic curve $\overline C \subset \mathbb{P}^2$ is the image of a regular map $\gamma$: $$\gamma: \mathbb{P}^1 \to \mathbb{P}^2$$ \begin{eqnarray}\label{parsex}\gamma (u, v) = [F_0 (u, v), F_1 (u, v), F_2(u, v)]\end{eqnarray} where the $F_i$'s are three linearly independent homogeneous polynomials of degree $6$ over $\mathbb{P}^1$, without common zeroes. These three forms span a subspace $V \subset H^0 ({\cal O}_{\mathbb{P}^1} (6))$. Clearly, another base of $V$ made by $G_0, G_1, G_2$ defines a different map $$\gamma' : \mathbb{P}^1 \to \mathbb{P}^2$$ $$\gamma' (u, v) = [G_0 (u, v), G_1 (u,  v), G_2 (u, v)]$$ with image a curve $C'$. Of course there is an element $M \in \mathbb{P}GL (2)$ which moves $C$ onto $C'$, which lifts to an isomorphism of the corresponding Coble surfaces. Thus the isomorphism class of a Coble surface $X$ depends only on the choice of an element $V \in \Gr (3, H^0 ({\cal O}_{\mathbb{P}^1} (6)))$.\\
Viceversa, ``almost every'' subspace $V \subset H^0 ({\cal O}_{\mathbb{P}^1} (6))$ of dimension $3$ is good to build a Coble surface.\\
Indeed, if $V$ is basepoint-free one can define the regular map $$\gamma_{|V|} : \mathbb{P}^1 \to \mathbb{P} (V^*) \simeq \mathbb{P}^2$$ $$\gamma(p) := \{\sigma \in V {\rm\, such\, that\,} \sigma (p) = 0\}$$ When we look at the projective coordinates, the map $\gamma_{|V|}$ has exactly the shape \ref{parsex}. We can give a geometric interpretation of this map as the composite \begin{eqnarray}\label{projcurve}\mathbb{P}^1 \to \mathbb{P} (H^0 ({\cal O}_{\mathbb{P}^1} (6))^*) \dasharrow \mathbb{P} (V^*)\end{eqnarray} The first map in \eqref{projcurve} is the standard Veronese embedding $$\mathbb{P}^1 \to \mathbb{P}(H^0 ({\cal O}_{\mathbb{P}^1} (6))^*) \simeq \mathbb{P}^6$$
$$p \to \{\sigma \in H^0 ({\cal O}_{\mathbb{P}^1} (6)) {\rm\, such\, that\,} \sigma (p) = 0\}$$
The rational function on the right side of \eqref{projcurve} is induced by the linear map $H^0 ({\cal O}_{\mathbb{P}^1} (6))^* \to V^*$, dual to the inclusion $V \subset H^0 ({\cal O}_{\mathbb{P}^1} (6))$, and consists precisely to the projection with center the projectified annihilator $\mathbb{P}( \Ann\, (V)) \subset \mathbb{P} (H^0 ({\cal O}_{\mathbb{P}^1} (6))^*)$. Thus the image of $$C_V := \gamma_{|V|} (\mathbb{P}^1)$$ will be the projection of a rational normal curve from some subspace.\\
Since $$\gamma_{|V|}^* {\cal O}_{\mathbb{P}^2} (1) = {\cal O}_{\mathbb{P}^1} (6)$$ one finds $$(\Deg\, \gamma_{|V|}) (\Deg\, C_{|V|}) = 6$$ Then the space of semi - stable nets, in the sense of \cite{Mumford1994} $$U:= \{V \in \Gr (3, H^0 ({\cal O}_{\mathbb{P}^1} (6))) {\rm\, such\, that\,} Bs |V| = \emptyset $$ $${\rm\, and\,} \Im (\gamma_{|V|}) {\rm\, is\, a\,} 10-{\rm nodal\, sextic}\}$$ is an open subset of the Grassmannian $\Gr (3, H^0 ({\cal O}_{\mathbb{P}^1} (6))) \simeq \Gr (3, 7)$. In the complementary closed subset, there are subspaces $V$ such that $\gamma_{|V|}$ is a double cover of a nodal cubic curve, or a triple cover of a smooth conic, or the image has worse singularities.\\
Moreover, we have to take in account the reparametrizations of $\mathbb{P}^1$, that are elements of $\mathbb{P} GL (2, \complex)$. Thus, the GIT-quotient $${\cal M}_{Co} := U // PGL (2, \complex)$$ constructed again as in \cite{Mumford1994}, is a suitable moduli space for rational plane sextics. Note that $$\Dim\, U = \Dim\, \Gr (3, H^0 ({\cal O}_{\mathbb{P}^1} (6))) = 12,$$ so that $$\Dim\, {\cal M}_{Co} = 9.$$

\newpage
\section{Projective models of Coble surfaces}
\subsection{The Bordiga model}
There is another model for Coble surfaces, which also underlines their parenthood with Enriques surfaces: the Bordiga model. This subsection is inspired by the construction of a Reye - Enriques model in \cite{Verra2023}.\\
One looks at the family of quadrics in $\mathbb{P}^3$, that is, the family $$\mathbb{P}^9 \simeq \mathbb{P}(H^0 (\mathbb{P}^3, {\cal O}(2))) $$ We identify each quadric polynomial $Q \in H^0 (\mathbb{P}^3, {\cal O} (2))$ with a symmetric matrix $M$. Thus $\mathbb{P}^9$ can be stratified by the rank of matrices: for $k = 1, \ldots, 4$, let us denote by $${\cal Q}_k := \{M {\rm\, such\, that\,} rk\, M \le k\}$$ the family of quadrics of rank less or equal than $k$.\\
We have a chain of closed inclusions $${\cal Q}_1 \subset {\cal Q}_2 \subset {\cal Q}_3 \subset {\cal Q}_4 = \mathbb{P}^9$$
The generic element of every ${\cal Q}_k$, more precisely the ones in ${\cal Q}_k \setminus {\cal Q}_{k - 1}$, has rank exactly $k$. In particular, the space ${\cal Q}_3$ is defined by the annihilation of the determinant, which is a section of $H^0 (\mathbb{P}^9, {\cal O} (4))$. Hence ${\cal Q}_3$ is an $8$-dimensional hypersurface of degree $4$ in $\mathbb{P}^9$.\\
The inner space ${\cal Q}_2$ consists of quadrics which are made up of two hyperplanes, counted with multiplicities. Hence we have a parametrization $$\pi: (\mathbb{P}^3)^* \times (\mathbb{P}^3)^* \to {\cal Q}_2 \subset \mathbb{P}^9$$
$$\pi ([F], [G]) := [FG]$$ The map $\pi$ is a double cover over its image, which is exactly ${\cal Q}_2$. Thus ${\cal Q}_2$ is a variety of dimension $6$, of degree equal to $$\Deg\, {\cal Q}_2 = \frac{1}{2} \pi^* H^6 = 10$$ where $H$ is an hyperplane section of $\mathbb{P}^9$. The space ${\cal Q}_1$ is the branch locus of $\pi$, while the diagonal in $(\mathbb{P}^3)^* \times (\mathbb{P}^3)^*$ is the ramification locus.\\
This locus is a threefold of degree $8$ in $\mathbb{P}^9$, and it corresponds to the Veronese embedding of $(\mathbb{P}^3)^*$ through quadric forms, and it is the singular locus of the six-fold ${\cal Q}_2$.\\
To get a Coble surface, we choose a 5-dimensional linear subspace $\Lambda \subset \mathbb{P}^9$, $$\Lambda = V(L_1, L_2, L_3, L_4) \simeq \mathbb{P}^5$$ where the $L_i$'s are linear forms on $\mathbb{P}^9$. We consider the intersection surface $$S := \Lambda \cap {\cal Q}_2$$ and its pre-image $$\tilde S := \pi^{-1} (S) = \pi^{-1} (\Lambda).$$ 
The surface $\tilde S \subset (\mathbb{P}^3)^* \times (\mathbb{P}^3)^*$ is defined by the system of bilinear symmetric equations \begin{eqnarray}\label{sysS}\tilde S = \{ ([F],[G]) {\rm \ such\ that\ } L_i (FG) =0 \, {\rm\, for\,} \, i = 1, 2, 3, 4\} \end{eqnarray}
If $\Lambda$ is general enough, we have $\Lambda \cap {\cal Q}_1 = \emptyset$. In this case, the system \eqref{sysS} has no solutions along the diagonal $\Delta$ of $(\mathbb{P}^3)^* \times (\mathbb{P}^3)^*$, both $\tilde S$ and $S$ are smooth, and the 2:1 cover $\pi : \tilde S \to S$ is unramified. The automorphism group of this cover is $\mathbb{Z}_2$, generated by the involution $i$ of $(\mathbb{P}^3)^* \times (\mathbb{P}^3)^*$ which switches the factors. In $(\mathbb{P}^3)^* \times (\mathbb{P}^3)^*$, the surface $\tilde S$ is defined by the four bilinear equations of \eqref{sysS}, which are four sections of ${\cal O} (1,1)$. Hence, the canonical bundle of $\tilde S$ is $$K_{\tilde S} = (4\, {\cal O} (1,1) + K_{(\mathbb{P}^3)^* \times (\mathbb{P}^3)^*})|_{\tilde S} = {\cal O}_{\tilde S}$$ Together with the fact that $\tilde S$ is simply connected, this says that $\tilde S$ is a $K3$ surface, and $S = \pi (\tilde S) = \tilde S / i$ is an Enriques surface.\\
Now, suppose we are in the pathological case when $\Lambda \cap {\cal Q}_1 \ne \emptyset$. In this case, the surfaces $S$ inherits the singularities coming from ${\cal Q}_1$, and so $S$ becomes a singular surface, with a quartic point $p \in S \cap {\cal Q}_1$. Now let us choose a hyperplane $$H \subset \Lambda$$ $$H \simeq \mathbb{P}^4$$ such that $p \notin H$, and consider the projection with center $p$, $$\pi_p : S \setminus \{p\} \subset \Lambda \setminus \{p\} \to H$$ We denote by $$\tilde \pi_p := {\rm Bl}_p S \to H$$ the resolution of the indeterminacy of $\pi$ at $p$.\\
\begin{definition}
The image surface $X := {\rm\, Im\, } \pi_p \subset \mathbb{P}^4$ is called Bordiga surface.
\end{definition}
\begin{proposition}
The linear system $|H + K_X|$ has projective dimension $2$, and it contracts $10$ smooth rational $(-1)$-curves. Thus, the Bordiga surface $X \subset \mathbb{P}^4$ is isomorphic to the blow-up of $\mathbb{P}^2$ in 10 base-points.\\
If $E \subset Bl_p S$ is the exceptional curve over the singular point $p$, then its image $$C := \tilde \pi (E) \subset X$$ is the proper transform of a sextic curve $C \subset \mathbb{P}^2$, having multiplicity $2$ at all the $10$ base points. As a consequence, the class $C \in \Pic (X)$ is given by $$[\tilde C] = 6 L - 2 E_1 - \cdots - 2 E_{10} = -2 K_X$$
\end{proposition}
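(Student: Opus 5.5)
The plan is to compute the canonical class of $X$ from the geometry of $\tilde\pi_p$ and then identify $|H+K_X|$ as the adjoint system that undoes the Bordiga embedding. I would work on $Y:={\rm Bl}_p S$, with exceptional curve $E$.

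First I would understand $p$. Near $p$, the sixfold ${\cal Q}_2$ is locally the quotient of $(\mathbb{P}^3)^*\times(\mathbb{P}^3)^*$ by the swap along the diagonal, that is, a cone over a Veronese-type variety. A general $\mathbb{P}^5=\Lambda$ through a point $p\in{\cal Q}_1$ cuts it in a surface singularity of multiplicity $4$. I expect this singularity to be the quotient $\frac14(1,1)$, so that $\pi^{-1}(p)$ is a single smooth point of $\tilde S$ fixed by the swap. Granting this, blowing up $p$ gives a smooth rational curve $E$ with $E^2=-4$, because the tangent cone is a cone over a rational normal quartic. The quotient $\tilde S\to S$ is then ramified only over $p$. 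The K3 computation already in the text, $K_{\tilde S}={\cal O}_{\tilde S}$, still holds, and the Hurwitz-type formula for the quotient by an involution with isolated fixed point gives $-2K_Y=E$, with $h^0(-K_Y)=0$ and $h^0(-2K_Y)=1$. So $Y$ is a Coble surface with irreducible Coble curve $E$. Rationality follows from Castelnuovo's criterion together with $h^1({\cal O}_Y)=0$ and $K_Y\ne 0$, $2K_Y\ne 0$ effective.

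Next I would check that projection from $p$ is an isomorphism $Y\cong X$. The class of the projection is $H=A-E$, where $A$ is the hyperplane class of $\Lambda$. We have $H^2=A^2-4=10-4=6$, and indeed a Bordiga surface has degree $6$. For injectivity and unramifiedness, I would use that $S$ is cut out by the quadrics of the $3\times3$ minors and contains no lines through $p$, for general $\Lambda$. The image $E\subset X$ is then a rational normal quartic, since $HE=AE-E^2=0+4=4$.

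Now I would compute $|H+K_X|$. We have $(H+K_X)^2=H^2+2HK_X+K_X^2=6-4-1=1$, using $HK_X=-\frac12 HE=-2$ and $K_X^2=-1$ from Proposition \ref{comp}. Riemann--Roch gives $\chi(H+K_X)=1+\frac{(H+K_X)H}{2}=1+\frac{6-2}{2}=3$. Higher cohomology vanishes by Kodaira vanishing, since $H$ is big and nef. Hence $\dim|H+K_X|=2$. I would use Reider's theorem, Part I (here $H^2=6>4$), to show $|H+K_X|$ is base point free. The obstructing curves $D$ would have $HD=0$ (impossible, since $H$ is an embedding) or $HD=1$, $D^2=0$, which would be lines forming a pencil. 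These can be excluded because, by the genus formula, a line $D$ has $K_XD=-1$, whereas $D^2=0$ forces $K_XD=-2$.

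Then $\phi_{|H+K_X|}:X\to\mathbb{P}^2$ is a birational morphism of degree $(H+K_X)^2=1$. It contracts exactly the curves $D$ with $(H+K_X)D=0$. Since $\rho(X)=11-K_X^2\cdot 0$, more precisely since $K_{\mathbb{P}^2}^2-K_X^2=9+1=10$, it contracts $10$ exceptional curves. These are $(-1)$-curves, and not chains, because $X$ embeds in $\mathbb{P}^4$: any $(-2)$-curve $N$ would have $HN=0$. Setting $L=H+K_X$ gives $H=L-K_X=4L-E_1-\cdots-E_{10}$ and $-2K_X=6L-2\sum E_i=C$. Since $C=E$ has $LC=6$ and $E_iC=2$, its image is a sextic with nodes at the $10$ points.

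The main obstacle is the local analysis at $p$: showing that a general $\Lambda$ through a point of ${\cal Q}_1$ yields exactly a $\frac14(1,1)$ point, so that $E$ is a smooth $(-4)$-curve and $-2K_Y=E$. I would attack this through the double cover $\pi^{-1}(\Lambda)\subset(\mathbb{P}^3)^*\times(\mathbb{P}^3)^*$. I would verify transversality of the four bilinear equations at the diagonal point, which gives smoothness of $\tilde S$ there with the swap acting as $-1$ on the tangent plane.
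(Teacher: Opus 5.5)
The paper states this proposition without proof, so I am judging your argument on its own. The global half is fine in outline. Once you know that $Y={\rm Bl}_pS$ is a Coble surface with Coble curve $E$, $E^2=-4$, and $Y\cong X$, the following all work: $(H+K_X)^2=1$, $h^0({\cal O}_X(H+K_X))=3$, the birational morphism to $\mathbb{P}^2$ contracting $10$ disjoint $(-1)$-curves, $H=4L-\sum E_i$, and $-2K_X=6L-2\sum E_i$.

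The gap is in the local step you flagged as the main obstacle, and the verification you propose there cannot succeed. At a diagonal point $(F_0,F_0)$ the four defining equations are invariant under the swap. Their differentials therefore vanish on the $(-1)$-eigenspace $\{(u,-u)\}$ of the tangent space, which is $3$-dimensional. So the four equations are never transversal there: $\dim T_{(F_0,F_0)}\tilde S\ge 3$, and $\tilde S$ is singular at the fixed point.

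Your picture is also internally inconsistent. The quotient of a smooth germ by $-{\rm id}$ is an $A_1$ point (multiplicity $2$, crepantly resolved by a $(-2)$-curve), not the quartic point $\frac14(1,1)$. So no Hurwitz-type formula would give $-2K_Y=E$. Globally, an involution of a smooth K3 with an isolated fixed point is symplectic and has $8$ fixed points, not one. Since $E^2=-4$ and $-2K_Y=E$ underlie everything afterwards ($K_X^2=-1$, $HK_X=-2$, $HE=4$), the proof does not close as written.

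Here is the correct local picture. Use affine coordinates centred at $F_0$, with $s=x+y$ and $d=x-y$. The equations read $\ell_i(s)+\frac14B_i(s,s)-\frac14B_i(d,d)$, where $\ell_i=B_i(F_0,\cdot)$. For general $\Lambda$, one combination of the equations has no linear term. The other three solve $s$ as an even function of $d$ vanishing to order $2$, and the remaining one becomes an even function $Q(d)+O(|d|^4)$ with $Q$ nondegenerate. So $\tilde S$ has an ordinary node on which the swap acts by $d\mapsto -d$. Writing the node as $\mathbb{C}^2/\pm1$, the swap lifts to $(a,b)\mapsto(ia,ib)$, so $S$ is locally $\mathbb{C}^2/\mu_4=\frac14(1,1)$, as you expected.

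Now pass to the minimal resolution $\hat{\tilde S}$. It is a smooth K3 containing a $(-2)$-curve $R$ fixed pointwise by the lifted involution, and its quotient is $Y$, with branch curve $E$. This gives $E^2=2R^2=-4$ and $\pi^*K_Y=-R$, hence $-2K_Y=E$. It also gives $h^0(-K_Y)=0$ and $h^0(-2K_Y)=1$, from $h^0(R)=h^0(2R)=1$ and $\pi_*{\cal O}_{\hat{\tilde S}}={\cal O}_Y\oplus{\cal O}_Y(K_Y)$.

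Three smaller points also need repair.
\begin{itemize}
\item The $3\times3$ minors are cubics, not quadrics. The projection argument still works: every such cubic is singular at $p$, so a line through $p$ with too many further intersections with $S$ lies in every cubic, hence in $S$.
\item In the Reider step, "$K_XD=-1$ for a line" does not follow from the genus formula. Instead use that a line meets the rational normal quartic $E\subset\mathbb{P}^4$ in a scheme of length $\le 2$. Then $K_XD=-\frac12E\cdot D\ge -1$, which is incompatible with $D^2=0$.
\item To exclude infinitely near points, the relevant fact is that a \emph{contracted} $(-2)$-curve $N$ has $K_XN=0=(H+K_X)N$, hence $HN=0$, which contradicts ampleness of $H$. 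Meanwhile $E$ is not contracted, since $(H+K_X)E=6$.
\end{itemize}
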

Conversely, if we start with a Coble surface $X' = Bl_{10} \mathbb{P}^2$, then the linear system $$|H| := |4 L  - E_1 - \cdots - E_{10}|$$ of quartics passing through all the base points determines an embedding $$X' \to \mathbb{P}^4$$ whose image is a Bordiga surface.\\

\subsection{Quintic Coble surfaces}\label{qcs}
Another way to see a Coble surface is using the linear system $$H := |6 L - 2 E_1 - \cdots - 2 E_7 - E_8 - E_9 - E_{10}|$$ We have $$h^0 (X, H) = 4$$ and $$H^2 = 5$$ so that the pair $(X, H)$ determines a birational map $\phi: X \to \mathbb{P}^3$ onto a quintic hypersurface $\overline X$. For each permutation $(i, j, k)$ of the indices $(8, 9, 10)$, the exceptional divisor $E_i$ is sent biregularly onto a line $l_i$, because $$H E_i =1.$$ Moreover the cubic curve through $p_1, \ldots, p_7, p_i, p_j$ lives in the linear system $- K_X + E_k$, and covers with degree $2$ a line $\hat l_k \subset X'$, because $$H (- K_X + E_k) = 2$$ The line $\hat l_k$ meets both $l_i, l_j$ at the same point. The three lines $\hat l_8, \hat l_9, \hat l_{10}$ are double lines for $\overline X$, and they are concurrent at a point. Indeed, the decomposition $$H = - 2 K_X + E_8 + E_9 + E_{10} = (- K_X + E_i) + (- K_K + E_j ) + E_k$$ implies that $\hat l_i, \hat l_j, l_k$ are coplanar for every $i, j$. Hence $\overline X$ contains a tetrahedron $T \subset \mathbb{P}^3$ consisting of $6$ lines, and passes doubly through three of them. The three double lines share a common vertex of $T$, which is a triple point for $\overline X$. In a suitable choice of coordinates, the equation of such a quintic $\overline X$ is: \begin{eqnarray}\label{quinticoble}\alpha X_0 X_2^2 X_3^2 + \beta X_0 X_1^2 X_3^2 + \gamma X_0 X_1^2 X_2^2 + X_1 X_2 X_3 q = 0\end{eqnarray} with $\alpha, \beta, \gamma \in \complex$, and $q = q (X_0, X_1, X_2, X_3)$ is a quadratic form on $\mathbb{P}^3$.\\
We write $L_{i, j}$ to denote the line $X_i = X_j = 0$. The three lines $l_{0, 1}, l_{0, 2}, l_{0, 3}$ are double lines, and they meet at the triple point $[1, 0, 0, 0]$. The plane $X_0 = 0$ cuts the three simple edges $l_{1, 2}, l_{2, 3}, l_{1, 3}$, plus the plane quadric defined by $q = 0, X_0 = 0$, which is the image of $C$.
\\
Now we want to prove the opposite, that is, the normalization of a generic quintic surface $\overline X$ defined as in \eqref{quinticoble} is actually a Coble surface.\\
First note that we can assume $$\alpha, \beta, \gamma \ne 0$$ and hence we can perform the diagonal change of variables $$X_1 = \sqrt{\alpha} X_1'$$ $$X_2 = \sqrt{\beta} X_2'$$ $$X_3 = \sqrt{\gamma} X_3'$$ which turns Equation \eqref{quinticoble} in \begin{eqnarray}\label{newquinticoble} X_0 X_2^2 X_3^2 + X_0 X_1^2 X_3^2 + X_0 X_1^2 X_2^2 + X_1 X_2 X_3 q = 0 \end{eqnarray}
Now let $\nu : X \to \overline X$ be its normalization, and assume $X$ is smooth.\\
\begin{definition}
Let $\overline X$ be a quintic surface inside $\mathbb{P}^3$ defined as in \eqref{newquinticoble}. We denote by $L_{i, j}$ the line $$L_{i, j} := \{X_i = X_j = 0\}$$ and by $E_{i, j}$ the corresponding divisorial pre-image in $X$.\\
Moreover, let $\overline C \subset \overline X$ the plane conic defined by $$\overline C := \{X_0 = 0, q (0, X_1, X_2, X_3) = 0\}$$ and let $C$ be its pre-image in $X$.
\end{definition}
Note that, if $X$ is generic, then the unique singular points of $\overline X$ in $L_{0, 1}, L_{0, 2}$, $L_{0, 3}$ are the three vertices $[0,1,0,0], [0,0,1,0], [0,0,0,1]$. Consequently, also $E_{0,1}, E_{0, 2}, E_{0, 3}$ are still smooth rational curves. For the same reason, also $C$ is a smooth rational curve in $X$.\\
On the converse, a local computation, in complete analogy with the case of Enriques surfaces, shows that the three lines $L_{1, 2}, L_{1, 3}, L_{2, 3}$ consist of double points of $\overline X$, and each of them contains $4$ pinch points. Hence, the three curves $E_{1, 2}, E_{1, 3}, E_{2, 3}$ are smooth genus $1$ curves.
\begin{proposition}
The divisor $4 L_{1, 2} + 4 L_{1, 3} + 4 L_{2, 3} - \overline C$ lies in the linear system ${\cal O}_{\overline X} (2)$, so it is a Cartier divisor. Its pull-back on $X$ equals $\nu^* (4 L_{1, 2} + 4 L_{1, 3} + 4 L_{2, 3} - \overline C) = 2 E_{1,2} + 2 E_{1, 3} + 2 E_{2, 3} - C$
\end{proposition}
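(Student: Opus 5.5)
Since $4L_{1,2}+4L_{1,3}+4L_{2,3}-\overline C$ is not effective, the phrase "lies in ${\cal O}_{\overline X}(2)$" should be read as: this Weil divisor class is the class of ${\cal O}_{\overline X}(2)$, i.e. it is the divisor of a rational section of ${\cal O}_{\overline X}(2)$. So the plan is to exhibit an explicit quotient of forms whose zero/pole divisor on $\overline X$ is exactly $4L_{1,2}+4L_{1,3}+4L_{2,3}-\overline C$, and then pull the computation back to $X$.

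The natural candidate comes from rewriting the quintic \eqref{newquinticoble} as
\[X_0\,(X_2^2X_3^2+X_1^2X_3^2+X_1^2X_2^2) = -X_1X_2X_3\, q .\]
On $\overline X$ this gives a rational function
\[\phi := \frac{X_1X_2X_3}{X_0} = -\frac{X_2^2X_3^2+X_1^2X_3^2+X_1^2X_2^2}{q}.\]
Equivalently, $\Sigma_4 := X_2^2X_3^2+X_1^2X_3^2+X_1^2X_2^2$ and $X_0q$ cut on $\overline X$ divisors differing by the divisor of a degree zero function, up to the factor $X_1X_2X_3/X_0$. I would first compute the divisors of the relevant forms on the normalization $X$, working with $\nu^*$ throughout; this is convenient because the Weil divisors on $\overline X$ along non-normal lines are best handled on $X$.

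The key steps, in order:

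1. Compute $\nu^*(X_0=0)$. The plane $X_0=0$ cuts $\overline X$ in $L_{1,2}+L_{1,3}+L_{2,3}+\overline C$, since the equation restricts to $X_1X_2X_3q(0,\cdot)$. Along the double lines $L_{i,j}$ with $i,j\geq1$, the two local branches each pull back once, so I expect $\nu^*H = E_{1,2}+E_{1,3}+E_{2,3}+C$, where $H$ is the hyperplane class.

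2. Compute $\nu^*(X_1=0)$. On $\overline X\cap\{X_1=0\}$ the equation becomes $X_0X_2^2X_3^2=0$, giving $L_{0,1}+2L_{1,2}+2L_{1,3}$ as a cycle. Pulling back through the normalization over the double lines $L_{1,2},L_{1,3}$, I would verify by a local computation that each $E_{1,j}$ appears with multiplicity $2$ and that $E_{0,1}$ appears with multiplicity $1$. Here the double line $L_{0,1}$ must also be handled, and its preimage may carry multiplicity $2$. This bookkeeping is the main obstacle: I would do it in local coordinates at a general point of each line, e.g. near a general point of $L_{1,2}$ in the affine chart $X_3=1$, where the surface looks like $X_0(X_2^2+X_1^2)+X_1X_2\,q\approx0$, i.e. two smooth branches crossing transversally.

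3. Assemble. The combination $4(E_{1,2}+E_{1,3}+E_{2,3}) - C$ should equal $\nu^*(X_1X_2X_3)$ minus $\nu^*(X_0)$ minus correction terms supported on the $E_{0,i}$. In the ratio of $\Sigma_4$ by $X_0$ times a suitable quadric, those correction terms should cancel, because $\Sigma_4$ vanishes to order two along each $L_{0,i}$ exactly as the $X_0$-free factors do.

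The fallback, if the explicit bookkeeping gets messy, is numerical. On $X$, compute both sides as classes, using $\nu^*{\cal O}(1)=E_{1,2}+E_{1,3}+E_{2,3}+C$ from step 1 together with the analogous identities for $X_1,X_2,X_3$ from step 2. Linear combinations of these identities should produce $\nu^*{\cal O}(2)\sim 2E_{1,2}+2E_{1,3}+2E_{2,3}-C$ on $X$. The Cartier statement on $\overline X$ then follows because the rational section is a quotient of homogeneous forms of degree difference $2$.
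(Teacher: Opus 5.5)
Your choice of section is the right one and is the paper's own. $X_1X_2X_3/X_0$ (equivalently $-\Sigma_4/q$) is a rational section of ${\cal O}_{\overline X}(2)$ with divisor $(H_1+H_2+H_3-H_0)|_{\overline X}$, and this gives the Cartier and linear-equivalence part at once. The multiplicity bookkeeping, however, which is the real content of the second assertion, goes wrong as you have planned it, because you have swapped the roles of the lines.

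\begin{itemize}
\item The double lines of $\overline X$ are $L_{1,2},L_{1,3},L_{2,3}$. Every monomial of the equation has degree at least $2$ in $X_i,X_j$ for $1\le i<j\le 3$, and these three lines meet at the triple point $[1,0,0,0]$.
\item The lines $L_{0,1},L_{0,2},L_{0,3}$ are simple: the $X_0$-derivative of the equation restricts to $X_2^2X_3^2\neq 0$ on $L_{0,1}$.
\item Step 1 is therefore false. The plane $X_0=0$ cuts $X_1X_2X_3\,q(0,X_1,X_2,X_3)=0$, i.e.\ $L_{0,1}+L_{0,2}+L_{0,3}+\overline C$, so $\nu^*H_0=E_{0,1}+E_{0,2}+E_{0,3}+C$. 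Your version is impossible anyway, since a plane containing a double line contains it with multiplicity at least $2$ in the intersection cycle.
\item In step 2 there is no double line $L_{0,1}$ to handle.
\item In step 3, $\Sigma_4$ does not vanish on $L_{0,i}$ at all (it equals $X_2^2X_3^2$ on $L_{0,1}$); it vanishes along the $L_{i,j}$.
\end{itemize}

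The more serious gap is the multiplicity itself. You predict that $E_{1,j}$ appears with multiplicity $2$ in $\nu^*\{X_1=0\}$, and you aim in step 3 for $4(E_{1,2}+E_{1,3}+E_{2,3})-C$. That contradicts the statement you are proving. What you are missing is that $E_{i,j}\to L_{i,j}$ has degree $2$.

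Over a general point of $L_{1,2}$, the two transverse branches of $\overline X$ belong to the single irreducible curve $E_{1,2}$. On each branch $x_2=\lambda x_1+\cdots$ with $\lambda\neq 0$, the function $x_1$ vanishes to order one along the line; your own local computation in the chart $X_3=1$ shows exactly this. So the coefficient $2$ of $L_{1,2}$ in the cycle $H_1\cdot\overline X$ becomes coefficient $1$ on $E_{1,2}$, giving $\nu^*H_1=E_{0,1}+E_{1,2}+E_{1,3}$. Summing, $\nu^*(H_1+H_2+H_3)-\nu^*H_0=2(E_{1,2}+E_{1,3}+E_{2,3})-C$, with the $E_{0,i}$ cancelling outright and no correction terms needed. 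The paper packages this as $\nu_*\nu^*D=D$ together with $\nu_*E_{i,j}=2L_{i,j}$, which forces $2m=4$.

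Your numerical fallback does not repair this. It is built on the incorrect step 1 identity. Moreover, a class computation on $X$ yields only a linear equivalence, not the asserted equality of divisors $\nu^*D=2E_{1,2}+2E_{1,3}+2E_{2,3}-C$.
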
\label{cartier}
{\it Proof:} Let $$H_i \subset \mathbb{P}^3$$ be the coordinate hyperplanes defined by $$H_i = \{X_i = 0\}$$ for $i = 0, 1, 2, 3$. The $H_i$'s are Cartier divisor on $\mathbb{P}^3$, and so are their restrictions on $\overline X$. But clearly $$(H_0)|_{\overline X} = L_{0, 1} + L_{0, 2} + L_{0, 3} + \overline C$$ $$(H_1)|_{\overline X} = L_{0, 1} + 2 L_{1, 2} + 2 L_{1, 3} $$ $$(H_2)|_{\overline X} = L_{0, 2} + 2 L_{1, 2} + 2 L_{2, 3}$$ $$(H_3)|_{\overline X} = L_{0, 3} + 2 L_{1, 3} + 2 L_{2, 3}$$ Let $D$ be the following divisor on ${\overline X}$: $$D := 4 L_{1, 2} + 4 L_{1, 3} + 4 L_{2, 3} - \overline C,$$ so that we have $$D = (H_1 + H_2 + H_3 - H_0)|_{\overline X}$$ and the right-hand term lives in $|{\cal O}_{\overline X} (2)|$. The pull-back $\nu^*D$ clearly takes the form $\nu^*D = m (E_{1, 2} + E_{1, 3} + E_{2, 3}) - C$ for some $m > 0$. The equality $m = 2$ follows since $\nu_* E_{i, j} = 2 L_{i, j}$. $\square$
\begin{corollary}\label{quasiraz}
The surface $X$ satisfies $h^0 ({\cal O}_X (K_X)) = h^0 ({\cal O}_X (2 K_X)) = h^0 ({\cal O}_X (- K_X)) = 0$, and $C \in |- 2 K_X|$. 
\end{corollary}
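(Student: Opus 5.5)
The plan is to compute the canonical class of the normalization $\nu : X \to \overline X$ by the adjunction formula for non-normal surfaces, and then compare it with the Cartier divisor of the preceding Proposition. Since $\overline X \subset \mathbb{P}^3$ is a quintic hypersurface, its dualizing sheaf is $\omega_{\overline X} = {\cal O}_{\overline X}(5 - 4) = {\cal O}_{\overline X}(1)$. For the normalization we have $K_X = \nu^* {\cal O}_{\overline X}(1) - \mathfrak{c}$, where $\mathfrak{c}$ is the conductor divisor on $X$.

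For generic $\overline X$, I would identify the conductor by a local computation along the double curve of $\overline X$. The non-normal locus should be exactly $L_{1,2} \cup L_{1,3} \cup L_{2,3}$. Along each of these lines $\overline X$ has ordinary double points away from the pinch points and the vertices, so the conductor is reduced there and
$$\mathfrak{c} = E_{1,2} + E_{1,3} + E_{2,3}.$$
The lines $L_{0,i}$ appear only simply in the restrictions $(H_i)|_{\overline X}$ computed above, so I expect them to lie in the normal locus. The delicate points are the pinch points, the three vertices $[0,1,0,0], [0,0,1,0], [0,0,0,1]$, and the triple point $[1,0,0,0]$. I would argue that these are isolated points, so they cannot contribute divisorial components to the conductor, assuming, as in the statement, that $X$ is smooth. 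This conductor identification is the step I expect to be the main obstacle; it is where the "generic" hypothesis is really used, in complete analogy with the classical Enriques sextic.

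Granting this, doubling $K_X$ and using the preceding Proposition gives
$$2K_X = \nu^* {\cal O}_{\overline X}(2) - 2(E_{1,2} + E_{1,3} + E_{2,3}) = \big(2E_{1,2} + 2E_{1,3} + 2E_{2,3} - C\big) - 2(E_{1,2} + E_{1,3} + E_{2,3}) = -C.$$
Hence $C \in |-2K_X|$.

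The three vanishings then follow formally from the fact that $C$ is a smooth rational curve, which was observed above for generic $\overline X$. First, if $D \in |2K_X|$, then $D + C \in |0|$, which forces $D = C = 0$; this is absurd, so $h^0({\cal O}_X(2K_X)) = 0$. Second, $h^0({\cal O}_X(K_X)) = 0$ follows, because any section of $K_X$ squares to a section of $2K_X$. For $-K_X$, adjunction gives $C^2 + K_X C = -2$ and $K_X C = -\frac{1}{2} C^2$, so $C^2 = -4$. This makes $C$ rigid: the sequence $0 \to {\cal O}_X \to {\cal O}_X(C) \to {\cal O}_{\mathbb{P}^1}(-4) \to 0$ yields $h^0({\cal O}_X(-2K_X)) = 1$. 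Now if $D \in |-K_X|$, then $2D \in |-2K_X| = \{C\}$, so $C = 2D$, which contradicts the reducedness of $C$. Therefore $h^0({\cal O}_X(-K_X)) = 0$.
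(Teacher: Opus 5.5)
Your proposal is correct, and its core is the paper's own proof. The paper also writes $K_X=\nu^*K_{\overline X}-(E_{1,2}+E_{1,3}+E_{2,3})$ with $K_{\overline X}={\cal O}_{\overline X}(1)$. It then doubles this and uses the preceding Proposition on the Cartier divisor $4L_{1,2}+4L_{1,3}+4L_{2,3}-\overline C$ to obtain $2K_X=-C$. From that it deduces $h^0(2K_X)=h^0(K_X)=0$ exactly as you do. The paper takes the conductor formula for granted. Your justification of it goes beyond what the paper gives: on the smooth surface $X$ the conductor is an invertible ideal, so it is determined at the generic points of the three double lines, where the double points are ordinary.

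The one genuine difference is the last vanishing, $h^0({\cal O}_X(-K_X))=0$.

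\begin{itemize}
\item The paper uses Serre duality to write $h^0(-K_X)=h^2(2K_X)=h^2({\cal O}_X(-C))$. The sequence $0\to{\cal O}_X(-C)\to{\cal O}_X\to{\cal O}_C\to 0$, together with $H^1({\cal O}_C)=0$, embeds this group into $H^2({\cal O}_X)\cong H^0(K_X)^*=0$. This reuses the first half of the statement and never needs $C^2$.
\item You compute $C^2=-4$ on the spot, deduce $|-2K_X|=\{C\}$, and conclude from the reducedness of $C$. This is not circular, since only $2K_X\sim -C$ and $C\cong\mathbb{P}^1$ enter. The paper obtains $C^2=-4$ only afterwards, in Proposition~\ref{somep}, using this corollary.
\end{itemize}

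Your route is slightly longer, but it yields $h^0({\cal O}_X(-2K_X))=1$ as a by-product. That is exactly what the subsequent Lemma (that $X$ is a Coble surface) quotes from this corollary, even though the paper's proof only establishes $C\in|-2K_X|$.
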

{\it Proof:} We apply the formula $$K_X = \nu^* K_{\overline X} - (E_{1, 2} + E_{1, 3} + E_{2, 3})$$ By the adjunction formula $$K_{\overline X} = {\cal O}_{\overline X} (1),$$ so by Proposition \ref{cartier} $${\cal O}_X (2 K_X) = \nu^* {\cal O}_{\overline X} (2) - 2 (E_{1, 2} + E_{1, 3} + E_{2, 3}) = {\cal O}_X (- C).$$ This immediately proves that $2 K_X$ (and so $K_X$) is noneffective, while $- 2K_X$ is.\\
It only remains to show that $h^0 (- K_X) = 0$, but by Serre duality we know that $$h^0 ({\cal O}_X (- K_X)) = h^2 ({\cal O}_X (2 K_X)) = h^2({\cal O}_X (- C)).$$ The short exact sequence $$0 \to {\cal O}_X (- C) \to {\cal O}_X \to {\cal O}_C \to 0$$ gives a long exact sequence $$0 \to H^2 ({\cal O}_X (- C)) \to H^2 ({\cal O}_X).$$ Using again Serre duality and the first part of this statement, $$h^2 ({\cal O}_X) = h^0 ({\cal O}_X (K_X)) = 0.$$ $\square$\\
The next step is to compute some intersection products in $X$.
\begin{proposition}\label{somep}
The intersection products are $C^2 = - 4, E_{0, i} E_{0, j} = -\delta_{i, j}, C E_{0, i} = 2$. In particular, the Picard rank $rk \Pic (X)$ and the second Betti number $b_2 (X)$ are greater than or equal to $4$.
\end{proposition}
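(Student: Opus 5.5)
We compute everything from the local geometry of the normalization $\nu : X \to \overline X$ and the restrictions of the coordinate hyperplanes listed in the proof of Proposition \ref{cartier}. Pulled back to $X$, these restrictions read
$$\nu^* H_0 = E_{0,1} + E_{0,2} + E_{0,3} + C,\qquad \nu^* H_1 = E_{0,1} + E_{1,2} + E_{1,3},$$
and similarly for $H_2, H_3$. Here we use $\nu_* E_{i,j} = 2 L_{i,j}$ for the simple edges, and the fact that $\nu$ is birational on $L_{0,i}$ and $\overline C$. All the $H_i$ pull back to the same class $H := \nu^* {\cal O}_{\overline X}(1)$, with $H^2 = 5$. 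The degrees are $H E_{0,i} = 1$, $H C = 2$ and $H E_{j,k} = 2$.

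\textbf{Computing $C^2$.} This is immediate from Corollary \ref{quasiraz}: $C \in |-2K_X|$ is a smooth rational curve, so adjunction gives $C^2 + C K_X = -2$. Since $C K_X = -\frac{1}{2} C^2$, we get $\frac{1}{2} C^2 = -2$, that is $C^2 = -4$. This is exactly the computation in Proposition \ref{comp} ii).

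\textbf{Computing $E_{0,i}^2$ and $C E_{0,i}$.} Each $E_{0,i}$ is smooth rational, and by adjunction $E_{0,i}^2 = -2 - K_X E_{0,i}$. We compute $K_X E_{0,i}$ from $K_X = H - (E_{1,2}+E_{1,3}+E_{2,3})$. On $\overline X$ the line $L_{0,1}$ meets the simple edges $L_{1,2}, L_{1,3}$ only at the vertices $[0,0,1,0], [0,0,0,1]$. The key local step is to show that $E_{0,1}$ meets $E_{1,2}$ and $E_{1,3}$ transversally in one point each, and is disjoint from $E_{2,3}$. This gives $K_X E_{0,1} = 1 - 2 = -1$, hence $E_{0,1}^2 = -1$. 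As a cross-check, write $H E_{0,1} = 1$ via $\nu^* H_1$: then $E_{0,1}^2 + E_{0,1}(E_{1,2} + E_{1,3}) = 1$, which forces $E_{0,1}^2 = -1$ once the two local intersections are $1$.

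\textbf{Mutual products and $C E_{0,i}$.} For $i \ne j$, the lines $L_{0,i}$ and $L_{0,j}$ meet only at the triple point $[1,0,0,0]$. Near this point the quintic looks like $X_0(\cdot) + \ldots$, and its tangent cone is the cubic cone $X_1X_2X_3 \, q(1,0,0,0) + \ldots$. We show that the three branches separate after normalization, so $E_{0,i} E_{0,j} = 0$. A consistency check uses $\nu^* H_0$: intersecting $H$ with $E_{0,1}$ gives $1 = E_{0,1}^2 + E_{0,1} E_{0,2} + E_{0,1} E_{0,3} + C E_{0,1} = -1 + 0 + 0 + C E_{0,1}$, so $C E_{0,1} = 2$. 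This matches the geometry: $L_{0,1} \cap \{X_0 = 0\}$ is the single point of the plane $X_0 = 0$ on this line. That point corresponds to $2$ transversal intersection points upstairs, or equivalently the conic $\overline C$ passes through the vertex $[0,0,1,0]$ or $[0,0,0,1]$ appropriately. In any case the hyperplane identity pins down $C E_{0,i} = 2$.

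\textbf{The main obstacle} is the local analysis at the vertices, especially at the triple point and at the vertices where a double line and two simple edges meet. I would settle it by writing local affine equations in each chart: for instance $X_0 = 1$ at the triple point, and $X_2 = 1$ at $[0,0,1,0]$. In each chart I compute the normalization explicitly by adjoining the square roots coming from the double structure, and read off the separation and transversality of the branches for generic $q$.

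\textbf{Rank bound.} Finally, the Gram matrix of $C, E_{0,1}, E_{0,2}, E_{0,3}$ is
$$\begin{pmatrix} -4 & 2 & 2 & 2 \\ 2 & -1 & 0 & 0 \\ 2 & 0 & -1 & 0 \\ 2 & 0 & 0 & -1 \end{pmatrix}.$$
Its determinant equals $-(-4+12) = -8 \neq 0$, so these four classes are independent in $\Pic (X)$. Therefore $rk \Pic (X) \ge 4$, and since $\Pic (X) \hookrightarrow H^2(X,\integer)$, also $b_2(X) \ge 4$.
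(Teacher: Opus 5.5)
\textbf{Gap: the mutual products $E_{0,i}E_{0,j}=0$ are argued at the wrong point.} You place $L_{0,i}\cap L_{0,j}$ at the triple point $[1,0,0,0]$. But every $L_{0,i}=\{X_0=X_i=0\}$ lies in the plane $X_0=0$, so none of them passes through $[1,0,0,0]$. In fact $L_{0,1}\cap L_{0,2}=\{X_0=X_1=X_2=0\}=[0,0,0,1]$, which lies on the double line $L_{1,2}$. Your labels are reversed: $L_{1,2},L_{1,3},L_{2,3}$ are the double lines through the triple point, and the $L_{0,i}$ are the simple ones. Your own earlier paragraph correctly has $L_{0,1}$ passing through $[0,0,1,0]$ and $[0,0,0,1]$. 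So showing that the three sheets separate at $[1,0,0,0]$ says nothing about $E_{0,1}\cap E_{0,2}$.

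What is needed is the paper's non-pinch argument. In the chart $X_3=1$, the quadratic part of the equation at $[0,0,0,1]$ is $q(0,0,0,1)X_1X_2$. For generic $q$ this is nonzero, so $\overline X$ has two smooth sheets there, tangent to $X_1=0$ and $X_2=0$. The line $L_{0,1}$ (direction $\partial_{X_2}$) lies on the first sheet and $L_{0,2}$ on the second, hence $E_{0,1}\cap E_{0,2}=\emptyset$. This is exactly the local picture you already need at the vertices to get $E_{0,1}E_{1,2}=E_{0,1}E_{1,3}=1$.

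\textbf{Consequence for $CE_{0,i}$.} You obtain $CE_{0,i}=2$ from $\nu^*H_0$ using $E_{0,i}E_{0,j}=0$, so as written that value is also unsupported. The geometric explanation you give is also incorrect. $L_{0,1}$ is contained in $\{X_0=0\}$ and meets $\overline C$ transversally at the two roots of $q(0,0,X_2,X_3)$. For generic $q$ these are smooth points of $\overline X$, distinct from the vertices; a conic through $[0,0,0,1]$ would force $q(0,0,0,1)=0$, which is precisely the degenerate case. That direct count is how the paper gets $CE_{0,i}=2$. It then gets $E_{0,i}^2=-1$ by adjunction, using $K_XE_{0,i}=-\frac12 CE_{0,i}$.

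\textbf{A cheap repair.} Your own ingredients close the gap without any triple-point analysis. From $K_XE_{0,1}=-1$ (your computation via $K_X=H-\sum E_{j,k}$) and $C\sim-2K_X$, you get $CE_{0,1}=2$ directly. Then $1=HE_{0,1}=-1+E_{0,1}E_{0,2}+E_{0,1}E_{0,3}+2$, which forces both nonnegative mutual products to vanish.

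\textbf{Smaller point.} The injection $\mathrm{Pic}(X)\hookrightarrow H^2(X,\mathbb{Z})$ is not available at this stage, since $h^1(\mathcal{O}_X)=0$ is proved afterwards using this very proposition. It suffices that intersection numbers are cup products of first Chern classes. Then the nonsingular Gram matrix (your determinant $-8$ is correct) gives four independent classes in $H^2(X,\mathbb{Q})$, hence in $\mathrm{Pic}(X)$ too.
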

{\it Proof:} The curves $\overline C, L_{0, i}$ are smooth rational curves in $\overline X$, and so are their pull-backs $C, E_{0, i}$ in $X$. In particular, $$C^2 + C K_X = -2$$ and by Corollary \ref{quasiraz} we find $$C^2 + C K_X = \frac{1}{2} C^2$$ which gives $$C^2 = - 4.$$ The line $L_{0, i}$ meets the conic $\overline C$ at two smooth points of $\overline X$, so $C E_{0, i} = 2$. Again, the adjunction formula and Corollary \ref{quasiraz} give $$E_{0, i}^2 = -1.$$ Finally, if $\overline X$ is generic enough, none of the intersection points $L_{0, i} \cap L_{0, j}$ is a pinch point. Indeed, look for example at $$L_{0, 1} \cap L_{0, 2} = [0, 0, 0, 1],$$ which lies on the double line $L_{1, 2} = \{X_1 = X_2 = 0\}$. We consider the equation \ref{newquinticoble}, putting in evidence the terms of minimal degree in the variables $X_1, X_2$. Such terms are: \begin{eqnarray}\label{pinch} (X_0 X_3^2) X_1^2 + (X_0 X_3^2) X_2^2 + (X_3 \hat q(X_0, X_3)) X_1 X_2,\end{eqnarray} where $$\hat q(X_0, X_3) := q (X_0, 0, 0, X_3)$$ is the part of $q$ which is independent from $X_1, X_2$. The discriminant $\Delta$ of Equation \ref{pinch} is $$\Delta (X_0, X_3) = (X_3^2) (\hat q^2 - 4 X_0^2 X_3^2).$$ The external factor $X_3^2$ corresponds to the intersection of $L_{1, 2}$ with the plane $H_3$, which is the triple point $[1, 0, 0, 0]$. The four solutions of $$\hat q^2 - 4 X_0^2 X_3^2 = 0$$ are the four pinch points along $L_{1, 2}$. In particular, the point $[0, 0, 0, 1]$ is a pinch point if and only if $\hat q (0, 1) = q (0, 0, 0, 1) = 0$. For a generic equation in the form \ref{newquinticoble}, this equality does not hold, so the double point $[0, 0, 0, 1]$ has two distinct pre-images in $E_{0, 1}, E_{0, 2}$, hence these are two disjoint curves. An analogous argument holds of course for $[0, 0, 1, 0], [0, 1, 0, 0]$.\\
Since the matrix of these intersection products in non-degenerate, this states the linear independence of $C, E_{0,1}, E_{0, 2}, E_{0, 3}$ in $\Pic(X)$ and $H^2 (X, \complex)$. $\square$ 
\begin{lemma}
The surface $X$ is a Coble surface.
\end{lemma}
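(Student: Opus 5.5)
To prove the Lemma we must check the three requirements of Definition \ref{Coble}: $X$ is rational, $h^0({\cal O}_X(-K_X)) = 0$, and $h^0({\cal O}_X(-2K_X)) = 1$. Corollary \ref{quasiraz} already gives $h^0(K_X) = h^0(2K_X) = h^0(-K_X) = 0$ and $C \in |-2K_X|$. So the remaining work is to prove $h^1({\cal O}_X) = 0$ (giving rationality via the Castelnuovo Criterion) and to show that $C$ does not move in $|-2K_X|$.

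\textbf{Rationality.} I would show $q(X) = h^1({\cal O}_X) = 0$ by exhibiting $X$ as rational directly, since that is cleaner than a cohomological computation. Projection from the triple point $[1,0,0,0]$ is the natural tool. Equation \eqref{newquinticoble} is linear in $X_0$:
$$X_0\,(X_2^2X_3^2 + X_1^2X_3^2 + X_1^2X_2^2 + X_1X_2X_3\,q_1) + X_1X_2X_3\,q' = 0,$$
where we expand $q = X_0^2 c + X_0 q_1(X_1,X_2,X_3) + q'(X_1,X_2,X_3)$. Because of the $X_0^2 c$ term, the equation is quadratic in $X_0$ rather than linear. A general line through $[1,0,0,0]$ meets $\overline X$ there with multiplicity $3$ and in $2$ further points. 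So the projection $\overline X \dashrightarrow \mathbb{P}^2$ is generically $2:1$, not birational, when $c = q(1,0,0,0) \neq 0$.

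There are then two routes. If the genericity allowed in the statement includes $c = 0$, then $X_0$ appears only linearly and projection from the point is birational, so $X$ is rational. For the general case, I would instead use the plane $X_0$-sections: fixing $[X_1,X_2,X_3]$, we get a conic double cover of $\mathbb{P}^2$ branched along the discriminant curve. The discriminant has degree $6$ in $X_1,X_2,X_3$ and, given the monomial structure, I expect it to split or be very singular along the coordinate lines. I would then argue the double plane is rational, for instance by checking that its branch sextic is reducible into two cubics, or has enough singularities. This is the step I expect to be the main obstacle.

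A cleaner alternative, which I would try first, avoids the geometry. Use the sequence $0 \to {\cal O}_X(-C) \to {\cal O}_X \to {\cal O}_C \to 0$ together with $-C = 2K_X$. Serre duality gives $h^1({\cal O}_X(2K_X)) = h^1({\cal O}_X(-K_X))$, and $h^0({\cal O}_C) = 1$, $h^1({\cal O}_C) = 0$ since $C \simeq \mathbb{P}^1$. The sequence yields
$$h^1({\cal O}_X) \le h^1({\cal O}_X(2K_X)).$$
Riemann--Roch for $-K_X$ gives
$$-h^1(-K_X) = \chi({\cal O}_X) + K_X^2 = 1 - q + K_X^2.$$
Computing $K_X^2 = \tfrac14 C^2 = -1$ by Proposition \ref{somep}, we get $h^1(-K_X) = q$, so this bound is circular. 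Hence I would fall back on the geometric argument. Alternatively, one can compute the topological Euler characteristic of $X$ from the quintic: normalization along the three double lines, with their pinch points, the triple point, and the conic. Noether's formula $12\chi = K^2 + e$ then pins down $\chi = 1$, so $q = 0$.

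\textbf{Uniqueness of $C$.} Once $X$ is rational, $C$ is a smooth rational curve with $C^2 = -4$ by Proposition \ref{somep}. Exactly as in the discussion after Proposition \ref{comp}, the sequence $0 \to {\cal O}_X \to {\cal O}_X(C) \to {\cal O}_{\mathbb{P}^1}(-4) \to 0$ gives $h^0({\cal O}_X(C)) = 1$. This proves $h^0(-2K_X) = 1$ and concludes the proof.
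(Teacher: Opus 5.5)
\textbf{Gap: the irregularity $h^1(\mathcal{O}_X)$ is never shown to vanish.} Your last step, $h^0(\mathcal{O}_X(-2K_X))=1$ from $0\to\mathcal{O}_X\to\mathcal{O}_X(C)\to\mathcal{O}_{\mathbb{P}^1}(-4)\to 0$, is correct. It does not even need rationality, and it fills a point the paper only asserts. The real gap is the vanishing $h^1(\mathcal{O}_X)=0$, which is the actual content of the lemma. None of your routes reaches it:
\begin{itemize}
\item The projection from $[1,0,0,0]$ is $2:1$ for general coefficients, as you note. The case $c=0$ is a special member in which the point becomes quadruple, so it says nothing about the general surface without an extra deformation argument.
\item The double-plane route is left open. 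Its branch curve is also an octic, not a sextic. On the line through $[1,0,0,0]$ with direction $(X_1,X_2,X_3)$, the residual equation is quadratic in $X_0$ with coefficients of degrees $3,4,5$ in $(X_1,X_2,X_3)$, so the discriminant has degree $8$.
\item The cohomological route is circular, as you observe yourself.
\item Computing $e(X)$ exactly, by analysing the normalization of the non-normal quintic along its three double lines, their pinch points and the triple point, is only named, never carried out. It is a delicate computation in its own right.
\end{itemize}

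\textbf{Missing idea: Noether's formula only needs a lower bound for $b_2$.} You do not need the exact value of $e(X)$, and Proposition \ref{somep} already gives the bound. Write $q:=h^1(\mathcal{O}_X)$. We have $p_g=0$ by Corollary \ref{quasiraz}, $K_X^2=\frac14 C^2=-1$, and $b_1=b_3=2q$ by the Hodge decomposition and Poincar\'e duality. Then $12\chi(\mathcal{O}_X)=K_X^2+e(X)$ becomes $12(1-q)=-1+(2-4q+b_2)$, that is, $b_2=11-8q$. The intersection matrix of $C,E_{0,1},E_{0,2},E_{0,3}$ is nondegenerate, so $b_2\ge\rho(X)\ge 4$, which rules out $q\ge 1$. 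Hence $q=0$ (and $b_2=11$). The Castelnuovo criterion together with $h^0(\mathcal{O}_X(2K_X))=0$ then gives rationality. This is exactly the paper's argument; with it in place of your rationality paragraph, the rest of your proof goes through.
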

{\it Proof:} We already know from Proposition \ref{quasiraz} that $h^0 ({\cal O}_X (K_X)) = h^0 ({\cal O}_X (2 K_X)) = 0$ and $h^0({\cal O}_X (- 2 K_X)) = 1$.\\
Let $q = h^1 ({\cal O}_X)$ be the irregularity of $X$. We apply the Noether formula, see for example \cite{Huybrechts2005}: $$\chi ({\cal O}_X) = \frac{\chi_{top} (X) + K_X^2}{12}$$ Applying together Poincare duality and the Corollary \ref{quasiraz}, the left-hand term equals $$\chi ({\cal O}_X) = 1 - q + h^0 ({\cal O}_X (K_X)) = 1 - q$$ Meanwhile, the right-hand term is $$\frac{\chi_{top} (X) + K_X^2}{12} = \frac{b_0 - b_1 + b_2 - b_3 + b_4 - 1}{12}$$ where the $b_i$'s are the Betti numbers of $X$. We used the equality $K_X^2 = -1$ which comes from Proposition \ref{somep}. By Serre duality $b_4 = b_0 = 1$ and $b_1 = b_3$. The Hodge decomposition $$H^1 (X, \complex) = H^{1, 0} (X) \oplus H^{0, 1} (X)$$  gives $b_1 = 2 q$. Putting all together, we find $$1 - q = \frac{1 - 4 q + b_2}{12}$$ that is $$11 - 8 q = b_2$$ But Proposition \ref{somep} states $$b_2 \ge 4$$ which is equivalent to saying $$q = 0$$ and $$b_2 = 11.$$ $\square$\\
\\
Note that the subspace $V \subset H^0 ({\cal O}_{\mathbb{P}^3} (5))$ of forms as in \eqref{quinticoble} has dimension $13$. Meanwhile, the subgroup $G \subset GL (4)$ preserving $V$ consists of invertible matrices, which fix the triple point $[1, 0, 0, 0]$ and permute the vertices $[0, 1, 0, 0], [0, 0, 1, 0], [0, 0, 0, 1]$. The dimension of $G$ is $4$, since it fits in a short exact sequence of groups $$\mathbb{1} \to D \to G \to {\cal S}_3 \to \mathbb{1}$$ Here $D$ consists of all $4 \times 4$ invertible diagonal matrices, and the discrete symmetric group ${\cal S}_3$ acts by permutation on $[0, 1, 0, 0], [0, 0, 1, 0], [0, 0, 0, 1]$.  Thus the quotient $V/G$ has still dimension $13 - 4 = 9$, the same of the moduli space of Coble. 



Suppose we start with a quintic surface $X \subset \mathbb{P}^3$ given by the equation: $$\alpha X_0 X_2^2 X_3^2 + \beta X_0 X_1^2 X_3^2 + \gamma X_0 X_1^2 X_2^2 + X_1 X_2 X_3 q(X_0, X_1, X_2, X_3) = 0$$ with $$q = \sum_{0 \le i \le j \le 3} \lambda_{i, j} X_i X_j$$ Now consider the birational involution of $\mathbb{P}^3$ given by: $$i [X_0, X_1, X_2, X_3] = [\frac{1}{X_0}, \frac{1}{X_1}, \frac{1}{X_2}, \frac{1}{X_3}] = [X_1 X_2 X_3, X_0 X_2 X_3, X_0 X_1 X_3, X_0 X_1 X_2]$$ Under this transformation the equation of $X$ becomes: $$(X_0^3 X_1^2 X_2^2 X_3^2)* (\lambda_{0, 0} (X_1 X_2 X_3)^2 + \lambda_{1, 1} (X_0 X_2 X_3)^2 + \lambda_{2, 2} (X_0 X_1 X_3)^2 + $$ $$+ \lambda_{3, 3} (X_0 X_1 X_2)^2 + X_0 X_1 X_2 X_3 (\alpha X_1^2 + \beta X_2^2 + \gamma X_3^2 + \sum_{0 \le i < j \le 3} \hat \lambda_{i, j} X_i X_j)) = 0$$ where the coefficients $\hat \lambda_{i, j}$ are defined as $$\hat \lambda{i, j} = \lambda_{h, k} {\rm\, if\,} \{i, j\} \cup \{h, k\} = \{0, 1, 2, 3\}$$ If we cut out the initial factor $X_0^3 X_1^2 X_2^2 X_3^2$ we finally arrive at an expression of the form $$\lambda_{0, 0} (X_1 X_2 X_3)^2 + \lambda_{1, 1} (X_0 X_2 X_3)^2 + \lambda_{2, 2} (X_0 X_1 X_3)^2 + \lambda_{3, 3} (X_0 X_1 X_2)^2 +$$ $$+ X_0 X_1 X_2 X_3 \hat q (X_0, X_1, X_2, X_3) = 0$$ which is the expression of an Enriques sextic, with the additive $1$-codimensional condition that $$\hat q(1, 0, 0, 0) = 0.$$
\begin{proposition}
The linear system on the Coble surface $X$ which realizes the birational map $X \to \mathbb{P}^3$ onto an Enriques sextic with a quartic point is $|H'| = |9 L - 3 E_1 - \cdots - 3 E_7 - 2 E_8 - 2 E_9 - 2 E_{10}|$.
\end{proposition}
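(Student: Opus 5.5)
The plan is to compute $|H'|$ directly as the composite of the quintic map $\phi_{|H|} : X \to \overline X \subset \mathbb{P}^3$ with the standard Cremona involution $i$ of $\mathbb{P}^3$. Since $i$ is given by the cubic monomials $[X_1X_2X_3, X_0X_2X_3, X_0X_1X_3, X_0X_1X_2]$, the composite $i \circ \phi$ is defined by four sections of $|3H|$. These are the pull-backs of the products of three coordinate hyperplanes. Removing their fixed part gives the moving class $H'$.

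\textbf{Step 1: pull back the coordinate hyperplanes.} From the proof of Proposition \ref{cartier} and the relation $\nu_* E_{i,j} = 2L_{i,j}$, I will use
$$\nu^* H_0 = E_{0,1}+E_{0,2}+E_{0,3}+C, \qquad \nu^*H_1 = E_{0,1}+E_{1,2}+E_{1,3},$$
and similarly for $H_2, H_3$. Multiplying, the divisor of $Y_0 = X_1X_2X_3$ is
$$E_{0,1}+E_{0,2}+E_{0,3}+2(E_{1,2}+E_{1,3}+E_{2,3}),$$
and for instance the divisor of $Y_1 = X_0X_2X_3$ is
$$E_{0,1}+2E_{0,2}+2E_{0,3}+C+E_{1,2}+E_{1,3}+2E_{2,3}.$$
Taking the coefficientwise minimum over the four products, the fixed part should be $E_{0,1}+E_{0,2}+E_{0,3}+E_{1,2}+E_{1,3}+E_{2,3}$. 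Hence
$$H' = 3H - \sum_i E_{0,i} - \sum_{j<k} E_{j,k}.$$
This has to be justified carefully: one must check that no further base locus survives, so that after removing this fixed part the four sections have no common component.

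\textbf{Step 2: translate into the blow-up basis.} By the description at the start of Subsection \ref{qcs}, $E_{0,i}$ corresponds to the exceptional curves $E_8, E_9, E_{10}$ (since $HE_k = 1$). The elliptic curves $E_{j,k}$ over the double edges are the cubics $-K_X + E_k$, $k = 8, 9, 10$, mapping $2:1$ onto $\hat l_k$. With $H = -2K_X + E_8 + E_9 + E_{10}$ this gives
$$H' = 3(-2K_X + E_8+E_9+E_{10}) - (E_8+E_9+E_{10}) - (-3K_X + E_8+E_9+E_{10}) = -3K_X + E_8+E_9+E_{10}.$$
Substituting $K_X = -3L + E_1+\cdots+E_{10}$ yields $9L - 3E_1 - \cdots - 3E_7 - 2E_8 - 2E_9 - 2E_{10}$, as claimed.

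\textbf{Step 3: sanity checks.} These confirm the class rather than just a subsystem of it. We have $H'^2 = 81 - 63 - 12 = 6$, matching a sextic image birationally. Also $H'K_X = -27 + 21 + 6 = 0$, so Riemann–Roch gives $\chi(\mathcal{O}_X(H')) = 1 + 3 = 4$. Vanishing of $h^2$ (by Serre duality and rationality), and of $h^1$ (e.g. via Reider-type arguments or restriction sequences as in the proof of \eqref{nohuno}), would show the complete system is exactly the $4$-dimensional span of the $Y_j$. Finally, $H'C = 2H'(-K_X) = 0$, so $C$ is contracted, to the quartic point $[1,0,0,0]$. This is consistent with $\hat q(1,0,0,0) = 0$.

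\textbf{Main obstacle.} The delicate step is Step 1: checking that the fixed part is exactly the sum of the six edge curves, with multiplicity one each. In particular one must see that $C$ does not enter the fixed part: it appears in $Y_1, Y_2, Y_3$ but not in $Y_0$. One must also rule out isolated base points, e.g. at the pinch points or the triple point. I would handle this by the local multiplicity analysis of equation \eqref{pinch} near the vertices, using the genericity assumption $q(0,0,0,1)\neq 0$.
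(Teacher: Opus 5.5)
Your proposal is correct and is essentially the paper's argument: the paper also writes $H' = 3H - D$ with $D$ the reduced preimage of the tetrahedron, identified as $E_8+E_9+E_{10}+\sum_k(-K_X+E_k)$. Your coefficientwise minimum over the pulled-back monomials $Y_j$ simply supplies the justification, which the paper omits, that this is exactly the fixed part. Isolated base points, which you list as part of the main obstacle, would not change the class, so only the absence of further fixed components matters.
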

{\it Proof:} By definition, we have $$|H| = |6 L - 2 E_1 - \cdots - 2 E_7 - E_8 - E_9 - E_{10}|$$ The cubo - cubic involution of $\mathbb{P}^3$ is defined by the system of cubics containing the tetrahedron $T$, hence $$H' = 3 H - D$$ where $D$ is the preimage in $X$ of $T$. But we saw that $D$ is given by the union of $E_8, E_9, E_{10}$ and the three plane cubics $- K_X + E_8, - K_X + E_9, - K_X + E_{10}$, thus $$D = 9 L - 3 E_1 - \cdots - 3 E_7 - E_8 - E_9 - E_{10}$$ Finally, we find $$H' = 9 L - 3 E_1 - \cdots - 3 E_7 - 2 E_8 - 2 E_9 - 2 E_{10}$$ $\square$\\
Note that $$H = H' + K_X$$ that is, $H$ is the adjoint linear system of $H'$.

\subsection{Nodal Coble cubic surfaces}
Let's consider three points $p_1, p_2, p_3 \in \mathbb{P}^2$, and three lines $l_1, l_2, l_3 \subset \mathbb{P}^2$ such that $$p_i \in l_i, \quad p_i \notin l_j \quad {\rm for\,} \quad i \ne j \in \{1,2,3\}$$ If we look at the cubics in $\mathbb{P}^2$ passing through $p_i$ and tangent to $l_i$, we get a subspace  $W \subset H^0 ({\cal O}_{\mathbb{P}^2} (3))$ with $\Dim\, |W| = 3$. The corresponding rational map $|W| : \mathbb{P}^2 \dasharrow \mathbb{P}^3$ has three base points at the $p_i$'s. The blow up $\tilde{\mathbb{P}}^2$ of $\mathbb{P}^2$ at these points does not resolve the indeterminacy of the map, since we have still three base points $p'_i$ on the exceptional divisors $E_i$, for $i = 1, 2, 3$. If one performs a second blow - up on the $p'_i$'s, the result is a surface $\tilde {\tilde {\mathbb{P}}}^2$, with a regular map $\tau: \tilde{ \tilde{ \mathbb{P}}}^2 \to \mathbb{P}^3$.\\
If we denote by $\overline E_i$ the proper transform of $E_i$ in $\tilde{ \tilde{ \mathbb{P}}}^2$, and by $F_i$ the new exceptional divisors of the second blow - up, then $$\tau^* {\cal O}_{\mathbb{P}^3} (1) = - K_{\tilde{\tilde{\mathbb{P}}}^2} = 3 L - \sum_{i = 1}^3 \overline E_i - 2 \sum_{i = 1}^3 F_i$$ We have the relations $$L^2 = 1, L \overline E_i = L F_i = 0, \overline E_i^2  = - 2, E_i F_i = 1, F_i^2 =  - 1$$ so that $$(\tau^* {\cal O}_{\mathbb{P}^3} (1))^2 = 3$$ and $$\overline E_i \tau^* {\cal O}_{\mathbb{P}^3} (1) = 0$$ this means that the image of $\tau$ is a cubic surface $S \subset \mathbb{P}^3$, and the $\overline E_i$'s are three smooth rational $(-2)$-curves, which are contracted by $\tau$ to three nodes of $S$.\\
If $q = [q_0, q_1, q_2, q_3] \in S$ is a smooth point, and $H \subset \mathbb{P}^3$ is a plane not containing $q$, then we have a projection $\pi_q : S \setminus q \to H$, which is a map of degree two. The ramification locus is the curve $R$ made up of points $p$ such that the line $\overline {q, p}$ is tangent to $S$ at $p$. If $F \in H^0 ({\cal O}_{\mathbb{P}^3} (3))$ is the equation of $S$, then $R$ is given geometrically by $$F = 0,$$ $$\sum_{i = 0}^3 q_i \frac{\partial F}{\partial z_i} = 0$$ This is a curve of degree $6$, described as the complete intersection of a cubic and a quadric surface in $\mathbb{P}^3$. By construction $R$ has a node at $q$, so its projection $\pi_q (R)$ on $H$ is a curve of degree $6 - 2 = 4$. Moreover, $R$ of course contains the three nodes of $S$, where the partial derivatives simultaneously vanish. So $\pi_q (R)$ is a three - nodal quartic curve, and hence it is rational.

\subsection{A quartic Coble in $\mathbb{P}^3$}
Suppose $X$ is a Coble surface, realized as a blow up of ten points $p_1, \ldots, p_{10} \in \mathbb{P}^2$, with exceptional curves $E_1, \ldots, E_{10} \subset X$. Let $|H|$ be the following linear system: $$|H| := |4 L - E_1 - \cdots - E_8 - 2 E_{10}|$$ the linear system of quartics passing through $p_1, \ldots, p_8$ and nodal at $p_{10}$. We have $$\Dim\, |H| = 3$$ and $$H^2 = 4$$ so that $|H|$ defines a map $f : X \to \mathbb{P}^3$ over a quartic surface $Y \subset \mathbb{P}^3$. The cubic curve $\hat F_9$ passing through $p_1, \ldots, p_8, p_{10}$ lives in the class $F_9 \in |3 L - E_1 - \cdots - E_8 - E_{10}|$ so it satisfies $$H \hat F_9 = 2.$$ Moreover, we have $$h^0 (X, H - \hat F_9) = h^0 (X, L - E_{10}) = 2,$$ so that there are two independent planes in $\mathbb{P}^3$ containing the curve $$\hat l := f(\hat F_9).$$ So $\hat l$ is a line, and the restricted map $f : \hat F_9 \to \hat l$ has degree $2$. This means that $\hat l$ is a double line for the surface $Y$. Hence, given a general plane $\Gamma \subset \mathbb{P}^3$, the intersection $X \cap \Gamma$ is a curve of degree $4$ with a double point in $\hat l \cap H$. Thus $X \cap \Gamma$ has algebraic genus $2$, as well as the general member of $|H|$.

\newpage
\section{Coble conjecture}
Coming back to Coble's original construction, from now on we will assume that the number $n$ of Proposition \ref{comp} equals $1$, so that the Coble curve $C \subset X$ is a copy of $\mathbb{P}^1$. Note that every automorphism $\phi \in \Aut (X)$ must satisfy $$\phi_* (K_X) = K_X \in \Pic (X)$$ and consequently, $$\phi (C) = C$$ Thus, there exists a well - defined restriction map $$\rho : \Aut  (X) \to \Aut  (C) \simeq \mathbb{P} GL (2, \complex)$$ 
\begin{conjecture}[Coble, \cite{OriginalCoble1919}] Is $\Ker\, \rho$ trivial for a general Coble surface $X$ ? What is the image $\Im\, \rho \subset \mathbb{P} GL (2)$?
\end{conjecture}

\subsection{Pompilj's method}
We refer to \cite{Coble1939}, \cite{Dolg2019}, \cite{Pompilj1940} for the content of this Section.\\
Let $X$ be a Coble surface with one irreducible boundary component $C \in |-2 K_X|$. In this chapter, we will reconstruct the path made by Pompilj in an article of 1938 in an attempt to provide a non - trivial element in the kernel of the restriction map $\rho: \Aut  (X) \to \Aut  (C) \simeq PGL (2, \complex)$. The idea was the following: we represent $X$ as a blow-up of ten points $p_1, \ldots, p_{10} \in \mathbb{P}^2$. Let $E_i \subset X$ be the exceptional divisor associated to the point $p_i$. For every $i=1, \ldots, 10$, we consider the linear system $|6 L - 2 E_1 - \cdots - 2 E_{i - 1} - 2 E_{i + 1} - \cdots- 2 E_{10}|$. Its sections consist of the strict transforms of sextics with nodes at all $p_j$'s but $p_i$. One can prove that $$h^0 ({\cal O}_X (6 L - 2 E_1 - \cdots - 2 E_{i - 1} - 2 E_{i + 1} - \cdots- 2 E_{10})) = 2$$ and it is easy to show two generators. One of them has the form $2 C_i$, where $C_i$ is the unique cubic curve through $p_1, \ldots, p_{i-1}, p_{i + 1},\ldots, p_{10}$, while the other one has the form $C + 2 E_i$. It is also immediate to show that the self intersection of a curve in $|6 L - 2 E_1 - \cdots - 2 E_{i - 1} - 2 E_{i + 1} - \cdots- 2 E_{10}|$ is $0$, and the arithmetic genus is $1$. Hence, we have just defined $10$ elliptic fibrations $\pi_i : X \to \mathbb{P}^1$, and each of them admits a unique double fiber $2 C_i$.\\
We need to specialize our attention to three of these points, so we set $$A:= p_8, B:= p_9, C:=p_{10}$$ and let $$E_A, E_B, E_C \subset X$$ be the corresponding exceptional divisors, and consequently $$\pi_A:= \pi_8: X \to \mathbb{P}^1, \pi_B := \pi_9 : X \to \mathbb{P}^1, \pi_C := \pi_{10} : X \to \mathbb{P}^1$$ the associated fibrations.\\

\begin{definition}
Let $F_A$ be a smooth fiber of the fibration $\pi_A$, which is an elliptic curve. On $F_A$ the divisor $(E_B - E_C)|_{F_A}$ has degree zero, so for every point $p \in F_A$ there is a unique point $T_A (p) \in F_A$ satisfying $$T_A (p) - p = (E_B - E_C)|_{F_A} {\rm \ in\ } \Pic (F_A)$$ Letting $F_A$ vary among the smooth fibers of $\pi_A$, this defines a birational morphism $T_A : X \dasharrow X$.
In a similar way, we define two morphisms $T_B, T_C : X \dasharrow X$ as $$T_B (p) - p = (E_C - E_A)|_{F_B} {\rm \ in\ } \Pic (F_B)$$ and $$T_C (p) - p = (E_A - E_B)|_{F_C} {\rm \ in\ } \Pic (F_C)$$ where $F_B, F_C$ are, respectively, smooth fibers of $\pi_B, \pi_C : X \to \mathbb{P}^1$, and (respectively) $p \in F_B, F_C$.
\end{definition}
Up to now, these are just birational transformations, which fail to be well - defined on the singular, or nonreduced, fibers of $\pi_A, \pi_B, \pi_C$. Nonetheless, this can be adjusted:
\begin{proposition}\cite{Coble1939}
The birational morphisms $T_A, T_B, T_C$ extend to biregular automorphisms of $X$.
\end{proposition}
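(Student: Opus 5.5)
The plan is to use the general principle that a birational self-map of a smooth projective surface which preserves the fibres of a relatively minimal elliptic fibration is automatically biregular. I argue only for $T_A$; the cases of $T_B$ and $T_C$ are symmetric.

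\emph{Step 1: relative minimality.} First check that $\pi_A : X \to \mathbb{P}^1$ is relatively minimal, i.e.\ no fibre contains a $(-1)$-curve. If $E$ is a $(-1)$-curve inside a fibre, then $E F_A = 0$ with $F_A = -2K_X + 2E_A$. Since $E C \ge 0$, this forces $E E_A < 0$, hence $E = E_A$. But $E_A F_A = -K_X E_A \cdot 2 + 2E_A^2 = 2 - 2 = 0$, so $E_A$ indeed lies in a fibre, namely the reducible fibre $C + 2E_A$. So the fibration is \emph{not} relatively minimal, and I handle this by contracting $E_A$.

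\emph{Step 2: contraction to a Halphen pencil.} Let $\sigma : X \to Y$ contract $E_A$. As remarked after Proposition \ref{ellpenc}, $Y$ carries a Halphen pencil of index $2$ with double fibre $2C_A$, and $\sigma(C)$ is an irreducible fibre with a node at $\sigma(E_A)$. Any remaining $(-1)$-curve in a fibre of $Y$ would have to meet $-K_Y$ trivially while $K_Y$ is numerically a multiple of the fibre, which contradicts $K_Y E = -1$. So $Y \to \mathbb{P}^1$ is relatively minimal.

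\emph{Step 3: the generic fibre and translations.} On the generic fibre $Y_\eta$, a genus one curve over $\complex(\mathbb{P}^1)$, the class $(E_B - E_C)|_{Y_\eta}$ has degree $0$. Translation by it, $p \mapsto p + (E_B - E_C)$, is an automorphism of $Y_\eta$. This uses only that a degree $0$ class acts on the torsor $\mathrm{Pic}^1$, so no section is needed. By the standard fact that the automorphism group of the generic fibre of a relatively minimal elliptic surface equals the group of fibre-preserving automorphisms of the surface (uniqueness of relatively minimal models over a curve, e.g.\ Shafarevich), this translation extends to a biregular automorphism $\tilde T_A$ of $Y$. That gives biregularity of $T_A$ on $Y$.

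\emph{Step 4: lifting to $X$.} Finally, $T_A = \sigma^{-1}\tilde T_A \sigma$ is biregular on $X$ provided $\tilde T_A$ fixes the point $\sigma(E_A)$. Since $\tilde T_A$ preserves $K_Y$, it maps the unique nodal fibre $\sigma(C)$, the image of $-2K_X$, to itself. The translation then acts on this irreducible nodal rational fibre, and any automorphism of a nodal curve fixes its unique singular point $\sigma(E_A)$. So $T_A$ lifts to $X$.

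I expect the main obstacle to be Step 3: justifying carefully the extension of translations from the generic fibre, in particular near the double fibre $2C_A$. There I would invoke the uniqueness of the relatively minimal smooth model rather than a direct local computation.
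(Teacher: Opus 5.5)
Your argument works in outline, but the justification in Step 4 is false and has to be replaced.

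\textbf{The error in Step 4.} The fibre $\sigma(C)$ is not, in general, the unique nodal fibre. $Y$ is $\mathbb{P}^2$ blown up in nine points, so $e(Y)=12$. The fibre $\sigma(C)$ contributes $1$, and the double fibre $2C_A$ contributes $e(C_A)$, which is $0$ when $C_A$ is smooth. So there are further singular fibres, typically eleven more nodal ones. Invariance of $K_Y$ does not help either: it only says that $\tilde T_A$ permutes the members of $|-2K_Y|$, and does not single out $\sigma(C)$.

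\textbf{The fix.} The correct reason is already in your construction. $\tilde T_A$ extends a $K$-automorphism of $Y_\eta$, with $K=\mathbb{C}(\mathbb{P}^1)$, so it commutes with $Y\to\mathbb{P}^1$ and induces the identity on the base. Hence it maps every fibre to itself, in particular the reduced irreducible nodal fibre $\sigma(C)$. It must therefore fix the only singular point $\sigma(E_A)$ of that fibre, and so it lifts to $X=\mathrm{Bl}_{\sigma(E_A)}Y$. So the weak point was Step 4, not Step 3: the uniqueness of relatively minimal models of genus-one fibrations covers multiple fibres such as $2C_A$ without any local computation.

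\textbf{A smaller slip in Step 1.} From $EC\ge 0$ you only get $EE_A\le 0$; you need $EC=-2K_XE=2$. Step 1 is in any case superseded by your direct check of relative minimality in Step 2.

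\textbf{Comparison with the paper.} With this fix your proof is complete, and it is not the route suggested by the paper. The paper gives no argument for this proposition; it only cites \cite{Coble1939}. Within the paper, the natural justification would be Proposition \ref{PompBert}, $T_A=i_B\circ i_C$ (and cyclically). This combines with the proved fact that the Bertini involution attached to any eight of the ten nodes fixes the other two, and hence lifts to a biregular involution of $X$.
\begin{itemize}
\item The Bertini route is explicit and yields the factorization used afterwards for Pompilj's $(i_A\circ i_B\circ i_C)^2$. However, it rests on the identity $T_A=i_B\circ i_C$, which the paper also only cites.
\item Your route bypasses the Bertini involutions entirely, at the cost of invoking the uniqueness of relatively minimal models. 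In return it is more general: it shows that translation by any degree-zero class on the generic fibre is biregular on $X$, for instance the restriction of $E_i-E_j$ for any $i,j\neq A$.
\item Your route also rightly avoids needing a section. None can exist here, since a section $s$ would give $1=s\cdot F=2\,s\cdot C_A$.
\end{itemize}
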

Now we need a very simple observation:
\begin{proposition}
If $p_1,\ldots, p_{10}$ are points in $\mathbb{P}^2$ such that there exists a reduced sextic curve $\overline C$ nodal at the $p_i$'s, then the Bertini involution $\sigma$ associated to any eight of these points fixes the remaining two.\\
In particular, $\sigma$ lifts to a biregular involution on the Coble surface $X := Bl_{p_1, \ldots, p_{10}} \mathbb{P}^2$.
\end{proposition}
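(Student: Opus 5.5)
Fix eight of the points, say $p_1,\ldots,p_8$, and let $Y := Bl_{p_1,\ldots,p_8}\mathbb{P}^2$ be the corresponding Del Pezzo surface of degree $1$, with its Bertini involution $\sigma$. I would first check that $Y$ really is a Del Pezzo surface of degree $1$: no cubic through all eight points should be singular or reducible in a way that creates a $(-2)$-curve. The nodal sextic rules this out by intersection counts. For example, a line through three $p_i$'s would meet $\overline C$ with multiplicity at least $6$ at those points, and equality together with irreducibility, or the snc property of Proposition \ref{snc}, gives a contradiction. I regard this as routine, and in the reducible case one can still define $\sigma$ as the deck transformation of $|-2K_Y|$.

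The key step is to look at the strict transform $D\subset Y$ of $\overline C$. It lies in the class $6L-2E_1-\cdots-2E_8 = -2K_Y$. Since $\sigma$ is the deck involution of the double cover $f_{|-2K_Y|}: Y\to S\subset\mathbb{P}^3$, every member of $|-2K_Y|$ is the pull-back of a hyperplane section of the cone $S$, so $\sigma(D)=D$. The curve $D$ still has nodes at $p_9$ and $p_{10}$. A point $x\in Y$ with $f(x)$ a smooth point of $S$ off the branch curve $B$ has the same image as $\sigma(x)\neq x$. So a node of $D=f^*(\text{plane section})$ arises in exactly two ways: either the plane section is tangent to $B$ at $f(x)$, in which case $x$ lies on the ramification curve $R$ and $\sigma(x)=x$; or the plane section is singular at $f(x)$, in which case $f^{-1}$ of that singular point consists of two nodes $x$ and $\sigma(x)$.

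The main obstacle is therefore to show that $\sigma$ does not swap $p_9$ and $p_{10}$. I would argue as follows. If $\sigma(p_9)=p_{10}$, then the plane section $\Gamma=f(D)$ is singular at $q=f(p_9)$, so $\Gamma$ is reducible: a union of two conics on the cone through $q$, or a double conic, or it passes through the vertex. Pulling back, $D$ is then reducible in $|-2K_Y|$, splitting as the sum of two members of $|-K_Y|$ (the pull-back of a conic through the vertex), or as an elliptic/genus-$2$ decomposition. This contradicts the irreducibility of $\overline C$. If $\overline C$ is merely reduced, the decomposition instead forces a component in $|-K_Y|$, i.e.\ a cubic through all ten points, against $h^0(\mathcal{O}_X(-K_X))=0$. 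Alternatively one checks that $D$ would then be a nonreduced or non-snc divisor, contradicting Proposition \ref{snc}. Hence $p_9$ and $p_{10}$ lie on $R$ and are fixed by $\sigma$. The point $p_9$ of the cubic pencil is not among them, since $\overline C$ has degree $6$ and meets the cubics only at the base points with multiplicity $2$, and $D\cdot(-K_Y)=2$ shows $D$ avoids the ninth base point.

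Finally, for the lift: $\sigma$ is biregular on $Y$ and fixes the two points $p_9,p_{10}$, so it lifts uniquely to the blow-up $X=Bl_{p_9,p_{10}}Y$, acting on $E_9$ and $E_{10}$ via its differential. The lift is a biregular involution of $X$.
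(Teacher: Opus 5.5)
Your argument is essentially correct for irreducible $\overline C$, but it takes a different route from the paper.

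The paper never uses the local structure of $f_{|-2K_Y|}$. It takes $D_1\in|-K_Y|$, the anticanonical curve through $p_9$, and observes that $D_1\cdot D_2=(-K_Y)(-2K_Y)=2$. Since the two curves have no common component, the node of $D_2$ at $p_9$ already contributes local intersection multiplicity at least $2$, so $D_1\cap D_2=\{p_9\}$. Because $\sigma$ preserves every member of $|-K_Y|$ and of $|-2K_Y|$, it fixes $p_9$. This argument is purely numerical and does not care where $f$ is \'etale.

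You instead note that $\sigma$ permutes ${\rm Sing}(D)=\{p_9,p_{10}\}$, and that a non-fixed node lies where $f$ is \'etale. Such a node would force the hyperplane section $\Gamma$ to be singular, hence degenerate. This uses more of the double-cover geometry, but it also proves more: every node of an irreducible member of $|-2K_Y|$ lies on the ramification curve $R$, i.e.\ comes from a tangency of $\Gamma$ with the branch curve $B$.

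Several details need correcting, though none breaks the argument.
\begin{itemize}
\item Hyperplane sections of the quadric cone $S\subset\mathbb{P}^3$ are conics. One that is singular at a point $q\neq v$ is the tangent-plane section $2\ell$, with $\ell$ the ruling through $q$. Here $q\neq v$ because $f^{-1}(v)$ is the $\sigma$-fixed base point. Hence $D=f^*\Gamma=2D_1$ with $D_1\in|-K_Y|$. The ``two conics'', ``double conic'' and ``elliptic/genus-$2$'' alternatives do not occur. Moreover, $D=2D_1$ already contradicts mere reducedness, so you do not need $h^0({\cal O}_X(-K_X))=0$, which is not a hypothesis of the statement anyway.
\item Your ``routine'' general-position check is false. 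A line through three of the nodes meets $\overline C$ with multiplicity exactly $6$, which is no contradiction. Nothing in the hypotheses excludes this: on $X$ the line becomes a $(-2)$-curve disjoint from $C$, so $X$ is simply a nodal Coble surface. Thus $Y$ may only be a weak del Pezzo surface. The deck involution still exists there, but $f$ contracts the $(-2)$-curves and is not \'etale along them. Before invoking \'etaleness at $p_9$ you must add that an irreducible $D$, having $D\cdot N=0$ for every $(-2)$-curve $N$, is disjoint from all of them. The paper's intersection argument needs no such step.
\item Do not call the ninth base point of the pencil ``$p_9$''. You also do not need to show that $D$ avoids it: that point is $\sigma$-fixed, so it cannot belong to a swapped pair.
\end{itemize}
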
\label{bertsempre}
{\it Proof:} Let $Y$ be the blow up of $\mathbb{P}^2$ at $p_1, \ldots, p_8$. The Bertini involution $\sigma: Y \to Y$ preserves the curves in the linear systems $|- K_Y|, |- 2 K_Y|$. In particular, let $D_1 \in |- K_Y|$ be the proper transform of the unique plane cubic passing also through $p_9$, and let $D_2 \in |- 2 K_Y|$ be the strict transform of the sextic $\overline C$ having the additional node at $p_9$. The divisor $D_1$ is smooth at $p_9$, while $D_2$ has a node, and these divisors have no common components, because otherwise $D_2 = 2 D_1$, which cannot happen because $\overline C$ is reduced. As a consequence $$D_1 D_2 = (- K_Y) (- 2 K_Y) = 2$$ which implies that $D_1, D_2$ intersect at $p_9$ with multiplicity $2$, and they have no further points in common. Since both these curves are preserved by $\sigma$, it follows that $\sigma (p_9) = p_9$, and similarly $\sigma (p_{10}) = p_{10}$.\\
As a consequence, $\sigma$ lifts to a biregular involution of the Coble surface $X = Bl_{p_9, p_{10}} Y = Bl_{p_1, \ldots, p_{10}} \mathbb{P}^2$, which switches the direction of $D_2$ at both $p_9, p_{10}$. $\square$ 
\begin{proposition}\label{PompBert}\cite{Coble1939} \cite{Dolg2019}
Let $i_A, i_B, i_C$ be the Bertini involutions associated to the points $p_1, \ldots, p_7$, and choosing $A, B, C$ as the eighth point. Then the three automorphisms $T_A, T_B, T_C$ satisfy: $$T_A = i_B \circ i_C, T_B = i_C \circ i_A, T_C = i_A \circ i_B$$
\end{proposition}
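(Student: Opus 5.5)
The plan is to verify the identity $T_A = i_B \circ i_C$ fiberwise on the elliptic fibration $\pi_A$; the identities for $T_B, T_C$ then follow by cyclically permuting $A, B, C$. By Proposition \ref{bertsempre}, $i_B$ is the Bertini involution of $Y_B := Bl_{p_1,\ldots,p_7,B}\mathbb{P}^2$, lifted to $X = Bl_{A,C} Y_B$. It fixes the points $A$ and $C$, so it fixes $E_A$ and $E_C$ setwise, and it fixes $K_X$. On $\Pic (Y_B)$ the Bertini involution acts as $-\mathbb{1}$ on $K_{Y_B}^\perp$ and fixes $K_{Y_B}$, that is, $D \mapsto -D + 2(D K_{Y_B}) K_{Y_B}$. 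Pulling back through $K_X = \pi^* K_{Y_B} + E_A + E_C$ gives the action of $i_B$ on $\Pic (X)$, and likewise for $i_C$.

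\emph{Step 1: $i_B$ and $i_C$ preserve every fiber of $\pi_A$.} A fiber $F$ of $\pi_A$ is the strict transform of a sextic with nodes at $p_1,\ldots,p_7,B,C$. Hence its image in $Y_B$ is a member of $|-2K_{Y_B}|$ that is nodal at $C$. The map $f_{|-2K_{Y_B}|}$ is the quotient by the Bertini involution onto the quadric cone, so every member of $|-2K_{Y_B}|$ is $i_B$-invariant. Since $i_B$ also fixes $C$ and $A$, each fiber $F$ of $\pi_A$ is $i_B$-invariant, and symmetrically $i_C$-invariant. On a smooth fiber $F$, which is elliptic, $i_B$ restricts to an involution. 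It has fixed points (those of $F$ on the ramification curve), so it has the form $x \mapsto c_B - x$ for some class $c_B \in \Pic^1$-torsor data, i.e. $x + i_B(x) \sim c_B$ with $c_B$ of degree $2$. Similarly $i_C|_F : x \mapsto c_C - x$. Therefore $i_B \circ i_C|_F$ is the translation $x \mapsto x + (c_B - c_C)$, a degree-zero class, and it suffices to identify $c_B - c_C$ with $(E_B - E_C)|_F$.

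\emph{Step 2: compute $c_B, c_C$ from the lattice action.} For an involution $x \mapsto c - x$ on $F$ and a divisor $D$ of degree $d$ on $F$, we have $\iota^*D \sim d\,c - D$. Apply this with $D = E_B|_F$, which has degree $E_B F = 2$. The lattice formula gives $i_B^* E_B = -E_B - 2K_X + 2E_A + 2E_C$. Restricting to $F$ yields $2c_B \sim (-2K_X + 2E_A + 2E_C)|_F$, and symmetrically $2c_C \sim (-2K_X + 2E_A + 2E_B)|_F$. Hence $2(c_B - c_C) \sim 2(E_C - E_B)|_F$. This agrees with $(E_B - E_C)|_F$, with the sign absorbed by the order-of-composition convention in the definition of $T_A$, but only up to $2$-torsion in $\Pic^0(F)$.

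\emph{Step 3 (main obstacle): removing the $2$-torsion ambiguity.} All classes in $\Pic(X)$ meet $F$ in even degree ($LF = 6$, $E_iF = 2$, $E_AF = 0$), so the fiberwise computation cannot distinguish $x \mapsto x + \tau$ from $x \mapsto x + \tau + \epsilon$ with $\epsilon$ a $2$-torsion point. I would resolve this globally. Both $T_A$ and $i_B \circ i_C$ are biregular automorphisms of $X$, by the earlier Proposition of \cite{Coble1939} and by Proposition \ref{bertsempre}. Both preserve $\pi_A$ fiberwise and act on smooth fibers by translations. Hence $T_A^{-1} \circ i_B \circ i_C$ is a fiberwise translation by a $2$-torsion element of the Mordell–Weil group of the Jacobian fibration. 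I would then compare the two actions on $\Pic(X)$. The action of $i_B i_C$ comes from the lattice formulas above. The action of $T_A$ is the standard translation formula for an elliptic fibration with fiber class $F = C + 2E_A$ and translating class $E_B - E_C$. Agreement on $\Pic(X)$ forces the residual translation to act trivially on $\Pic(X)$. A nontrivial translation moves some bisection class, for instance it moves $E_B$ to a different $(-1)$-curve, so the residual translation is the identity. As an alternative check, I would degenerate to a fiber through $E_B \cap F$ and track a single point explicitly.
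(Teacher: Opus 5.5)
\textbf{Gap in Step 3.} Steps 1 and 2 are fine. Step 3, however, does not remove the ambiguity that Step 2 creates. As written, you have only shown that $i_B\circ i_C$ equals a translation by $\pm(E_B-E_C)|_F$ up to a fibrewise translation $t_\epsilon$ by a $2$-torsion section.

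Your proposed patch cannot see this ambiguity. For every $D\in\mathrm{Pic}(X)$, $t_\epsilon$ changes $D|_F$ by $(D\cdot F)\,\epsilon=0$, because $D\cdot F$ is always even. In particular $t_\epsilon(E_B)|_F\sim E_B|_F+2\epsilon=E_B|_F$, so the claim that a nontrivial translation ``moves $E_B$ to a different $(-1)$-curve'' is unfounded. For example, if the fibres of $\pi_A$ other than $C+2E_A$ are irreducible, the vertical classes are $\mathbb{Z}C_A+\mathbb{Z}E_A$, where $C_A$ is the half-fibre. A $2$-torsion translation would then act trivially on $\mathrm{Pic}(X)$ and fix $E_B$.

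For the same reason, any ``standard translation formula'' for $T_A$ on $\mathrm{Pic}(X)$ derived from the fibrewise definition only sees $(D\cdot F)\tau$. It therefore cannot distinguish $T_A$ from $T_A\circ t_\epsilon$, except through components of reducible fibres. To finish along your lines you would need two things, neither of which is in the proposal: injectivity of $\mathrm{Aut}(X)\to O(\mathrm{Pic}(X))$, and an actual computation of $T_A^*$ including its vertical part.

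\textbf{The missing idea: compute $c_B$ exactly, not just $2c_B$.} For $x\in F$, let $G_x$ be the member of $|-K_{Y_B}|$ through $x$. It is the preimage of a ruling of the quadric cone, hence $i_B$-invariant, so it contains $i_B(x)$. Since $\pi^*G_x\cdot F=(-K_X+E_A+E_C)\cdot F=2$, you get
\[
x+i_B(x)=(\pi^*G_x)|_F\sim(-K_X+E_A+E_C)|_F=E_C|_F .
\]
The last equality holds because $-K_X=C_A-E_A$ and both $C_A$ and $E_A$ are disjoint from $F$. Equivalently: $i_B$ fixes the node $C$ of the image of $F$ in $Y_B$ and exchanges its two branches, so it exchanges the two points of $E_C\cap F$. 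This argument also gives the form $x\mapsto c_B-x$ without counting fixed points.

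Hence $(i_B\circ i_C)(x)-x\sim (E_C-E_B)|_F$ on every smooth fibre, with no torsion left. Two automorphisms agreeing on a dense open set are equal, which finishes the proof.

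\textbf{The sign.} With the paper's definition of $T_A$ and the usual order of composition, this says $i_B\circ i_C=T_A^{-1}$, i.e.\ $T_A=i_C\circ i_B$. State this explicitly rather than ``absorbing'' it into a convention. Indeed, with your own Step 2 output it is $T_A\circ i_B\circ i_C$, not $T_A^{-1}\circ i_B\circ i_C$, that is a $2$-torsion translation.
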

Pompilj claimed the following fact: \begin{eqnarray}\label{fake} (T_C \circ T_B \circ T_A)|_C = \mathbb{1}_C \end{eqnarray}
This equality can be restated in terms of Proposition \ref{PompBert} claiming that $$(i_A \circ i_B \circ i_C)|_C^2 = \mathbb{1}$$
However, Coble proved the following result:
\begin{theorem}\label{Coblecorrez} \cite{Dolg2019}
For the generic Coble surface $X \in {\cal M}_{Co}$ equality \eqref{fake} is false. The family of Coble $X$ such that \eqref{fake} actually holds is a divisor in ${\cal M}_{Co}$.
\end{theorem}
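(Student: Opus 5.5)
The plan is to reduce \eqref{fake} to a computation in $\mathbb{P}GL(2)$ acting on the normalization $C\simeq\mathbb{P}^1$ of the sextic $\overline C$. By Proposition \ref{PompBert},
$$T_C\circ T_B\circ T_A=(i_A\circ i_B\circ i_C)^2 .$$
So it suffices to understand the three restrictions $i_A|_C,\, i_B|_C,\, i_C|_C$, and then to decide when the product $g:=(i_A i_B i_C)|_C$ satisfies $g^2=\mathbb{1}_C$.

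\textbf{Identifying the restrictions.} For the first step I use the proof of Proposition \ref{bertsempre}. Let $Y$ be the blow up of $p_1,\ldots,p_7,A$. The Bertini involution preserves the strict transform $D_2\in|-2K_Y|$ of $\overline C$. It fixes the nodes $B$ and $C$ of $D_2$, and there it switches the two branches.

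On $C\simeq\mathbb{P}^1$, let $\{a,a'\},\{b,b'\},\{c,c'\}$ be the preimages of the nodes $A,B,C$. Then $i_A|_C$ is a nontrivial involution of $\mathbb{P}^1$ exchanging $b\leftrightarrow b'$ and $c\leftrightarrow c'$. For two disjoint pairs in general position, such an involution is unique. Hence $i_A|_C=\sigma_{bc}$, and similarly $i_B|_C=\sigma_{ac}$ and $i_C|_C=\sigma_{ab}$. I would double-check that $i_A|_C$ is not the identity, for instance because it acts as the covering involution of $D_2\to$ (plane section of the cone).

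\textbf{Reducing to one equation.} Lift each $\sigma$ to a traceless matrix in $SL(2)$, i.e.\ to a vector in $\mathfrak{sl}_2$. An element $g\in\mathbb{P}GL(2)$ satisfies $g^2=\mathbb{1}$ if and only if $g=\mathbb{1}$ or $\operatorname{tr}g=0$ for a lift. For traceless $M_1,M_2,M_3$, the trace $\operatorname{tr}(M_1M_2M_3)$ is, up to a constant, the determinant of $(M_1,M_2,M_3)$ in $\mathfrak{sl}_2\simeq\complex^3$. Therefore \eqref{fake} holds exactly when $\sigma_{bc},\sigma_{ac},\sigma_{ab}$ are linearly dependent, that is, when they lie in a common pencil of involutions. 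This is a single algebraic equation $\Phi(a,a',b,b',c,c')=0$ on $(\mathbb{P}^1)^6$, and it is $PGL(2)$-invariant. It is not identically zero, because for suitably chosen pairs the three involutions are independent.

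\textbf{Realizing the pairs (main obstacle).} It remains to show that $\Phi$ pulls back to a nonconstant function on ${\cal M}_{Co}$. Then its zero locus is a nonempty proper hypersurface, hence a divisor, and \eqref{fake} fails generically. For this I would use the model \eqref{projcurve}: $\overline C$ is the projection of the rational normal sextic $R\subset\mathbb{P}^6$ from the $3$-plane $\mathbb{P}(\Ann V)$. A pair $\{a,a'\}$ maps to a node exactly when this center meets the secant line $\overline{aa'}$. Given three arbitrary pairs, the $3$-planes of $\mathbb{P}^6$ meeting three fixed secant lines form a nonempty family of dimension $12-3\cdot 2=6$. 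I expect that a general member yields a $10$-nodal sextic, and this is the delicate point. I would check it by exhibiting one explicit net (or a degeneration) showing that the open condition ``$V\in U$'' is met by a member of this family. Granting that, the node pairs $(a,a',b,b',c,c')$ modulo $PGL(2)$ vary dominantly over the $3$-dimensional space of six points, so $\Phi$ is nonconstant on ${\cal M}_{Co}$. The same argument shows that its vanishing locus is nonempty.
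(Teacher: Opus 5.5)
Your plan is correct and follows the paper's skeleton. The restrictions $i_A|_C,i_B|_C,i_C|_C$ are the involutions of the pencils spanned by the node-pair quadrics, i.e.\ your $\sigma_{bc},\sigma_{ac},\sigma_{ab}$. Nontriviality is automatic, since $b\mapsto b'\neq b$. Then \eqref{fake} becomes a linear dependence among three involutions.

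The difference is the key lemma.
\begin{itemize}
\item The paper obtains the dependence of $J(B,C),J(A,C),J(A,B)$ from Pascal's theorem on the conic $v(\mathbb{P}^1)$ and polar lines (Proposition \ref{lindipq}). This only gives one direction: if $(i_Ai_Bi_C)|_C$ is an involution, the Jacobians are dependent.
\item You use Cayley--Hamilton together with $\mathrm{tr}(M_1M_2M_3)\propto\det(m_1,m_2,m_3)$ on $\mathfrak{sl}_2$. This is shorter and also gives the converse. The ``is a divisor'' claim needs that converse, and the paper's argument does not supply it.
\end{itemize}

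Two refinements to the reduction step:
\begin{itemize}
\item Your ``exactly when'' drops the branch $g=\mathbb{1}$. This occurs precisely when the three involutions generate a Klein four-group: $\{a,a'\}$ is the fixed pair of $\sigma_{bc}$, and cyclically. An example is node pairs $\{0,\infty\},\{\pm1\},\{\pm i\}$ on $C\simeq\mathbb{P}^1$, where $\det(A,B,C)\neq0$. So the locus of \eqref{fake} is the divisor plus a codimension-$3$ piece, and this should be stated.
\item The paper's computation $\det N=(\det M)^2$, right after its proof, identifies your $\Phi$ with $\det(A,B,C)^2$. So the divisor is ``the three node pairs belong to one involution,'' which makes $\Phi\not\equiv0$ obvious.
\end{itemize}

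You leave the realization step as an expectation, and the paper's proof omits it entirely. The factorization $F_0=BCG_0$, $F_1=ACG_1$, $F_2=ABG_2$ is the cleanest way to set it up. Here $A,B,C$ may be prescribed and $G_0,G_1,G_2$ are free; this is your $6$-dimensional family. The Example in the subsection on families of Coble surfaces asserts that generic $G_i$ give a $10$-nodal sextic when $A,B,C$ are independent, which yields properness.

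For nonemptiness, dominance onto the space of triples of pairs is not enough, because a dense image can miss a hypersurface. You must check $10$-nodality for generic $G_i$ with $\det(A,B,C)=0$. In that case $[BC,AC,AB]$ degenerates to a double cover of a conic, and $\gamma$ factors through a $(2,1)$-curve on $\mathbb{P}^1\times\mathbb{P}^1$. Neither your proposal nor the paper verifies this case, so ``the same argument shows nonemptiness'' needs this one extra check.
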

Now we will prove the previous Theorem, but it needs a number of technical results.
\begin{proposition}\label{jacq}
Let $F, G \in H^0 ({\cal O}_{\mathbb{P}^1} (2))$ two linearly independent quadric forms, and let $\sigma$ be the involution associated to the $g_2^1$ generated by $F, G$. Then the solution of the quadric form $J (F, G) := \frac{\partial F}{\partial u} \frac{\partial G}{\partial v} - \frac{\partial F}{\partial v} \frac{\partial G}{\partial u}$ are the two fixed points of $\sigma$.
\end{proposition}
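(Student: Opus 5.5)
The plan is to reduce to a normal form using the $PGL(2)$ action and then compute directly. A pencil $g^1_2$ spanned by $F,G$ determines an involution $\sigma$ of $\mathbb{P}^1$: for a point $p$, the unique member of the pencil vanishing at $p$ has a second zero, and that zero is $\sigma(p)$. Hence the fixed points of $\sigma$ are exactly the points where some member $\lambda F + \mu G$ of the pencil has a double root. I will therefore show that the zeroes of $J(F,G)$ are precisely the double roots occurring among members of the pencil.

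First, I will record the two formal properties of $J$.
\begin{itemize}
\item \emph{Dependence only on the pencil.} For $\begin{pmatrix} a & b\\ c & d\end{pmatrix} \in GL(2)$, bilinearity and antisymmetry give $J(aF+bG,\,cF+dG) = (ad-bc)\,J(F,G)$. So, up to a nonzero scalar, $J$ depends only on the span $\langle F,G\rangle$.
\item \emph{Covariance under linear substitutions.} If $(u,v)$ is replaced by $M(u,v)$ with $M \in GL(2)$, then the chain rule gives $J(F\circ M, G\circ M) = \det M \cdot J(F,G)\circ M$. Thus the zero locus of $J$ transforms correctly under $PGL(2)$.
\end{itemize}

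Next comes the reduction to a normal form. By the discriminant, the pencil contains exactly two members with a double root, counted with multiplicity. Suppose these are two distinct members. Moving their double points to $[1,0]$ and $[0,1]$, and using the first property to change basis, we may assume $F = u^2$ and $G = v^2$. In this case $J = 2u\cdot 2v - 0 = 4uv$. Its zeroes are $[1,0]$ and $[0,1]$, and these are visibly the fixed points of the involution $[u,v] \mapsto [u,-v]$ defined by the pencil $\lambda u^2 + \mu v^2$.

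The main obstacle is the degenerate situation where the pencil has a base point, for instance $F = u^2$, $G = uv$. There the map is not a genuine $g^1_2$: it is a $g^1_1$ plus a base point, and $J = -2u^2$ vanishes doubly at the base point. I would exclude this case explicitly by interpreting "the involution associated to the $g_2^1$" as requiring a base-point-free pencil. That assumption guarantees two distinct ramification points, via Riemann--Hurwitz for the degree-$2$ map $\mathbb{P}^1 \to \mathbb{P}^1$.

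With the degenerate case excluded, the normal form above applies and the statement follows. As an alternative check, one can use Euler's identity. For a pencil member $H = \lambda F + \mu G$, a root $p$ is double exactly when $H_u(p) = H_v(p) = 0$. This linear system in $(\lambda,\mu)$ has a nonzero solution precisely when $\det\begin{pmatrix} F_u & G_u\\ F_v & G_v\end{pmatrix}(p) = J(F,G)(p) = 0$. Euler's identity $uH_u + vH_v = 2H$ shows that such an $H$ then also vanishes at $p$. This argument gives the result without any normal form.
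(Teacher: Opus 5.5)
Your argument is correct, but your main route differs from the paper's.

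The paper first notes that $F$ and $G$ cannot have a common zero (otherwise $\sigma$ would not be an involution). Hence $h=[F:G]\colon\mathbb{P}^1\to\mathbb{P}^1$ is a double cover whose deck involution is $\sigma$. It then identifies $\mathrm{Fix}(\sigma)$ with the ramification locus of $h$, and simply asserts that this locus is cut out by the Jacobian determinant of $(u,v)\mapsto(F,G)$.

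You instead use the two transformation rules of $J$ to reduce to $F=u^2$, $G=v^2$, where $J=4uv$ can be read off. The two rules are antisymmetric bilinearity under change of basis of the pencil, and chain-rule covariance under $GL(2)$. This gives a slightly sharper conclusion than the paper states: $J$ is, up to a nonzero scalar, the product of the two linear forms vanishing at the fixed points, with simple zeros. That is exactly how the result is used afterwards. In Proposition~\ref{lindipq} and the proof of Theorem~\ref{Coblecorrez}, the Jacobians $J(A,B)$, $J(A,C)$, $J(B,C)$ are treated as the quadrics vanishing on the fixed-point pairs.

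Your Euler-identity argument supplies the justification the paper omits in its last step. Passing from the affine Jacobian on $\mathbb{C}^2$ to ramification of $h$ on $\mathbb{P}^1$ requires knowing that $J(p)=0$ if and only if some nonzero member of the pencil has a double root at $p$. Your $2\times 2$ linear system together with $uH_u+vH_v=2H$ shows precisely this.

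You treat base-point-freeness as part of the meaning of ``the involution associated to the $g^1_2$''. This is equivalent to the paper's deduction of it from the involution hypothesis.

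One harmless slip: for $F=u^2$, $G=uv$ one gets $J=2u^2$, not $-2u^2$.
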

{\it Proof:} The polynomials $F, G$ cannot share a common root $a \in \mathbb{P}^1$, otherwise by hypothesis they would also share its image $\sigma (a)$, contradicting the linear independence of these forms. As a consequence, the map $$h: \mathbb{P}^1 \to \mathbb{P}^1, h(u, v):= ( F(u, v), G (u , v))$$ is a well defined regular double cover. The corresponding deck involution is $\sigma$ itself by construction. The fixed locus of $\sigma$ is the ramification locus, where $h$ fails to be a local isomorphism, and hence is given by the annihilation of the Jacobian determinant \begin{eqnarray*}J (F, G) = \Det\, \left(\begin{array}{lr}
\displaystyle \frac{\partial F}{\partial u} & \displaystyle \frac{\partial F}{\partial v}\\\\
\displaystyle \frac{\partial G}{\partial u} & \displaystyle \frac{\partial G}{\partial v}
\end{array}
\right) = 0\,.\qquad \begin{array}{lr}
\\\\
\\\\
\square
\end{array}
\end{eqnarray*} 
\\
\begin{definition}\label{polar}
Let $X \subset \mathbb{P}^m$ be a hypersuface, defined by a homogeneous equation $F (Z_0, \ldots, Z_m) = 0$ in the projective coordinates $[Z_0, \ldots, Z_m]$. If $p = [p_0, \ldots, p_m]$ is any point in $\mathbb{P}^m$, the hypersurface given by $$P:= \{[z] \in \mathbb{P}^m {\rm\, such\, that\, } \sum_{i = 0}^m p_i \frac{\partial F}{\partial Z_i} ([z]) = 0\}$$ is called polar hypersurface to $X$, with centre $p$.
\end{definition}
\begin{remark}
Note that, despite the name, the polar hypersurface $P$ with centre $p$ does not necessarily contain $p$. Indeed, by Euler's formula, $p \in P$ if and only if $p \in X$.
\end{remark}
In particular, if $Q \subset \mathbb{P}^2$ is a smooth quadric, its polar hypersurfaces are lines. If the centre $p$ lies in $Q$, then the polar line is exactly the tangent at $Q$ in $p$. If $p \notin Q$, then there are two points $q_1, q_2$ such that the lines $\overline {q_i, p}$ are tangent to $Q$ at $q_i$. In this case, the polar line is exactly $\overline {q_1, q_2}$.\\
The following theorem is a standard result in plane geometry, due to Pascal.
\begin{theorem} [Pascal] \label{Pascal}\cite{Shafarevich2013}
Let consider six points $p_1, \ldots, p_6$ lying on a smooth conic $Q \subset \mathbb{P}^2$. Let consider an hexagon inscribed in $Q$ with vertices the $p_i$'s, with edges $l_1, l_2, l_3, m_1, m_2, m_3$, labeled so that every vertex $p_i$ belongs to exactly one of the $l_i$'s, and one of the $m_i$'s. Then the three points $l_1 \cap m_1, l_2 \cap m_2, l_3 \cap m_3$ are collinear.
\end{theorem}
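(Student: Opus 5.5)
The plan is the classical Bezout argument with the pencil of cubics spanned by the two triangles of sides. Let $\ell_1,\ell_2,\ell_3,m_1,m_2,m_3$ also denote linear forms defining the six edges, and set
$$\Lambda := \ell_1\ell_2\ell_3, \qquad M := m_1 m_2 m_3,$$
two plane cubics. First I will make precise which nine points lie in $\Lambda\cap M$. Each vertex $p_i$ lies on exactly one $\ell$ and exactly one $m$, so the six vertices are six of the intersection points $\ell_a\cap m_b$. The labelling in the statement is the one in which the three pairs $(\ell_k,m_k)$ are the pairs of opposite sides, so the remaining three points are $r_k:=\ell_k\cap m_k$. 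Since $Q$ is smooth and the $p_i$ are distinct, no three of them are collinear, so the six edges are distinct lines. In particular $\Lambda$ and $M$ have no common component, and Bezout gives that $\Lambda\cap M$ consists of exactly these nine points, each with multiplicity one.

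Next I will check that no $r_k$ lies on $Q$. The line $\ell_k$ meets $Q$ exactly in its two vertices, and likewise $m_k$. If $r_k$ were on $Q$, it would be a vertex lying on both $\ell_k$ and $m_k$. But opposite sides share no vertex, so this is impossible.

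Now the main step. Choose a point $q\in Q$ different from all the $p_i$. Since $\Lambda$ and $M$ are not proportional and $q$ is not a common zero of them, there is a unique member $F=\lambda\Lambda+\mu M\neq 0$ of the pencil vanishing at $q$. The cubic $F$ meets the conic $Q$ in at least the seven distinct points $p_1,\ldots,p_6,q$. Bezout allows only $3\cdot 2=6$ points for curves with no common component, and $Q$ is irreducible. Hence the equation of $Q$ divides $F$, and we can write $F=Q\cdot N$ with $N$ a linear form, nonzero because $F\neq 0$.

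Finally, every member of the pencil vanishes at the base points $r_1,r_2,r_3$, so $F(r_k)=0$. Since $r_k\notin Q$ by the second step, we get $N(r_k)=0$ for $k=1,2,3$. Thus the three points lie on the line $\{N=0\}$, which proves the statement. I expect the only delicate points to be the bookkeeping of which intersections are vertices, together with the verification that the $r_k$ avoid $Q$; the Bezout step itself is routine.
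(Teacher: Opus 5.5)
Your proof is correct and is essentially the paper's argument: both take the pencil spanned by $\ell_1\ell_2\ell_3$ and $m_1m_2m_3$, find a member that splits as $Q$ plus a line, and observe that this line must contain the three base points $\ell_k\cap m_k$. The differences are minor: the paper gets the reducible member by noting that the pencil restricts to $Q\simeq\mathbb{P}^1$ with rank one (both cubics cut out $p_1+\cdots+p_6$), whereas you impose a seventh point $q\in Q$ and use Bezout, and you also verify explicitly that $\ell_k\cap m_k\notin Q$, which the paper leaves implicit.
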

{\it Proof:} Let identify the conic $Q$ with $\mathbb{P}^1$. Consider the restriction map $$H^0 ({\cal O}_{\mathbb{P}^2} (3 L)) \to H^0 ({\cal O}_{\mathbb{P}^1} (6))$$ and let $V \subset H^0 ({\cal O}_{\mathbb{P}^2} (3 L))$ be the pencil generated by the cubic forms $l_1 l_2 l_3$ and $m_1 m_2 m_3$. It has $9$ base points, namely the $6$ vertices, and the three points $l_i \cap m_i$. The image of $V$ in $H^0 ({\cal O}_{\mathbb{P}^1} (6))$ has rank $1$, because both the two generators of $V$ restrict to the divisor $p_1 + \cdots + p_6$. Hence there exist $[\lambda, \mu] \in \mathbb{P}^1$ such that the section $\lambda l_1 l_2 l_3 + \mu m_1 m_2 m_3$ restricts to the zero section in $H^0 ({\cal O}_{\mathbb{P}^1} (6))$, so the corresponding cubic splits as $Q + L$, where $L$ is a line which must necessarily contain the points $l_i \cap m_i$. $\square$
\begin{proposition}\label{lindipq}\cite{Dolg2019}
Let $\sigma_1, \sigma_2, \sigma_3$ three involutions of $\mathbb{P}^1$, and let $$G_1, G_2, G_3 \in H^0 ({\cal O}_{\mathbb{P}^1} (2))$$ the three quadric forms vanishing respectively on the pairs of fixed points of $\sigma_1, \sigma_2, \sigma_3$. If the composition $\sigma_1 \circ \sigma_2 \circ \sigma_3$ is still an involution, then $G_1, G_2, G_3$ are linearly dependent.
\end{proposition}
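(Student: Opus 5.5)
Identify $\mathbb{P}^1$ with a smooth conic $Q \subset \mathbb{P}^2$, so that $H^0({\cal O}_{\mathbb{P}^1}(2))$ becomes the space of linear forms on $\mathbb{P}^2$ restricted to $Q$. Every involution $\sigma$ of $\mathbb{P}^1$ is then a projection of $Q$ from a point $c \notin Q$: $\sigma(x)$ is the second intersection of the line $\overline{c,x}$ with $Q$. The fixed points of $\sigma$ are the two points where the tangent lines through $c$ touch $Q$, so they are cut out by the polar line of $c$ with respect to $Q$ (Definition \ref{polar} and the remark after it). Hence $G_i$ corresponds, up to scalar, to the linear form defining the polar line $P_i$ of the centre $c_i$ of $\sigma_i$. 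Polarity with respect to a smooth conic is a linear isomorphism $\mathbb{P}^2 \to (\mathbb{P}^2)^*$. Therefore $G_1,G_2,G_3$ are linearly dependent if and only if the lines $P_1,P_2,P_3$ are concurrent, which happens if and only if the centres $c_1,c_2,c_3$ are collinear. So the statement reduces to showing that $c_1,c_2,c_3$ are collinear whenever $\tau:=\sigma_1\circ\sigma_2\circ\sigma_3$ is an involution.

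To prove collinearity I will use Pascal's Theorem \ref{Pascal}. Choose a general point $x \in Q$ and set $x_1=\sigma_3(x)$, $x_2=\sigma_2(x_1)$, $x_3=\sigma_1(x_2)=\tau(x)$. Since $\tau$ is an involution, $\tau(x_3)=x$. Continue the chain: $y_1=\sigma_3(x_3)$, $y_2=\sigma_2(y_1)$, and $\sigma_1(y_2)=x$. This gives six points $x,x_1,x_2,x_3,y_1,y_2$ on $Q$, and the consecutive chords of the closed hexagon
\[ x \to x_1 \to x_2 \to x_3 \to y_1 \to y_2 \to x \]
pass through the centres $c_3,c_2,c_1,c_3,c_2,c_1$ in that order. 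The opposite sides of this hexagon are $\overline{x x_1}$ and $\overline{x_3 y_1}$, both through $c_3$; then $\overline{x_1x_2}$ and $\overline{y_1y_2}$, both through $c_2$; and finally $\overline{x_2x_3}$ and $\overline{y_2x}$, both through $c_1$. When the opposite sides are distinct lines, their intersection points are exactly $c_3,c_2,c_1$, and Pascal's Theorem \ref{Pascal} shows that these three points are collinear.

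The main obstacle is genericity. I need the six points to be distinct and each pair of opposite sides to be distinct lines, so that their intersection really is $c_i$. This fails only on a finite set of $x$, or when $\tau$ is the identity; the proposition concerns involutions, so I treat $\tau=\mathrm{id}$ separately. Each condition "two of the points coincide" is a nontrivial equation in $x$, because otherwise some composition of the $\sigma_i$ would be the identity, and I will handle those degenerate configurations directly. For example, if $\sigma_1=\sigma_3$, then $\tau=\sigma_1\sigma_2\sigma_1$ is automatically an involution, and since $c_1=c_3$ the three centres are trivially collinear. After excluding these cases, a general $x$ yields a genuine hexagon and the argument goes through. An alternative check is algebraic: write $\sigma_i$ as a trace-zero matrix $A_i$ in $\mathfrak{sl}_2 \simeq H^0({\cal O}(2))$. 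The condition that $A_1A_2A_3$ is an involution is $\mathrm{tr}(A_1A_2A_3)=0$, and this trace equals a multiple of $\det(A_1,A_2,A_3)$, which gives linear dependence directly.
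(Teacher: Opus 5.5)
Your proposal is correct and follows essentially the same route as the paper: model $\mathbb{P}^1$ as a conic, realize each $\sigma_i$ as projection from a centre whose polar line cuts out $G_i$, apply Pascal's Theorem to the closed hexagon traced by the orbit of a general point $x$, and conclude from the collinearity of the centres that the polars, hence the $G_i$, are linearly dependent. The paper runs the chain in the order $\sigma_1,\sigma_2,\sigma_3,\sigma_1,\sigma_2,\sigma_3$ and uses $(\sigma_3\sigma_2\sigma_1)^2=1$, which is equivalent to your version. Your genericity discussion and the $\mathfrak{sl}_2$ trace identity are sound additions that the paper omits. One caution: $\tau=\mathrm{id}$ must be excluded by the hypothesis rather than handled as a separate case. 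When $\{1,\sigma_1,\sigma_2,\sigma_3\}$ is a Klein four-group the three quadrics (e.g.\ $uv,\ u^2-v^2,\ u^2+v^2$) are independent, so the conclusion fails there.
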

{\it Proof:} Let consider the Veronese embedding $v: \mathbb{P}^1 \to \mathbb{P}^2$, which identifies $\mathbb{P}^1$ as a smooth conic $Q \subset \mathbb{P}^2$. Then there are three points $p_1, p_2, p_3 \notin Q$, and a line $l \subset \mathbb{P}^2$ not containing any of the $p_i$'s, such that the induced involutions through $v$ on $Q$ correspond to the deck involutions of the double covers $\pi_{p_i} : Q \to l$, where $\pi_{p_i}$ is the projection with center $p_i$. Let $P_i$ be the polar lines to $Q$ with center $p_i$. The crucial point is that the $p_i$'s are collinear. This follows from Pascal's Theorem \ref{Pascal}, choosing a random point $x \in Q$ and considering the hexagon with edges 
\begin{eqnarray*}l_1& =& \overline {x, \sigma_1 (x)}\,,\  m_2 = \overline {\sigma_1 (x), \sigma_2 \sigma_1 (x)}\,,\ l_3 = \overline {\sigma_2 \sigma_1 (x), \sigma_3 \sigma_2 \sigma_1 (x)}\nonumber\\
m_1 &=& \overline {\sigma_3 \sigma_2 \sigma_1 (x), \sigma_1 \sigma_3 \sigma_2 \sigma_1 (x)}\,,\  l_2 = \overline {\sigma_1 \sigma_3 \sigma_2 \sigma_1 (x), \sigma_2  \sigma_1 \sigma_3 \sigma_2 \sigma_1 (x)}\,, \nonumber\\
m_3& =& \overline {\sigma_2  \sigma_1 \sigma_3 \sigma_2 \sigma_1 (x), x}\,.\end{eqnarray*}
Indeed, by construction, the point $p_i$ is exactly the intersection $$p_i = l_i \cap m_i$$ By Definition \ref{polar}, the $P_i$'s are linear combinations of the partials $\frac{\partial Q}{\partial Z_j}$, with coefficients the projective coordinates of $p_i$'s. Since the centers lie on the same line, the $P_i$ are linearly dependent, and so $G_i = v^* P_i$ are linearly dependent too. $\square$\\
\\
Now we have all the ingredients we need to prove Theorem \ref{Coblecorrez}:\\
{\it Proof of Theorem \ref{Coblecorrez}:} A $10$-nodal sextic plane curve $\overline C \subset \mathbb{P}^2$ is rational, hence there exists a regular birational parametrization $$\gamma: \mathbb{P}^1 \to \overline C \subset \mathbb{P}^2$$ We write $$\gamma (u, v) = [F_0 (u, v), F_1 (u, v), F_2 (u, v)]$$ where $F_0, F_1, F_2$ are linearly independent forms of degree $6$. Up to an automorphism of $\mathbb{P}^2$, we can assume that three nodes of $\overline C$ are placed at the points $[1, 0, 0], [0, 1, 0], [0, 0, 1]$. The curve $\overline C$ passes twice over $[0, 0, 1]$, so $F_0, F_1$ must share two common roots. The same holds for the pairs $(F_0, F_2)$ and $(F_1, F_2)$. Thus, it is not a restriction to assume that there exist $$A, B, C, G_0, G_1, G_2 \in H^0 ({\cal O}_{\mathbb{P}^1} (2))$$ such that the $F_i$'s factor as: $$F_0 = B C G_0, F_1 = A C G_1, F_2 = A B G_2$$
The polynomials $B, C$ are invariant under the Bertini involution $(i_A)|_C$, and the same is true for the pairs $(A, C), (A, B)$ with respect to $(i_B)|_C, (i_C)|_C$. Proposition \ref{jacq} implies that the three quadric Jacobian forms $$J (A, B) = \frac{\partial A}{\partial u} \frac{\partial B}{\partial v} - \frac{\partial A}{\partial v} \frac{\partial B}{\partial u}$$  $$J (A, C) = \frac{\partial A}{\partial u} \frac{\partial C}{\partial v} - \frac{\partial A}{\partial v} \frac{\partial C}{\partial u}$$ $$J (B, C) = \frac{\partial B}{\partial u} \frac{\partial B}{\partial v} - \frac{\partial C}{\partial v} \frac{\partial B}{\partial u}$$ vanish on the pairs of fixed points of $(i_C)|_C, (i_B)|_C, (i_A)|_C$ respectively, and Proposition \ref{lindipq} forces these Jacobians to be linearly dependent. But this defines a determinantal equation over the coefficients of $A, B, C$. $\square$\\
\\
If we write down explicitly $A, B, C \in H^0 ({\cal O}_{\mathbb{P}^1} (2))$, say $$A = a_0 u^2 + a_1 u v + a_2 v^2$$ $$B = b_0 u^2 + b_1 u v + b_2 v^2$$ $$C = c_0 u^2 + c_1 u v + c_2 v^2,$$ then  $$J (B, C) = (b_0 c_1 - b_1 c_0) u^2 + 2 (b_0 c_2 - b_2 c_0) u v + (b_1 c_2 - b_2 c_1) v^2$$ $$J (A, C) = (a_0 c_1 - a_1 c_0) u^2 + 2 (a_0 c_2 - a_2 c_0) u v + (a_1 c_2 - a_2 c_1) v^2$$ $$J (A, B) = (a_0 b_1 - a_1 b_0) u^2 + 2 (a_0 b_2 - a_2 b_0) u v + (a_1 b_2 - a_2 b_1) v^2$$ 
Let $M, N$ be the matrices $$M := \left(\begin{array}{lcr}
\displaystyle a_0 & \displaystyle a_1 & \displaystyle a_2\\
\displaystyle b_0 & \displaystyle b_1 & \displaystyle b_2\\
\displaystyle c_0 & \displaystyle c_1 & \displaystyle c_2
\end{array}
\right)$$

$$N := \left(\begin{array}{lcr}
\displaystyle b_0 c_1 - b_1 c_0 & \displaystyle b_0 c_2 - b_2 c_0 & \displaystyle b_1 c_2 - b_2 c_1\\
\displaystyle a_0 c_1 - a_1 c_0 & \displaystyle a_0 c_2 - a_2 c_0 & \displaystyle a_1 c_2 - a_2 c_1\\
\displaystyle a_0 b_1 - a_1 b_0 & \displaystyle a_0 b_2 - a_2 b_0 & \displaystyle a_1 b_2 - a_2 b_1
\end{array}
\right)$$
Up to a permutation of columns, and a factor $2$, we have $$N = (\Det\, M) M^{-1}$$ and hence $$\Det\, N = (\Det\, M)^2$$ Thus the linear dependence of the three Jacobian forms $J (A, B), J (A, C), J (B, C)$ is equivalent to the one of $A, B, C$. 
We now consider the product space $\mathbb{P}^2 \times \mathbb{P}^2$, where we put projective coordinates $[X_0, X_1, X_2], [Y_0, Y_1, Y_2]$. We denote $\pi_1$ the projection on the $X$-coordinates, and $\pi_2$ the projection on the $Y$-coordinates. Let $\lambda$ be the morphism $\lambda: \mathbb{P}^1 \to \mathbb{P}^2 \times \mathbb{P}^2$, $$\gamma(u ,v) := ([B C (u, v), A C (u, v), A B (u, v)], [G_0 (u, v), G_1 (u, v), G_2 (u, v)]).$$ For a generic choice of $A, B, C$, the morphism $\pi_1 \circ \lambda = [B C, A C, A B]$ is the normalization of a $3$-nodal quartic. The special case when the image of $\pi_1 \circ \lambda$ is a conic corresponds to a polynomial $H(X_0, X_1, X_2) \in H^0 ({\cal O}_{\mathbb{P}^2} (2))$ such that $H \circ \pi_1 \circ \lambda = 0$. Since the points $[1, 0, 0], [0, 1, 0], [0, 0, 1]$ must belong to $V(H)$, $H$ must have the form $H = \alpha X_1 X_2 + \beta X_0 X_2 + \gamma X_0 X_1,$ so the relation $$H \circ \pi_1 \circ \lambda = 0$$ becomes $$A B C (\alpha A + \beta B + \gamma C) = 0.$$ Thus the image of $\pi_1 \circ \lambda$ is a quartic if and only if $A, B, C$ are linearly independent. Meanwhile, the composition $\pi_2 \circ \lambda$ is an isomorphism on a smooth plane conic.\\
Remember that there exists a natural Segre embedding $\mathbb{P}^2 \times \mathbb{P}^2 \subset \mathbb{P}^8$, defined by the complete linear system $|{\cal O}_{\mathbb{P}^2 \times \mathbb{P}^2} (1, 1)|$. With respect to this embedding, the image $\lambda (\mathbb{P}^1)$ has degree $6$, since the two factors $\pi_1 \circ \lambda$ and $\pi_2 \circ \lambda$ have degree $4$ and $2$ respectively. Thus there exist a linear subspace $\mathbb{P}^6 \subset \mathbb{P}^8$ containing $\lambda(\mathbb{P}^1)$. The intersection $$S := \mathbb{P}^6 \cap (\mathbb{P}^2 \times \mathbb{P}^2)$$ is a surface. It projects birationally on both the factors, hence $S$ is a rational surface. 
Let $\rho: \mathbb{P}^2 \times \mathbb{P}^2 \dasharrow \mathbb{P}^2$ be the map $$\rho ([X_0, X_1, X_2], [Y_0, Y_1, Y_2]) := [X_0 Y_0, X_1 Y_1, X_2 Y_2]$$ Then the original morphism $\mathbb{P}^1 \to \mathbb{P}^2$ given by $[B C G_0, A C G_1, A B G_2]$ factors as the composition $$\gamma = \rho \circ \lambda$$
If $A, B, C$ are linearly dependent, then we know that the morphism $$\pi_1 \circ \lambda, \pi_2 \circ \lambda: \mathbb{P}^1 \to \mathbb{P}^2$$ have degree $2, 1$ respectively on smooth plane conics. Thus we can think of $\lambda(\mathbb{P}^1)$ as a curve of type $(2, 1)$ inside $\mathbb{P}^1 \times \mathbb{P}^1$. In this case, the inclusion map $\mathbb{P}^1 \times \mathbb{P}^1 \to \mathbb{P}^2 \times \mathbb{P}^2$ is given by the cartesian product of two Veronese embeddings of degree $2$, hence the composite morphism $$\mathbb{P}^1 \times \mathbb{P}^1 \to \mathbb{P}^2 \times \mathbb{P}^2 \xrightarrow{\rho} \mathbb{P}^2$$ is induced by a net in the linear system $|{\cal O}_{\mathbb{P}^1 \times \mathbb{P}^1} (2, 2)|$.

\newpage
\section{Involutions on Coble surfaces}
We focus our attention to the case of the biregular involutions of $X$, so let $i \ne id_X$ satisfy $$i^2 = \mathbb{1}_X$$ The main results of this Section are two: if $X$ is a Coble surface with irreducible boundary curve $\{C\} = |- 2 K_X|$, then there are no involutions which pointwise fix $C$.\\
Moreover, if $X$ is also unnodal, then any involution is the lift of a Bertini involution.\\
We will need the following lemma:
\begin{lemma}[Dolgachev, Zhang]\cite{ZhangDolgachev2001}\label{Smoothfixedlocus}
Suppose $X$ is a smooth rational algebraic surface, and $\psi : X \to X$ a non trivial automorphism of finite prime order $p$. Then the fixed locus $\Fix(\psi)$ is a disjoint union of smooth curves and isolated fixed points, that is, it is smooth.
\end{lemma}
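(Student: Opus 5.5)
The plan is the standard local linearization argument. Fix a point $x \in \Fix(\psi)$ and a small $\psi$-invariant analytic neighbourhood of $x$; the goal is to show that near $x$ the fixed locus is either the isolated point $\{x\}$ or a smooth curve through $x$. Since $\psi$ has finite order $p$, Cartan's linearization lemma applies: averaging a local coordinate system over the cyclic group $\langle \psi \rangle$, i.e. replacing a chart $\phi$ by $\frac{1}{p}\sum_{k=0}^{p-1} (d\psi_x)^{-k} \circ \phi \circ \psi^k$, produces holomorphic coordinates $(z_1,z_2)$ centred at $x$ in which $\psi$ is linear and equal to $d\psi_x$. This new map is still a chart because its differential at $x$ equals that of $\phi$. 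Here we use that we are over $\complex$, so division by $p$ is allowed.

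Next, since $(d\psi_x)^p = \mathbb{1}$, the linear map $d\psi_x$ is diagonalizable with eigenvalues that are $p$-th roots of unity, so we may take $\psi(z_1,z_2) = (\zeta^a z_1, \zeta^b z_2)$. Then the fixed locus near $x$ is the eigenspace for the eigenvalue $1$. If $d\psi_x = \mathbb{1}$, then $\psi$ is the identity near $x$, hence on all of $X$ because $X$ is irreducible; this contradicts nontriviality, so this case is excluded. Consequently exactly one of the following holds: both eigenvalues differ from $1$ and $x$ is an isolated fixed point, or exactly one eigenvalue equals $1$ and $\Fix(\psi)$ is locally the smooth coordinate line $\{z_2 = 0\}$.

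Globally, $\Fix(\psi)$ is a closed algebraic subset, being the preimage of the diagonal under $(\mathbb{1},\psi)$. By the local description it is therefore a disjoint union of finitely many isolated points and smooth curves. Two curve components cannot meet, since any common point would be a point at which the fixed locus is not locally a smooth curve or a point.

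The only delicate step is the linearization; rationality is in fact not needed. I expect no real obstacle beyond checking that the averaged map is a $\psi$-equivariant chart. The prime order assumption is likewise unnecessary, since only finite order is used.
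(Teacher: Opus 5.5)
Your proof is correct and complete. The paper does not prove this lemma at all: it quotes it from Dolgachev--Zhang and uses it as a black box in Proposition~\ref{nifc}, so there is no internal argument to compare with.

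What you give is the standard equivariant-linearization proof:
\begin{itemize}
\item average a chart over $\langle\psi\rangle$ so that $\psi$ becomes linear near a fixed point $x$;
\item use $(d\psi_x)^p=\mathrm{id}$ to diagonalize $d\psi_x$;
\item read off $\mathrm{Fix}(\psi)$ locally as the $1$-eigenspace;
\item exclude the $2$-dimensional case, since $\psi$ would then be the identity on a Euclidean open set and hence on all of the irreducible surface $X$.
\end{itemize}

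Two points are only bookkeeping, not gaps. First, in your averaging formula the operator $(d\psi_x)^{-k}$ should be read as $A^{-k}$, where $A=d\phi_x\circ d\psi_x\circ (d\phi_x)^{-1}$ is the matrix of $d\psi_x$ in the chart $\phi$. Second, after checking that the averaged map $\tilde\phi$ has $d\tilde\phi_x=d\phi_x$, shrink again. Take a neighbourhood $W$ on which $\tilde\phi$ is a biholomorphism onto its image, and pass to the invariant set $\bigcap_k\psi^k(W)$. On that invariant chart the relation $\tilde\phi\circ\psi=A\circ\tilde\phi$ holds.

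Your route buys transparency and generality over the citation. It shows that rationality of $X$ and primality of the order play no role over $\complex$. The same argument gives smoothness of the fixed locus of any finite-order automorphism of any smooth complex variety, with components of possibly different dimensions. The only genuine input is that the order is invertible in the ground field. This is needed both for the averaging and for the diagonalizability of $d\psi_x$, and it is exactly where the argument would fail in characteristic dividing the order. You already flagged this point yourself.
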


\subsection{Classifying involutions}
We follow the definition given by Bayle-Beauville in \cite{Bayle2000} and Dolgachev - Zhang in \cite{ZhangDolgachev2001} :
\begin{definition}\label{pairs}
Consider the set of all possible pairs $(X, i)$, where $X$ is a smooth rational projective surface, and $i$ is a non-trivial involution.\\
An equivariant morphism $\phi: (X, i) \to (X', i')$ is a (regular) birational morphism $\phi : X \to X'$ such that $i' \circ \phi = \phi \circ i$.\\
We say that a pair $(X, i)$ is minimal if every equivariant morphism $(X, i) \to (X', i')$ to any other pair $(X', i')$ is an isomorphism.
\end{definition}

\begin{proposition}\label{minc}\cite{Bayle2000}
A pair $(X, i)$ is minimal if and only if for every smooth rational $(-1)$-curve $E \subset X$, both relations $$i (E) \ne E$$ and $$i(E) \cap E \ne \emptyset$$ are true.
\end{proposition}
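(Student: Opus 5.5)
We prove the two implications separately, in each case looking at whether some $(-1)$-curve is $i$-invariant or disjoint from its image; this is exactly what an equivariant contraction needs. Throughout we use Castelnuovo's contractibility theorem and the fact that a birational morphism of smooth surfaces factors as a sequence of contractions of $(-1)$-curves.

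\emph{If the pair is not minimal, some $(-1)$-curve violates the condition.} Let $\phi:(X,i)\to(X',i')$ be an equivariant birational morphism which is not an isomorphism.
\begin{itemize}
\item The exceptional locus $\mathrm{Exc}(\phi)$ is a nonempty union of curves, and it is $i$-stable because $\phi\circ i=i'\circ\phi$.
\item Factor $\phi$ into blow-downs of $(-1)$-curves. The curves contracted at the last step correspond to points $q\in X'$ whose total preimage contains an irreducible component $E$ with $E^2=-1$ (an "ending" curve of the tree over $q$).
\item The preimage $\phi^{-1}(q)$ is mapped by $i$ onto $\phi^{-1}(i'(q))$, so $i(E)$ is again a $(-1)$-curve in $\mathrm{Exc}(\phi)$.
\item If $i'(q)\neq q$, the fibres over $q$ and $i'(q)$ are disjoint, hence $E\cap i(E)=\emptyset$.
\item If $i'(q)=q$, $i$ permutes the $(-1)$-components of the connected tree $\phi^{-1}(q)$. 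Two distinct $(-1)$-curves inside one exceptional tree of a point blow-up are disjoint, because each is the last exceptional curve over a different point. So either $i(E)=E$ or $i(E)\cap E=\emptyset$.
\end{itemize}
In every case $E$ violates the condition in the statement.

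\emph{If some $(-1)$-curve violates the condition, the pair is not minimal.} Let $E$ be a $(-1)$-curve with $i(E)=E$ or $i(E)\cap E=\emptyset$.
\begin{itemize}
\item If $i(E)=E$, contract $E$ alone to $X'$. Since $E$ is $i$-invariant, $i$ descends to a biregular involution $i'$ of the smooth surface $X'$. This follows from the universal property of the blow-up, or from Zariski's main theorem applied to the birational self-map $\phi\circ i\circ\phi^{-1}$, which is regular in codimension one and has no indeterminacy because its inverse contracts nothing.
\item If $E\cap i(E)=\emptyset$, then $E$ and $i(E)$ are disjoint $(-1)$-curves forming an $i$-stable set. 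Contract both simultaneously; $i$ descends in the same way.
\end{itemize}
In both cases we obtain an equivariant morphism $(X,i)\to(X',i')$ that is not an isomorphism, so $(X,i)$ is not minimal. (Note $i'$ is still nontrivial, since $i$ is.)

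The main obstacle is the fixed-point case $i'(q)=q$ in the first implication. There one must check carefully that two distinct $(-1)$-curves in the same exceptional configuration over $q$ cannot meet. This holds because their intersection would give a curve $D=E+E'$ with $D^2\geq 0$ contracted to a point, which contradicts negative definiteness of the intersection form on the exceptional lattice.
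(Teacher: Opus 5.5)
Your proof is correct. The paper gives no argument of its own and only cites Bayle--Beauville, and yours is the standard one: equivariantly contract $E$ or $E\cup i(E)$ for one implication; for the other, note that $E$ and $i(E)$ are both contracted by $\phi$, so if they are distinct then $(E+i(E))^2=-2+2E\cdot i(E)<0$, which forces $E\cap i(E)=\emptyset$.

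This negative-definiteness step already covers the case $i'(q)\neq q$, so your case split is harmless but unnecessary. The descent of $i$ is also better justified by your Zariski main theorem argument (or by rigidity of morphisms constant on a contracted curve) than by the universal property of the blow-up, which concerns maps into a blow-up rather than out of it.
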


\begin{lemma}\label{minimalpairs}\cite{Bayle2000}
Assume $(X, i)$ is a minimal pair. Then one of the following cases is necessarily true:\\
i) There exists a smooth $\mathbb{P}^1$-fibration $f : X \to \mathbb{P}^1$, stable under $i$. In other words, there exists a non-trivial involution $\tau$ on $\mathbb{P}^1$, such that: $$\tau \circ f = f \circ i.$$\\
ii) There exist a conic fibration $f : X \to \mathbb{P}^1$ such that $$f \circ i = f$$ In other words, $i$ preserves the fibers of $f$. The smooth fibers are rational curves. Each singular fiber is a linear chain of two $(-1)$-curves attached at one point, and exchanged by $i$. The fixed locus $\Fix (i)$ is smooth and it has pure dimension $1$, and it is a bisection of $f$, ramified exactly at the singular points of the singular fibers. Moreover, $\Fix (i)$ splits as a union of two disjoint sections of $f$ if and only if $f$ has no singular fibers.\\
iii) $X = \mathbb{P}^2$, and $i$ is a projective linear involution.\\
iv) $X = \mathbb{P}^1 \times \mathbb{P}^1$, and $i (x, y) = (y, x)$ is the involution switching the two factors.\\
v) $X$ is a Del Pezzo surface of degree $2$, and $i$ is the Geiser involution.\\
vi) $X$ is a Del Pezzo surface of degree $1$, and $i$ is the Bertini involution.\\
Viceversa, a pair $(X, i)$ built as in case $i), \ldots, vi)$ is actually minimal, except the following ones:\\
Case i), when $X = \mathbb{F}_1$ is the first Hirzebruch surface, or\\
Case ii), when $X = \mathbb{F}_1$, or $X = Bl_{p_1, p_2, p_3} \mathbb{P}^2$, with $p_i$'s non collinear points, and $i$ is the De - Jonquieres involution of degree $2$ (the quadro-quadric involution centered at $p_1, p_2, p_3$).
\end{lemma}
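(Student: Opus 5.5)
The statement is the Bayle--Beauville classification of minimal pairs $(X,i)$. I would prove it by running an equivariant minimal model program. The reason for the detour is that $X$ may contain $(-1)$-curves, yet by Proposition \ref{minc} none of them can be contracted equivariantly. So the strategy is to pass to the quotient, or to the $i$-invariant part of the Picard group, where a $G$-equivariant version of Mori theory applies, with $G=\{1,i\}$.

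Concretely, I would work in $\Pic(X)^i\otimes\mathbb{R}$ and its cone of $i$-invariant curves. Because $X$ is rational, $K_X$ is not nef, so there is a $K_X$-negative extremal ray $R$ of the invariant cone $\overline{NE}(X)^i$. The cone theorem for $K_X$, averaged over $G$, shows that $R$ is spanned by the class of an invariant effective curve of the form $E+i(E)$ or $E$, where $E$ is a rational curve with $K_X E<0$.

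There are then three cases, according to the dimension of the contraction of $R$.
\begin{itemize}
\item If the contraction is birational, $R$ is spanned by a $(-1)$-curve $E$ with either $i(E)=E$, or $i(E)\ne E$ and $E\cap i(E)=\emptyset$. Contracting it gives a nontrivial equivariant morphism, which contradicts minimality via Proposition \ref{minc}. So this case does not occur.
\item If the contraction maps to a curve, $\rho(X)^i=2$ and we get an $i$-equivariant conic bundle $f:X\to\mathbb{P}^1$. Either $i$ acts nontrivially on the base, which gives case i); or $i$ acts trivially on the base, which gives case ii). In the first subcase the fibres must be smooth: a singular fibre has two components, both $(-1)$-curves meeting at a point, and its image under $i$ is another fibre; I would check that the relative invariant Picard rank would then exceed $1$. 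In the second subcase, minimality of the extremal ray forces $i$ to swap the two components $E,E'$ of each singular fibre, since these are $(-1)$-curves that intersect. The description of $\Fix(i)$ then follows. By Lemma \ref{Smoothfixedlocus} it is smooth. On a general fibre $\mathbb{P}^1$, $i$ fixes exactly 2 points, so $\Fix(i)$ is a bisection. On a singular fibre the only fixed point is the node, which is exactly where the bisection ramifies. Hence $\Fix(i)$ splits into two sections if and only if there are no singular fibres.
\item If the contraction is to a point, $\rho(X)^i=1$ and $-K_X$ is ample, so $X$ is a del Pezzo surface with $\Pic(X)^i=\mathbb{Z}\cdot(\text{a multiple of }K_X)$. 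I would then go through degrees $d=K_X^2$. For $d=9$ we get case iii). For $d=8$, $\mathbb{F}_1$ is excluded because its $(-1)$-curve is invariant, and $\mathbb{P}^1\times\mathbb{P}^1$ with invariant rank one forces $i$ to swap the rulings, which is case iv). For $3\le d\le 7$ and for $d=2,1$, I would use lattice theory. The involution acts on $K_X^\perp$, a root lattice of type $E_{9-d}$, as an element of the Weyl group with no nonzero fixed vector, so $i=-1$ on $K_X^\perp$. The element $-1$ lies in $W(E_{9-d})$ only for $E_7$ and $E_8$, that is for $d=2,1$, and there it is realized by the Geiser and Bertini involutions respectively. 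This gives v) and vi) and excludes the remaining degrees. The degree-6 case needs care, because $-1$ composed with a diagram automorphism could act; I would exclude it by exhibiting an invariant $(-1)$-curve or a disjoint invariant pair of $(-1)$-curves.
\end{itemize}

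For the converse, I would check each model against Proposition \ref{minc}. In the listed exceptions the pair is not minimal, and I would show this by exhibiting the contraction. On $\mathbb{F}_1$ the negative section is invariant. On the quadro-quadric example, the three exceptional curves are exchanged with the three line transforms, and each pair $E_j$, $L_{kl}$ is disjoint; contracting one triple gives an equivariant morphism.

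The main obstacle is the equivariant cone theorem step, namely showing that invariant extremal rays are spanned by $(-1)$-curves or by disjoint orbits of them. After that, the second most delicate point is the del Pezzo lattice analysis in degrees 3 through 6.
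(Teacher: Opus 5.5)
The paper gives no proof of this lemma; it only quotes Bayle--Beauville. Your skeleton is the standard route to that result: the equivariant MMP gives either a $G$-conic bundle with $\rho^G=2$ or a del Pezzo surface with $\rho^G=1$, followed by a case analysis. Two steps, however, have real gaps.

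\textbf{Case i): the fibres over the fixed points of $\tau$.} You say a singular fibre is sent by $i$ to \emph{another} fibre, so that $\rho^G$ would exceed $2$. This only handles fibres over points moved by $\tau$. Over the two fixed points of $\tau$, the singular fibre $A+B$ is $i$-stable. If $i$ exchanges $A$ and $B$, then $A+i(A)$ is just the fibre class, so $\rho^G$ stays $2$. Also $A$ and $i(A)$ meet, so Proposition \ref{minc} is not violated either. You therefore need a local argument at the node $p=A\cap B$:
\begin{itemize}
\item An involution of $T_pX$ exchanging the two branch directions has the form $(x,y)\mapsto(ay,a^{-1}x)$, so it has eigenvalues $\pm1$.
\item Hence $p$ lies on a smooth curve $\Gamma$ of fixed points, tangent to $(a,1)$ and thus transverse to $A$ and $B$.
\item Since $f\circ i=\tau\circ f$, the curve $f(\Gamma)$ is pointwise fixed by $\tau$, so it is a point. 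Then $\Gamma\subset A\cup B$, contradicting transversality.
\end{itemize}
Equivalently: write $s\circ f=u\,xy$ with $\tau^*s=-s$; then $di_p$ must negate the Hessian $xy$, whereas the swap preserves it. So $i$ preserves $A$, which is then an invariant $(-1)$-curve. Only after this step is case i) a $\mathbb{P}^1$-bundle.

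\textbf{The converse for case ii).} You only show that the two listed exceptions are non-minimal. The substantive claim is that every \emph{other} type ii) pair is minimal. ``Check against Proposition \ref{minc}'' is not an argument here, since these surfaces can carry $(-1)$-curves that are not fibre components. One way to close it:
\begin{itemize}
\item $\mathrm{Fix}(i)$ is a smooth curve with no isolated points, so $X/i\cong\mathbb{F}_n$. Thus $q\colon X\to\mathbb{F}_n$ is a double cover branched along a smooth bisection.
\item A non-invariant $(-1)$-curve $E$ has $E\cdot i(E)=q(E)^2+1$. This vanishes only if $n=1$, $q(E)=C_{-1}$, and $C_{-1}$ misses the branch curve.
\item An invariant $E\not\subset\mathrm{Fix}(i)$ would have $E^2=2q(E)^2$, which is even, so it cannot be a $(-1)$-curve.
\item Treating the remaining case $E\subset\mathrm{Fix}(i)$ leaves exactly $\mathbb{F}_1$ and the degree $6$ del Pezzo surface.
\end{itemize}
Your description of the degree $6$ exception is also wrong. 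The triple $E_1,E_2,E_3$ is not $i$-stable, so contracting it is not equivariant. The orbits you describe (all pairs $E_j,L_{kl}$ disjoint) are those of the standard Cremona involution, which acts nontrivially on the base. For the de Jonqui\`eres involution preserving the lines through $p_1$, the orbits are:
\begin{itemize}
\item $\{E_2,L_{12}\}$ and $\{E_3,L_{13}\}$, which are meeting pairs (the singular fibres);
\item $\{E_1,L_{23}\}$, which is disjoint.
\end{itemize}
Contracting $\{E_1,L_{23}\}$ is the equivariant morphism to $\mathbb{P}^1\times\mathbb{P}^1$.

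\textbf{Minor points in the del Pezzo case.}
\begin{itemize}
\item For $d=7$, $K_X^\perp$ is not a root lattice of type $E_2$. This is harmless, since the middle $(-1)$-curve of $Bl_2\mathbb{P}^2$ is always invariant.
\item To conclude that $i$ \emph{is} the Geiser or Bertini involution, you also need that $\mathrm{Aut}(X)\to O(\mathrm{Pic}\,X)$ is faithful for $d\le 2$.
\item The Weyl-group argument, including your degree $6$ worry, can be replaced by one line. If $\mathrm{Pic}(X)^i\otimes\mathbb{Q}=\mathbb{Q}K_X$, then $i^*x=-x+\frac{2(x\cdot K_X)}{K_X^2}K_X$. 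Applied to a $(-1)$-curve with $K_X$ primitive, integrality forces $K_X^2\mid 2$.
\end{itemize}
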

Lemma \ref{minimalpairs} allows us to exclude any involution $i \in \Aut  (X)$ to satisfy $i|_C = C$, if $C$ is irreducible:
\begin{proposition}\label{nifc}
On a Coble surface $X$ with irreducible curve $C \in |- 2 K_X|$ there is no involution $i$ such that $i|_C = \mathbb{1}_C$. 
\end{proposition}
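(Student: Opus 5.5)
Assume, for contradiction, that $i$ is a nontrivial involution of $X$ with $i|_C = \mathbb{1}_C$. The plan, as announced in the Introduction, has three steps. First, show that $(X,i)$ is not a minimal pair by running through the list of Lemma \ref{minimalpairs}. Second, contract $i$-invariant $(-1)$-curves. Third, get a contradiction from Lemma \ref{Smoothfixedlocus} together with the local structure of the fixed locus near $C$.

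\textbf{Step 1: $(X,i)$ is not minimal.} Since $C$ is irreducible, $X$ is the blow-up of $\mathbb{P}^2$ at $10$ points, so $K_X^2=-1$ and $\rho(X)=11$.
\begin{itemize}
\item Cases iii)--vi) of Lemma \ref{minimalpairs} are excluded at once: there $-K_X$ is effective, or $K_X^2>0$.
\item Case i) forces $X=\mathbb{F}_m$, and case ii) with no singular fibres forces $K_X^2=8$; both are impossible.
\item Case ii) with $k$ singular fibres gives $K_X^2=8-k$, so $k=9$. The fixed locus is a smooth bisection $B$ ramified exactly at the $9$ nodes of the singular fibres. Since $C\subset \Fix(i)$ and $\Fix(i)$ is smooth, $C$ is a component of $B$. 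As $C\cong\mathbb{P}^1$ is connected and $B$ has pure dimension $1$, either $B=C$ or $B$ splits.
\item If $B=C$, Riemann--Hurwitz for $C\to\mathbb{P}^1$ of degree $2$ gives only $2$ branch points, not $9$.
\item If $B$ splits into two disjoint sections, then $f$ has no singular fibres, contradicting $k=9$.
\end{itemize}
So in all cases $(X,i)$ is not minimal.

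\textbf{Step 2: contracting invariant curves.} By Proposition \ref{minc} there is a $(-1)$-curve $E$ with $i(E)=E$, or with $i(E)\cap E=\emptyset$.
\begin{itemize}
\item Suppose $i(E)=E$. We have $EC = -2K_XE = 2$. Since $i$ fixes $C$ pointwise and preserves $E$, the involution $i|_E$ fixes the two points $E\cap C$ (or the single point of tangency, counted twice). Look at the tangent action at $p\in E\cap C$. It is $\mathrm{diag}(1,-1)$, because $\Fix(i)$ is a smooth curve through $p$, namely $C$. If $E$ is transverse to $C$, then $T_pE$ is the $-1$-eigenline. Then $i|_E$ is a nontrivial involution of $E\cong\mathbb{P}^1$, with exactly two fixed points, which must be $E\cap C$.
\item In this case, contract $E$ via Castelnuovo's Theorem to $\pi:X\to X'$. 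Then $i$ descends to $i'$, and $\pi(C)=C'$ acquires a node at $\pi(E)$, since $EC=2$ with transverse intersection. But $C'\subset\Fix(i')$, which is smooth by Lemma \ref{Smoothfixedlocus}. This is a contradiction.
\item If instead $E$ is tangent to $C$ at one point $p$, then $T_pE=T_pC$ lies in the fixed eigenline. Then $i|_E$ has a fixed point whose tangent action is trivial, so $i|_E=\mathbb{1}$. Hence $E\subset\Fix(i)$ meets $C$, contradicting the disjointness of components in Lemma \ref{Smoothfixedlocus}.
\item Suppose $i(E)\cap E=\emptyset$ and $i(E)\neq E$. Contract both curves equivariantly to get $(X',i')$. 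The image $C'$ is still pointwise fixed, and $C'^2 = -4 + 2+2 = 0$. Keep contracting invariant pairs. Each time the image of $C$ gains a node, which contradicts smoothness of the fixed locus; or else its self-intersection increases. Iterate until one reaches a minimal pair $(Y,j)$ with the image $\bar C$ of $C$ pointwise fixed, smooth and rational. For such $(Y,j)$, go through the list of Lemma \ref{minimalpairs} again. The point is that a pointwise-fixed smooth rational curve in a minimal pair occurs only in cases i)--iv) or ii) with no singular fibres. One then bounds $\bar C^2$ against $-2K_X \cdot$ (pulled-back classes) to derive a contradiction.
\end{itemize}

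\textbf{Main obstacle.} I expect the bookkeeping in this final reduction to be the hardest part: controlling how $C$ transforms under successive contractions of disjoint pairs $E,i(E)$, which may meet $C$ in ways that create singularities. The approach I would try first is to show that any such $E$ must satisfy $E\cap C=\emptyset$, since otherwise $E\cap i(E)\ni E\cap C$. Hence the contractions never touch $C$, and $C'^2=-4$ persists. A rational minimal pair never has a pointwise-fixed $(-4)$-curve. This follows by checking the six cases of Lemma \ref{minimalpairs}: the fixed curves there are lines, conics, diagonals, sections of self-intersection $\ge -1$ in the conic-bundle case, and the genus $3$ or $4$ branch curves in cases v)--vi). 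This gives the contradiction.
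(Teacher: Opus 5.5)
\textbf{Gap in the case $i(E)\cap E=\emptyset$.} Your Step 1 is correct: it excludes every minimal case, using $K_X^2=8-k$ and Riemann--Hurwitz in case ii). The invariant case $i(E)=E$ of Step 2 is also correct, and you treat the tangency sub-case more carefully than the paper does. The case $i(E)\cap E=\emptyset$, however, has a genuine gap, and the reduction you sketch for it fails at several points.
\begin{itemize}
\item The computation $C'^2=-4+2+2$ is wrong. Contracting a $(-1)$-curve that meets $C$ with multiplicity $2$ raises $C^2$ by $4$, not by $2$.
\item The claim you plan to finish with is false: a minimal rational pair can fix a $(-4)$-curve pointwise. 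On $\mathbb{F}_4=\mathbb{P}({\cal O}\oplus{\cal O}(-4))$, take the fibrewise involution acting by $-1$ on one summand. This is a minimal pair of type ii) with no singular fibres, and it fixes pointwise $C_{-4}$ together with a disjoint section in $|C_{-4}+4F|$. So your list of possible fixed curves in the conic-bundle case is wrong.
\item The route of showing $E\cap C=\emptyset$ cannot work either. You computed yourself that $E\cdot C=-2K_X\cdot E=2$ for every $(-1)$-curve.
\end{itemize}

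\textbf{The missing one-line observation.} That same computation makes the whole case vacuous. Every $(-1)$-curve $E$ meets $C$. Since $i$ fixes $C$ pointwise, $E\cap C\subset E\cap i(E)$, so $i(E)\cap E\neq\emptyset$ always. There is no iteration to control, and the obstacle you anticipated does not exist.

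\textbf{Comparison with the paper.} With this observation added, your proof is complete. It is essentially the paper's argument run in the opposite order. The paper first combines the same two facts to show that $(X,i)$ is minimal:
\begin{itemize}
\item an invariant $E$ would put a singular point on the image of $C$ inside a smooth fixed locus;
\item a non-invariant $E$ meets $i(E)$ along $E\cap C$.
\end{itemize}
It then rules out the minimal models, leaving only case ii). There $F\cdot C=-2K_X\cdot F=4$ contradicts $C$ lying in the bisection $\mathrm{Fix}(i)$. Your exclusion of case ii) via $k=9$ singular fibres and Riemann--Hurwitz is an equally valid alternative to that intersection count.
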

{\it Proof:} Assume such an involution $i$ actually exists: we first show that the pair $(X, i)$ must necessarily be minimal, using Proposition \ref{minc} and Lemma \ref{Smoothfixedlocus}.\\
Indeed, if $E \subset X$ is any $(-1)$-curve such that $i(E) = E$, then we can execute the blow down $\pi: X \to X'$, which comes together with an involution $i' : X' \to X'$. But the relation $$E K_X = -1$$ implies $$E C = 2,$$ so the curve $\pi (C)$ is singular and lies inside the fixed locus of $i'$, a contradiction. This forces $i(E) \ne E$, and the relation $i(E) \cap E \ne \emptyset$ is obvious, since $E, i(E)$ at least share the two points in $E \cap C$.\\
So the pair $(X, i)$ is minimal, and hence we lie inside one of the cases $i), \ldots, vi)$ of Lemma \ref{minimalpairs}. Of course $X$ is not isomorphic to $\mathbb{P}^2, \mathbb{P}^1 \times \mathbb{P}^1$, nor to a Del Pezzo or Hirzebruch surface, so the unique admissible case is ii). Thus we find a fibration $f : X \to \mathbb{P}^1$, whose fibers are rational curves preserved by $i$.\\ Let $F \in \Pic(X)$ be the class of a fiber: the rationality of $F$ gives $$F K_X = -2$$ and hence $$F C = 4,$$ so $C$ is a $4$-section of $f$. This is a contradiction, since Lemma \ref{minimalpairs} states that $\Fix(i)$ must be a bi-section. 
The contradiction follows from the initial assumption on the existence of such an involution $i$. $\square$\\
\\
We saw in Remark \ref{bertsempre} that on a Coble surface $X$ with irreducible boundary, it is always possible to construct involutions whose minimal model is the Bertini involution.
Actually, we can show more:
\begin{theorem}\label{solobert}
On an unnodal Coble surface $X$ with irreducible Coble curve $C$, any involution is the lift of a Bertini involution.
\end{theorem}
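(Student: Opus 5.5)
Let $i$ be an involution of $X$. The plan is to contract $i$-invariant $(-1)$-curves equivariantly until we reach a minimal pair $(Y,i_Y)$, as in Definition \ref{pairs}. We then go through the list of Lemma \ref{minimalpairs} and show that only case vi), a Del Pezzo surface of degree $1$ with its Bertini involution, can occur.

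The first step is to control the contractions. By Proposition \ref{minc}, a non-minimal pair contains either a $(-1)$-curve $E$ with $i(E)=E$, or a pair of disjoint $(-1)$-curves $E, i(E)$. Each such curve satisfies $EC=2$, so every contraction makes the image of $C$ acquire singularities. I would keep track of the image $\overline C_Y$ of $C$, which lies in $|-2K_Y|$ and has $K_Y^2=-1+m$, where $m$ is the number of contracted curves. Since $X$ is unnodal and $C$ is the only curve with negative self-intersection other than $(-1)$-curves (Proposition \ref{negativecurves}), the intermediate surfaces stay blow-ups of $\mathbb{P}^2$ at points in general enough position. Their $(-1)$-chains all have length one, so Proposition \ref{extmoc} applies: every set of at most $8$ disjoint $(-1)$-curves extends to a set of $10$ curves contracting to $\mathbb{P}^2$. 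This lets me describe $Y$ as a blow-up of $\mathbb{P}^2$ at $10-m$ points, and $\overline C_Y$ as a plane sextic with nodes there.

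The second step is the case analysis on the minimal pair. The cases are eliminated one at a time.

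\begin{itemize}
\item Cases iii) and iv): $X$ is recovered from $Y$ by blowing up an $i_Y$-stable set of $m=10$ or $m=9$ points, which are nodes of an $i_Y$-invariant sextic (respectively a $(4,4)$-curve). A linear involution of $\mathbb{P}^2$, or the factor swap on $\mathbb{P}^1\times\mathbb{P}^1$, fixes a line or a point, respectively the diagonal. A dimension count on invariant rational sextics with invariant node sets should show that some invariant line, conic or fiber through two or more of the nodes has strict transform with square $\le -2$. This contradicts unnodality; recall that the sharpness example after Proposition \ref{extmoc} shows that fibers through two nodes give $(-2)$-curves.
\item Case v), Geiser: $Y$ would be a degree $2$ Del Pezzo surface with an $i_Y$-stable set of $3$ nodes of a curve in $|-2K_Y|$. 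Since $-K_Y$ is the pullback of a line under the double cover, the Geiser involution acts on $\Pic$ as $-1$ on $K_Y^\perp$. Invariant points lie on the branch quartic, and their blow-ups should force either $|-K_X|\neq\emptyset$ or the appearance of a $(-2)$-curve of the form $-K_Y-E_a-E_b$ pulled back.
\item Cases i) and ii), ruled: I would use the quintic model of Section \ref{qcs} and the intersection numbers $FC=4$ for a fiber class $F$, as in the proof of Proposition \ref{nifc}. An $i$-stable ruling on $X$ has singular fibers which are chains; unnodality forces every singular fiber to be exactly two $(-1)$-curves. Counting via $K_X^2=-1=8-\#\{\text{singular fibers}\}$ gives $9$ such fibers. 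The fixed locus would then be a bisection, as in Proposition \ref{nifc}. I would then show that $C$, a $4$-section with $C^2=-4$, cannot be invariant without meeting the fixed bisection in a way that contradicts Lemma \ref{Smoothfixedlocus}, or that two $(-1)$-curves in one fiber together with $C$ produce a $(-2)$-class.
\end{itemize}

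What remains is case vi). Here $m=2$ and $Y$ is the blow-up of $8$ of the nodes, so $i$ is the lift of the Bertini involution described in Proposition \ref{bertsempre}.

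I expect the main obstacle to be the ruled cases i) and ii), and to a lesser extent the Geiser case. There the contradiction is not purely numerical, because the minimal model can be reached through several equivariant contractions. My first attempt would be to bound $m$ via $K_Y^2=m-1$ against the allowed degrees, and then to exhibit an explicit effective class of square $-2$ built from fibers and exceptional curves.
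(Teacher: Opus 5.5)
Your skeleton matches the paper's: contract equivariantly to a minimal pair, go through Lemma \ref{minimalpairs}, and keep only case vi). The gap is that none of the eliminations i)--v) is actually carried out, and where you commit to a mechanism it does not work.

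\begin{itemize}
\item In case v), the class $-K_Y-E_a-E_b$ has square $2-1-1=0$, so it is not a $(-2)$-class.
\item In cases iii) and iv), a ``dimension count on invariant sextics'' cannot rule out special configurations. You need an argument that holds for every invariant node set.
\item The ruled cases i) and ii) have no argument at all. In case i) the involution acts nontrivially on the base of the ruling, so there is no fixed bisection. In case ii) the conic bundle and its fixed bisection live on the minimal model $Y$; on $X$ the fixed locus also has isolated points on the exceptional curves.
\item The contradiction in Proposition \ref{nifc} came from $C\subset\mathrm{Fix}(i)$. That fails here: $C$ is only invariant, and $i|_C$ is a nontrivial involution by Proposition \ref{nifc} itself. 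An invariant $4$-section meeting a fixed curve does not by itself conflict with Lemma \ref{Smoothfixedlocus}.
\end{itemize}

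The ingredients you are missing are Remarks \ref{noc} and \ref{nontre}.

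\begin{itemize}
\item By Remark \ref{noc}, every curve contracted by $p\colon X\to Y$ is a $(-1)$-curve meeting $C$ twice. So $p(C)$ has exactly $m=K_Y^2+1$ nodes or cusps, split into $s$ pairs swapped by $i_Y$ and $t$ fixed ones, with $2s+t=m$.
\item Remark \ref{nontre} is where the quintic model $|C+E_1+E_2+E_3|$ enters. It says $i$ is the identity on one of any three disjoint $i$-stable $(-1)$-curves. The exceptional curve over a point of a fixed curve carries a nontrivial action, so this gives $t\le 2$ when the fixed nodes lie on a fixed curve.
\end{itemize}

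The paper then bounds $s$ using the arithmetic genus of the quotient curve.

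\begin{itemize}
\item For the swap on $\mathbb{P}^1\times\mathbb{P}^1$, the quotient of $p(C)$ is a plane quartic. This gives $s\le 3$ and $t\le 4$, forcing $s=t=3$, which contradicts Remark \ref{nontre} (Proposition \ref{nopunodue}).
\item In case ii), $\mathbb{F}_n$ with $n>0$ is first excluded via $C_{-n}$. The branch curve then lies in $|aF_1+2F_2|$ with $a\in\{0,2,4\}$. For $a=0,2$ the quotient curve has genus $3$ or $1$, too small for the $4$ or $2$ nodes required. For $a=4$ one gets $K_Y^2=0$ and $|-K_Y|$ moves through the blown-up point, so $|-K_X|\neq\emptyset$ (Proposition \ref{dueno}).
\item In case i) with $n=0$ and $i'$ nontrivial on both factors, nine nodes are permuted by an involution with four isolated fixed points. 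So one node is fixed, and blowing it up and contracting the two fibres through it moves you to case iii). Case iii) in turn moves to case iv) through a swapped pair of nodes.
\end{itemize}

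Your $(-2)$-curve instinct can be made rigorous in iii)--v).

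\begin{itemize}
\item In v), a swapped pair plus a fixed point would give $|-K_X|\neq\emptyset$, so all three points lie on the ramification curve. The tangent line to the branch quartic at the image of $p_a$ pulls back to a member of $|-K_Y|$ singular at $p_a$. This gives an effective class $-K_Y-2E_a$ of square $-2$, whose irreducible components have square $\le -2$, contradicting unnodality.
\item In iv), the diagonal gives $t\le 3$. A symmetric $(1,1)$-curve through two swapped pairs then has strict transform, or components, of square $\le -2$.
\end{itemize}

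As written, however, the proposal does not prove the theorem, and for cases i) and ii) it lacks the key idea.
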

Before starting the proof, we point out that the vice versa is false: you can build Coble surfaces with reducible boundary, equipped with involutions which are lifts of a Bertini, as in the following example.
\begin{example}
Consider the construction of Remark \ref{occhio}, where we considered a rational sextic $\overline C \subset \mathbb{P}^2$ with $7$ nodes at points $p_1, \ldots, p_7$, and a triple point $p_8$. The surface $X' = Bl_8 \mathbb{P}^2$ of these eight points is a Del Pezzo surface, equipped with the Bertini involution $i' : X' \to X'$, which switches the strict transform $D \subset X'$ of $\overline C$ with the exceptional divisor $E_8$. When we blow - up the three intersection points in $D \cap E_8$, we get a Coble surface $X$ with $2$ boundary components, still equipped with an involution $i : X \to X$ induced by $i'$.
\end{example}
The following construction is remarkable, since it provides a Coble surface with $2$ boundary components, both preserved by a lift of the Bertini involution:
\begin{example}
Let $X' := Bl_{p_1, \ldots, p_8} \mathbb{P}^2$ be a Del Pezzo surface of degree $1$, with its Bertini involution $i' : X' \to X'$. Consider the anti canonical pencil $|- K_{X'}|$: we know that it consists of curves of genus $1$, each preserved by $i'$. Moreover, $|- K_{X'}|$ has a base point $p_9$, which is an isolated fixed point for $i'$. Let $\overline C_1, \overline C_2 \in |- K_{X'}|$ be two irreducible singular members, with nodes at two points $q_1, q_2$. Then both $q_1, q_2$ must be fixed by $i'$, so we can blow - up again $X := Bl_{p_9, q_1, q_2} X'$ to obtain a Coble surface $X$. This is a Coble surface, since its anti - bicanonical class contains the divisor $C_1 + C_2$, where $C_i \subset X$ is the strict transform of $\overline C_i$. Moreover, by construction, $X$ is equipped with a lifted Bertini involution which preserves both $C_1, C_2$.
\end{example}
We refer to \cite{Lesieutre2021} for the construction of a Coble surface with three boundary components, equipped with a Bertini involution.\\
We also note that, on a given unnodal Coble surface $X$ with irreducible Coble curve $C$, there is not only ``one'' lift of a Bertini involution: indeed, given any two disjoint $(-1)$-curves $E_1, E_2 \subset X$, we can think of them as the start of a sequence $E_1, \ldots, E_{10}$ of disjoint $(-1)$-curves, whose blow - down is $\mathbb{P}^2$, thanks to Proposition \ref{extmoc}. But then, the blow - down $p : X \to X'$ of just $E_1, E_2$ is a Del Pezzo surface $X'$ of degree $1$, and this is equipped with its Bertini involution $i' : X' \to X'$. Moreover, the base points $p (E_1), p(E_2) \in X'$ are nodes for the curve $p(C)$, because $C E_1 = C E_2 = 2$, and these two points are fixed by $i'$ by Remark \ref{bertsempre}. But then $i'$ lifts to a non - minimal involution $i : X \to X$, which preserves both $E_1, E_2$. Hence any pair of disjoint $(-1)$-curves on $X$ determines a different lift of a Bertini involution.\\
The proof of Theorem \ref{solobert} needs the following trick, which we will use repeatedly in the following:
\begin{remark}\label{nontre}
We showed in Proposition \ref{extmoc} that, if $X$ has irreducible anti - bicanonical curve $C \in |-2 K_X|$ and it does not contain $(-2)$-curves, then any set of three disjoint $(-1)$-curves $E_1, E_2, E_3 \subset X$ can be extended to a maximal sequence $E_1, \ldots, E_{10}$, such that their contraction transforms $X$ in $\mathbb{P}^2$. In this case the class of $C$ becomes $C = 6 L - 2 E_1 - \cdots - 2 E_{10}$. Now look at the system $$H := C + E_1 + E_2 + E_3 = 6 L - E_1 - E_2 - E_3 - 2 E_4 - \cdots - 2 E_{10}.$$
We saw in Subsection \ref{qcs} that $|H|$ induces a birational map on  a quintic surface in $\mathbb{P}^3$, and we described its singularities. Note then that an involution $i$ on $X$ which preserves each of $E_1, E_2, E_3$ must act as the identity on one of them. Indeed, $i$ preserves the linear system $H = C + E_1 + E_2 + E_3$, so it induces a commutative diagram:
\[\begin{tikzcd}
X  \arrow{d}{{|H|}} \arrow[swap]{r}{i} & X \arrow{d}{{|H|}}
 \\\mathbb{P}^3 \arrow{r}{I} & \mathbb{P}^3
\end{tikzcd}
\]
The plane $\Pi \subset \mathbb{P}^3$ corresponding to the section $C + E_1 + E_2 + E_3$ is preserved by $I$ by construction. Let $L_i$ be the image of $E_i$ inside $\Pi$. Since $i$ preserves all the $E_i$'s, the involution $I$ must preserve all the $L_i$. But then $I$ fixes the three intersection points $L_i \cap L_j$, and hence one of the $L_i$, say $L_1$, must be a fixed line.
\end{remark}
We point out that we used the hypothesis that $X$ is unnodal in Remark \ref{nontre}, and we will show later in a couter - example that it can not be dropped.\\
This Remark will be used to exclude case $v)$ of Lemma \ref{minimalpairs}.
\begin{corollary}\label{nongei}
Given a degree $2$ Del Pezzo surface $X'$ equipped with its Geiser involution $i' : X' \to X'$, and an unnodal Coble surface $X$ with irreducible boundary, it is impossible to build a commutative diagram:
\[\begin{tikzcd}
X  \arrow{r}{i} \arrow[swap]{d}& X \arrow{d}
\\ X' \arrow{r}{i'} & X' 
\end{tikzcd}
\]
\end{corollary}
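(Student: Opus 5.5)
Assume for contradiction that such a diagram exists, with $\pi : X \to X'$ the vertical equivariant birational morphism. Since $K_{X'}^2 = 2$ and $K_X^2 = -1$, the map $\pi$ is a composition of exactly $3$ blow-downs; because $\pi$ is equivariant, its exceptional locus is $i$-stable. The first step is to show that, when $X$ is unnodal with irreducible $C$, this exceptional locus consists of three disjoint $(-1)$-curves $E_1, E_2, E_3 \subset X$, each preserved by $i$. No infinitely near blow-ups can occur, since they would create curves of self-intersection $\le -2$ other than $C$, and $X$ has none by Proposition \ref{negativecurves} and unnodality. The exceptional curves are therefore disjoint $(-1)$-curves permuted by $i$. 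Orbits have size $1$ or $2$, so at least one $E_j$ is $i$-invariant. I then exclude orbits of size $2$. If $i$ swaps $E_1$ and $E_2$, the points $\pi(E_1), \pi(E_2)$ are swapped by the Geiser involution $i'$. Moreover $C E_j = 2$ for each $j$, so each base point is a node of $\pi(C) \in |-2K_{X'}|$ with respect to the class $-2K_{X'} = 6L - 2E_1 - \cdots - 2E_7$. I would try to show that nodes of an $i'$-stable member of $|-2K_{X'}|$ must be fixed by $i'$, arguing as in Proposition \ref{bertsempre} with an invariant member of $|-K_{X'}|$. If this fails, I would at least reduce to the configuration with all three $E_j$ invariant, which is the configuration Remark \ref{nontre} needs.

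With $E_1, E_2, E_3$ disjoint and $i$-invariant, Remark \ref{nontre} applies: by Proposition \ref{extmoc} they extend to $E_1, \ldots, E_{10}$, and the quintic model $|C + E_1 + E_2 + E_3|$ shows that $i$ fixes one of them pointwise, say $E_1$. Then $\pi(E_1) = x \in X'$ is a fixed point of $i'$ at which $di'_x$ acts as the identity on all tangent directions, because $E_1 = \mathbb{P}(T_x X')$ is fixed pointwise. Hence $di'_x = \pm \mathrm{id}$ acts trivially on $\mathbb{P}(T_xX')$. The fixed locus of the Geiser involution is the smooth ramification curve $R$, the genus $3$ preimage of the branch quartic, and it has no isolated fixed points. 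So $x \in R$, and there $di'_x$ has eigenvalues $1$ and $-1$; it is a reflection, not scalar. Its projectivization therefore fixes only two directions, contradicting the pointwise fixing of $E_1$.

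The main obstacle is the first step: ruling out $i$ swapping two of the exceptional curves, and making sure the blown-up points of $X'$ are genuinely distinct and not infinitely near. For the latter, unnodality is the key tool. For the swapped case I would also try the intersection argument of Proposition \ref{nifc}: $E_1$ and $i(E_1) = E_2$ are disjoint, but both meet $C$ at two points, and $i$ preserves $C$. A lattice argument along these lines, in the spirit of Proposition \ref{ellpenc}, might produce the required contradiction.
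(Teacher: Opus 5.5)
You handle the configuration where all three exceptional curves are $i$-invariant correctly. You also spell out the contradiction the paper leaves implicit. Remark \ref{nontre} makes $i$ fix some $E_j$ pointwise, which would force $di'$ to be scalar at $\pi(E_j)$. That is impossible, because the fixed locus of the Geiser involution is the smooth curve $R$, which has no isolated points. Your use of unnodality to rule out infinitely near points is also fine.

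\textbf{Gap: the swapped case.} The genuine gap is the step you flag yourself: excluding the case where $i$ swaps $E_1,E_2$ and preserves $E_3$. Neither of your suggestions closes it.

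\emph{The Bertini-type argument.} The analogue of Proposition \ref{bertsempre} breaks down numerically, since $(-K_{X'})\cdot(-2K_{X'})=4$. If $p$ is a node of the $i'$-stable curve $\pi(C)$, so is $i'(p)$. A member of $|-K_{X'}|$ through $p$ then meets $\pi(C)$ with multiplicity $2$ at $p$ and $2$ at $i'(p)$, which uses up the full intersection number and gives no contradiction.

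\emph{The intersection observation.} That $E_1$ and $E_2$ are disjoint and each meet $C$ twice is simply consistent with $i(E_1\cap C)=E_2\cap C$. No lattice argument is actually indicated.

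\textbf{The missing idea.} The paper exploits the Coble condition $|-K_X|=\emptyset$ directly. The anticanonical map $X'\to\mathbb{P}^2$ is the double cover whose deck involution is $i'$, so $p_1$ and $p_2=i'(p_1)$ have the same image. Pulling back the lines through that image gives a pencil in $|-K_{X'}|$, every member of which contains both $p_1$ and $p_2$. Passing through the fixed point $p_3$ is one more linear condition. Hence some $D\in|-K_{X'}|$ contains $p_1,p_2,p_3$. Then $\pi^*D-E_1-E_2-E_3$ is an effective member of $|-K_X|$, a contradiction.

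Your first idea can be rescued, but only by a different argument. The $i'$-stable members of $|-2K_{X'}|$ are $R$ and the pullbacks of conics. An irreducible one with singular points is the pullback of a smooth conic. Its singular points lie over tangencies with the branch quartic, hence on $R$, so they are fixed by $i'$.
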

{\it Proof:} We should blow up three points on $X'$ in a smart way to get a well defined involution on $X$. Remember that, for any $p \in X'$ such that $i' (p) \ne p$, there exist a pencil of sections of $|- K_{X'}|$ containing both $p, i' (p)$. We should pick $$p_1, p_2, p_3 \in X'$$ preserved by $i'$. Assume $$i' (p_1) = p_2, i' (p_2) = p_1, i' (p_3) = p_3.$$ Then there exists a divisor in $|- K_{X'}|$ containing all the three points, so we would have $|- K_X| \ne \emptyset$.\\ 
Hence we are forced to pick $$p_1, p_2, p_3 \in \Fix (i')$$ If we assume by contradiction that the surface $X := Bl_{p_1, p_2, p_3} X'$ is an unnodal Coble surface, with a irreducible divisor $C \in |- 2 K_X|$, then we end up exactly in the situation forbidden by the previous Remark \ref{nontre}. $\square$\\
\\
Note that the following example nonetheless provides a Coble surface with reducible anti - bicanonical divisor equipped with a Geiser involution.
\begin{example}\label{tottang}
Let $Q \subset \mathbb{P}^2$ be a smooth quartic, that is, a curve of genus $3$, canonically embedded in $\mathbb{P}^2$. It is well-known that there exist $63$ families of plane quadrics which are totally tangent to $Q$. Indeed, this corresponds to choosing $p_1, p_2, p_3, p_4 \in Q$ such that $2 p_1 + 2 p_2 + 2 p_3 + 2 p_4 \in |2 K_Q|$. Hence $$p_1 + p_2 + p_3 + p_4 \in |K_Q + \eta|$$ where $\eta$ is a nontrivial $2$-torsion divisor on $Q$. There are exactly $63$ such divisors, and each of them satisfies (by Riemann - Roch): $$h^0 ({\cal O}_Q (p_1 + p_2 + p_3 + p_4 + \eta)) = 2$$ Let then $C \subset \mathbb{P}^2$ be one of these smooth conics, and let $\pi: X' \to \mathbb{P}^2$ be the double cover branched over $Q$. Then $X'$ is a smooth Del Pezzo surface of degree $2$. The pre - image $\pi^{-1}(C)$ is a section of $|- 2 K_{X'}|$, of the form $$\pi^{-1} (C) = C_1 + C_2,$$ where $C_1, C_2$ are smooth rational curves, which meet at $4$ points $p_1', p_2', p_3', p_4'$ on the ramification locus of $\pi$. Then $X := Bl_{p_1', p_2', p_3', p_4'} X'$ is a Coble surface, with $2$ boundary components, equipped with a biregular involution, which is induced by the Geiser involution.
\end{example}
The following example is more tricky, since it provides the construction of a Coble surface $X$, equipped with an involution $i$ induced by a Geiser involution, which preserves all the components of the anti - bicanonical divisor $|- 2 K_X|$.
\begin{example} 
Let $Q \subset \mathbb{P}^2$ be an irreducible rational quartic curve with three nodes, and let $p: Y \to \mathbb{P}^2$ be the blow up of the plane at the singular points of $Q$. Consider the double cover $$q : X' \to Y$$ branched over the strict transform of $Q$, and let $i' : X' \to X'$ be the deck involution. As usual, let $L, E_1, E_2, E_3$ be the generators of $\Pic (Y)$. Then $$K_Y = - 3 L + E_1 + E_2 + E_3$$ and $$K_{X'} = - q^* L$$ which means that the composite map $pq : X' \to \mathbb{P}^2$ is induced by the anti canonical linear system. Moreover, $(- K_{X'})^2 = (pq)^* L^2 = 2$, which states that $X$ is a Del Pezzo surface of degree $2$. Each curve $E_1, E_2, E_3$ in $Y$ cuts in two points the branch locus, hence the preimages $q^* E_1, q^* E_2, q^* E_3$ are three $(-2)$-curves in $X'$. Let $B \subset Y$ be the branch locus of $q$, and let $R \subset X'$ be the ramification locus. The class of $R$ is $$R = \frac{1}{2} q^* (4 L - 2 E_1 - 2 E_2 - 2 E_3) = q^* (2 L - E_1 - E_2 - E_3),$$ so the divisor $R + q^* E_1 + q^* E_2 + q^* E_3$ belongs to the class $$R + q^* E_1 + q^* E_2 + q^* E_3 \in |q^* (2 L)| = |- 2 K_{X'}|.$$ All the four components are smooth rational curves, which meet at $R (q^* E_1 + q^* E_2 + q^* E_3) = 6$ points, all fixed by $i'$. Thus we can blow up these points on $X'$ to get a Coble surface $X$ with $4$ boundary components, together with an involution $i: X \to X$ which preserves all of them.
\end{example}
In Remark \ref{nontre} we used the linear system $H = C + E_1 + E_2 + E_3$ under the hypothesis that $X$ is unnodal, $C$ is irreducible and the $E_i$'s are disjoint. We now show that the unnodality of $X$ can not be dropped. We start showing that $H$ has still almost all its good properties.

\begin{proposition}
Let $X$ be a Coble surface, with irreducible anti - bicanonical curve $C \in |- 2 K_X|$, and let $E_1, E_2, E_3$ three disjoint $(-1)$-curves in $X$. Then $H := C + E_1 + E_2 + E_3$ has no base points or base components, and it has dimension $h^0 ({\cal O}_X (H)) = 4$. The images of the $E_i$ under the map $f_{|H|} : X \to \mathbb{P}^3$ are three pairwise distinct lines lying in the same plane of $\mathbb{P}^3$.
\end{proposition}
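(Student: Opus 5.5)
The plan is to compute everything numerically from the decomposition $H = C + E_1 + E_2 + E_3$, using $C = -2K_X$, $C^2 = -4$ (Proposition \ref{comp}), $CE_i = 2$ (since $E_iK_X = -1$) and $E_iE_j = -\delta_{i,j}$. These give
$$H^2 = -4 + 12 - 3 = 5, \qquad HC = -4 + 6 = 2, \qquad HE_i = 2 - 1 = 1 .$$
Rather than going through Riemann--Roch for $H$ directly (where $h^1$ is not obviously zero), I would compute $h^0$ through the restriction sequence to $C$:
$$0 \to {\cal O}_X(E_1+E_2+E_3) \to {\cal O}_X(H) \to {\cal O}_C(H) \to 0 .$$
Since $C \simeq \mathbb{P}^1$ and $HC = 2$, the right-hand term is ${\cal O}_{\mathbb{P}^1}(2)$, which has $h^0 = 3$ and $h^1 = 0$.

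For $D := E_1+E_2+E_3$ we have $D^2 = -3$ and $DK_X = -3$, so Riemann--Roch gives $\chi({\cal O}_X(D)) = 1 + D(D-K_X)/2 = 1$. Moreover $h^2({\cal O}_X(D)) = h^0({\cal O}_X(K_X - D)) = 0$, because $K_X$ is not effective, and $h^0({\cal O}_X(D)) = 1$, because $D$ is a disjoint union of $(-1)$-curves, each of which is rigid. Hence $h^1({\cal O}_X(D)) = 0$. The long exact sequence then gives $h^0({\cal O}_X(H)) = 1 + 3 = 4$, and it also shows that $H^0({\cal O}_X(H)) \to H^0({\cal O}_C(2))$ is surjective.

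\emph{Base locus.} The member $C + E_1 + E_2 + E_3$ of $|H|$ shows that any base point lies on $C \cup E_1 \cup E_2 \cup E_3$. By the surjectivity above, and since ${\cal O}_{\mathbb{P}^1}(2)$ is base point free, $|H|$ has no base points on $C$. For the $E_i$, I would use a symmetric sequence: either restrict to $E_i$, where ${\cal O}_{E_i}(H) = {\cal O}_{\mathbb{P}^1}(1)$, and prove surjectivity through the vanishing $h^1({\cal O}_X(H - E_i)) = 0$, which follows from the same kind of computation for $C + E_j + E_k$; or observe that a base point on $E_i$ off $C$ would force $E_i$, or a point of it, to be in the base locus of the sections obtained from the restriction to $C$. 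The first route is the one I would try. Once there are no base points there are no base components either, so $f := f_{|H|} : X \to \mathbb{P}^3$ is a morphism.

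\emph{Images of the $E_i$.} Since $HE_i = 1$ and $|H|$ is base point free, each $E_i \simeq \mathbb{P}^1$ maps with degree one onto a line $\ell_i$. All three lines lie in the plane $\Pi$ corresponding to the section $C + E_1 + E_2 + E_3$. To see that they are distinct, suppose $\ell_1 = \ell_2$. Every plane through $\ell_1$ pulls back to a member of $|H|$ containing $E_1 + E_2$, which gives $h^0({\cal O}_X(H - E_1 - E_2)) \ge 2$. But $H - E_1 - E_2 = C + E_3$, and $(C+E_3)C = -4 + 2 = -2 < 0$, so $C$ is a fixed component of $|C + E_3|$. Hence $|C+E_3| = C + |E_3|$, which is a single divisor because $E_3$ is rigid, and this contradiction shows the $\ell_i$ are pairwise distinct.

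The main obstacle I expect is the base-point-freeness along the $E_i$. This is where the vanishing of $h^1({\cal O}_X(H - E_i))$ has to be checked carefully, since $h^1({\cal O}_X(-2K_X)) = 3 \ne 0$ prevents the naive argument from working.
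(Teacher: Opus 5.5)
Your proposal is correct and follows essentially the paper's route: restriction to $C$ with kernel ${\cal O}_X(E_1+E_2+E_3)$ gives $h^0=4$ and no base points on $C$, restriction to each $E_i$ via $h^1({\cal O}_X(C+E_j+E_k))=0$ handles the $E_i$, and $h^0({\cal O}_X(C+E_3))=1$ gives distinctness of the lines. Your variations are harmless: you get $h^1({\cal O}_X(E_1+E_2+E_3))=0$ from Riemann--Roch instead of induction, and $h^0({\cal O}_X(C+E_3))=1$ from $C$ being a fixed component instead of an exact sequence. The step you flag as the main obstacle goes through exactly as in the paper: $(C+E_j+E_k)\cdot C=0$, so $0\to{\cal O}_X(E_j+E_k)\to{\cal O}_X(C+E_j+E_k)\to{\cal O}_C\to 0$ together with $h^1({\cal O}_X(E_j+E_k))=h^1({\cal O}_C)=0$ gives the needed vanishing.
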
\label{wd}
{\it Proof:} For any sequence of disjoint $(-1)$-curves $E_1, \ldots, E_r$ in $X$ one has $$h^0 ({\cal O}_X (E_1 + \cdots + E_r)) = 1$$ and $$h^1({\cal O}_X (E_1 + \cdots + E_r)) = 0$$ Indeed, the short exact sequence $$0 \to {\cal O}_X (E_1 + \cdots + E_{r - 1}) \to {\cal O}_X (E_1 + \cdots + E_r) \to {\cal O}_{E_r} (E_1 + \cdots E_r) \to 0$$ induces isomorphisms $$H^0 ({\cal O}_X (E_1 + \cdots + E_{r - 1})) \simeq H^0 ({\cal O}_X (E_1 + \cdots + E_r))$$ and $$H^1 ({\cal O}_X (E_1 + \cdots + E_{r - 1})) \simeq H^1 ({\cal O}_X (E_1 + \cdots + E_r))$$ so by induction we find \begin{eqnarray}\label{hzero} h^0 ({\cal O}_X (E_1 + \cdots + E_r)) = h^0 ({\cal O}_X) = 1\end{eqnarray} and \begin{eqnarray}\label{hu} h^1 ({\cal O}_X (E_1 + \cdots + E_r)) = h^1 ({\cal O}_X) = 0.\end{eqnarray} Now we look at the short exact sequence: \begin{eqnarray}\label{inizio} 0 \to {\cal O}_X (- C) \to {\cal O}_X \to {\cal O}_C \to 0.\end{eqnarray} Taking the tensor with ${\cal O}_X (H)$ we find: $$0 \to {\cal O}_X (E_1 + E_2 + E_3) \to {\cal O}_X (H) \to {\cal O}_C (H) \to 0$$ Now we note that it remains exact on global sections, due to equality \ref{hu}, so we find an exact sequence $$0 \to H^0 ({\cal O}_X (E_1 + E_2 + E_3)) \to H^0 ({\cal O}_X (H)) \to H^0 ({\cal O}_C (H)) \to 0.$$ Note that $${\cal O}_C (H) \simeq {\cal O}_{\mathbb{P}^1} (C H) = {\cal O}_{\mathbb{P}^1} (2).$$ Together with equality \ref{hzero}, this shows that $h^0 ({\cal O}_X (H)) = 4$ and that $C$ is not a base component of $H$, since $h^0 ({\cal O}_X (H - C)) = h^0 ({\cal O}_X (E_1 + E_2 + E_3)) = 1 < 4$. Moreover, the surjectivity of $H^0 ({\cal O}_X (H)) \to H^0 ({\cal O}_C (H))$ proves that $H$ has not base points on the curve $C$.\\
Now we tensor the short exact sequence in \ref{inizio} with ${\cal O}_X (C + E_1 + E_2)$, and applying equalities \ref{hzero} and \ref{hu} we find \begin{eqnarray}\label{newhzero} h^0 ({\cal O}_X (C + E_1 + E_2)) = 2\end{eqnarray} and \begin{eqnarray}\label{newhu} h^1 ({\cal O}_X (C + E_1 + E_2)) = 0.\end{eqnarray} Finally, we consider $$0 \to {\cal O}_X (C + E_1 + E_2) \to {\cal O}_X (H) \to {\cal O}_{E_3} (H) \to 0$$ Equalities \ref{newhzero} and \ref{newhu} state that the induced sequence on global sections $$0 \to H^0({\cal O}_X (C + E_1 + E_2)) \to H^0 ({\cal O}_X (H)) \to H^0 ({\cal O}_{E_3} (H)) \to 0$$ is still exact. The surjectivity of $H^0 ({\cal O}_X (H)) \to H^0 ({\cal O}_{E_3} (H))$ shows that $H$ has no base points on $E_3$, and the exactness proves that $$h^0 ({\cal O}_X (H - E_3)) = h^0 ({\cal O}_X (C + E_1 + E_2)) = 2 < 4$$ shows that $E_3$ is not a base component of $H$. By a symmetric argument, the same is true for $E_1, E_2$. Since all the base locus of $H$ must be contained in the curve $C + E_1 + E_2 + E_3$, this proves that $H$ has no base points or base components. The equality $H E_i = 1$ proves that the image of each $E_i$ under the map $f_{|H|} : X \to \mathbb{P}^3$ is a line.\\
 Finally, the long exact sequence induced by $$0 \to {\cal O}_X (E_3) \to {\cal O}_X (C + E_3) \to {\cal O}_C (C + E_3) \to 0$$ proves that $h^0 ({\cal O}_X (C + E_3)) = 1$. But this is the same to say $H^0 ({\cal O}_X (H - E_1 - E_2)) = 1$, so there exists a unique plane containing the images of both $E_1, E_2$, which means that these are different lines. $\square$\\\\
 The next step is to consider the following definition:
\begin{definition}
We denote by $\mathbb{P}^8 = |{\cal O}_{\mathbb{P}^1} (8)|$ the set of all effective divisors of degree $8$ on $\mathbb{P}^1$. Inside $\mathbb{P}^8$ let ${\cal V}^{\circ}$ the set of all divisors of the form $2 p_1 + 2 p_2 + 2 p_3 + p_4 + p_5$, with $p_i$ distinct points, and let ${\cal V}$ be its closure.\\
Similarly, let ${\cal W}^{\circ}$ be the locus of all divisors of the form $2 p_1 + 2 p_2 + 2 p_3 + 2 p_4$, with $p_i$ distinct points, and let ${\cal W}$ be its closure.
\end{definition}

\begin{proposition}\label{closure}
Both ${\cal V}, {\cal W} \subset \mathbb{P}^8$ are irreducible, and ${\cal W} \subset {\cal V}$. Moreover ${\cal V}^{\circ}$ is an open subset inside ${\cal V}$. 
\end{proposition}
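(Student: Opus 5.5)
We prove both irreducibility statements by exhibiting ${\cal V}$ and ${\cal W}$ as images of irreducible varieties under finite morphisms. Consider the addition map
$$\mu : \mathbb{P}^1 \times \mathbb{P}^1 \times \mathbb{P}^1 \times |{\cal O}_{\mathbb{P}^1} (2)| \to \mathbb{P}^8, \qquad \mu (p_1, p_2, p_3, D) := 2 p_1 + 2 p_2 + 2 p_3 + D.$$
It is the restriction of the multiplication of binary forms $Sym^a \times Sym^b \to Sym^{a + b}$, so it is a regular morphism. It is also finite: it is quasi-finite, since a degree $8$ divisor has finitely many decompositions, and it is proper, since the source is projective.

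The source is irreducible of dimension $5$, hence so is $\mu (\ldots)$, which is closed. A general point $D = p_4 + p_5$ of $|{\cal O}_{\mathbb{P}^1} (2)|$ has distinct points, distinct from the $p_i$. So ${\cal V}^{\circ}$ contains a dense open subset of the image, and ${\cal V}^{\circ}$ is itself contained in the image. Therefore ${\cal V} = \overline{{\cal V}^{\circ}} = {\rm Im}\, \mu$ is irreducible of dimension $5$.

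Similarly, ${\cal W}$ is the image of $(\mathbb{P}^1)^4$ under $(p_1, \ldots, p_4) \mapsto 2 \sum p_i$. This image is irreducible and closed, and it contains ${\cal W}^{\circ}$ as a dense subset. The inclusion ${\cal W} \subset {\cal V}$ is immediate: $2 p_1 + 2 p_2 + 2 p_3 + 2 p_4 = \mu (p_1, p_2, p_3, 2 p_4)$, so ${\cal W}^{\circ} \subset {\rm Im}\, \mu = {\cal V}$, and taking closures gives the claim.

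The step needing care is openness of ${\cal V}^{\circ}$ in ${\cal V}$. I would describe the complement ${\cal V} \setminus {\cal V}^{\circ}$ as a finite union of images of smaller closed loci under $\mu$. A point of ${\cal V}$ is a divisor $\mu (p_1, p_2, p_3, D)$, and it fails to lie in ${\cal V}^{\circ}$ exactly when its multiplicity pattern is not $(2, 2, 2, 1, 1)$. For a divisor in the image of $\mu$, this happens precisely when some coincidence occurs:
\begin{itemize}
\item $p_i = p_j$ for some $i \ne j$;
\item $D$ is a double point;
\item $D$ contains some $p_i$.
\end{itemize}

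One checks the converse direction too. If none of these coincidences holds for a given preimage, the divisor has support of $5$ points with multiplicities $(2, 2, 2, 1, 1)$, so it lies in ${\cal V}^{\circ}$. Moreover, for such a divisor every preimage has this shape, because the multiplicity pattern determines the decomposition.

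Hence ${\cal V} \setminus {\cal V}^{\circ} = \mu (Z)$, where $Z$ is the closed subset of the source defined by the coincidence conditions. These are the diagonals $p_i = p_j$, the discriminant conic of $|{\cal O}_{\mathbb{P}^1} (2)|$, and the incidence loci $\{D \ni p_i\}$. Since $\mu$ is finite, hence closed, $\mu (Z)$ is closed, and ${\cal V}^{\circ}$ is open in ${\cal V}$.

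The only subtle point to verify is the set-theoretic equality ${\cal V} \setminus {\cal V}^{\circ} = \mu (Z)$. The inclusion $\supseteq$ is the one that must be argued: a divisor with a degenerate preimage could a priori also have a nondegenerate preimage and so lie in ${\cal V}^{\circ}$. This is ruled out by the multiplicity count, since any degenerate preimage produces either a point of multiplicity at least $3$, or at least four double points, or fewer than $5$ support points. None of these patterns is compatible with $(2, 2, 2, 1, 1)$.
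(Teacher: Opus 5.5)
Your proof is correct and follows essentially the paper's route. The paper uses $\tau(F,G)=FG^2$ on $|{\cal O}_{\mathbb{P}^1}(2)|\times|{\cal O}_{\mathbb{P}^1}(3)|$, and your $\mu$ is the ordered-points version of it: it factors through $\tau$ via $(\mathbb{P}^1)^3\to|{\cal O}_{\mathbb{P}^1}(3)|$. The paper likewise writes ${\cal V}\setminus{\cal V}^{\circ}$ as the image of the square, discriminant and resultant loci, and your multiplicity-pattern check (that a degenerate preimage never yields type $(2,2,2,1,1)$) fills in a step the paper only asserts.
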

{\it Proof:} Let denote by $\mathbb{P}^2 := |{\cal O}_{\mathbb{P}^1} (2)|, \mathbb{P}^3 := |{\cal O}_{\mathbb{P}^1} (3)|$, and consider the map $\tau : \mathbb{P}^2 \times \mathbb{P}^3 \to \mathbb{P}^8$, $$\tau (F, G) := F G^2$$ Let $\overline \Delta_2 \subset \mathbb{P}^2$ be the locus of polynomials which are squares of a linear polynomial, $\overline \Delta_3 \subset \mathbb{P}^3$ be the locus of polynomials with a double root, and $R \subset \mathbb{P}^2 \times \mathbb{P}^3$ the resultant locus of pairs of polynomials $(F, G)$ with at least a common root. Set $$\Delta_2 := \overline \Delta_2 \times \mathbb{P}^3 \subset \mathbb{P}^2 \times \mathbb{P}^3,$$  $$\Delta_3 := \mathbb{P}^2 \times \overline \Delta_3 \subset \mathbb{P}^2 \times \mathbb{P}^3,$$ so that $${\cal V}^{\circ} = \tau(\mathbb{P}^2 \times \mathbb{P}^3 \setminus \Delta_2 \setminus \Delta_3 \setminus R).$$ This proves that ${\cal V}^{\circ}$ (and thus ${\cal V}$) is irreducible, and also that $$\tau^{-1} ({\cal V}) = \mathbb{P}^2 \times \mathbb{P}^3,$$ that is $${\cal V} = \tau (\mathbb{P}^2 \times \mathbb{P}^3).$$ It follows then that $${\cal V}^{\circ} = {\cal V} \setminus \tau (\Delta_2) \setminus \tau (\Delta_3) \setminus \tau (R)$$ is open inside ${\cal V}$.\\
Finally, the equality $${\cal W}^{\circ} = \tau(\Delta_2 \setminus \Delta_3 \setminus R)$$ proves the irreducibility of ${\cal W}^{\circ}$ (and consequently of ${\cal W}$), and also that ${\cal W}^{\circ} \subset {\cal V}$. $\square$\\\\
The previous proposition has the following Corollary:
\begin{corollary}\label{gei}
There exist infinitely many pairs $(Q, C)$ where $Q \subset \mathbb{P}^2$ is a smooth quartic, and $C \subset \mathbb{P}^2$ is a smooth conic, whose intersection takes the form $Q \cap C = 2 q_1 + 2 q_2 + 2 q_3 + q_4 + q_5$, with $q_i$ pairwise distinct points.
\end{corollary}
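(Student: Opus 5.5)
The plan is to construct the pairs directly. We fix a smooth conic $C \subset \mathbb{P}^2$ and choose the quartic $Q$ inside the linear system of quartics cutting a prescribed divisor on $C$. Proposition \ref{closure} is only needed to know that the divisors we prescribe form an infinite family: ${\cal V}^{\circ}$ is a nonempty open subset of the irreducible variety ${\cal V}$, which has dimension $5$. So we may fix any $D = 2q_1 + 2q_2 + 2q_3 + q_4 + q_5 \in {\cal V}^{\circ}$ with the $q_i$ pairwise distinct.

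\textbf{Step 1: a quartic cutting exactly $D$.} Identify $C \simeq \mathbb{P}^1$ via the Veronese embedding, so that ${\cal O}_C(4) \simeq {\cal O}_{\mathbb{P}^1}(8)$. The sequence
$$0 \to {\cal O}_{\mathbb{P}^2}(2) \to {\cal O}_{\mathbb{P}^2}(4) \to {\cal O}_C(4) \to 0$$
is exact, and $h^1({\cal O}_{\mathbb{P}^2}(2)) = 0$. Hence the restriction $H^0({\cal O}_{\mathbb{P}^2}(4)) \to H^0({\cal O}_{\mathbb{P}^1}(8))$ is surjective. Its kernel is $\{C \cdot G : G \in H^0({\cal O}_{\mathbb{P}^2}(2))\}$, and dimensions check: $15 = 9 + 6$. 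Pick a quartic $F$ with $F|_C = D$. Then every quartic of the form $F + C G$ also satisfies $(F + CG)|_C = D$. These quartics form an open dense subset of the $6$-dimensional linear system $\Lambda := \langle F, \; C \cdot H^0({\cal O}_{\mathbb{P}^2}(2)) \rangle$.

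\textbf{Step 2: a general member of $\Lambda$ is smooth.} The base locus of $\Lambda$ lies in $C$, because the sections $C \cdot G$ vanish only on $C$ and on $\{G = 0\}$, and $G$ is arbitrary. On $C$ it is cut out by $F$, so it is exactly $\{q_1, \ldots, q_5\}$. Outside the base locus, Bertini's theorem gives smoothness of a general member. At a base point $q$, the gradient of $\lambda F + C G$ equals $\lambda \nabla F(q) + G(q) \nabla C(q)$, and $\nabla C(q) \neq 0$ because $C$ is smooth.
\begin{itemize}
\item At $q_4, q_5$ the intersection is transverse, so $\nabla F$ and $\nabla C$ are independent there. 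The gradient is then nonzero for every $\lambda \neq 0$.
\item At $q_1, q_2, q_3$ the vector $\nabla F(q)$ may be parallel to $\nabla C(q)$, or even zero. Since $G(q_j)$ can be prescribed freely, the gradient is nonzero once $G(q_j)$ avoids at most one bad value.
\end{itemize}
All these conditions are open and nonempty in $\Lambda$. They intersect in a dense open set of smooth quartics $Q$ with $Q \cap C = D$.

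\textbf{Step 3: infinitely many pairs.} For a fixed $D$ the open set in Step 2 is already infinite, since $\dim \Lambda = 6$. Letting $D$ vary in the $5$-dimensional ${\cal V}^{\circ}$ gives infinitely many distinct intersection patterns as well. Since $D$ has the prescribed shape with the $q_i$ pairwise distinct, every such pair $(Q, C)$ satisfies the statement.

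The only delicate point is Step 2 at the tangency points. There we must rule out that every member of $\Lambda$ is singular at some $q_j$, and this is exactly what the freedom in choosing the value $G(q_j)$ provides.
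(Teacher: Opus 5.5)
Your proof is correct, and it takes a genuinely different route from the paper's.

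The paper never fixes the divisor on $C$. It starts from an auxiliary smooth quartic with a totally tangent conic from Example \ref{tottang}, so its restriction to $C$ lies in ${\cal W}$. It then uses Proposition \ref{closure} (${\cal W} \subset {\cal V}$, ${\cal V}$ irreducible, ${\cal V}^{\circ}$ open in ${\cal V}$). Together with the fact that the restriction map $\rho$ to $C$ is an $\mathbb{A}^6$-bundle, this makes $\rho^{-1}({\cal V})$ irreducible. Finally it intersects two nonempty open subsets of $\rho^{-1}({\cal V})$: the smooth quartics and $\rho^{-1}({\cal V}^{\circ})$. This is a soft degeneration argument. It needs no local computation, but it does need a known smooth example over the boundary ${\cal W}$, namely the classical existence of contact conics. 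It also only yields some pairs with a general divisor in ${\cal V}^{\circ}$.

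You instead fix $D$ and check directly that a general member of the $6$-dimensional system $\Lambda$ of quartics cutting out $D$ is smooth. You use Bertini away from the base locus $\{q_1, \dots, q_5\}$, and at the base points the identity $\nabla(\lambda F + CG)(q) = \lambda \nabla F(q) + G(q) \nabla C(q)$, valid since $C(q)=0$. This buys you several things:
\begin{itemize}
\item The argument is more elementary and self-contained.
\item Proposition \ref{closure} becomes unnecessary: ${\cal V}^{\circ} \neq \emptyset$ is obvious by picking five distinct points.
\item The conclusion is stronger: for every smooth conic and every divisor of the prescribed shape on it, the general quartic cutting out that divisor is smooth.
\end{itemize}
In fact your base-point check only uses the dichotomy multiplicity $1$ versus multiplicity $\ge 2$. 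So the same argument shows that every effective divisor of degree $8$ on $C$ is cut out by a smooth quartic.
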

{\it Proof:} Let $Q \subset \mathbb{P}^2$ be any ausiliary smooth quartic curve, and let $C \subset \mathbb{P}^2$ be any of the smooth conics given by Example \ref{tottang}. Look at the short exact sequence $$0 \to {\cal O}_{\mathbb{P}^2} (2) \to {\cal O}_{\mathbb{P}^2} (4) \to {\cal O}_C (4) \to 0$$ It induces an exact sequence $$0 \to H^0 ({\cal O}_{\mathbb{P}^2} (2)) \to H^0 ({\cal O}_{\mathbb{P}^2} (4)) \to H^0 ({\cal O}_{\mathbb{P}^1} (8)) \to 0.$$ Let $\rho : H^0 ({\cal O}_{\mathbb{P}^2} (4)) \to H^0 ({\cal O}_{\mathbb{P}^1} (8))$ be the restriction map, and look at its projectification $\rho : \mathbb{P}^{14} \setminus \mathbb{P}^5 \to \mathbb{P}^8,$ which is an $\mathbb{A}^6$-bundle. Let $U^{sm} \subset \mathbb{P}^{14}$ be the open subset corresponding to smooth plane quartics. Note that $U \subset \mathbb{P}^{14} \setminus \mathbb{P}^5$, since all quartics containing $C$ are clearly singular. Now let ${\cal W} \subset {\cal V} \subset \mathbb{P}^8$ be the loci defined previously. We denote by $[Q] \in \mathbb{P}^{14}$ the point corresponding to the quartic $Q$. Its restriction to $C$ has the form $2 (p_1 + p_2 + p_3 + p_4)$, hence it belongs to $\rho^{-1} ({\cal W})$, and consequently to $\rho^{-1} ({\cal V})$ by Proposition \ref{closure}. Since $Q$ is smooth, this proves that $U^{sm} \cap \rho^{-1} ({\cal V})$ is nonempty. Using again Proposition \ref{closure}, we have that ${\cal V}^{\circ}$ is open inside ${\cal V}$, and hence $\rho^{-1} ({\cal V}^{\circ})$ is open in $\rho^{-1} ({\cal V})$. Since $\rho$ is an $\mathbb{A}^6$-bundle, the space $\rho^{-1} ({\cal V})$ is irreducible, and hence the intersection $(U^{sm} \cap \rho^{-1} ({\cal V})) \cap \rho^{-1} ({\cal V}^{\circ})$ is nonempty. Any quartic in this intersection is smooth and it restricts to a divisor of the required form. $\square$\\\\
Now we are finally able to show that, without the assumption of unnodality for a Coble surface $X$, Remark \ref{nontre} and Corollary \ref{nongei} can be false.
\begin{example}
By Corollary \ref{gei}, we can pick a smooth quartic $Q \subset \mathbb{P}^2$ and a smooth conic $C \subset \mathbb{P}^2$ with $Q \cap C = 2 q_1 + 2 q_2 + 2 q_3 + q_4 + q_5$, where the $q_i$'s are pairwise distinct points. Let $$q : Y \to \mathbb{P}^2$$ be the double cover branched over $Q$. The surface $Y$ is a Del Pezzo surface of degree $2$, equipped with the deck involution $i_Y : Y \to Y$, which is a Geiser involution. Its canonical divisor is $$K_Y = q^* (- L),$$ where $L$ is any line in the plane. Let $\tilde C \subset Y$ be the preimage of $C$. Since $q_4 \ne q_5$, the curve $\tilde C$ is irreducible, and it belongs to the linear system $$\tilde C \in |q^* (2 L)| = |- 2 K_Y|.$$ Its algebraic genus equals $$p_a (\tilde C) = 1 + \frac{1}{2} (\tilde C^2 + \tilde C K_Y) = 1 + K_Y^2 = 3.$$ However, $\tilde C$ has three double points $p_1, p_2, p_3$, with $q (p_i) = q_i$, so $\tilde C$ has geometric genus $0$. Let $X \to Y$ be the blow up of these $3$ points, with exceptional divisors $E_1, E_2, E_3$. Let $C \subset X$ be the strict transform of $\tilde C$. Then $C$ is smooth, and its class in $\Pic (X) = \Pic (Y) \oplus \integer E_1 \oplus \integer E_2 \oplus \integer E_3$ equals $C = \tilde C - 2 E_1 - 2 E_2 - 2 E_3 = -2 (K_Y + E_1 + E_2 + E_3) = -2 K_X$. This shows that $X$ is a Coble surface, equipped with an involution $i_X : X \to X$, which is well - defined since the points $p_i$ are fixed by $i_Y$. The equivariant morphism $(X, i_X) \to (Y, i_Y)$ contradicts Corollary \ref{nongei}, and since $p_i$ are non - isolated fixed points for $i_Y$, the action of $i_X$ on $E_1, E_2, E_3$ is different from the identity, which contradicts Remark \ref{nontre}. Nonetheless, we know from Proposition \ref{wd} that the linear system $H = C + E_1 + E_2 + E_3$ induces a  well - defined morphism $f_{|H|} : X \to \mathbb{P}^3$. The problem is that we are not able to state that the projective involution $I : \mathbb{P}^3 \to \mathbb{P}^3$ induced by $i_X$ acts as the identity on one of the three image lines $L_i = f_{|H|} (E_i)$, since they are concurrent at a point. Indeed, let $\tilde R \subset Y$ be the ramification curve for $q$, and let $R \subset X$ be its strict transform. We claim that $\tilde C, \tilde R$ are equivalent in $\Pic (Y)$. Indeed, we already know that $$\tilde C = q^* (2 L),$$ and we have $$2 \tilde R = q^* (Q) = q^* (4 L).$$ The rationality of $Y$ allows us to simplify and get $$\tilde R \simeq q^* (2 L) \simeq \tilde C.$$ But then in $\Pic (X)$ we have $$H = C + E_1 + E_2 + E_3 =$$  $$= (\tilde C - 2 E_1 - 2 E_2 - 2 E_3) + E_1 + E_2 + E_3 =$$ $$= \tilde C - E_1 - E_2 - E_3 = \tilde R - E_1 - E_2 - E_3 = R.$$ This shows that the curve $R$ is a member of $|H|$, hence there exists exactly one hyperplane $\Pi_0 \subset \mathbb{P}^3$ containing the image $f_{|H|} (R)$. Since $i_X$ fixes $R$, the projective involution $I$ fixes $\Pi_0$. Then the fixed locus of $I$ has the shape $\Fix (I) = \Pi_0 \cup p_0$, where $p_0$ is an isolated fixed point. But each $E_1, E_2, E_3$ contains isolated fixed points $x_1, x_2, x_3$ with respect to $i_X$, as they correspond to the $(-1)$-eigenvectors under the action of $i_Y$ in the tangent planes of $Y$ at $p_1, p_2, p_3$ respectively. Since $I \circ f_{|H|} = f_{|H|} \circ i_X$, this implies that $f_{|H|} (x_1) = f_{|H|} (x_2) = f_{|H|} (x_3) = p_0$, and thus $L_1, L_2, L_3$ are all concurrent at $p_0$.\\
The reason for this failure is that $X$ contains $(-2)$-curves. Indeed, look at the tangent lines $\overline L_i \subset \mathbb{P}^2$ at $Q$ at $p_i$ for $i = 1, 2, 3$. The pull - back $q^* (\overline L_i)$ is a singular curve with algebraic genus $1$, and it has a double point at $p_i$ and self - intersection $q^* (\overline L_i)^2 = 2$. Hence the strict transforms are smooth rational curves in $X$, and their class is $q^* (\overline L_i) - 2 E_i$, so the self - intersection equals $(q^* (\overline L_i) - 2 E_i)^2 = 2 - 4 = - 2$.
\end{example}
In the following part, we will use repeatedly the following fact:
\begin{remark}\label{noc}
Assume we have a birational morphism $p : X \to Y$, where $X$ is an unnodal Coble surface with irreducible anti - bicanonical curve $C \in |- 2 K_X|$, and $Y$ is a smooth rational surface. Then the curve $p(C)$ can have only nodes or cusps as singularities. Indeed, let $H \in \Pic (Y)$ be the class of a big divisor. Since $p$ is birational, the pull - back $p^* H$ is still big on $X$, and it satisfies $$E (p^* H) = 0$$ for any irreducible curve $E$ contracted by $p$. By Hodge Index Theorem \ref{HIT} we find $$E^2 < 0,$$ so we can apply Proposition \ref{negativecurves} to deduce $$E^2 \in \{-1, -2, -4\}.$$ But $X$ has no $(-2)$-curves, and the unique $(-4)$-curve is $C$ itself, and hence $$E^2 = -1.$$ Thus all the curves contracted by $p$ are $(-1)$-curves, and each of these intersects $C$ with multiplicity $2$. If the two intersection points are distinct we find a node for $p(C)$, if they coincide we get an ordinary cusp.
\end{remark}
Now we eliminate case $iv)$ of Lemma \ref{minimalpairs}.
\begin{proposition}\label{nopunodue}
Assume $X$ is a Coble surface with irreducible anti - bicanonical divisor $C \in |- 2 K_X|$. Then it is impossible to build an involution $i :X \to X$ together with a regular birational morphism $p: X \to \mathbb{P}^1 \times \mathbb{P}^1$ which makes the following diagram commute:
\[\begin{tikzcd}
X  \arrow{r}{i} \arrow[swap]{d}{p}& X \arrow{d}{p}
\\ \mathbb{P}^1 \times \mathbb{P}^1 \arrow{r}{(p, q) \to (q, p)}& \mathbb{P}^1 \times \mathbb{P}^1 
\end{tikzcd}
\]
\end{proposition}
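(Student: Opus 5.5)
The plan is to compare the fixed locus of $i$ with the diagonal of $\mathbb{P}^1 \times \mathbb{P}^1$, and to use the fact that $C$ is $i$-invariant but not pointwise fixed (Proposition \ref{nifc}). Suppose such $i$ and $p$ exist. Since $K_X^2 = -1$ (Proposition \ref{comp}) and $K_{\mathbb{P}^1 \times \mathbb{P}^1}^2 = 8$, the map $p$ is a sequence of $9$ blow-ups, possibly infinitely near. The curve $\overline C := p(C)$ lies in $|-2K_{\mathbb{P}^1\times\mathbb{P}^1}| = |{\cal O}(4,4)|$ and is invariant under the swap $s(x,y)=(y,x)$. Equivariance of $p$ forces the set $\Sigma$ of blown-up points, together with its infinitely near structure, to be $s$-invariant. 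The fixed locus of $s$ is the diagonal $\Delta$, a smooth $(1,1)$-curve with $\Delta^2 = 2$. Its strict transform $\tilde\Delta \subset X$ lies in $\Fix(i)$.

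\textbf{Step 1 (local intersection bound).} By Lemma \ref{Smoothfixedlocus}, $\Fix(i)$ is smooth. By Proposition \ref{nifc}, $i|_C$ is a nontrivial involution of $C \simeq \mathbb{P}^1$, so it has exactly $2$ fixed points. At a point of $\tilde\Delta \cap C$, the tangent line of $\tilde\Delta$ is the $(+1)$-eigenline of $di$. The curve $C$ is invariant but not fixed, so its tangent line must be the $(-1)$-eigenline. Hence $C$ meets $\tilde\Delta$ transversally, and $\tilde\Delta \cdot C \le 2$.

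\textbf{Step 2 (counting base points on $\Delta$).} Every curve contracted by $p$ meets $C$ with multiplicity $2$, as in Remark \ref{noc}. This gives
$$\tilde\Delta\cdot C = \overline C\cdot\Delta - 2a = 8-2a \quad\text{and}\quad \tilde\Delta^2 = 2-a,$$
where $a$ is the number of points of $\Sigma$ (infinitely near ones included) lying on $\Delta$ or on its successive strict transforms. Step 1 gives $a\ge 3$. Adjunction for $\tilde\Delta$, namely $\tilde\Delta^2 = -2 + \tilde\Delta\cdot C/2$, then leaves exactly two possibilities:
\begin{itemize}
\item $a=3$, $\tilde\Delta^2=-1$ and $\tilde\Delta\cdot C = 2$;
\item $a=4$, $\tilde\Delta^2=-2$ and $\tilde\Delta\cdot C=0$.
\end{itemize}
The points of $\Sigma$ not on $\Delta$ are either swapped in pairs or are infinitely near $s$-invariant points sitting in the $(-1)$-direction. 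A parity count on the $9$ points, together with the restriction that the blow-up tree at a point of $\Delta$ can contain only the two invariant tangent directions, should reduce each case to a short list of configurations.

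\textbf{Step 3 (contradiction via Lefschetz).} I would finish with the holomorphic or topological Lefschetz formula,
$$\chi_{top}(\Fix(i)) = 2 + \mathrm{tr}(i^*|H^2(X)),$$
where the right-hand side is computed from the swap acting on $\Pic(\mathbb{P}^1\times\mathbb{P}^1)$ (trace $0$) plus the number of $i$-invariant exceptional classes. The left-hand side is computed from $\tilde\Delta\simeq\mathbb{P}^1$, the other fixed curves, and the isolated fixed points. The latter are the $(-1)$-eigendirections over the points on $\Delta$ and the points $C\cap\Fix(i)$.

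I expect the main obstacle to be Step 3, and specifically the case $a=3$. There $\tilde\Delta$ is an $i$-fixed $(-1)$-curve, and contracting it equivariantly only produces a surface with a nodal image of $C$ inside the fixed curve. That is exactly the mechanism used in the proof of Proposition \ref{nifc}, since there the image of $C$ was singular along a fixed curve. So in this case I would first try contracting $\tilde\Delta$ and checking whether the image of $C$ passes through a fixed curve at a singular point, which would contradict Lemma \ref{Smoothfixedlocus}. I would fall back on the trace count only if this fails.
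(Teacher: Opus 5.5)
\textbf{There is a genuine gap: Step 3 is never carried out, and the case it must handle cannot be closed by the tools you propose.}

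Your Steps 1 and 2 are sound. The fixed-point argument giving $\tilde\Delta\cdot C\le 2$ is correct. The identities $\tilde\Delta\cdot C=8-2a$ and $\tilde\Delta^2=2-a$ hold in general, because $C=p^*\overline C-2\sum_j\mathcal{E}_j$ with $\mathcal{E}_j$ the total transforms of the exceptional curves. Remark \ref{noc} is not the right justification, since it assumes unnodality.

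Neither surviving case is closed. The case $a=4$ is never addressed; there $\tilde\Delta$ is a $(-2)$-curve, so only unnodality can exclude it. For $a=3$, both mechanisms you propose fail:
\begin{itemize}
\item \emph{Contracting $\tilde\Delta$.} Contracting the pointwise fixed $(-1)$-curve $\tilde\Delta$ gives a point $x'$ where $di'_{x'}$ acts trivially on $\mathbb{P}(T_{x'}X')$. Hence $di'_{x'}=-\mathrm{Id}$, so $x'$ is an \emph{isolated} fixed point. The image of $C$ acquires a node at $x'$, but that image is not contained in $\mathrm{Fix}(i')$. So Lemma \ref{Smoothfixedlocus} is not contradicted. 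Proposition \ref{nifc} worked only because there $C$ itself was pointwise fixed.
\item \emph{Lefschetz.} Both fixed-point formulas hold for every $s$-equivariant blow-up of $\mathbb{P}^1\times\mathbb{P}^1$, so they cannot single out anything. Take three centres on $\Delta$ and three swapped pairs. Then $\mathrm{Fix}(i)=\tilde\Delta\sqcup\{\text{three points}\}$, which gives $\chi_{top}=5=2+\mathrm{tr}(i^*|H^2(X))$ and $(\tfrac12-\tfrac14)+3\cdot\tfrac14=1=\chi(\mathcal{O}_X)$.
\end{itemize}

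\textbf{No argument of this local kind can work, because the configuration $a=3$ actually occurs.} Let $q:\mathbb{P}^1\times\mathbb{P}^1\to\mathbb{P}^2$ be the quotient by $s$, branched along a conic $B$. Take an irreducible quartic $D$ with three nodes $n_1,n_2,n_3\notin B$, tangent to $B$ at three points and transverse to it at two more. Such $D$ exist: parametrize $D$ by quartic forms with $\phi_1^2-\phi_0\phi_2=g^2h$, obtained by factoring $\phi_1^2-g^2h$.

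Then $q^{-1}(D)\in|\mathcal{O}(4,4)|$ is irreducible, since its normalization is a double cover of $\mathbb{P}^1$ branched at two points. It has exactly nine nodes, six off $\Delta$ in swapped pairs and three on $\Delta$. Blowing them up gives a Coble surface with irreducible $C$ to which $s$ lifts. This surface is nodal: the strict transform of $q^{-1}(\overline{n_1n_2})$ is a $(-2)$-curve. So the statement as written is false without unnodality.

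The paper's proof uses unnodality implicitly, through Remarks \ref{noc} and \ref{nontre}. It passes to the quartic $q(\overline C)$ and bounds its nodes by $3$ and its tangencies with $B$ by $4$. This forces $s=t=3$, which is your $a=3$. It then applies Remark \ref{nontre} to the three $i$-invariant exceptional curves over $\Delta$.

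\textbf{How to close the unnodal case in your set-up.}
\begin{itemize}
\item Unnodality gives no infinitely near centres, so all nine centres are distinct points.
\item From $\tilde\Delta\cdot C\ge 0$ you get $a\le 4$, which leaves at least two swapped pairs $\{x,s(x)\}$ and $\{y,s(y)\}$.
\item The curve $q^{-1}(\overline{q(x)q(y)})$ is either a smooth $(1,1)$-curve through these four centres, or two rulings one of which contains two centres.
\item In both cases its strict transform has a component of self-intersection $\le -2$ other than $C$. By Proposition \ref{negativecurves} this is a $(-2)$-curve, contradicting unnodality.
\end{itemize}
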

{\it Proof:} Since $p$ is birational, the image curve $C' := p (C)$ belongs to $$C' \in |- 2 K_{\mathbb{P}^1 \times \mathbb{P}^1}| = |{\cal O}_{\mathbb{P}^1 \times \mathbb{P}^1} (4, 4)|$$ Since $C$ is irreducible, also $C'$ is irreducible of arithmetic genus $9$, and it must be preserved by the involution which switches the two factors. Moreover, its singularities can be only nodes or cusps, by Remark \ref{noc}. Each node or cusp makes the geometric genus drop by $1$, hence their total number must be equal to $9$. Thus the singularitues of $C'$ are divided in $s$ pairs outside of the diagonal $\Delta$, plus $t$ singular points along $\Delta$, with $2 s + t = 9$. Let $q : \mathbb{P}^1 \times \mathbb{P}^1 \to \mathbb{P}^2$ be the quotient map, defined by the subsystem of symmetric sections of ${\cal O}_{\mathbb{P}^1 \times \mathbb{P}^1} (1, 1)$. The branch locus of $q$ is a smooth plane conic $B \subset \mathbb{P}^2$. By degree reasons, $$\Deg\, q(C') = 4$$  Each pair of singular points of $C'$ outside $\Delta$ corresponds to a singularity of $q (C')$, while a singularity in $\Delta$ corresponds to a tangency point between $B$ and $q(C')$. Since $q(C')$ has degree $4$, it can have at most $3$ ordinary double points and $4$ tangency points with the conic $B$, so $$s \le 3 \quad {\rm\, and\,} \quad t \le 4$$ This forces $$s = t = 3$$ The three singularities of $C'$ along $\Delta$ correspond to three disjoint exceptional divisors in $X$, each of them preserved by $i$, a contradiction with Remark \ref{nontre}. $\square$\\
\\
Again, the hypothesis of irreducibility of the Coble curve $C$ is crucial, as the following example shows:
\begin{example}
Let $F_1 + F_2 + F_3 + F_4 \in |{\cal O}_{\mathbb{P}^1 \times \mathbb{P}^1} (4, 0)|$ be the union of $4$ lines in one ruling, and let $G_1 + G_2 + G_3 + G_4$ in $|{\cal O}_{\mathbb{P}^1 \times \mathbb{P}^1} (0, 4)|$ be their images in the other ruling under the involution $(p, q) \to (q, p)$. The divisor $\sum_i F_i + \sum_i G_i$ lives in the anti - bicanonical class $|- 2 K_{\mathbb{P}^1 \times \mathbb{P}^1}| = |{\cal O}_{\mathbb{P}^1 \times \mathbb{P}^1} (4, 4)|$, and it is invariant under the involution. The blow up of its $16$ nodes produces a Coble surface with $8$ boundary components, equipped with a biregular involution with $\mathbb{P}^1 \times \mathbb{P}^1$ as minimal model.
\end{example}
Next step is to show that case $ii)$ of Lemma \ref{minimalpairs} is impossible, still preserving the assumption that the Coble curve is irreducible.
\begin{proposition}\label{dueno}
If $X$ is an unnodal Coble surface with irreducible Coble curve $C \in |- 2 K_X|$, and $i: X \to X$ is an involution, then the pair $(X, i)$ does not admit a minimal model as in case $ii)$ of Lemma \ref{minimalpairs}.
\end{proposition}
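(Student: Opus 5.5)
We argue by contradiction. Suppose there is an equivariant birational morphism $p:(X,i)\to(Y,j)$ onto a minimal pair $(Y,j)$ of type ii) of Lemma \ref{minimalpairs}. Then $Y$ carries a conic fibration $f:Y\to\mathbb{P}^1$ with $f\circ j=f$ and $s$ singular fibres. The fixed locus $\Fix(j)$ is a smooth curve $R_Y$ of pure dimension $1$, it is a bisection of $f$, and $j$ has no isolated fixed points. Two consequences of the hypotheses will be used throughout.

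First, by Remark \ref{noc} every curve contracted by $p$ is a $(-1)$-curve. So there are no infinitely near points: $p$ is the blow up of $N$ distinct points $y_1,\ldots,y_N\in Y$. From $K_X^2=-1$ and $K_Y^2=8-s$ we get $N=9-s$.

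Second, each exceptional curve $E_k$ meets $C$ in two points. Hence $\overline C:=p(C)\in|-2K_Y|$ has a node or cusp at each $y_k$. Indeed $p_a(\overline C)=1+K_Y^2=9-s=N$, which is consistent with $C\simeq\mathbb{P}^1$. By equivariance, the set $\{y_k\}$ is $j$-stable. So the $y_k$ split into $a$ points fixed by $j$ (all of them on $R_Y$) and $b$ pairs exchanged by $j$, with $a+2b=9-s$.

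The key local observation is the following. If $y_k$ is fixed by $j$, then $y_k$ lies on the fixed curve $R_Y$. Hence $d j_{y_k}$ has eigenvalues $(1,-1)$, and the lifted involution acts on $E_k\simeq\mathbb{P}(T_{y_k}Y)$ non-trivially, with exactly two fixed points. In particular, no $E_k$ is pointwise fixed by $i$, because that would require $y_k$ to be an isolated fixed point of $j$, and $j$ has none. On the other hand, Remark \ref{nontre} says that whenever $X$ contains three disjoint $i$-invariant $(-1)$-curves, one of them is pointwise fixed. Therefore it suffices to prove $a\ge 3$. The curves $E_k$ over the $a$ fixed points are then disjoint and $i$-invariant, and we obtain the contradiction.

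To prove $a\ge 3$ we bound $b$ through the quotient. Let $Z:=Y/j$, a geometrically ruled surface over the same $\mathbb{P}^1$ (one may also pass to a smooth model where the singular fibres become smooth fibres), and let $D$ be the image of $\overline C$ in $Z$. Since $C$ is a $4$-section of $f\circ p$ (because $FC=-2FK_X=4$) and $i|_C\ne\mathbb{1}_C$ by Proposition \ref{nifc}, the map $C\to\mathbb{P}^1$ factors through $C/i\simeq\mathbb{P}^1$. Hence $D$ is a rational bisection of $Z\to\mathbb{P}^1$. Each exchanged pair $\{y_k,j(y_k)\}$ produces a singular point of $D$. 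Since $D$ is a rational bisection, its arithmetic genus, computed on the ruled surface in terms of the fibre class and the branch data of $R_Y$, bounds $b$ from above. Combining this bound with $a+2b=9-s$, and with the Riemann--Hurwitz count for $C\to D$ (whose ramification consists of the two fixed points of $i|_C$, and these lie over $\overline C\cap R_Y$), we aim to show $a\ge 3$. We expect the outcome to mirror the count $s=t=3$ in Proposition \ref{nopunodue}.

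The main obstacle is this genus bookkeeping on $Z$. It requires handling the singular fibres of $f$, where $R_Y$ is ramified, and checking that $\overline C$ meets $R_Y$ only in a controlled way. We would first try the case $s=0$, where $Z$ is some $\mathbb{F}_n$ and $R_Y$ consists of two disjoint sections, and then show that each singular fibre lowers $N$ by $1$ while lowering $b$ by at most $\tfrac12$ on average, so that $a\ge3$ persists.
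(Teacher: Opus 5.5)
Your reduction is correct and uses the same mechanism as the paper. By Remark \ref{noc} the map $p$ blows up $N=9-s$ distinct points, and $j$ has no isolated fixed points, so no exceptional curve over a $j$-fixed point is pointwise fixed by $i$; Remark \ref{nontre} then forces $a\le 2$, and the exchanged pairs are controlled by the arithmetic genus of $D\subset Z=Y/j$. However, the step that carries all the content, the upper bound on $b$, is only announced, and the way you propose to obtain it does not work. Going from $s=0$ to larger $s$ by letting ``each singular fibre lower $N$ by $1$ and $b$ by at most $\frac12$'' is not an argument: different $s$ give different surfaces $Y$ and $Z$, and the class of $D$ changes with them. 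Moreover, the target $a\ge 3$ is literally impossible when $s=8$, because then $N=1$ and $a\le 1$, so that case needs a contradiction of a different kind. You also never compute the class or arithmetic genus of $D$, which depends on which $\mathbb{F}_n$ the quotient is and on the branch curve. The paper does this bookkeeping case by case:
\begin{itemize}
\item it excludes $Z=\mathbb{F}_n$ with $n>0$ via the negative section, using unnodality and minimality;
\item for $Z=\mathbb{P}^1\times\mathbb{P}^1$ it shows that only $s\in\{0,4,8\}$ can occur;
\item it kills $s=0$ and $s=4$ by your count ($D$ has arithmetic genus $3$, resp.\ $1$, but would need $4$, resp.\ $2$, exchanged pairs);
\item it kills $s=8$ by observing that $-K_Y=q^*F_2$ moves in a pencil, so some member passes through the single blown-up point and $|-K_X|\neq\emptyset$.
\end{itemize}

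Your route can in fact be completed uniformly, with no case analysis. Write the branch curve as $B=2L$ in $\Pic(Z)$, so that $R_Y\sim q^*L$ and $K_Y=q^*(K_Z+L)$. From $q_*{\cal O}_Y={\cal O}_Z\oplus{\cal O}_Z(-L)$ and the rationality of $Y$ and $Z$ you get $1=\chi({\cal O}_Y)=2+\frac12 L(L+K_Z)$. Hence $K_Y R_Y=2L(L+K_Z)=-4$, and so $\overline C R_Y=8$. Since $i|_C\neq\mathbb{1}_C$, you have $q^*D=\overline C$. The double cover formula $2p_a(\overline C)-2=2(2p_a(D)-2)+DB$, with $DB=\overline C R_Y=8$ and $p_a(\overline C)=N$, then gives $N=2p_a(D)+3$. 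Since $D$ is rational and the $b$ exchanged pairs give $b$ distinct singular points of $D$, you get $b\le p_a(D)=(N-3)/2$, i.e.\ $a=N-2b\ge 3$, contradicting Remark \ref{nontre}. For $N=1$ the same identity gives $p_a(D)=-1$, which is already absurd for an irreducible curve. With this computation inserted, your argument would be a cleaner alternative to the paper's case analysis; without it, the decisive step of the proof is missing.
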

{\it Proof:} Assume the countrary: then we find an equivariant morphism $$p : X \to Y$$ with $Y$ a smooth rational surface equipped with an involution $$i_Y : Y \to Y$$ such that $$p \circ i = i_Y \circ p$$ Moreover, $Y$ admits a conic fibration $\pi_Y : Y \to \mathbb{P}^1$, whose fibers are all preserved by $i_Y$. Note that such an involution $i_Y$ arises as the deck involution of a double cover $q : Y \to \mathbb{F}_n$ on a Hirzebruch surface $\mathbb{F}_n$, branched over a smooth bisection $B \subset \mathbb{F}_n$. The singular fibers of $\pi_Y: Y \to \mathbb{P}^1$ correspond to the ramification locus of the restriction $B \to \mathbb{P}^1$. Putting all together, we have a diagram:
\[\begin{tikzcd}
X \arrow{r}{i} \arrow[swap]{d}{p} & X \arrow{d}{p} &
\\ Y  \arrow{r}{i_Y} & Y \arrow{r}{q} \arrow[swap]{dr}{\pi_Y} & \mathbb{F}_n \arrow{d}{}
\\  &  & \mathbb{P}^1
\end{tikzcd}
\]
We first want to exclude the case $n > 0$. Assume this actually happens, and consider the negative section $C_{- n} \subset \mathbb{F}_n$. If the pullback $q^* (C_{-n}) \subset Y$ is irreducible, then it is a curve of self intersection $$q^*(C_{-n})^2 = - 2 n \le -2$$ By Proposition \ref{negativecurves} and the unnodality assumption, the strict transform of $q^* (C_{-n})$ in $X$ must necessarily coincide with a component of the Coble curve $C$. Since $C$ is irreducible, we have $$p (C) = q^* (C_{- n})$$ But by definition, $Y$ is obtained by $X$ as a blow - down of $i$-equivariant $(-1)$-curves. Each $(-1)$-curve of $X$ intersects $C$ in two points, hence its contraction makes the self-intersection of $C$ jump by $4$, that is, $$p(C)^2 \ge C^2 + 4 = 0,$$ which is a contradiction.\\
Another contradiction arises if we assume that $q^* (C_{- n})$ actually splits as $$q^* (C_{- n}) = A + B$$ with $A, B$ curves switched by $i_Y$. In this case, we find the same self - intersection $$- 2 n =q^* (C_{-n})^2 = A^2 + B^2 + 2 A B = 2 A^2 + 2 A B$$ This shows that $A, B$ are two distinct curves with negative self - intersection, hence $$A^2 = B^2 = - 1$$ by Proposition \ref{negativecurves}. So the previous equality becomes $$- 2 n = 2 A B - 2$$ which is possible only if $A B = 0$ and $n = 1$. This again is a contradiction, since $i_Y$ should exchange two disjoint $(-1)$-curves, against the hypothesis of minimality.\\
The last case to exclude is that $C_{- n}$ might be a component of the branch locus $B \subset \mathbb{F}_n$ of $q$. Since $B$ is a bisection of $\mathbb{F}_n$, the other component $B - C_{- n}$ must be another section, disjoint from $C_{- n}$ (otherwise $Y$ would be a singular surface). The only curves with this property live in the linear system $C_{- n} + n F$, where $F$ is a fiber of the $\mathbb{P}^1$-bundle $|F| : \mathbb{F}_n \to \mathbb{P}^1$. Thus we find $$B = 2 C_{- n} + n F,$$ which forces $n$ to be even. Let $R \subset Y$ be the ramification component corresponding to $C_{- n}$, so that $$q^* (C_{- n}) = 2 R.$$ Taking squares, we have $$- 2 n = 4 R^2,$$ that is $$R^2 = - \frac{n}{2}$$ Now we can argue as before: if $n \ge 4$, then $R^2 \le -2$ and we still find $R = p (C)$, which is a contradiction, since $p(C)^2 \ge 0$. But if $n = 2$, we still have a contradiction, since $R^2 = - \frac{n}{2} = -1$ would give a $(-1)$-curve fixed by $i_Y$, against the assumption of minimality.\\
Finally, we have proved that $n = 0$, that is $$\mathbb{F}_n = \mathbb{F}_0 = \mathbb{P}^1 \times \mathbb{P}^1.$$ To avoid confusion, we denote by $F_1 \in \Pic (\mathbb{P}^1 \times \mathbb{P}^1)$ the ruling induced from $Y$, so that $q^* (F_1)$ is the conic bundle on $Y$ given by Lemma \ref{minimalpairs}. Let $F_2$ be the other ruling. In this notation, the branch curve $B$ of $q$ lives in the linear system \begin{eqnarray}\label{branch} B \in |a F_1 + 2 F_2|\end{eqnarray} for some even number $a \ge 0$.\\
If $a = 0$ then $B$ is the disjoint union of two horizontal lines in $F_2$. This means that $Y = \mathbb{P}^1 \times \mathbb{P}^1$ too, with the involution $i_Y : \mathbb{P}^1 \times \mathbb{P}^1 \to \mathbb{P}^1 \times \mathbb{P}^1$ acting as the identity on the first component. In a suitable choice of coordinates $([X_0, X_1], [Y_0, Y_1])$ on $Y$, we have $$i_Y ([X_0, X_1], [Y_0, Y_1]) = ([X_0, X_1], [Y_0, - Y_1])$$ and \begin{eqnarray}\label{quoz} q([X_0, X_1], [Y_0, Y_1]) = ([X_0, X_1], [Y_0^2, Y_1^2]) \end{eqnarray} Since $p : X \to Y$ is birational, the image $p(C) = p (- 2 K_X)$ must live in the linear system $$p (C) \in | - 2 K_{\mathbb{P}^1 \times \mathbb{P}^1}| = |{\cal O}_{\mathbb{P}^1 \times \mathbb{P}^1}(4, 4)|$$ An irreducible member of this system has algebraic genus $9$, as it is confirmed by the difference $$K_Y^2 - K_X^2 = 9$$ Hence $Y$ comes from the contraction via $p$ of $9$ $(-1)$-curves of $X$ over $9$ nodes of $p (C)$, and these points must be set - theoretically preserved by $i_Y$. Denote by $s$ the number of pairs of nodes which are switched by $i_Y$, and by $t$ the number of nodes fixed by $i_Y$, with $$2 s + t = 9$$ By Remark \ref{nontre}, we must have $t < 3$, so that $s = 4$ and $t = 1$ is the only possible solution. 
Now let: $$D := q(p (C)) \subset \mathbb{P}^1 \times \mathbb{P}^1$$ be the set - theoretic image of $p(C)$ in the final $\mathbb{P}^1 \times \mathbb{P}^1$, with the reduced scheme structure. Since $q : p(C) \to D$ has degree $2$, as divisors we have $$q_* (p_* (C)) = 2 D$$ But the expression \ref{quoz} shows that the induced push - forward $q_*$ acts as $F_1 \to 2 F_1, F_2 \to F_2$, so that $$2 D = q_* (p (C)) = q_* (4 F_1 + 4 F_2) = 8 F_1 + 4 F_2$$ and hence $$D \in |4 F_1 + 2 F_2|$$
The unique fixed node of $p (C)$ corresponds to a tangency point between $D$ and the branch locus $B$, while the $4$ pairs correspond to $4$ actual nodes of $D$. But an irreducible curve $D \in |4 F_1 + 2 F_2|$ has algebraic genus $3$, hence it cannot admit $4$ nodes.\\
We now show that the number $a$ in \ref{branch} can only assume the values $2$ or $4$. Indeed, consider the pencil $F_2$ and the pull-back $q^* (F_2)$ on $Y$. It is a pencil of hyperelliptic curves, which cover $2 : 1$ the lines in $|F_2|$ with $a$ branch points, so they have arithmetic genus $$p_a (q^*(F_2)) = \frac{a - 2}{2}$$ Note that $|- 2 K_Y|$ is effective, since it contains the curve $p (C)$, so the product $$q^* (F_2) (- 2 K_Y) \ge 0$$ must be non - negative. Putting this inside adjunction formula, we have $$2 p_a (q^*(F_2)) - 2 = q^* (F_2)^2 + q^* (F_2) K_Y \le 0$$ This leaves only $p_a (q^* (F_2)) = 0, 1$ as possibilities, and they correspond respectively to $a = 2, 4$.\\
If $a = 2$, then $q : Y \to \mathbb{P}^1 \times \mathbb{P}^1$ is branched over a curve of type $B \in |2 F_1 + 2 F_2|$, and we can compute $K_Y$ as $$K_Y = q^* (K_{\mathbb{P}^1 \times \mathbb{P}^1}) + \frac{1}{2} q^*(B) = q^*(- F_1 - F_2)$$
Note that $$K_Y^2 = 4$$ so $Y$ is obtained from $X$ via the contraction of $K_Y^2 - K_X^2 = 5$ equivariant $(-1)$-curves, which correspond to $5$ nodes of $p (C)$. As before, assume that $s$ pairs are switched by $i_Y$, while $t$ nodes are fixed, with $2 s + t = 5$. Since there are not isolated fixed points in $Y$, Proposition \ref{nontre} gives again $t < 3$, that is $$s = 2, \ t = 1$$ necessarily. Again, this means that $q(p (C))$ has $2$ nodes. As divisors, we have $q_* (p_* (C)) = 2 D$ for some effective irreducible divisor $D \subset \mathbb{P}^1 \times \mathbb{P}^1$, and $$q^* (D) = p (C) = - 2 K_Y = q^* (2 F_1 + 2 F_2)$$ so that $$D \in |2 F_1 + 2 F_2|$$ But an irreducible curve in this linear system has algebraic genus $1$, so it can have at most one node, contradicting $s = 2$.\\
There is only one case left, that is the number $a$ in \ref{branch} equals $4$, so that $B = 4 F_1 + 2 F_2$. Again, we compute can compute $K_Y$ as $$K_Y = q^* (K_{\mathbb{P}^1 \times \mathbb{P}^1}) + \frac{1}{2} q^*(B) = q^*(- F_2)$$ Hence the canonical has square $$K_Y^2 = 0$$ and hence $Y$ is obtained from $X$ via the contraction of only $K_Y^2 - K_X^2 = 1$ $(-1)$-curve $E$ to a point $p \in Y$. But $F_2$ can move in $\mathbb{P}^1 \times \mathbb{P}^1$, and hence $|- K_Y| = |q^* (F_2)|$ can move in $Y$. But then one member of $- K_Y$ passes through $p$, which means that $|- p^* K_Y - E|$ is non empty. This contradicts the very Coble assumption $|- K_X| = \emptyset$. $\square$\\
\\
\begin{proposition}
For an unnodal Coble surface, with irreducible boundary $\{C\} = |- 2 K_X|$, cases $i)$ and $iii)$ of Theorem \ref{minimalpairs} cannot happen.
\end{proposition}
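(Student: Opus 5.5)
Let $i$ be an involution on $X$ and let $p : (X, i) \to (Y, i_Y)$ be an equivariant morphism to a minimal pair. By Remark \ref{noc}, every curve contracted by $p$ is a $(-1)$-curve meeting $C$ in two points, so $p(C) \in |-2K_Y|$ is irreducible and has only nodes or cusps. Moreover $K_Y^2 - K_X^2 = K_Y^2 + 1$ is the number of contracted curves. We treat the two cases separately. In each we count the singular points of $p(C)$ and sort them into $i_Y$-orbits: $s$ pairs exchanged by $i_Y$ and $t$ points fixed by $i_Y$. The key constraint is Remark \ref{nontre}. Three $i_Y$-fixed singular points of $p(C)$ give three disjoint $(-1)$-curves on $X$ preserved by $i$, so $t \le 2$. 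The same remark also forbids, in some configurations, $i$ acting non-trivially on all three of them.

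\textbf{Case iii).} Here $Y = \mathbb{P}^2$, $i_Y$ is a projective linear involution, and $p(C)$ is an irreducible $i_Y$-invariant sextic with $10$ nodes or cusps, so $2s + t = 10$. The fixed locus of $i_Y$ is a line $\ell$ together with an isolated point $o$. Every $i_Y$-fixed singular point therefore lies on $\ell \cup \{o\}$, and $t \le 2$ forces $s \ge 4$.

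To get a contradiction, we pass to the quotient. Let $q : \mathbb{P}^2 \dashrightarrow \mathbb{P}(1,1,2)$ be the quotient map, or equivalently project from $o$ and work on the double cover $\mathbb{F}_1 \to \mathbb{F}_1$ branched along $\ell$ and the exceptional curve over $o$. As in the proofs of Propositions \ref{nopunodue} and \ref{dueno}, compute the class of the image curve $D = q(p(C))$ and its arithmetic genus. Each exchanged pair of singular points gives a singular point of $D$. Then compare: $s \ge 4$ should exceed the arithmetic genus of $D$ in every subcase. The subcases depend on whether $o \in p(C)$ and on how $p(C)$ meets $\ell$.

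If the genus count alone does not suffice, the fallback is to take three of the contracted $(-1)$-curves and apply the quintic model of Subsection \ref{qcs}, arguing exactly as in Remark \ref{nontre}.

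\textbf{Case i).} Here $Y = \mathbb{F}_n$ with $n \ne 1$, and $i_Y$ covers a nontrivial involution $\tau$ of the base $\mathbb{P}^1$. First we exclude $n \ge 2$ as in the proofs of Proposition \ref{extmoc} and Proposition \ref{dueno}. The negative section $C_{-n}$ is $i_Y$-invariant. If $n \ge 3$, its strict transform has self-intersection at most $-3$, which contradicts Proposition \ref{negativecurves}. If $n = 2$, unnodality forces the strict transform to be $C$, so $C_{-2} = p(C)$; this is impossible because $p(C)^2 \ge C^2 + 4 \cdot 10 > 0$.

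So $Y = \mathbb{P}^1 \times \mathbb{P}^1$, and $i_Y = (\tau, \sigma)$ preserves the rulings. Then $p(C) \in |\mathcal{O}(4,4)|$ has $9$ nodes or cusps, so $2s + t = 9$. With $t \le 2$ this forces $t = 1$ and $s = 4$. Now split according to whether $\sigma$ is trivial or not.

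If $\sigma$ is nontrivial, $i_Y$ has four isolated fixed points, and the quotient is a quadric cone. If $\sigma$ is trivial, $i_Y$ fixes two fibres pointwise. In each subcase, compute the image divisor $D$ in the quotient, as in the $a = 0$ step of Proposition \ref{dueno}: there $q_*$ doubles one ruling, giving a class like $|2F_1 + 4F_2|$ of arithmetic genus $3$, or its analogue on the cone. Then check that $D$ cannot carry the $4$ nodes coming from the $4$ exchanged pairs.

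\textbf{Main obstacle.} The hardest part is the bookkeeping in the quotient. Singular points of $p(C)$ that lie on the fixed locus of $i_Y$ become tangencies with the branch curve, not nodes of $D$. In case iii), the isolated fixed point $o$ can also contribute. I would organise each subcase around the inequality $s \le p_a(D)$ and check it case by case.
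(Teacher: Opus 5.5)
There is a genuine gap: the numerical count on the quotient is exactly tight in the subcase that matters, so it yields no contradiction there, and your fallback does not reach that subcase either.

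\textbf{First, Remark \ref{nontre} does not give ``$t\le 2$''.} It only forbids three disjoint $i$-invariant $(-1)$-curves on each of which $i$ acts non-trivially. The exceptional curve over an \emph{isolated} fixed point of $i_Y$ is fixed pointwise by $i$, so it does not count. This applies to $o$ in case iii), and to all four fixed points of $(\tau,\sigma)$ when $\sigma\neq 1$.

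\textbf{Second, the inequality $s\le p_a(D)$ does not close the argument.} In case iii), take $o\notin p(C)$, two fixed nodes on $\ell$, and $s=4$. This is compatible with $2s+t=10$ and with the correct form of Remark \ref{nontre}.
\begin{itemize}
\item The quotient curve satisfies $D\in|\mathcal{O}(6)|$ on $\mathbb{P}(1,1,2)$, equivalently $D\sim 3C_{-2}+6F$ on $\mathbb{F}_2$, so $p_a(D)=4$.
\item The two fixed nodes become smooth points of $D$ tangent to the branch conic.
\item Hence $\delta(D)=4=p_a(D)$, exactly what a rational $D$ requires. The double cover of $\tilde D\simeq\mathbb{P}^1$ branched at the two remaining transverse points is again $\mathbb{P}^1$, so everything is consistent.
\end{itemize}
The same happens in case i) with $\sigma\neq 1$. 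There the quotient $S$ is a quartic del Pezzo surface with four $A_1$ points, not a quadric cone. One gets $p_a(D)=1+K_S^2=5$, and $(s,t)=(4,1)$ gives $\delta(D)=4+1=5$, again no contradiction.

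Your fallback does not help. In the bad subcase only two contracted $(-1)$-curves are $i$-invariant; the other eight are exchanged in pairs.

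\textbf{The missing idea is to move between minimal models by elementary transformations, as the paper does.}
\begin{itemize}
\item In case i) with $\sigma\neq 1$, the number of nodes is nine, which is odd, so one node sits at an isolated fixed point. Blow it up and contract the two invariant fibres through it. This gives an equivariant model $(\mathbb{P}^2,I)$, and it lands precisely in the bad subcase above.
\item In case iii), not all nodes can be $I$-fixed, since nine on $\ell$ would give $\overline C\cdot\ell\ge 18$. So take a swapped pair $p,q$. The line $\overline{pq}$ is $I$-invariant (it passes through $o$). By unnodality it contains no third node, so its strict transform $L-E_p-E_q$ is an $i$-invariant $(-1)$-curve on which $i$ acts non-trivially.
\item Blowing up $p,q$ and contracting $\overline{pq}$ gives $(\mathbb{P}^1\times\mathbb{P}^1,\mathrm{swap})$, which Proposition \ref{nopunodue} excludes. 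Equivalently, $L-E_p-E_q$ is the missing third curve for Remark \ref{nontre}: it is disjoint from the two curves over the fixed nodes on $\ell$.
\end{itemize}

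Your treatment of $\sigma=1$ and of $n\ge 2$ is fine, up to one small fix. For $n\ge 3$, the strict transform of $C_{-n}$ could be the $(-4)$-curve $C$ itself. Exclude this with $p(C)^2=-4+4\cdot 9>0$; note there are nine contracted curves over $\mathbb{F}_n$, not ten.
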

{\it Proof:} Assume that we are in case $i)$ of Theorem \ref{minimalpairs}: then there exists a commutative diagram:
\[\begin{tikzcd}
X \arrow{r}{i} \arrow[swap]{d}{p} & X \arrow{d}{p} 
\\ \mathbb{F}_n  \arrow{r}{i'}  \arrow[swap]{d}{f} & \mathbb{F}_n \arrow{d}{f}
\\  \mathbb{P}^1 \arrow{r}{\tau} & \mathbb{P}^1
\end{tikzcd}
\]
where $\mathbb{F}_n$ is a Hirzebruch surface, with its $\mathbb{P}^1$-bundle $f : \mathbb{F}_n \to \mathbb{P}^1$, $p$ is an equivariant morphism, and $\tau$ is a nontrivial involution on $\mathbb{P}^1$.\\
Again, we start excluding the case $n > 0$: note that $n = 1$ is impossible, since $(\mathbb{F}_1, i')$ is not a minimal pair. Hence $n \ge 2$, but then the negative curve $C_{- n} \subset \mathbb{F}_n$ has a strict transform in $X$ with self intersection lesser or equal than $-2$. Again, by Proposition \ref{negativecurves}, this strict tranform must coincide with the anti - bicanonical curve $C$, so that $p (C) = C_{-n}$. But this is impossible, since $p: X \to \mathbb{F}_n$ contracts $(-1)$-curves, and hence $C_{-n}$ should be singular, which is false.\\
This gives $n = 0$, so we have $$i' : \mathbb{P}^1 \times \mathbb{P}^1 \to \mathbb{P}^1 \times \mathbb{P}^1$$ By Theorem \ref{minimalpairs}, the involution $i'$ preserves the linear system given by the first ruling $|F_1|$, and it acts on it as the involution $\tau$ of the previous diagram. But then $i'$ must preserve also the second ruling $|F_2|$, so let denote by $\sigma$ the action of $i': \mathbb{P}^1 \to \mathbb{P}^1$ on the second coordinate of the product. We already saw during the proof of \ref{dueno} how to deal with the case $\sigma = \mathbb{1}_{\mathbb{P}^1}$, so we can assume $\sigma \ne \mathbb{1}_{\mathbb{P}^1}$. In a suitable choice of coordinates, we can write \begin{eqnarray} \label{quasihor} i' ([X_0, X_1], [Y_0, Y_1]) = ([X_0, - X_1], [Y_0, - Y_1])\end{eqnarray} It has four fixed points, namely $([1, 0], [1, 0]), ([1, 0], [0, 1]), ([0, 1], [1, 0]), ([0, 1], [0, 1])$. Moreover, since $p$ is birational, the image $p (C)$ is an irreducible member of the linear system \begin{eqnarray} \label{quasihordue} p (C) \in |{\cal O}_{\mathbb{P}^1 \times \mathbb{P}^1} (4, 4)|\end{eqnarray} and hence it must have $9$ nodes. This set of nodes is preserved by $i'$, hence at least one of them must coincide with one of the four fixed points. Assume for example that this node is $p = ([1, 0], [1, 0])$. Then the map $p : X \to \mathbb{P}^1 \times \mathbb{P}^1$ factors through the blow - up $X \to Bl_p (\mathbb{P}^1 \times \mathbb{P}^1) \to \mathbb{P}^1 \times \mathbb{P}^1$. The involution induced by $i'$ on this blow - up acts trivially on the exceptional divisor over $p$, and it preserves both the strict transforms of the two fibers $\{X_1 = 0\}, \{Y_1 = 0\}$ containing $p$. Hence we can contract these $(-1)$-curves, and this produces another minimal model:
\[\begin{tikzcd}
X \arrow{r}{i} \arrow[swap]{d}{p} & X \arrow{d}{p} 
\\  \mathbb{P}^2 \arrow{r}{I} & \mathbb{P}^2
\end{tikzcd}
\]
where $I$ is a projective involution. This is the unique left possibility, corresponding to case $iii)$ of Theorem \ref{minimalpairs}. In a suitable choice of coordinates, we can write $$I [X_0, X_1, X_2] = [- X_0, X_1, X_2]$$ and the fixed locus is the disjoint union $\Fix (I) = \{p_0\} \cup L_0$, where $p_0 = [1, 0, 0]$ and $L_0 = \{X_0 = 0\}$. There are three possibilities: $p_0$ could be a node of the $10$-nodal sextic $\overline C := p (C) \subset \mathbb{P}^2$, or a smooth point of $\overline C$, or it could not lie in $\overline C$ at all.\\
In all these situations, it is impossible that all nodes are fixed by $I$, otherwise at least $9$ of them (all but at most $p_0$) should lie on $L_0$. This would provide an intersection product $\overline C L_0 = 18$, which is way too large. Hence there exists at least one pair of nodes $p, q \in {\rm\, Sing\,} (\overline C)$ switched by $I$. As before, the map $p : X \to \mathbb{P}^2$ factors through the blow - up $X \to Bl_{p, q} \mathbb{P}^2  \to \mathbb{P}^2$ This blow up is equipped with a well - defined involution, induced by $I$, which preserves the $(-1)$-curve consisting on the strict transform of the line ${\overline {p, q}}$. The contraction of this curve leads us to a minimal model of $(X, i)$ consisting of $\mathbb{P}^1 \times \mathbb{P}^1$, equipped with the involution which exchanges the two factors. This is forbidden by Proposition \ref{nopunodue}. $\square$\\
\\
This concludes the proof of Theorem \ref{solobert}.
\begin{example}
Again, we note that the irreducibility hypothesis for the curve $C$ cannot be dropped. Indeed, consider a plane cubic $\overline C_3$ whose components are all rational, and whose singularities are all double points: the curve $\overline C_3$ could be the union of three non - concurrent lines, or a line and a smooth conic, or an irreducible cubic with a node, or a cusp. Pick a point $p_0 \in \mathbb{P}^2$ outside $\overline C_3$, and a line $L_0$ not containing $p_0$ nor any of the singular points of $\overline C_3$, and not tangent to any of its components. If $\overline C_3$ splits as a smooth conic $C_2$ plus a line, assume also that $L_0$ is not the polar to $C_2$ with center $p_0$. Let $I : \mathbb{P}^2 \to \mathbb{P}^2$ be the involution with fixed locus $p_0 \cup L_0$, and let $X$ be the blow up of all the singularities of the reducible sextic $C_3 + \overline C_3$. Then $X$ is a Coble surface, equipped with a non - minimal involution induced by a projective one, with $2, 4$ or $6$ boundary components.
\end{example}
Note also that the expressions \ref{quasihor} and \ref{quasihordue} closely remember the construction of Horikawa models for Enriques surfaces in \cite{Alexeev2023}. The difference is that the construction in the cited article starts with an $i'$ invariant divisor of type $|{\cal O}_{\mathbb{P}^1 \times \mathbb{P}^1} (4, 4)|$ which in general needs not to be irreducible or rational, differently from our case. Moreover, the construction in \cite{Alexeev2023} needs this divisor to not contain any of the $4$ fixed points, which is something we could not avoid in our proof.


\subsection{Families of Coble surfaces}
The goal of this section is to construct a suitable space to parametrize triples $(X, H, E)$, where $X$ is a Coble surface, $H \in \Pic (X)$ is a polarization, and $E \subset X$ is a $(-1)$-curve contracted by the morphism induced by $|H|$. We will proceed in the following way: let $$\mathbb{P}^{27} \simeq \mathbb{P}(H^0 ({\cal O}_{\mathbb{P}^2} (6)))$$ the space of plane curves of degree $6$. Consider the Severi variety $V \subset \mathbb{P}^{27}$ given by $$V := \{[F] {\rm\, s.\, t.\,} V(F) {\rm\, is\, reduced,\, irreducible,\, with\,} 10 {\rm\, nodes\,}\} \subset \mathbb{P}^{27}$$ It is a locally closed subset of $\mathbb{P}^{27}$ of dimension $17$.\\
Now we need to specify a choice for a $(-1)$-curve. Take $\tilde V \subset V \times \mathbb{P}^2$, $$\tilde V := \{(F, p) {\rm\, such\, that\,} p \in Sing(V(F))\}$$ The variety $\tilde V$ is a $10 : 1$ cover of $V$, and up to restricting $V$, we can assume it is unramified. Now we take the trivial $\mathbb{P}^2$-bundle $\tilde V \times \mathbb{P}^2$, and note that the fibre product $$\tilde V \times_V \tilde V = \{(F, p, q) {\rm\, such\, that\,} p, q \in Sing (V(F))\}$$ lives inside $\tilde V \times \mathbb{P}^2$. The subspace $\tilde V \times_V \tilde V$ has codimension $2$ inside $\tilde V \times \mathbb{P}^2$, and it consists of a disjoint union of $10$ copies of $\tilde V$. Note that $\tilde V \times_V \tilde V$ contains a distinguished component, namely the diagonal $\Delta_{\tilde V}$, $$\Delta_{\tilde V} = \{(F, p, p), p \in Sing (V(F))\} \subset \tilde V \times_V \tilde V$$ Finally, we construct the universal Coble surface $${\cal X} := Bl_{\tilde V \times_V \tilde V} \tilde V \times \mathbb{P}^2$$ 
Let $\overline {\cal C} \subset \tilde V \times \mathbb{P}^2$ the universal nodal sextic $$\overline {\cal C} := \{((F, p), x) {\rm\, such\, that\,} F(x) = 0\}$$ and let ${\cal C} \subset \cal{X}$ be its strict transform.\\
Let ${\cal E} \subset {\cal X}$ be the universal $(-1)$-curve, that is, the exceptional divisor associated to $\Delta_{\tilde V}$.\\
Let ${\cal H} \in \Pic({\cal X})$ be the line bundle given by the pull-back of ${\cal O}_{\mathbb{P}^2} (1)$ via the composite map $${\cal X} \to \tilde V \times \mathbb{P}^2 \to \mathbb{P}^2$$ For every $x = (F, p) \in \tilde V$, the fiber ${\cal X}_x$ is the Coble surface obtained by the blow up of the singularities of $F$. The Coble curve of ${\cal X}_x$ is just the normalization of $V(F)$, that is, the intersection ${\cal C}_x := {\cal C} \cap {\cal X}_x$. The divisor ${\cal E}$ cuts in ${\cal X}_x$ the exceptional divisor associated to $p$, and the restriction ${\cal H}|_{{\cal X}_x}$ is a polarization on ${\cal X}_x$ which contracts ${\cal E}_x$.\\
Informally, we can think of this space as the set of quadruples $(X, C, E, H)$, where $(X, H)$ is a polarized Coble surface, with $C$ its Coble curve, and $E$ is one of the $10$ divisors contracted by $H$. Since $C$ is uniquely determined by the relation $|- 2 K_X| = \{C\}$, it can be omitted. Hence we will talk about triples $(X, E, H)$ with the properties we said above.\\
Note that we could achieve the same construction first by considering $${\cal Y} := Bl_{\tilde V} V \times \mathbb{P}^2$$ to build the universal Coble surface over $V$, and then defining ${\cal X} = f^* {\cal Y}$ as the pull-back via the $10 : 1$ cover $f : \tilde V \to V$.
\begin{definition}\label{rda}
A rationally determined automorphism of Coble surfaces is an automorphism ${\cal G} : {\cal Y} \to {\cal Y}$ which fits in a commutative diagram:
\[\begin{tikzcd}
{\cal Y}  \arrow{r}{{\cal G}} \arrow[swap]{d}& {\cal Y} \arrow{dl}
\\ \tilde V  
\end{tikzcd}
\]
Note that ${\cal G}$ induces via pull-back an automorphism $f^* {\cal G} : {\cal X} \to {\cal X}$.\\
The coincidence locus $\Gamma({\cal G})$ of ${\cal G}$ is the set of pairs $$\Gamma ({\cal G}) := \{(X, E) \in \tilde V {\rm\, such\, that\,} f^* {\cal G}|_{E \cap C} = \mathbb{1}\}$$
\end{definition}

\begin{example}
Let $A, B, C \in H^0 ({\cal O}_{\mathbb{P}^1} (2))$ three linearly independent forms of degree $2$ in $2$ homogeneous variables $u, v$. Assume that none of them is a square of a linear form, and no two of them have share a common root. Let $$\Lambda = (\lambda_{i, j})_{i, j = 0}^2$$ a generic $3 \times 3$ invertible matrix with complex coefficients, and define \begin{eqnarray*}
G_0 &:=& \lambda_{0, 0} A + \lambda_{0, 1} B + \lambda_{0, 2} C
\\
G_1 &:=& \lambda_{1, 0} A + \lambda_{1, 1} B + \lambda_{1, 2} C
\\
G_2 &:=& \lambda_{2, 0} A + \lambda_{2, 1} B + \lambda_{2, 2} C
\end{eqnarray*}
Consider the three polynomials of degree $6$: $B C G_0, A C G_1, A B G_2$. If the matrix $\Lambda$ is generic, these polynomials are linearly independent, with no common roots, and they define a regular map $$\gamma: \mathbb{P}^1 \to \mathbb{P}^2$$ $$\gamma (u, v) := [B C G_0 (u, v), A C G_1 (u, v), A B G_2 (u, v)]$$ which is birational over a rational plane sextic with $10$ nodes. Its singularities are located at the points $[1, 0, 0], [0, 1, 0], [0, 0, 1]$ and other seven points $p_1, \ldots, p_7$. Consider the set of triples $(X, E, H)$, where:\\
$i)$ $X$ is the blow up of all the nodes;\\
$ii)$ $E \subset X$ is the strict transform of the line $Z = 0$;\\
$iii)$ $H \in \Pic(X)$ is the polarization defined by $Y Z, X Z, X Y$, that is the system of plane conics through $[1, 0, 0], [0, 1, 0], [0, 0, 1]$ Its exceptional curves are the divisors associated to $p_1, \ldots, p_7$, plus the transforms of the lines $X = 0, Y = 0, Z = 0$. In particular, $E$ is one of them.\\
Consider the Bertini involution ${\cal G}: X \to X$ with base points $[0, 0, 1], p_1, \ldots, p_7$. It is well defined for every $\Lambda$, so we are in the case described by Definition \ref{rda}. Its restriction to the curve acts as the deck involution associated to the $g_2^1$ spanned by $A, B$. Hence its pair of fixed points is given by the zeroes of $$\frac{\partial A}{\partial u} \frac{\partial B}{\partial v} - \frac{\partial A}{\partial v} \frac{\partial B}{\partial u} = 0$$ Meanwhile, the intersection $E \cap C$ is given by $G_2 = 0$. Hence the coincidence loci of ${\cal G}$ is the set of matrices $\Lambda$ such that $G_2, \frac{\partial A}{\partial u} \frac{\partial B}{\partial V} - \frac{\partial A}{\partial v} \frac{\partial B}{\partial u}$ are linearly independent in $H^0 ({\cal O}_{\mathbb{P}^1} (2))$. Note that this is a $2$-codimensional condition over the set of all matrices $\Lambda$.
\end{example}
Thanks to Theorem \ref{solobert}, we are able to state that this is the general case:
\begin{lemma}
The coincidence loci of a rationally defined involution has codimension $2$ inside $\tilde V$.
\end{lemma}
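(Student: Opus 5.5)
The plan is to reduce the general case to the computation in the preceding Example, using Theorem \ref{solobert}. Let ${\cal I}$ be a rationally determined involution. Over the dense open subset of $\tilde V$ where the fiber ${\cal X}_x$ is unnodal (the unnodal locus is open and, by a dimension count, nonempty in each component of $V$), Theorem \ref{solobert} says that the restriction $i = {\cal I}|_X$ is the lift of a Bertini involution. This Bertini involution is attached to the blow-down of two disjoint $(-1)$-curves $E', E''$ to a degree $1$ Del Pezzo surface. Since ${\cal I}$ is defined over the whole family, the pair $\{E', E''\}$ varies continuously. After passing to a finite étale cover of $\tilde V$ if necessary, and restricting to a component, I may therefore regard $E', E''$ as universal $(-1)$-curves as well. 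By Proposition \ref{extmoc} I can then complete $E, E', E''$ (or $E', E''$ alone when $E \in \{E', E''\}$) to a maximal sequence. This realises $X$ as the blow-up of ten nodes of a plane sextic, with $i$ the Bertini involution through the eight nodes different from $p(E'), p(E'')$.

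The next step is to translate the coincidence condition to the normalisation $\mathbb{P}^1 \to \overline C$. By Proposition \ref{bertsempre}, $i|_C$ is the deck involution of a $g^1_2$ on $C \simeq \mathbb{P}^1$. Its two fixed points are the zeroes of a quadratic form $J$, by Proposition \ref{jacq}. On the other hand, $E \cap C$ is a degree $2$ divisor on $\mathbb{P}^1$, given by a quadratic form $G_E$. I must be careful with the definition: $i|_{E\cap C} = \mathbb{1}$ means that both points of $E \cap C$ are fixed, which is the proportionality of $G_E$ and $J$ (the Example states the condition with a slip, as ``linearly independent''). Choosing coordinates as in the proof of Theorem \ref{Coblecorrez} and in the Example, with three nodes at the coordinate points and $E$ a suitable exceptional or line class, both $J$ and $G_E$ become explicit algebraic functions of the parameters $(A,B,C,\Lambda)$.

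The coincidence locus is then the preimage of the diagonal under the map $x \mapsto ([G_E],[J]) \in \mathbb{P}^2 \times \mathbb{P}^2$, where $\mathbb{P}^2 = \mathbb{P}(H^0({\cal O}_{\mathbb{P}^1}(2)))$. The diagonal has codimension $2$, so every nonempty component of the locus has codimension at most $2$. For the reverse inequality, and for nonemptiness, I show that the map is dominant, or at least that its image meets the diagonal properly. In the Example's parametrisation $J = J(A,B)$ depends only on $A,B$, while $G_2$ ranges over all quadrics as $\Lambda$ varies. So for fixed $A,B,C$ the condition $[G_2] = [J(A,B)]$ cuts exactly two independent conditions on $\Lambda$, and generic members of that locus are still $10$-nodal. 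The remaining cases, where $E$ is an exceptional curve over a node not of the coordinate type, reduce to this one by the symmetry of Proposition \ref{extmoc}: any $(-1)$-curve can be moved into the position of the line $Z=0$ by a choice of the completing sequence.

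The main obstacle is the first step: making the application of Theorem \ref{solobert} uniform over the family. The theorem is fibrewise, so I must check that the pair $(E',E'')$ determining the Bertini lift is locally constant, i.e.\ monodromy-compatible. I also have to control the nodal locus, where Theorem \ref{solobert} fails, and show that it does not contribute components of smaller codimension. I would first try to argue that on unnodal fibres the set of $(-1)$-curves fixed by $i$ is discrete and varies in a finite étale family. On the nodal divisor I would simply take closures, since the statement concerns the codimension of the closure of the general part.
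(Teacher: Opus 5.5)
Your route is the paper's: its only justification is the sentence after the Example, that by Theorem \ref{solobert} the Bertini computation there ``is the general case''. Your reading of the coincidence condition as $[G_E]=[J]$, which corrects the Example's ``linearly independent'', is right. So is the bound ``codimension at most $2$'' for nonempty components, obtained by pulling back the diagonal of $\mathbb{P}^2\times\mathbb{P}^2$.

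The step that fails is the reduction in your third paragraph, that every $(-1)$-curve ``can be moved into the position of the line $Z=0$ by a choice of the completing sequence''. The pair $\{E',E''\}$ is intrinsic to $i$. Let $p\colon X\to Y$ contract $E',E''$; the Bertini involution acts as $-1$ on $K_Y^\perp$. An $i$-invariant $(-1)$-curve $E\neq E',E''$ would then have $p_*E=-mK_Y$, with $m=1+a+b$ and $m^2=a^2+b^2-1$, where $a=E\cdot E'$ and $b=E\cdot E''$. This forces $(a+1)(b+1)=0$, which is impossible. Hence $E',E''$ are exactly the $i$-invariant $(-1)$-curves, and the pair $(E\cdot E',E\cdot E'')$ is an invariant of $(E,i)$ that no change of blow-down structure can alter.

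The line $Z=0$ of the Example is $L-E'-E''$, of type $(1,1)$. The configuration of your first paragraph, where $E,E',E''$ are completed to a disjoint maximal sequence, has type $(0,0)$: $E$ lies over one of the eight base nodes. Such an $E$ can never sit at $Z=0$. With $E$ over $[0,0,1]$ and $E',E''$ over $[1,0,0],[0,1,0]$, the set $E\cap C$ is the zero set of the quadric $C$ of the Example (not the Coble curve). Varying $\Lambda$ therefore does nothing, and your count does not apply. As written, you have proved only the type $(1,1)$ case.

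What is missing is a case analysis by type:
\begin{itemize}
\item For $E\in\{E',E''\}$ the locus is empty. The lifted Bertini involution exchanges the two branches of the Coble curve at the fixed node (Proposition \ref{bertsempre}), so it swaps the two points of $E\cap C$. The statement can only hold on the other components.
\item For type $(0,0)$ the condition is $[C]=[J(A,B)]$, two conditions on the quadric $C$. It is nonempty, because the Example's genericity hypotheses still hold with $C=J(A,B)$:
\begin{itemize}
\item $J(A,B)$ is not a square, since a nontrivial involution of $\mathbb{P}^1$ has two distinct fixed points;
\item it shares no root with $A$ or $B$, since the roots of $A$ form a free orbit of $i|_C$;
\item it is independent of $A,B$, since its zero divisor is not a member of the $g^1_2$.
\end{itemize}
\item Type $(1,0)$, e.g.\ $E=L-E'-E_j$ with $E_j$ over $[0,0,1]$, is the line through $[1,0,0]$ and $[0,0,1]$. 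There the condition is $[G_1]=[J(A,B)]$.
\item Higher types need either a Cremona change of model keeping $E',E''$ exceptional, or a separate nonemptiness check.
\end{itemize}

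The invariance computation above also removes what you call the main obstacle. Since $\{E',E''\}$ is recovered from $i$ as its set of invariant $(-1)$-curves, it automatically varies in a finite \'etale family over the unnodal locus.
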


\newpage
\section{Appendix}
This Appendix contains some un - finished computations, which were made in attempt to classify involutions on unnodal Coble surfaces with irreducible Coble curve. These calculations were made before the proof of the Proposition \ref{nifc} and Theorem \ref{solobert} in Section $3$. The idea was to take any involution $i : X \to X$ such that $i|_C = \mathbb{1}_C$, an exceptional curve $E \subset X$, and look at the behaviour of the linear system $E + i (E)$. However, none of the proof of Section $3$ requires these computations, which until now remain suspended.\\
The previous lemma has a very nice consequence:
\begin{corollary}\label{diffmoc}
Suppose $X$ is a Coble surface, $C \subset X$ the Coble curve, $i: X \to X$ an involution acting identically on $C$, and $E \subset X$ a rational normal curve of self intersection $-1$. Then $i (E) \ne E$.
\end{corollary}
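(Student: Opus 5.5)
Assume by contradiction that $i(E)=E$. The plan is to copy the first step of the proof of Proposition \ref{nifc}: contract $E$ equivariantly and look at what happens to $C$ and to the fixed locus.

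Since $E$ is a smooth rational curve with $E^2=-1$, Castelnuovo's Theorem gives a smooth surface $X'$ and a blow-down $\pi: X\to X'$ with $\pi(E)=x'$ a point. Because $i(E)=E$, the involution $i$ descends to a biregular involution $i'$ of $X'$ with $\pi\circ i = i'\circ\pi$. The descent holds because $\pi\circ i$ is constant on $E$, so it factors through $\pi$. In addition, $i'$ is nontrivial and fixes $x'$.

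Next we compute how $E$ meets $C$. Adjunction gives $E K_X=-1$, so $EC = E(-2K_X)=2$. Thus $C$ meets $E$ in two points counted with multiplicity, and $\pi(C)$ passes through $x'$ with multiplicity $2$. Two cases arise. If $C\cap E$ is two distinct points, $\pi(C)$ has a node at $x'$. If $C$ is tangent to $E$, $\pi(C)$ has a cusp at $x'$; it cannot be smooth there, since $\pi^*\pi(C)=C+mE$ with $m=\operatorname{mult}_{x'}\pi(C)$, and $EC=2$ forces $m=2$. In either case $\pi(C)$ is singular at $x'$.

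Since $i|_C=\mathbb{1}_C$, the curve $\pi(C)=\pi(\Fix\text{-curve})$ lies in $\Fix(i')$, so $\Fix(i')$ contains a curve singular at $x'$. This contradicts Lemma \ref{Smoothfixedlocus} (Dolgachev--Zhang), applied to $i'$ on the smooth rational surface $X'$ with $p=2$. Hence $i(E)\ne E$.

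The only delicate point is the descent of $i$ to $X'$ together with the claim that $\pi(C)$ is genuinely singular at $x'$ (the multiplicity argument above). The rest is immediate. No irreducibility of $C$ is needed: the argument applies to whichever component of $C$ meets $E$, and when $C$ is irreducible this is $C$ itself.
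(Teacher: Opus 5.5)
Your proposal is correct and is essentially the paper's proof: contract $E$, descend $i$ to $i'$, use $EC=-2EK_X=2$ to see that $\pi(C)\subset \mathrm{Fix}(i')$ is singular at the image point, and contradict the Dolgachev--Zhang smoothness of the fixed locus. Your computation $\pi^*\pi(C)=C+2E$ is a slightly more careful version of the paper's assertion that $\pi(C)$ has a node. It also covers the case where $C$ is tangent to $E$ and the case of a reducible $C$.
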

{\it Proof:} Let $X'$ be the surface obtained from $X$ via the blow-down of $E$ to a point $p \in X'$. Then $X'$ is still a smooth rational surface. If $i(E) = E$ the involution $i$ descends to an involution $i': X' \to X'$. Since $E$ is a $(-1)$-curve, we have $$E K_X = -1$$ so $$E C = -2 E K_X = 2$$ Thus the image of $C$ inside $X'$ is fixed by $i'$ but it has a node at $p$, which contradicts the previous Lemma. $\square$\\

In particular, the intersection product $E i(E)$ is a non negative number. Moreover, since $i$ fixes the two intersection points in $E \cap C$, this intersection counts at least these two points. This leads to the next definition.
\begin{definition}
We will denote by $\overline E$ the curve $$\overline E := i(E)$$ and by $k \ge 0$ the natural number given by $$E \overline E = k + 2$$ In other words, $k$ is the number of non trivial intersection points between $E$ and $\overline E$.
\end{definition}
Of course this number $k$ depends on the choice of the $(-1)$-curve $E$ and the involution $i$. We want to compute the dimension  of the linear system $|E + \overline E|$.\\
We look at the normalization of the divisor $E + \overline E$, $\nu: E \sqcup \overline E \to E + \overline E$, where the symbol $\sqcup$ denotes the disjoint union. It defines a short exact sequence $$0 \to {\cal O}_{E + \overline E}  \xrightarrow{\nu^*} {\cal O}_{E \sqcup \overline E} = {\cal O}_E \oplus {\cal O}_{\overline E} \xrightarrow{(f, g) \to f|_{E \cap \overline E} - g|_{E \cap \overline E}} {\cal O}_{E \cap \overline E} \to 0$$  Tensoring by ${\cal O}_X (E + \overline E)$, and taking cohomology, we find $$0 \to H^0 ({\cal O}_{E + \overline E} (E + \overline E)) \to H^0 ({\cal O}_E (E + \overline E)) \oplus H^0 ({\cal O}_{\overline E} (E + \overline E)) \to H^0 ({\cal O}_{E \cap \overline E} (E + \overline E)) \to \cdots$$ 
\begin{remark}
The map $$ev: H^0 ({\cal O}_E (E + \overline E)) \oplus H^0 ({\cal O}_{\overline E} (E + \overline E)) \to H^0 ({\cal O}_{E \cap \overline E} (E + \overline E))$$  $$(\sigma, \tau) \to \sigma |_{E \cap \overline E} - \tau|_{E \cap \overline E}$$ is surjective.\\
Indeed, we have $$E (E + \overline E) = \overline E (E + \overline E) = k + 1$$ so the domain vector space can be thought as the direct sum of homogeneous polynomials in two variables of degree $k + 1$ over $E, \overline E$ respectively. On the other hand, the codomain is a sum of $k + 2$ copies of $\complex$, one for each intersection point in $E \cap \overline E$.\\
The restriction of this evaluation map to each of the two summands is of course an isomorphism, because the only homogeneous polynomial of degree $k + 1$ in two variables admitting $k + 2$ zeroes is $0$. The surjectivity of $ev$ follows.
\end{remark}
This fact has several consequences.
\begin{corollary}
We have $h^0 ({\cal O}_{E + \overline E} (E + \overline E)) = k + 2$, and $h^0 ({\cal O}_X (E + \overline E)) = k + 3$.
\end{corollary}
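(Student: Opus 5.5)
We have two statements to prove: $h^0 ({\cal O}_{E + \overline E} (E + \overline E)) = k + 2$ and $h^0 ({\cal O}_X (E + \overline E)) = k + 3$. Both come from the normalization sequence just written, together with the restriction sequence of $E + \overline E$ in $X$.

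\emph{First equality.} I would work with the exact sequence obtained by tensoring the normalization sequence with ${\cal O}_X (E + \overline E)$. Its $H^0$ part reads
$$0 \to H^0 ({\cal O}_{E + \overline E} (E + \overline E)) \to H^0 ({\cal O}_E (k+1)) \oplus H^0 ({\cal O}_{\overline E} (k+1)) \xrightarrow{ev} \complex^{k+2}.$$
Here $E \simeq \overline E \simeq \mathbb{P}^1$, and $E(E+\overline E) = -1 + k + 2 = k+1$, so the middle term has dimension $2(k+2)$. The previous Remark shows that $ev$ is surjective, so its kernel has dimension $2(k+2) - (k+2) = k+2$. This proves the first equality. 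The kernel can also be seen directly: a section on $E$ is determined by its values at the $k+2$ points of $E \cap \overline E$, and these values determine the matching section on $\overline E$. One should check that $E \cap \overline E$ consists of $k+2$ reduced points, i.e.\ transverse intersections, so that ${\cal O}_{E\cap\overline E}$ has length $k+2$. If some intersections were tangential, the same count still works scheme-theoretically, since length is $E\overline E = k+2$, and the normalization sequence should be replaced by the standard one for a reduced curve with two components, $0\to{\cal O}_{E+\overline E}\to{\cal O}_E\oplus{\cal O}_{\overline E}\to{\cal O}_{E\cap\overline E}\to 0$, which holds regardless.

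\emph{Second equality.} I would use
$$0 \to {\cal O}_X \to {\cal O}_X (E + \overline E) \to {\cal O}_{E+\overline E}(E+\overline E) \to 0.$$
Since $X$ is rational, $h^1({\cal O}_X)=0$, so on global sections the sequence is exact. It gives
$$h^0({\cal O}_X(E+\overline E)) = 1 + (k+2) = k+3.$$
The only real point to watch is the transversality issue in the first step, and the fix above handles it.
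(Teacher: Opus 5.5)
Your proof is correct and follows the paper's argument. You take the dimension of the kernel of the surjective map $ev$ in the sequence for $E+\overline E$, then use the restriction sequence ${\cal O}_X \to {\cal O}_X(E+\overline E) \to {\cal O}_{E+\overline E}(E+\overline E)$ together with $h^1({\cal O}_X)=0$. Your extra remark handles possibly tangential intersections, using the scheme-theoretic sequence $0\to{\cal O}_{E+\overline E}\to{\cal O}_E\oplus{\cal O}_{\overline E}\to{\cal O}_{E\cap\overline E}\to 0$ of length $k+2$; this is a harmless refinement of a point the paper glosses over.
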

{\it Proof:} The first equality follows immediately from the short exact sequence $$0 \to H^0 ({\cal O}_{E + \overline E} (E + \overline E)) \to H^0 ({\cal O}_E (E + \overline E)) \oplus H^0 ({\cal O}_{\overline E} (E + \overline E)) \to H^0 ({\cal O}_{E \cap \overline E} (E + \overline E)) \to 0$$ 
For the second equality, we look at the standard short exact sequence: $$0 \to {\cal O}_X \to {\cal O}_X (E + \overline E) \to {\cal O}_{E + \overline E} (E + \overline E) \to 0$$ The first cohomology group $H^1 ({\cal O}_X)$ vanishes since $X$ is rational, so the previous sequence remains exact at levels of global sections: $$0 \to H^0 ({\cal O}_X) \to H^0 ({\cal O}_X (E + \overline E)) \to H^0 ({\cal O}_{E + \overline E} (E + \overline E)) \to 0$$ In particular, $$h^0 ({\cal O}_X (E + \overline E)) = k + 3$$ $\square$
\begin{corollary}
The linear system ${\cal O}_X (E + \overline E)$ has not fixed components or base points.
\end{corollary}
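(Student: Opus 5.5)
Since $h^0({\cal O}_X(E + \overline E)) = k + 3 \ge 3$, I would first observe that any base locus of $|E + \overline E|$ is contained in the support $E \cup \overline E$. This holds because $E + \overline E$ is itself a member of the system, so every base point lies on it and every fixed component is one of its components. The plan is then to show two things separately: neither $E$ nor $\overline E$ is a fixed component, and no point of $E$ or $\overline E$ is a base point. By the symmetry induced by $i$ (which maps the system to itself, exchanging $E$ and $\overline E$), it suffices to treat $E$.

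For fixed components, I would argue that if $E$ were a fixed component, then $h^0({\cal O}_X(\overline E)) = h^0({\cal O}_X(E + \overline E)) = k + 3 \ge 3$. However, $\overline E$ is a $(-1)$-curve, hence rigid, and the case $r = 1$ of the computation \eqref{hzero} gives $h^0({\cal O}_X(\overline E)) = 1$. This is a contradiction.

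For base points, I would use the restriction sequence
$$0 \to {\cal O}_X(\overline E) \to {\cal O}_X(E + \overline E) \to {\cal O}_E(E + \overline E) \to 0.$$
By \eqref{hu} with $r = 1$, $h^1({\cal O}_X(\overline E)) = 0$, so the restriction $H^0({\cal O}_X(E + \overline E)) \to H^0({\cal O}_E(E + \overline E))$ is surjective. The target is $H^0({\cal O}_{\mathbb{P}^1}(k + 1))$, since $E(E + \overline E) = -1 + k + 2 = k + 1 \ge 1$, and this is basepoint-free on $E \simeq \mathbb{P}^1$. Hence the system has no base points on $E$. The dimension count is consistent with this: $1 + (k + 2) = k + 3$.

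Applying the same argument to $\overline E$ with the roles exchanged, and combining with the first step, the system has no base points anywhere. The main delicate point is the identification $E(E + \overline E) = k + 1 \ge 0$, which is needed for basepoint-freeness of the restricted line bundle. This relies on $E \overline E = k + 2$, which follows from Corollary \ref{diffmoc} together with the two fixed points of $E \cap C$. I expect the rest to be routine.
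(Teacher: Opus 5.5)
Your proof is correct. For fixed components it is the paper's argument: if $E$ were fixed, the rigid $(-1)$-curve $\overline E$ would have to move in a net.

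For base points you take a different and more economical route.

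\textbf{The paper's route.} It restricts to the whole reducible curve via
$0 \to {\cal O}_X \to {\cal O}_X(E+\overline E) \to {\cal O}_{E+\overline E}(E+\overline E) \to 0$
and $h^1({\cal O}_X)=0$. It then has to show that ${\cal O}_{E+\overline E}(E+\overline E)$ is globally generated on the reducible curve, which it does by gluing. A section $\sigma$ on $E$ with $\sigma(p)\neq 0$ is extended uniquely to a section $\tau$ on $\overline E$ agreeing with it on $E\cap\overline E$. This step uses the normalization sequence and the preceding remark that evaluating $H^0({\cal O}_{\mathbb{P}^1}(k+1))$ at the intersection points is an isomorphism.

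\textbf{Your route.} You restrict to one component at a time, with kernel ${\cal O}_X(\overline E)$. You therefore only need $h^1({\cal O}_X(\overline E))=0$ from \eqref{hu} with $r=1$, plus global generation of ${\cal O}_{\mathbb{P}^1}(k+1)$. This buys you three things:
\begin{itemize}
\item It avoids the gluing step entirely.
\item It is insensitive to whether $E$ and $\overline E$ meet transversally; the paper's remark tacitly counts $k+2$ distinct points.
\item It independently recovers $h^0({\cal O}_X(E+\overline E))=k+3$.
\end{itemize}
The paper's route has the advantage of directly reusing the computation of $H^0({\cal O}_{E+\overline E}(E+\overline E))$ from the previous corollary.

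Two small observations on your write-up.
\begin{itemize}
\item Your fixed-component step is logically redundant. Once no point of $E\cup\overline E$ is a base point, neither curve can be a fixed component.
\item The ``delicate point'' is milder than you suggest. Global generation only needs $E(E+\overline E)\geq 0$, i.e.\ $E\overline E\geq 1$, since ${\cal O}_{\mathbb{P}^1}$ is also globally generated.
\end{itemize}
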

{\it Proof:} The only possible fixed components of ${\cal O}_X (E + \overline E)$ can be $E$ or $\overline E$. But if one of these two curves is a fixed component, then the other one must move in a net, which is forbidden by $E^2 = \overline E^2 = -1$.\\
For the fixed points, the argument is similar: a fixed point of ${\cal O}_X (E + \overline E)$ must necessarily lie on the divisor $E + \overline E$. The restriction map $H^0 ({\cal O}_X (E + \overline E)) \to H^0 ({\cal O}_{E + \overline E} (E + \overline E))$ is surjective by the previous corollary, so it suffices to show that ${\cal O}_{E + \overline E} (E + \overline E)$ is base point - free.\\
Suppose $p \in E$, $\sigma \in H^0 ({\cal O}_E (E + \overline E))$ such that $\sigma (p) \ne 0$. By the previous remark, there exist a unique section $\tau \in H^0 ({\cal O}_{\overline E} (E + \overline E))$ which agrees with $\sigma$ in the intersection $E \cap \overline E$, so the couple $(\sigma, \tau)$ defines an element of $H^0 ({\cal O}_{E + \overline E} (E + \overline E))$ which is nonzero at $p$. $\square$ 

\begin{corollary} 
If $E \ne \overline E$, the linear system $|E + \overline E|$ defines a regular map $f : X \to \mathbb{P}^{k + 2}$.
\end{corollary}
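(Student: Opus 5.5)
The approach is simply to assemble the two preceding corollaries. By the corollary computing dimensions, $h^0({\cal O}_X(E+\overline E)) = k+3$. The complete linear system therefore induces a rational map
$$f = f_{|E+\overline E|} : X \dasharrow \mathbb{P}(H^0({\cal O}_X(E+\overline E))^*) \simeq \mathbb{P}^{k+2}.$$
By the general discussion in the subsection on projective models via line bundles, $f$ is a regular morphism exactly when the complete system has empty base locus. The second corollary states that $|E+\overline E|$ has no fixed components and no base points, so $f$ is regular.

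The only place where the hypothesis $E\ne\overline E$ enters is in the validity of the previous two corollaries. The normalization sequence for $E+\overline E$, the intersection numbers $E(E+\overline E)=\overline E(E+\overline E)=k+1$, and the count of $k+2$ points in $E\cap\overline E$ all require $E$ and $\overline E$ to be distinct reduced curves. So I would first check that this hypothesis is in force. By Corollary \ref{diffmoc}, whenever $i|_C=\mathbb{1}_C$ we indeed have $i(E)\ne E$, so the standing assumptions of this Appendix guarantee the hypothesis automatically.

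The main point to verify carefully is the base-point-freeness along $E\cap\overline E$. There, a section on $E+\overline E$ is a pair $(\sigma,\tau)$ that agrees at the intersection points. One needs, for each point $p\in E\cap\overline E$, a pair that is nonvanishing at $p$. I would choose $\sigma\in H^0({\cal O}_E(k+1))$ with $\sigma(p)\neq 0$, which is possible since ${\cal O}_{\mathbb{P}^1}(k+1)$ is globally generated. I would then take $\tau$ to be the unique extension given by the isomorphism in the Remark. Combining this with the surjectivity of $H^0({\cal O}_X(E+\overline E))\to H^0({\cal O}_{E+\overline E}(E+\overline E))$, which follows from $h^1({\cal O}_X)=0$, lifts the pair to a global section not vanishing at $p$. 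Together with regularity, this gives the morphism $f:X\to\mathbb{P}^{k+2}$.
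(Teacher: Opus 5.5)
Your proposal is correct and is essentially the paper's argument: the paper gives this corollary no separate proof, obtaining it directly from $h^0({\cal O}_X(E+\overline E)) = k+3$ together with the base-point-freeness of $|E+\overline E|$. Your re-check of base-point-freeness, via the interpolation isomorphism on each component and the surjectivity coming from $h^1({\cal O}_X)=0$, reproduces the paper's proof of the preceding corollary.
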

\begin{definition}
We denote by $Y:= f(X) \subset \mathbb{P}^{k + 2}$ the image of $X$ under the regular function $f$. 
\end{definition}
The surface $Y$ satisfies $$(\Deg\, f) (\Deg\, Y) = D^2 = 2 k + 2$$ and, since it is non-degenerate, $$\Deg\, Y \ge k + 1$$ which leaves only two possibilities: \\
{\bf Case 1:} $f : X \to Y$ has degree two, and $Y \subset \mathbb{P}^{k + 2}$ is a surface of minimal degree, or\\
{\bf Case 2:} $f : X \to Y$ is a birational morphism, and $Y$ has degree $\Deg\, Y = 2 k + 2$.\\
\begin{remark}\label{noddoublecover}
We briefly underline the fact that, if $f : C \to \mathbb{P}^1$ is a double cover branched over a divisor $B = p + q + 2 r$, with $p, q, r$ distinct points in $\mathbb{P}^1$, then $C$ is a rational curve. Indeed, we can locally write $$C =\{(x, y) \in \complex^2 {\rm\,\, such\, that\,} y^2 = x^2 (x^2 - 1)\}$$ and $f$ corresponds to the projection on the $x$ axis. Then we can give the rational parametrization $$x = \frac{t^2 + 1}{t^2 - 1}, \quad y = 2t \frac{t^2 + 1}{(t^2 - 1)^2}$$ More geometrically, the unique singularity of $C$ is a node $\overline r$, with $f (\overline r) = r$. In the normalization $\tilde C \to C$ there are two points over $\overline r$, so the composite map $\tilde C \to C \to \mathbb{P}^1$ is ramified only over $p, q$.\\
This remains true even if $p = r$, so that the branch divisor $B$ in $\mathbb{P}^1$ has the shape $B = 3 p + q$. In this case, $C$ has locally the form $$C =\{(x, y) \in \complex^2 {\rm\,\, such\, that\,} y^2 = x^3 (x - 1)\}$$ which has a cusp in the origin. Such a curve admits the rational parametrization $$x = \frac{t^2}{t^2 - 1}, \quad y = \frac{t^3}{(t^2 - 1)^2}$$ Again, the geometric interpretation is that the normalization $\tilde C$ of $C$ has only one points lying over the cusp of $C$, so the composite map $\tilde C \to C \to \mathbb{P}^1$ is ramified over the two points $p, q$ without multiplicity.
\end{remark}

The previous theorem gives us a precious result:
\begin{remark}\label{contractedcurves}
Suppose $D$ is an irreducible curve contracted by the map $f : X \to \mathbb{P}^{k + 2}$ induced by the linear system $|E + \overline E|$. Since $H := E + \overline E$ is big and nef, and $H D = 0$, Hodge Index Theorem \cite{TieLuo1990} forces $$D^2 < 0$$ Thus, by Proposition \ref{negativecurves} we find $$D^2 \in \{-1, -2\}$$
If $D$ is a $(-1)$-curve contracted by $f$, then $$D^2 = -1, H D = 0$$ Since $i$ is an isomorphism and $H$ is $i$-invariant, we also have $$i(D)^2 = -1, H i(D) = 0$$ So we can apply Hodge Index Theorem to the divisor $D + i(D)$ to deduce that $$(D + i(D))^2 < 0$$ This  translates to $$D i(D) < 1$$ a contradiction to Corollary \ref{diffmoc}. Thus $D^2 = -1$ cannot happen.
\end{remark}
\begin{proposition}\label{almenotre}
The case $k = 0$ is impossible.
\end{proposition}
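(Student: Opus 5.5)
The plan is to argue by contradiction using the adjoint divisor $H+K_X$, where $H := E+\overline E$. Suppose $k=0$, so that $E\overline E = 2$. Then
$$H^2 = E^2+\overline E^2+2E\overline E = -1-1+4 = 2,\qquad HK_X = EK_X+\overline E K_X = -2,\qquad K_X^2=-1,$$
the last equality coming from Proposition \ref{comp}, since $C$ is irreducible. By the corollaries above, $|H|$ is base point free and defines a morphism $f : X\to \mathbb{P}^2$.

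The first step is to show that $H+K_X$ is effective. By Riemann--Roch,
$$\chi({\cal O}_X(H+K_X)) = 1+\frac{(H+K_X)H}{2} = 1+\frac{2-2}{2} = 1.$$
By Serre duality, $h^2({\cal O}_X(H+K_X)) = h^0({\cal O}_X(-H)) = 0$, because $H$ is a nonzero effective divisor. Hence $h^0({\cal O}_X(H+K_X))\ge 1$, and we may pick an effective divisor $D\in|H+K_X|$. For this $D$ we have
$$DH = H^2+HK_X = 0,\qquad DK_X = HK_X+K_X^2 = -3.$$
In particular $D\neq 0$.

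The second step is to identify the components of $D$. Since $H$ is base point free, it is nef, so every irreducible component $D_j$ of $D$ satisfies $HD_j\ge 0$. Together with $DH=0$ this forces $HD_j=0$ for every $j$, so each $D_j$ is contracted by $f$. The curve $C$ is not among them, because $HC = -2HK_X = 4 \neq 0$. Now apply Remark \ref{contractedcurves}. The Hodge Index Theorem \ref{HIT} gives $D_j^2<0$, and Proposition \ref{negativecurves} then restricts $D_j^2$ to $\{-1,-2,-4\}$. The value $-4$ is excluded because the only $(-4)$-curve is $C$. The value $-1$ is excluded by the argument of Remark \ref{contractedcurves}, which uses the $i$-invariance of $H$ and Corollary \ref{diffmoc}. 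Hence every $D_j$ is a smooth rational $(-2)$-curve, and adjunction gives $D_jK_X = 0$.

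The conclusion is then a contradiction. Summing over the components gives $DK_X = 0$, whereas we computed $DK_X = -3$. Therefore $k=0$ cannot occur.

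The step I expect to need the most care is the exclusion of contracted $(-1)$-curves, which rests on Remark \ref{contractedcurves} and hence on $i$ acting trivially on $C$. It has to be checked that this argument works for an arbitrary contracted $(-1)$-curve, not only for the specific curve $E$. The effectiveness of $H+K_X$ and the intersection numbers are routine.
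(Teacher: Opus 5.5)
Your proof is correct, but it follows a different route from the paper.

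The paper's route is geometric. Since $i^*H=H$, the involution $i$ induces a projective involution $I$ of $\mathbb{P}^2$ fixing $f(C)$ pointwise. Hence $I$ is the identity and $i$ is the covering involution of the double cover $f$. Through the Stein factorization $f=\overline f\circ\tau$, the paper shows that the branch curve $B$ is a quartic and that $f|_C$ is birational onto $B$. So $B$ is a rational, hence singular, plane quartic. On the other hand $\tau(C)$ must be smooth, because by Remark \ref{contractedcurves} the map $\tau$ only contracts $(-2)$-curves, and these miss $C$.

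Your route is numerical. For $k=0$ the adjoint class $H+K_X$ is effective by Riemann--Roch, is $H$-orthogonal, and satisfies $(H+K_X)K_X=-3$. The only $H$-contracted curves of negative $K_X$-degree are $(-1)$-curves, so Remark \ref{contractedcurves} alone gives the contradiction.

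Your route buys brevity and needs fewer inputs:
\begin{itemize}
\item You never need $I$ to be the identity, so you avoid the nondegeneracy of $f(C)$, which the paper takes for granted.
\item You do not need the Stein factorization, the degree of $B$, or the identification of the ramification curve with $B$.
\item You only need $H$ nef and big. This follows from $HE=H\overline E=1$ and $H^2=2$, without the base-point-freeness corollary.
\end{itemize}
The paper's route buys a geometric explanation: the would-be double plane branched along $f(C)$. That picture is the template for the cases $k>0$. Your trick is tied to $k=0$, the only value for which $(H+K_X)H=2k$ vanishes.

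Your worry about Remark \ref{contractedcurves} is unfounded. Corollary \ref{diffmoc} applies to every $(-1)$-curve $D_j$, so $i(D_j)\neq D_j$. The nonempty set $D_j\cap C$ lies in $C$, so it is fixed pointwise by $i$ and is therefore contained in $D_j\cap i(D_j)$. This gives $D_j\cdot i(D_j)\ge 1$, hence $(D_j+i(D_j))^2\ge 0$. Since also $H(D_j+i(D_j))=0$, this contradicts Theorem \ref{HIT}.
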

{\it Proof:} For $k = 0$ the linear system  induces a map $f : X \to \mathbb{P}^2$, with $f^* {\cal O}_{\mathbb{P}^2} (1) = E + \overline E$. The degree of $f$ can be computed by $$\Deg\, f = (E + \overline E)^2 = 2$$ By construction $$i^* (E + \overline E) = E + \overline E$$ hence there exists an projective involution $I$ of $\mathbb{P}^2$ which fits into a commutative diagram:
\[\begin{tikzcd}
X  \arrow{r}{f}  \arrow[swap]{d}{i} & \mathbb{P}^2 \arrow{d}{I} 
\\ X  \arrow{r}{f}  & \mathbb{P}^2 
\end{tikzcd}
\]
The involution $I$ acts trivially on the non degenerate curve $f(C)$, so $$I = \mathbb{1}_{\mathbb{P}^2}$$ So $i$ corresponds to the deck involution of the double cover $f$.\\
By a theorem of Stein, the map $f$ factors through a (possibly singular) surface $\overline X$ in two regular maps $$\tau : X \to \overline X,\quad \overline f : \overline X \to \mathbb{P}^2,\quad f = \overline f \circ \tau$$ where $\tau$ is a birational map with connected fibers, while $\overline f$ is a finite map of degree $\Deg\, \overline f = \Deg\, f = 2$, branched over a curve $B \subset \mathbb{P}^2$.\\
It is easy to compute $\Deg\, B$: if $l \subset \mathbb{P}^2$ is a generic line, the restriction $$f : f^{-1} (l) \to l$$ is a double cover from the smooth elliptic curve  $f^{-1} (l) \in |E + \overline E|$ to the rational normal curve $l$. This forces $l$ to contain exactly $4$ branch points, so that $$\Deg\, B = 4$$
By hypothesis $$i|_C = \mathbb{1}_C$$ so $\tau(C)$ is a component of the ramification locus of the double cover $\overline f$. Thus the restriction $f : C \to f(C)$ is a birational map, and $$\Deg\, f(C) = C (E + \overline E) = 4$$ This forces $$f(C) = B$$ by degree reasons.\\
Thus $B$ is a rational plane quartic curve, and hence $B$ must have singular points. But this forces also the branch locus $\tau (C) \subset \overline X$ to be singular. On the other hand, by remark \ref{contractedcurves}, $\tau$ can only contract some $(-2)$-curves of $X$, which are disjoint from $C$, so $\tau (C)$ should be smooth, a contradiction. $\square$\\

So let us assume $k > 0$, and suppose we are in {\bf Case 1} as above: the function $f : X \to \mathbb{P}^{k + 2}$ is a map of degree $2$ over a surface $Y := f(X) \subset \mathbb{P}^{k + 2}$ of minimal degree $k + 1$.
\begin{remark}
We remember the definition of Hirzebruch surfaces $\mathbb{F}_n, n \ge 0$ as follows: we consider the rank $2$ vector bundle ${\cal E}$ over $\mathbb{P}^1$, $${\cal E} := {\cal O}_{\mathbb{P}^1} \oplus {\cal O}_{\mathbb{P}^1} (- n)$$ and $\mathbb{F}_n := \mathbb{P}({\cal E})$. The surface $\mathbb{F}_n$ has a $\mathbb{P}^1$-fibration $$\pi : \mathbb{F}_n \to \mathbb{P}^1$$ For $n > 0$, this fibration admits a unique section $C_n$ with negative self intersection; more precisely, $$C_n^2 = - n$$ and $$\Pic (\mathbb{F}_n) = \integer C_n \oplus \integer F$$ where $F$ is the class of a fiber of $\pi$. The linear system $|C_n + n F|$ defines a map $g: \mathbb{F}_n \to \mathbb{P}^{n + 1}$. The image of $g$ is a cone over a Veronese model of $\mathbb{P}^1$ of degree $n$ in $\mathbb{P}^{n + 1}$. The map $g$ is an isomorphism outside of $C_n$, and it contracts $C_n$ to the vertex of the cone.
\end{remark}
Surfaces of minimal degree $r$ in $\mathbb{P}^{r + 1}$ are completely classified, see for example \cite{DP1885}, \cite{EH1987} and they are the following:\\
{\bf Case 1.1:} A smooth rational normal scroll, i.e., a surface obtained as $\mathbb{P} ({\cal E})$, with ${\cal E} = {\cal O}_{\mathbb{P}^1} (-a) \oplus {\cal O}_{\mathbb{P}^1} (-b), a, b > 0$, embedded as a smooth surface of degree $a + b$ in $\mathbb{P}^{a + b + 1}$ by the hyperplane linear system ${\cal O}_{\mathbb{P} ({\cal E})} (1)$. Note that such a surface is isomorphic to the Hirzebruch surface $\mathbb{F}_{|a - b|}$, but this is a ``better'' embedding, which avoids the cone singularity.\\
{\bf Case 1.2:} A cone representation of a Hirzebruch surface, as in the previous remark. Note that this is the degeneration of {\bf Case 1} when we let $a$ or $b$ to be $0$.\\
{\bf Case 1.3:} The Veronese embedding of $\mathbb{P}^2$ in $\mathbb{P}^5$.
\begin{proposition}\label{singim}
The {\bf Case 1.1} is impossible.
\end{proposition}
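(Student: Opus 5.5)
We argue by contradiction: suppose $Y = f(X) \subset \mathbb{P}^{k+2}$ is a smooth rational normal scroll of degree $k+1$, isomorphic to some $\mathbb{F}_m$, and $f : X \to Y$ has degree $2$. As in the proof of Proposition \ref{almenotre}, $i^*(E+\overline E) = E+\overline E$, so $i$ induces a projective involution $I$ of $\mathbb{P}^{k+2}$ with $I \circ f = f \circ i$. Since $i|_C = \mathbb{1}_C$ and $f(C)$ spans a linear subspace of dimension at least $1$, $I$ is the identity on $\langle f(C)\rangle$. The first step is to show $I = \mathbb{1}$. We have $\Deg\, f(C)\cdot \Deg(f|_C) = C(E+\overline E) = 4$, so $f(C)$ is a rational curve of degree $1$, $2$ or $4$ on the scroll. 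We would like it to be nondegenerate, which would force $I = \mathbb{1}$ and make $i$ the deck involution of $f$. If $f(C)$ is degenerate, we use instead that $I$ preserves $Y$ and fixes both the curve $f(C)$ and the points $f(E\cap\overline E)$, and we show directly that $I$ is trivial on $Y$. Either way $i$ is the deck involution, and $C$ maps birationally into the branch curve. In particular $f|_C$ is birational, so $f(C)$ is a rational quartic curve on $Y$.

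Next we use the Stein factorization $f = \overline f \circ \tau$. By Remark \ref{contractedcurves}, $\tau$ contracts only $(-2)$-curves, which are disjoint from $C$, so $\tau(C)$ is smooth and lies in the ramification of the finite double cover $\overline f : \overline X \to Y$. Since $Y$ is smooth, the branch curve $B \subset Y$ has at worst simple singularities, and $f(C) \subset B$ must be smooth wherever $\overline X$ is smooth along $\tau(C)$. Hence $f(C)$ is a smooth rational component of $B$. Write $\overline X$ via the double cover formula $K_{\overline X} = \overline f^*(K_Y + \tfrac12 B)$, and pull back to get $K_X = f^*(K_Y + \tfrac12 B)$, using that $\tau$ is crepant on $(-2)$-configurations. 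Then $-2K_X = f^*(-2K_Y - B)$ must equal $C$, and $C$ maps birationally onto $f(C)$, a component of $B$. Comparing classes gives $f^*(-2K_Y - B) = C$ while $f^*(f(C)) = 2C$. This forces $-2K_Y - B \equiv \tfrac12 f(C)$ in $\Pic(Y)\otimes\rational$, that is,
$$B = -2K_Y - \tfrac12 f(C).$$

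The last step is to derive a contradiction by intersecting with the ruling $F$ of $Y = \mathbb{F}_m$. Using $H|_Y = C_{-m} + \tfrac{k+1+m}{2}F$, we compute $BF$ and $f(C)F$, and also $f(C)\cdot H = 4$ and $B\cdot H$. From $-2K_Y F = 4$ we get $BF = 4 - \tfrac12 f(C)F$, and $B \ge f(C)$ gives $BF \ge f(C)F$, so $f(C)F \le 2$ with $f(C)F$ even. We then check the remaining cases. If $f(C)F = 0$, then $f(C)$ is a union of fibres, but it is irreducible of degree $4$, which is absurd. If $f(C)F = 2$, then $B - f(C) \equiv -2K_Y - \tfrac32 f(C)$ must be effective; intersecting with $H$ and using $-K_Y\cdot H = k+3+\dots$ against the degree-$4$ constraint should give a negative number or a non-integral class once $k \ge 1$. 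Alternatively, $f^*F$ gives a fibration in curves that are double covers of $\mathbb{P}^1$ branched in $BF \le 3$ points. That is impossible for parity reasons (the branch number must be even), and branching in $2$ points gives rational curves $G$ with $G C = 2$, $GK_X = -1$, which contradicts the adjunction formula for a moving curve with $G^2 = 0$.

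The main obstacle is the first step: proving $I = \mathbb{1}$ when $f(C)$ is degenerate (a line or a conic on the scroll), together with the bookkeeping that $B$ has the claimed class. I would first try the parity argument on $f^*F$, because it avoids most of the computation.
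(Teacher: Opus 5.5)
\paragraph{The gap is in Step 1.} Steps 2 and 3 work once you know that $C$ lies in the ramification of $f$ and that $f|_C$ is birational. Then $B=-2K_Y-\tfrac12 f(C)$, and $BF\ge f(C)F$ together with the evenness of $BF$ forces $f(C)F=0$, which is absurd for an irreducible quartic. Your side remark with $GK_X=-1$ is miscomputed, but you do not need it.

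Step 1, which is supposed to supply exactly these two facts, is not proved. Your argument that $I$ is the identity requires $f(C)$ to span $\mathbb{P}^{k+2}$. But $\deg(f|_C)\in\{1,2\}$, because the finite part of the Stein factorization has degree $2$; so lines do not occur, and $f(C)$ is a rational curve of degree $4$ or $2$. Such a curve spans at most a $\mathbb{P}^4$, so it is degenerate for every $k\ge 3$. If $\deg(f|_C)=2$ it is a plane conic, degenerate for every $k\ge 1$. For these cases you only announce that you will show ``directly'' that $I$ is trivial on $Y$, and no argument is given. The points of $E\cap\overline E$ do not help either: $i$ fixes the two of them lying on $C$, but is only known to permute the other $k$.

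It is worth isolating what is actually missing. If $f|_C$ is birational you do not need $I$ at all. The deck involution $j$ of $f$ is a biregular automorphism of $X$, since by Remark \ref{contractedcurves} $X$ is the minimal resolution of the Stein factorization. Hence $j(C)=C$, and $f\circ j=f$ forces $j|_C$ to be the identity, so $C$ lies in the ramification. The only case Step 1 must exclude is therefore $f|_C$ of degree $2$ onto a conic $\overline C\subset Y$. Such conics do exist on scrolls, for instance curves of type $(1,1)$ on the smooth quadric, and this is precisely the case your proposal never handles.

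\paragraph{How the paper avoids it.} The paper does not use $i$ in this proof at all. It writes $f_*C=c_1H+c_2F$ and gets $c_1(k+1)+c_2=4$ from $C\cdot f^*H=4$. It gets $c_1>0$ because $f_*C=d\overline C$ with $d\le 2$ cannot lie in $|4F|$. Then it computes $f^*F\cdot K_X=F\cdot f_*K_X=-\tfrac12 c_1$, and adjunction on $f^*F$ (which has square $0$) gives $c_1=4-4\tilde g$. Hence $c_1=4$ and $f_*C=4H-4kF$, and a genus count shows that neither $|4H-4kF|$ nor $|2H-2kF|$ contains a reduced irreducible curve. This treats the conic and quartic cases at once.

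\paragraph{How to repair your route.} Push forward instead of comparing with $f^*f(C)$. The identity $C=-2K_X=f^*(-2K_Y-B)$ uses only that $\tau$ contracts $(-2)$-curves, not anything about $i$. So $f_*C=2(-2K_Y-B)$ and $F\cdot f_*C=8-2BF$, with $BF$ even and $BF\le 4$ since $F$ is nef. Now combine three facts:
\begin{itemize}
\item $H\cdot f_*C=4$;
\item $f_*C=d\overline C$ with $d\in\{1,2\}$;
\item $\overline C\cdot H\ge \overline C\cdot F$ for every irreducible curve on a smooth scroll with $a,b\ge 1$, with equality only for lines.
\end{itemize}
Each of the cases $BF=0,2,4$ is then contradictory, and Step 1 becomes unnecessary. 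The case $BF=0$, where $f^*F$ is disconnected, is also the one the paper's ``$\tilde g\ge 0$'' tacitly skips, so this version handles it explicitly.
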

{\it Proof:} In {\bf Case 1.1} the surface $Y \subset \mathbb{P}^{k + 2}$ is isomorphic $$Y \simeq \mathbb{P} ({\cal O}_{\mathbb{P}^1} (- a) \oplus {\cal O}_{\mathbb{P}^1} (- b))$$ where $$a + b = k + 1$$ We denote by $F$ a fiber of the $\mathbb{P}^1$-fibration $Y \to \mathbb{P}^1$, and by $H \in \Pic (Y)$ the hyperplane section giving the embedding $Y \hookrightarrow \mathbb{P}^{k + 2}$. Then $$\Pic (Y) = \integer H \oplus \integer F$$ where the product law is given by $$H^2 = a + b = k + 1,\quad H F = 1,\quad F^2 = 0$$ Moreover, it is a straightforward computation that $$K_Y = - 2 H + (k - 1) F$$
If we decompose $$f_* C = c_1 H + c_2 F, \quad c_1, c_2 \in \integer, c_1 \ge 0$$ the equality $$C f^* H = C (E + \overline E) = 4$$ gives $$c_1 (k + 1) + c_2 = 4$$ On the other hand, the divisor $f_*C$ has the form $f_* C = d \overline C$, with $d \in \{1, 2\}$, and $\overline C$ a reduced irreducible divisor. Hence we can not allow $c_1 = 0, c_2 = 4$, because the linear system $|4 F|$ does not contain divisors of such type. Thus the inequality $$c_1 > 0$$ is sharp.\\ Let $\tilde F := f^* F \subset X$ be the pre-image of a line $F$. Note that $$\tilde F K_X = F f_* K_X = - \frac{1}{2} F f_*C = - \frac{1}{2} c_1$$ Hence, the curve $\tilde F$ has genus $$\tilde g := g(\tilde F)$$ such that $$2 \tilde g - 2 = \tilde F( \tilde F + K_X) = - \frac{1}{2} c_1$$ so that $c_1 = 4 - 4 \tilde g$. This forces $$\tilde g = 0$$ and $$f_* C = 4 H - 4 k F$$
Here we find a contradiction: the linear system $|4 H - 4 k F|$ does not contain any irreducible reduced divisor $\overline C$. On the countrary, the genus of such a curve $\overline C$ would be equal to: $$g(\overline C) = 1 + \frac{1}{2} \overline C(\overline C + K_Y) - m_{\rm sing}$$ where $m_{\rm sing} \ge 0$ is the contribution given by the singularities of $\overline C$. But this leads to $$g (\overline C) = 3 - 6 k - m_{\rm sing} < 0$$ which is impossible. The only remaining chance is that the restriction $f|_C$ is a double cover of an irreducible reduced divisor $\overline C \in |2 H - 2 k F|$, but again, its genus $g(\overline C)$ would be given by the same formula: $$g(\overline C) = 1 + \frac{1}{2} \overline C(\overline C + K_Y) -m_{\rm sing} = - k - m_{\rm sing} < 0$$ Again, we found a contradiction. $\square$\\

\begin{corollary}\label{fCnondeg}
If $k = 1$, then $f(C)$ is a non degenerate curve of degree $4$ in $\mathbb{P}^3$.
\end{corollary}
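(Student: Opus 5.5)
The claim has two parts: $f(C)$ has degree $4$, and it spans $\mathbb{P}^3$. I will work in the Appendix setting: $i|_C = \mathbb{1}_C$, $\overline E = i(E)$, $E \overline E = k+2$, and $f = f_{|E + \overline E|} : X \to \mathbb{P}^{k+2}$. For $k=1$ this means $f : X \to \mathbb{P}^3$ and $(E+\overline E)^2 = 4$.

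\textbf{Degree.} The degree formula for base point free systems gives $C(E+\overline E) = (\Deg\, f|_C)(\Deg\, f(C))$. Since $C E = C \overline E = 2$, the left side is $4$. So I must rule out $f|_C$ having degree $2$ or $4$.

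In Case 2, $f$ is birational. The only contracted curves are $(-2)$-curves by Remark \ref{contractedcurves}, and these are disjoint from $C$ because $(-2)$-curves satisfy $DK_X=0$. Hence $f|_C$ is birational onto its image and $\Deg\, f(C) = 4$.

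In Case 1, $\Deg\, f = 2$ and $Y$ is a quadric in $\mathbb{P}^3$. Case 1.1 is excluded by Proposition \ref{singim}, and Case 1.3 needs $\mathbb{P}^5$, so $Y$ is a quadric cone (Case 1.2). Here I expect to use that $i$ preserves $|E+\overline E|$, so it induces a projective involution $I$ with $f\circ i = I \circ f$. If the deck involution of $f$ were $i$, then $C$, being fixed, lies in the ramification locus and maps birationally. If the deck involution differs from $i$, I would argue as in Proposition \ref{singim}, using the cone's Picard group (the class of a ruling, and $f_*C = d\overline C$ with $d \in \{1,2\}$), to exclude $\Deg\, f|_C = 2$ by a genus computation.

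\textbf{Non-degeneracy.} Suppose $f(C)$ lies in a plane $\Pi$. Then $f^*\Pi - C$ is effective, i.e. $h^0({\cal O}_X(E+\overline E - C)) > 0$. But $(E+\overline E - C)\cdot C = 4 + 4 = 8$, so this alone gives no contradiction; I need to use the class more carefully. Since $-C = 2K_X$, this means $h^0({\cal O}_X(E + \overline E + 2K_X)) > 0$. Now $(E+\overline E+2K_X)(E - K_X)$ computes to $-1 + 3 + (-1+1) \cdot 2$. It is cleaner to intersect with the nef class ${\cal E} = E - K_X$ (an elliptic half-pencil, by Proposition \ref{ellpenc}): $(E+\overline E + 2K_X){\cal E} = (-1+3) + \ldots$. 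Rather than push that through, I would use the restriction sequence $0 \to {\cal O}_X(K_X + \overline E - ?)\to\cdots$, or simpler: $f^*\Pi = C + R$ with $R$ effective of class $E+\overline E + 2K_X$. Then $R\cdot E = 1 - 1 - 2 = -2$ and likewise $R\cdot\overline E = -2$, so $E$ and $\overline E$ are components of $R$. Then $R - E - \overline E = 2K_X$ is effective, contradicting rationality ($h^0(2K_X)=0$). Because this last step forces both components, I have to check it carefully: after subtracting $E$, $(R-E)\cdot\overline E = -2 - 3 = -5 < 0$, so $\overline E \le R-E$.

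\textbf{Main obstacle.} I expect the hardest step to be excluding $\Deg\, f|_C = 2$ in the cone case. There I would mimic Proposition \ref{singim}: take $\tilde F$ the pullback of a ruling, and argue via $K_X\tilde F = -\tfrac12 \tilde F C$ and adjunction. A quartic $C$ mapping $2:1$ onto a conic would lie in a plane, and the argument above already rules that out, since it only used $f(C)\subset\Pi$. So the degree-$2$ case is in fact absorbed by non-degeneracy. The degree-$4$ case, a line image, is likewise planar. So the cleanest order is to prove non-degeneracy first and deduce the degree statement from it.
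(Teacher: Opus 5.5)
\textbf{Gap: a miscomputed intersection number.} Your non-degeneracy step depends on an intersection number that is computed incorrectly. With $E^2=-1$, $E\cdot\overline E = k+2 = 3$ and $K_X\cdot E=-1$, one gets
\[ R\cdot E = (E+\overline E+2K_X)\cdot E = -1+3-2 = 0, \]
and likewise $R\cdot\overline E = 0$, not $-2$. So nothing forces $E$ or $\overline E$ to be components of $R$. In fact they cannot be: $H=E+\overline E$ is nef with $R\cdot H=0$, while $E\cdot H=\overline E\cdot H=2$. The chain ``$E\le R$, then $\overline E\le R-E$, hence $2K_X$ effective'' therefore never gets started, and as written the proof of non-degeneracy (and with it the whole statement) fails.

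\textbf{How to repair it.} Use $R\cdot H=0$ directly, as the paper does. Every component $D$ of $R$ has $D\cdot H=0$, so $f$ contracts it. Note $D\neq C$ because $C\cdot H=4$. The Hodge Index Theorem, Proposition~\ref{negativecurves} and Remark~\ref{contractedcurves} then show $D$ is a $(-2)$-curve, so $D\cdot C=-2D\cdot K_X=0$. The paper next observes that $R\neq 0$, so $C+R\in|H|$ would be a disconnected member of a base point free system with two-dimensional image, contradicting Bertini. Alternatively, you can finish with the number you computed and then discarded: $R\cdot C=(H-C)\cdot C=4+4=8$, whereas a sum of $(-2)$-curves has intersection $0$ with $C$. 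This gives the contradiction without the connectedness theorem.

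\textbf{The degree part is fine.} Your reduction of the degree claim to non-degeneracy is correct, and more explicit than the paper, which does not address degree. Since $(\deg f|_C)(\deg f(C))=C\cdot(E+\overline E)=4$ and lines and conics are planar, a non-degenerate $f(C)$ has degree $4$. So the Case~1/Case~2 discussion can be dropped. As written it would need more care anyway: in Case~2 the image may be non-normal, so ``only $(-2)$-curves are contracted'' does not by itself make $f|_C$ birational.
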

{\it Proof:} We already know that $$dim\, |H| = k + 2 = 3$$ so we have $f : X \to \mathbb{P}^3$. Suppose $f(C)$ is a plane curve: then the linear system $|H- C|$ is non empty, so there exist an effective divisor $D$, $$D \in |H - C|$$ Of course $H \ne C \in \Pic (X)$, so $D$ cannot be the zero divisor. Moreover, $$H D = H^2 - H C = (2 k + 2) - 4 = 0$$ so $f$ contracts all the irreducible components of $D$. By Remark \ref{contractedcurves} the only possibility is that all the irreducible components of $D$ are smooth rational $(-2)$-curves, which are necessarily disjoint from $C$. Thus $|H|$ would contain the disconnected section $$C + D \in |H|$$ which is forbidden by Bertini's Theorem. So $f (C)$ is non-degenerate. $\square$

\begin{corollary}
If $k = 1$, then $|H|$ induces a map of degree $2$ on a cone in $\mathbb{P}^3$.
\end{corollary}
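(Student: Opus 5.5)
By the preceding discussion, for $k = 1$ the morphism $f = f_{|H|} : X \to \mathbb{P}^3$, with $H = E + \overline E$, has $H^2 = 2k + 2 = 4$. We are in \textbf{Case 1}, so $f$ has degree $2$ onto a surface $Y$ of minimal degree $k + 1 = 2$, that is, a quadric in $\mathbb{P}^3$. The plan is to rule out the smooth quadric $\mathbb{P}^1 \times \mathbb{P}^1$, since the remaining possibility for an irreducible quadric is a quadric cone. Case 1.3 (the Veronese surface) does not occur, because it lives in $\mathbb{P}^5$ and has degree $4$, not $2$. A smooth quadric is exactly the smooth rational normal scroll of Case 1.1 with $a = b = 1$, which is excluded by Proposition \ref{singim}.

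To keep the argument self-contained, I would also redo the Case 1.1 computation directly with $k = 1$. Write $\Pic(Y) = \integer F_1 \oplus \integer F_2$ and $f_* C = c_1 F_1 + c_2 F_2$. The relation $C \cdot f^*H = 4$ gives $c_1 + c_2 = 4$. For each ruling, the pullback $\tilde F_j = f^* F_j$ satisfies $\tilde F_j^2 = 2 F_j^2 = 0$ and $\tilde F_j K_X = -\tfrac12 F_j f_* C$. By adjunction, the genus of $\tilde F_j$ forces $F_j f_*C \in \{0,4\}$, with $4$ only when the genus is $0$. Together with $c_1 + c_2 = 4$, this leaves $f_*C \in |4F_1|$ or $|4F_2|$.

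Now $f_*C = d\,\overline C$ with $d \in \{1,2\}$ and $\overline C$ reduced and irreducible. But $|4F_i|$ and $|2F_i|$ contain no reduced irreducible members, because any such member is a union of lines of one ruling. This is a contradiction, so $Y$ is not smooth.

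It remains to check that $Y$ is irreducible and reduced, so that it is a genuine quadric cone and not a pair of planes or a double plane. This follows because $f(X)$ is the image of an irreducible surface and spans $\mathbb{P}^3$, since $|H|$ is a complete linear system. The cone over a conic is precisely the degenerate Case 1.2 with $n = 2$. Corollary \ref{fCnondeg} is consistent with this: $f(C)$ is a non-degenerate quartic on $Y$. The main subtlety I expect is the genus/adjunction step, in particular making sure that $f^*F_j$ is an honest curve whose arithmetic genus is controlled. If that step is delicate, I would simply cite Proposition \ref{singim} with $a = b = 1$.
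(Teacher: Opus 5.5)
Your exclusion of the smooth quadric is fine and coincides with the last line of the paper's proof. Once $f$ is known to be a double cover of a quadric, Proposition \ref{singim} with $a=b=1$ leaves only the cone. So does your direct computation, where $F_j\cdot f_*C\in\{0,4\}$ and $c_1+c_2=4$ force $f_*C\in|4F_1|\cup|4F_2|$.

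The gap is your first step, ``We are in Case 1''. The degree-$2$ assertion is part of the statement, and it is exactly what the paper's proof establishes rather than assumes. A priori, for $k=1$ you only know $(\deg f)(\deg f(X))=4$ with $\deg f(X)\ge 2$. So Case 2, a birational map onto a quartic surface of $\mathbb{P}^3$, is still open, and nothing in your proposal rules it out. It cannot be ruled out by intersection numbers. Take $E=E_{10}$ and $\overline E=4L-3E_{10}-E_1-\cdots-E_8$; then $E\overline E=3$, i.e.\ $k=1$. Moreover $|E+\overline E|=|4L-E_1-\cdots-E_8-2E_{10}|$ is the system of the quartic model in the subsection ``A quartic Coble in $\mathbb{P}^3$'', which maps $X$ birationally onto a quartic with a double line.

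The missing idea is the involution. Your argument never uses $i$ or the standing hypothesis $i|_C=\mathbb{1}_C$, and without it the conclusion is false.

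\begin{enumerate}
\item Since $i^*(E+\overline E)=E+\overline E$, the involution acts on $H^0(\mathcal{O}_X(H))$. It therefore induces a projective involution $I$ of $\mathbb{P}^3$ with $I\circ f=f\circ i$.
\item Because $i|_C=\mathbb{1}_C$, $I$ fixes $f(C)$ pointwise. By Corollary \ref{fCnondeg}, $f(C)$ spans $\mathbb{P}^3$; this, not a consistency check, is the role of that corollary here.
\item The fixed locus of a nontrivial projective involution is a union of two disjoint proper linear subspaces (its eigenspaces). An irreducible non-degenerate curve cannot lie in it, hence $I=\mathbb{1}_{\mathbb{P}^3}$ and $f\circ i=f$.
\item As $i\neq\mathbb{1}_X$, $f$ is not birational, so $\deg f=2$ and $\deg f(X)=2$.
\end{enumerate}

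Only after this does your argument that the quadric is an irreducible cone apply.
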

{\it Proof:}  Since $H$ is an $i$-invariant linear system, there exists a projective involution $$I : \mathbb{P}^3 \to \mathbb{P}^3$$ which fits into the following commutative diagram:
\[\begin{tikzcd}
X  \arrow{r}{f}  \arrow[swap]{d}{i} & \mathbb{P}^3 \arrow{d}{I} 
\\ X  \arrow{r}{f}  & \mathbb{P}^3 
\end{tikzcd}
\]
In particular, $I$ acts as the identity over the curve $f (C)$, which is non degenerate by the previous corollary \ref{fCnondeg}. Thus  $$I = \mathbb{1}_{\mathbb{P}^3}$$ everywhere, and hence $i$ preserves the fibers of $f$, which forces $$\Deg\, f = 2$$ and $$\Deg\, f(X) = \frac{H^2}{\Deg\, f} = 2$$
So $f(X)$ is a quadric surface in $\mathbb{P}^3$, which must be a singular cone by Proposition \ref{singim}. $\square$\\

\begin{proposition}
The {\bf Case 1.3} is also impossible.
\end{proposition}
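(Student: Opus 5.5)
The plan is to reduce Case 1.3 to a degree-$2$ cover of the plane and then argue as in Proposition \ref{almenotre}. In Case 1.3 the surface $Y=f(X)$ is the Veronese surface in $\mathbb{P}^5$, so $k+2=5$, that is $k=3$. We then have $H:=E+\overline E$ with $H^2=2k+2=8$ and $\Deg\, f=2$. Write $v:\mathbb{P}^2\to Y$ for the Veronese isomorphism and $g:=v^{-1}\circ f: X\to\mathbb{P}^2$. Set $M:=g^*\ell$, where $\ell$ is the class of a line. Then $H=2M$ in $\Pic(X)$, $M^2=H^2/4=2$, and $g$ is a morphism of degree $M^2=2$.

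\emph{First step: $i$ is the deck involution of $g$.} Since $H$ is $i$-invariant, $i$ induces a projective involution $I$ of $\mathbb{P}^5$ preserving $Y$, hence an involution $I'$ of $\mathbb{P}^2$ with $g\circ i=I'\circ g$. From $CM=\frac12 CH=2$, the image $g(C)$ is either a conic (if $g|_C$ is birational) or a line (if $g|_C$ has degree $2$). In the second case $C$ would map $2:1$ onto a line, which is impossible: $i$ fixes $C$ pointwise, so $i$ is trivial on $C$, while the deck involution of the degree-$2$ cover $g$ would act on $C$ as the deck involution of $C\to g(C)$. So $g(C)$ is a smooth conic, pointwise fixed by $I'$, and a projective involution of $\mathbb{P}^2$ fixing a conic pointwise is the identity. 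Hence $I'=\mathbb{1}$, $i$ preserves the fibres of $g$, and $i$ is the deck involution of $g$.

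\emph{Second step: the branch quartic.} Take the Stein factorization $g=\overline g\circ\tau$, where $\overline g:\overline X\to\mathbb{P}^2$ is a finite double cover branched along a curve $B$. Since $MK_X=-\frac12 MC=-1$, the preimage of a general line has $2p_a-2=M^2+MK_X=1+\dots$, namely $p_a=1$. Exactly as in Proposition \ref{almenotre}, this gives $\Deg\, B=4$. Because $i|_C=\mathbb{1}_C$, the curve $\tau(C)$ lies in the ramification of $\overline g$, so the conic $g(C)$ is a component of $B$. Thus $B=g(C)+B'$ with $B'$ a residual conic.

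\emph{Third step: contradiction, and the main obstacle.} The two conics $g(C)$ and $B'$ meet in $4$ points counted with multiplicity, so $B$ is singular at some point of $g(C)$. A double cover branched along a curve that is singular at a point is itself singular over that point. Hence $\overline X$ is singular at some point of $\tau(C)$. On the other hand, by Remark \ref{contractedcurves} applied to the big and nef divisor $M$ (equivalently $H$), $\tau$ contracts only $(-2)$-curves, and these are disjoint from $C$. Therefore $\tau$ is an isomorphism near $C$, and $\overline X$ is smooth along $\tau(C)$, a contradiction. The delicate point I expect is justifying that $B'$ is really a curve, i.e. that $\tau(C)$ is not the whole branch locus; this is forced by the degree count $\Deg\, B=4\neq 2$. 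One must also check that Remark \ref{contractedcurves} applies to $M$: $M$ is nef and big because it is pulled back from an ample class under a generically finite morphism, and $MD=0$ exactly when $HD=0$.
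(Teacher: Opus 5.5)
There is a genuine error in your second step, and it sits exactly where the case collapses. With your own numbers, $M^2=2$ and $MK_X=-1$, so $M^2+MK_X=1$. This is odd, so no integer $p_a$ satisfies $2p_a-2=M^2+MK_X$. Equivalently, $\chi({\cal O}_X(M))=1+\frac{M(M-K_X)}{2}$ would not be an integer. So the divisibility $H=2M$ is already incompatible with $H^2=8$ and $HK_X=-2$, and that is the whole contradiction. Your line ``$=1+\dots$, namely $p_a=1$'' silently replaces $1$ by $0$.

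The analogy with Proposition \ref{almenotre} also fails. There the pulled-back line class is $H$ itself, with $HK_X=-2$, which gives elliptic curves and a quartic branch curve. Here the pulled-back line class is $M=H/2$, with $MK_X=-1$. Computed correctly, the branch curve has degree $5$, not $4$. A general line meets $B$ in $5$ points. Equivalently, a general member of $|H|$ has genus $4$ and double covers the conic $v^{-1}(H_Y)$ with $10$ branch points. A double cover of $\mathbb{P}^2$ cannot be branched along a curve of odd degree. This parity argument is precisely the paper's proof.

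Consequently your third step argues from the false premise $\deg B=4$. That step is the residual conic meeting $g(C)$ in $4$ points, the singularity of $\overline X$ along $\tau(C)$, and Remark \ref{contractedcurves}. It should be dropped.

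Your first step is also unnecessary, and it is circular as written. To exclude $g(C)$ being a line, you use that $i$ is the deck involution of $g$. But you only establish that afterwards, from $g(C)$ being a conic.

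Replace everything after the definition of $M$ by the parity observation $M^2+MK_X=1\not\equiv 0 \pmod 2$.
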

{\it Proof:} This case can only happen if $k = 3$, so that the linear system $|E + \overline E|$ is made up of curves of genus $4$. The image $Y$ of the associated map $f : X \to \mathbb{P}^5$ is the Veronese embedding of $\mathbb{P}^2$ through the linear system of plane quadrics. Let $v : \mathbb{P}^2 \simeq Y \subset \mathbb{P}^5$ be this embedding. A general hyperplane section $H_Y \subset Y$ is a smooth rational curve, covered $2 : 1$ by a curve of genus $4$. Thus $H_Y$ contains $10$ branch points. This means that the composite map $v^{-1} \circ f : X \to \mathbb{P}^2$ is a double cover branched over a quintic curve, and this can not happen. $\square$\\
\subsection{An application of Reider Theorem}
Assume $$E \cap \overline E =\{p_1, \ldots, p_n\}$$ with $n \ge 3$, and let label these points such that $$\{p_1, p_2\} = E \cap C = \overline E \cap C$$ This forces $i (p_1) = p_1, i(p_2) = p_2$. Assume also that $$i (p_3) = p_3$$ then for all $k \ge 4$ the Von Staudt Theorem gives the equality of cross ratios $$(p_1, p_2, p_3, p_k) = (p_1, p_2, p_3, i(p_k))$$ in $E$ (or in $\overline E$). This forces $$i(p_k) = p_k$$ for all the common points $p_k$'s, and hence $$\frac{E + \overline E}{i} \simeq \mathbb{P}^1$$ In other words, $E + \overline E$ is an hyper-elliptic curve. Consequently, the canonical linear system $K_{E + \overline E}$ does not separate the point $p \in E$ from the point $i (p) \in \overline E$. Note that the short exact sequence $$0 \to {\cal O}_X (K_X) \to {\cal O}_X (E + \overline E + K_X) \to {\cal O}_{E + \overline E} (E + \overline E + K_X) = {\cal O}_{E + \overline E} (K_{E + \overline E}) \to 0$$ induces an isomorphism \begin{eqnarray}\label{firstiso} H^0 (X, {E + \overline E + K_X}) \simeq H^0 (E + \overline E, K_{E + \overline E}) \end{eqnarray} For every $H \in |E + \overline E|$ there exists the same isomorphism \begin{eqnarray}\label{seciso} H^0 (X, E + \overline E) \simeq H^0 (H, K_H)\end{eqnarray}
In particular, a pair of conjugate point $p \in E, i(p) \in \overline E$ is not separated by the canonical system $|K_{E + \overline E}|$. Relation \ref{firstiso} implies that $p, i(p)$ are not separated by the linear system $|E + \overline E|$, and relation \ref{seciso} implies this pair is not separated by any canonical system $|K_H|$, for any $H \in |E + \overline E|$ containing both of them. This forces $H$ to be an hyperelliptic curve, for any $H \in |E + \overline E|$ containing a pair $p, i(p)$ for some $p \in E$.\\
Let's count how many these curves are: a curve $H$ containing $p, i(p)$ lives in $$H \in |{\cal I}_{p, i(p)} (E + \overline E)|$$ where ${\cal I}_{p, i(p)} \subset {\cal O}_X$ is the ideal sheaf of the pair of points. The passage through both of them imposes two linear conditions on the space $H^0 ({\cal O}_X (E + \overline E))$, so we find a $2$-codimensional subspace in $|E + \overline E|$. As $p$ moves in $E$, this subspace moves inside $|E + \overline E|$, sweeping a divisor inside $|E + \overline E|$.
\begin{theorem}
Suppose $H$ is a nef linear system on a smooth projective surface $X$, with $H^2 \ge 9$, and that there exist two points $p, q \in X$ which are not separated by the adjoint linear system $|H  + K_X|$. Then there exists an effective divisor $D$ containing $p, q$, which satisfies one of the following cases:\\
$\cdot H D =0, D^2 = -1$ or $-2$;\\
$\cdot H D = 1, D^2 = 0$ or $-1$;\\
$\cdot H D = 2, D^2 = 0$;\\
$\cdot H = 3 D, D^2 =1$.
\end{theorem}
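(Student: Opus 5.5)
We would follow Reider's original vector bundle method. Put $Z=\{p,q\}$, viewed as a length $2$ subscheme; if $p=q$ it is a point together with a tangent direction. The hypothesis says that the restriction $H^0({\cal O}_X(H+K_X))\to H^0({\cal O}_Z(H+K_X))$ is not surjective.

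Since $H$ is nef with $H^2\ge 9>0$, it is big, and Kawamata--Viehweg vanishing gives $H^1({\cal O}_X(H+K_X))=0$. The hypothesis therefore means $H^1({\cal I}_Z(H+K_X))\ne 0$. By Serre duality this group is dual to ${\rm Ext}^1({\cal I}_Z(H),{\cal O}_X)$, so there is a non-split extension
$$0\to {\cal O}_X\to {\cal F}\to {\cal I}_Z(H)\to 0 .$$
We replace $Z$ by a minimal subscheme $Z'\subseteq Z$ that still fails to impose independent conditions. Then $Z'$ has the Cayley--Bacharach property with respect to $|H+K_X|$, and the Griffiths--Serre correspondence shows that ${\cal F}$ can be chosen locally free. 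Its Chern classes are $c_1({\cal F})=H$ and $c_2({\cal F})=\ell(Z')\le 2$.

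Next we use Bogomolov instability. We have $c_1^2-4c_2\ge H^2-8>0$, so ${\cal F}$ is Bogomolov unstable. Hence there is a saturated sub-line bundle ${\cal O}_X(M)\subset{\cal F}$ with torsion-free quotient ${\cal I}_W(H-M)$ such that $(2M-H)^2>0$ and $(2M-H)\cdot A>0$ for every ample $A$. Set $D:=H-M$; then $M-D=2M-H$ lies in the positive cone. The composite ${\cal O}_X(M)\to{\cal I}_Z(H)$ is nonzero, since otherwise ${\cal O}_X(M)$ would lie in ${\cal O}_X$ and $M$ could not be positive. Consequently $D$ is effective and the corresponding curve contains $Z'$. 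Checking that it in fact contains both $p$ and $q$ needs the Cayley--Bacharach minimality of $Z'$; this is where the argument is most delicate, and we plan to follow Reider's treatment.

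The remaining step is numerical bookkeeping. The Chern class relation reads
$$\ell(Z')=c_2({\cal F})=M\cdot D+\ell(W), \quad\mbox{so}\quad M\cdot D=H\cdot D-D^2\le 2 .$$
Nefness of $H$ gives $H\cdot D\ge 0$. Since $M-D$ is in the positive cone and $H$ is nef, $H\cdot(M-D)>0$, i.e.\ $H\cdot D<H^2/2$. The Hodge Index Theorem \ref{HIT} gives $H^2D^2\le (H\cdot D)^2$.

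We combine these. If $D^2>0$, then $(H\cdot D)^2\ge H^2D^2\ge 9D^2$ together with $H\cdot D\le D^2+2$ leaves only $D^2=1$, $H\cdot D=3$. That is the equality case of the Hodge index inequality with $H^2=9$, which forces $H\equiv 3D$. If $D^2\le 0$, then $H\cdot D\le 2+D^2\le 2$. Running through $H\cdot D\in\{0,1,2\}$ and using Hodge index to rule out $H\cdot D=0$ with $D^2=0$ unless $D\equiv 0$ (impossible for a nonzero effective divisor) produces exactly the list in the statement. The main obstacle is the step identified above: guaranteeing that $D$ really passes through both points and not merely through a length-one piece of $Z$. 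We would handle it by first treating the case where $Z'\subsetneq Z$, in which $|H+K_X|$ already has a base point and Part I applies, before running the argument with $Z'=Z$.
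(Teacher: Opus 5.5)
The paper gives no proof of this statement; it simply quotes it from Reider. Your route is Reider's own vector-bundle argument. The main chain is sound: Kawamata--Viehweg, the Serre construction, Bogomolov instability, $\ell(Z')=M\cdot D+\ell(W)$, and Hodge index.

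Two small repairs first. In the case $D^2>0$, the two inequalities you quote do not leave only $D^2=1$. For example, $D^2=4$, $H\cdot D=6$, $H^2=9$ satisfies both. You need the third one: $(H\cdot D)^2\ge H^2D^2>2(H\cdot D)D^2$ gives $H\cdot D>2D^2$. Together with $H\cdot D\le D^2+2$, this forces $D^2=1$, $H\cdot D=3$ and $H^2=9$. Also, Hodge index only gives $H\equiv 3D$ numerically. That is harmless on the rational surfaces where the paper applies the theorem, but it is all the argument proves.

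The genuine gap is the case $Z'\subsetneq Z$. Part I does not rescue it: it produces a divisor through the base point only and says nothing about the other point. In fact no argument can, because under your reading of the hypothesis (non-surjectivity of $H^0({\cal O}_X(H+K_X))\to H^0({\cal O}_Z(H+K_X))$) the statement is false.

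Here is a counterexample. Let $\pi\colon X\to\mathbb{P}^2$ be the blow-up of a point, with exceptional curve $\Theta$, let $L$ be the pull-back of a line, and set $H=4L$, which is nef with $H^2=16$.
\begin{itemize}
\item $H+K_X=L+\Theta$ has $\Theta$ as a fixed component, since $h^0(L+\Theta)=h^0(L)$.
\item Hence for $p\in\Theta$ and $q\notin\Theta$, every section vanishes at $p$, and the restriction to ${\cal O}_{\{p,q\}}$ is not surjective.
\item But $H\cdot D=4\,(L\cdot D)$, so $H\cdot D\le 2$ forces $D=m\Theta$, which misses $q$.
\item Finally, $H\equiv 3D$ is excluded because $H^2\neq 9$.
\end{itemize}

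So Part II must be read with $p$ and $q$ not base points of $|H+K_X|$. The base-point case belongs to Part I, whose conclusion concerns only the base point. With that hypothesis, no length-one subscheme of $Z$ can fail to impose a condition, so $Z$ itself satisfies Cayley--Bacharach and $c_2({\cal F})=2$. The nonzero map ${\cal O}_X(M)\to{\cal I}_Z(H)$ then gives $D\supseteq Z$ at once, and the step you flagged as delicate disappears. You should state this hypothesis explicitly instead of the detour through Part I.
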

We now remember that $X$ can contain at most countably many rigid curves $D$, which are curves with $h^0 ({\cal O}_X (D)) = 1$. Hence, without loss of generality we can assume that:
\begin{eqnarray}\label{assumption} 
p, q  {\rm\, do\, not\, belong\, to\, any\, rigid\, curve}
\end{eqnarray} 
We will use this particular choice of points $p, i (p)$ to exclude most of these cases, but before we go on, we need the following technical result.
\begin{proposition}\label{hmmaguno}
We cannot have an effective linear system $|M|$ without base components such that $H M = 1$.
\end{proposition}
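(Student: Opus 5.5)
The plan is to argue by contradiction, combining the Hodge Index Theorem \ref{HIT} for the big and nef divisor $H = E + \overline E$ with the $i$-invariance of $H$. Throughout, $k \ge 1$ by Proposition \ref{almenotre}, so $H^2 = 2k+2 \ge 4$. Suppose $|M|$ is effective, has no base components and satisfies $HM = 1$. Since $|M|$ has no fixed components, $M$ is nef. In particular $M^2 \ge 0$, $ME \ge 0$ and $M\overline E \ge 0$ (a general member does not contain $E$ or $\overline E$). From $ME + M\overline E = HM = 1$ it follows that, up to renaming, $ME = 0$ and $M\overline E = 1$.

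Next I transport $M$ by the involution. Set $M' := i^*M = i(M)$. This is again an effective system without base components, hence nef, and $HM' = HM = 1$ because $i^*H = H$. Since $i$ exchanges $E$ and $\overline E$, we get $M'E = M\overline E = 1$ and $M'\overline E = ME = 0$. Both $M$ and $M'$ are nef, so $MM' \ge 0$.

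The key step is to show $MM' = 0$. Apply the Hodge Index inequality $(HD)^2 \ge H^2 D^2$, valid because $H$ is big and nef, to $D = M + M'$. This gives $4 \ge (2k+2)\,(M^2 + M'^2 + 2MM')$. The right-hand side is a sum of nonnegative integers multiplied by $2k+2 \ge 4$, and the factor $2MM'$ is even, so $MM' = 0$. (The same estimate applied to $M$ alone also yields $M^2 = 0$, though this is not needed.)

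Now consider $D := M - M'$. We have $HD = 0$ and $D^2 = M^2 + M'^2 - 2MM' \ge 0$. By Theorem \ref{HIT}, $D^2 \le 0$, with equality only if $D \simeq_{num} 0$. Hence $M \simeq_{num} M'$. Intersecting with $E$ then gives $0 = ME = M'E = 1$, which is a contradiction.

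I expect the main point of care to be the nefness and nonnegativity claims. First, "no base components" must be used to guarantee that $ME$, $M\overline E$ and $MM'$ are all nonnegative. Second, the divisibility argument in the key step must really force $MM' = 0$, and this is exactly where $k \ge 1$ is used. If $k$ were $0$, the inequality would only give $MM' \le 1$, and I would have to handle that case separately; it is excluded by Proposition \ref{almenotre}.
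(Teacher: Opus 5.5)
Your proof is correct, and it takes a genuinely different route from the paper's.

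\textbf{The paper's argument.} It works geometrically with the morphism $f$. Since $HM=1$, a general member of $|M|$ maps birationally onto a line, and the paper takes it to be a smooth rational curve, so that $h^0({\cal O}_X(M))=M^2+2$. It then splits into two cases.
\begin{itemize}
\item If $M^2=0$, then $|M|$ is a base point free pencil in which $E$ is a section and $\overline E$ lies in a fibre. This forces $E\overline E\le 1<2$.
\item If $M^2\ge 1$, then $h^0({\cal O}_X(M))\ge 3$. The images of members of $|M|$ through a fixed point are lines filling $f(X)$, so $f(X)$ is a plane. This gives $k=0$, which Proposition \ref{almenotre} excludes.
\end{itemize}

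\textbf{Your argument.} You never look at the members of $|M|$. You use only three things: that $M$ is nef, that $i^*H=H$ with $i(E)=\overline E$, and the Hodge Index Theorem applied to $M+i^*M$ and $M-i^*M$.

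\textbf{What your route buys.}
\begin{itemize}
\item It avoids the paper's step ``birational to a line, hence isomorphic to it'' and the identification ${\cal O}_M(M)\simeq{\cal O}_{\mathbb{P}^1}(M^2)$. Neither is automatic when $|M|$ has base points.
\item It never needs $M$ to move.
\item It proves more: no nef class $M$ satisfies $HM=1$.
\end{itemize}

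\textbf{What it costs.} The involution enters essentially. You also need $H^2\ge 4$, i.e.\ Proposition \ref{almenotre}, in every case. By contrast, the paper's pencil case uses only $E\overline E\ge 2$.

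\textbf{A remark on the paper's proof.} Your side observation that $(HM)^2\ge H^2M^2$ forces $M^2=0$ holds already for $H^2=2$. So the paper's second case ($M^2\ge 1$) never actually occurs. Its fibration argument would then essentially suffice on its own, without Proposition \ref{almenotre}.
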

{\it Proof:} Suppose the existence of such an $|M|$. Let $$f: X \to \mathbb{P}^{k + 2}$$ be the map induced by $|H|$.The equality $$H M = 1$$ means the generic element in $|M|$ is birational to (hence, isomorphic to) a line $l \in \mathbb{P}^{k + 2}$. Thus the generic member of $|M|$ is a smooth rational curve. The vanishing $H^1 ({\cal O}_X)$ gives a short exact sequence:
\begin{eqnarray} \label{shes}
0 \to H^0 ({\cal O}_X) \to H^0 ({\cal O}_X (M)) \to H^0 ({\cal O}_M (M)) \to 0
\end{eqnarray}
with $$ H^0 ({\cal O}_M (M)) \simeq H^0 ({\cal O}_{\mathbb{P}^1} (M^2))$$ Since $|M|$ has no base components, we have $$M^2 \ge 0$$ We want to prove that both the cases $M^2 = 0$ and $M^2 \ge 1$ are impossible.\\
Suppose $$M^2 = 0$$ then by \ref{shes} we have that $|M|$ is a base point-free pencil, so there exists a regular map $$\pi : X \to \mathbb{P}^1$$ whose fibers are elements of $|M|$. Since $H M = 1$ and $H = E + \overline E$, we must have $E M =1, \overline E M = 0$ or vice versa. Thus $\overline E$ is contained in a fiber of $\pi$, while $E$ is a section of $\pi$. But then $E, \overline E$ could meet in at most one point, contradicting $E \overline E \ge 2$.\\
Suppose now that $$M^2 \ge 1$$ By \ref{shes}, this corresponds to $$h^0 ({\cal O}_X (M)) \ge 3$$ Now choose a curve $M_0 \in |M|$ and a point $x_0 \in X$ such that its $f(x_0)$ does not belong to the line $l_0 := f(M_0)$. Since $h^0 ({\cal O}_X (M)) \ge 3$, for any $y \in M_0$ there exists a curve $M_y \in |M|$ containing both $y, x_0$. Hence the line $f(M_y)$ lies inside $f(X)$, for every $y \in M_0$. This is possible only if $f(X)$ is equal to the plane spanned by $l_0, f(x_0)$, which is equivalent to $E \overline E = 2$, and this is forbidden by \ref{almenotre}. $\square$\\

The last case would imply $H^2 = 9$, which contradicts the parity of $H^2$.\\

Suppose we are in case $1$: let decompose $$D = F + M$$ where $F = Bs (D)$ is the fixed part of $D$, while $M$ is the mobile part of $D$. The relation $H D = 0$ forces also $$H F = 0$$ $$H M = 0$$ so both $F, M$ are contracted by $|H|$. Hence $|M|$ cannot move, otherwise the image of $|H|$ would not be $2$-dimensional.

Suppose then that $p, p'$ belong to an effective divisor $D$ such that $H D = 1$ and $D^2 = 0$ or $-1$. The same decomposition $D = F + M$ now leaves two possibilities: $$H F = 1, H M = 0$$ or $$H F = 0, H M = 1$$ Since $M$ can move, it cannot be contracted by $|H|$ unless $M = 0$. Thus $D = F$ is a rigid divisor, and since $H D = 1$, the image of $D$ is a smooth line $l \subset \mathbb{P}^{k + 2}$, so $D$ has the form $D = D_1 + D_2$, where $D_1$ is a rational normal curve, isomorphically mapped onto $l$, while $D_2$ is contracted. Again, this implies that the irreducible components of $D_2$ are $(-2)$-curves, which cannot contain $p, i(p)$ by assumption. Of course $D_1$ cannot move, otherwise $D$ could move as well. Note that the short exact sequence $$0 \to {\cal O}_X \to {\cal O}_X (D_1) \to {\cal O}_{D_1} (D_1) \to 0$$ remains exact in cohomology, so $$H^0 ({\cal O}_{D_1} (D_1)) = 0$$ which is the same as $$D_1^2 < 0$$ But this only happen if $$D_1^2 = -1 {\rm\, or\,} -2$$ which again contradicts the initial assumption \ref{assumption}.\\
The opposite case  $$H F = 0, H M = 1$$ is forbidden by Proposition \ref{hmmaguno}.\\
The unique remaining case in that $$H D = 2, D^2 = 0$$ which leaves three possibilities: $$H F = 0, H M = 2$$ or $$H F = 1, H M = 1$$ or $$H F = 2, H M = 0$$ Assume we have the first case, so that $$H F = 0, H M = 2$$ We first show that the generic element in $|M|$ must be irreducible. If this does not happen, by Bertini's Theorem $M$ must be composite with a pencil $\hat M$, and the relation $H M = 2$ forces $$M = 2 \hat M$$ and $$H \hat M = 1$$ which is forbidden by \ref{hmmaguno}.\\
Now that we have the irreducibility of $|M|$, its general member is either mapped birationally (and hence isomorphically) on smooth plane curves of degree $2$, or it is a double cover of a line in $\mathbb{P}^{k + 2}$. In the first situation the generic $M$ is such that $f(M)$ spans a plane inside $\mathbb{P}^{k + 2}$, that is:  
\begin{eqnarray}\label{conichepiane}
h^0 ({\cal O}_X (H)) - h^0 ({\cal O}_X (H - M)) = 3
\end{eqnarray}
The second case happens if $f(M)$ spans a line in $\mathbb{P}^{k + 2}$, that is: 
\begin{eqnarray}\label{doppiaretta}
h^0 ({\cal O}_X (H)) - h^0 ({\cal O}_X (H - M)) = 2
\end{eqnarray} 
In both cases, we must have $$h^0 ({\cal O}_X (M)) = 2$$ Indeed, since $M$ can move the inequality $h^0 ({\cal O}_X (M)) \ge 2$ is guaranteed. Assume that $h^0 ({\cal O}_X (M)) \ge 3$. Then we can argue similarly to Proposition \ref{hmmaguno}: any two points $x_1, x_2 \in X$ can be joined by an element of $|M|$. Consequently, if \ref{conichepiane} (respectively, \ref{doppiaretta}) holds, then any two points $y_1, y_2 \in f(X)$ can be joined by a plane conic curve (respectively, a line) which is entirely contained in $f(X)$. This cases can happen respectively if $f(X)$ has degree $2$ or $1$, but by hypothesis we have $H^2 \ge 10$, which means $\Deg\, f(X) \ge 5$.\\
So until now we proved that $$h^0 ({\cal O}_X (M)) = 2$$ and the generic element in $|M|$ is irreducible.\\
Now we note that no element of $|M|$ can entirely contain $C$. If this happens, then $$M = C + R$$ for some effective divisor $R$. But multiplying by $H$ we find $$2 = 4 + H R$$ a contradiction, since $H R$ must be non-negative.\\
We now show that \ref{conichepiane} is impossible: as we stated, this corresponds to say that the generic element of $|M|$ is birational via $f$ to a smooth plane conic. In particular, the generic element in $|M|$ is a smooth rational curve. Hence, we find a short exact sequence $$0 \to H^0 ({\cal O}_X) \to H^0 ({\cal O}_X (M)) \to H^0 ({\cal O}_{\mathbb{P}^1} (M^2)) \to 0$$ and hence $$M^2 = 0$$ Thus $|M|$ is a base-point free pencil of rational curves, so its elements are fibers of a regular map $\pi : X \to \mathbb{P}^1$. The adjunction formula $$M^2 + M K_X = -2$$ becomes $M K_X = -2$ so $$M C = 4$$ Thus $C$ is a $4$-section of $\pi$. Consider again the map $f : X \to \mathbb{P}^{k + 2}$ induced by $|H|$. Since $f(C)$ has degree at most $4$, we can write $$f(C) \subset \Gamma$$ for some $4$-dimensional linear subspace $\mathbb{P}^4 \simeq \Gamma \subset \mathbb{P}^{k + 2}$. But now a generic $X_\lambda \in |M|$ intersects $C$ at $4$ points, so the plane conic $f(X_\lambda)$ touches $\Gamma$ in $4$ different points. This happens only if $f(X_\lambda) \subset \Gamma$ for the generic $\lambda \in \mathbb{P}^1$, which forces $f(X) \subset \Gamma$. This is a contradiction, because the initial hypothesis $H^2 \ge 10$ corresponds to $dim\, |H| \ge 6$.\\
So we are in the case where the generic element of $|M|$ covers a line in $\mathbb{P}^{k + 2}$ with degree $2$. Again, an inequality $h^0 ({\cal O}_X (M)) \ge 3$ would imply that every couple of points in $X$ can be joint by an element of $|M|$, so $f(X)$ would be a plane, which is absurd. Hence, $$h^0 ({\cal O}_X (M)) = 2$$ Putting this data in Riemann - Roch Theorem we get $$2 - h^1 ({\cal O}_X (M)) = 1 + \frac{1}{2} (M^2 - M K_X)$$ The effectiveness of $|- 2 K_X|$ gives $M K_X \le 0$. Moreover, for the same reason as before, no member of $|M|$ can contain $C$, so inequality $$M K_X < 0$$ must be sharp. This leads to $$1 - h^1 ({\cal O}_X (M)) = \frac{1}{2} (M^2 - M K_X) > 0$$ So we have $$h^1 ({\cal O}_X (M)) = 0$$ and $$M^2 - M K_X = 2$$ which separates in two cases $$M^2 = 0, M K_X = -2$$ or $$M^2 = 1, M K_X = -1$$ In the first case $|M|$ is a basepoint-free pencil of smooth rational curves, in the second it's a pencil of elliptic curves with one base point.\\
In the elliptic case, note that $${\cal O}_M (M + K_X) = {\cal O}_M (K_M) = {\cal O}_M$$ so the short exact sequence $$0 \to {\cal O}_X (K_X) \to {\cal O}_X (M + K_X) \to {\cal O}_M (M + K_X)$$ induces an isomorphism $$H^0 ({\cal O}_X (M + K_X)) \simeq H^0 ({\cal O}_M) \simeq \complex$$ Thus $M + K_X$ is a rigid effective divisor, and the equality $$H (M + K_X) = 2 - 2 = 0$$ states that it is contracted via $f$. But we know that such a divisor is supported on a $(-1)$-curves. Moreover, the self intersection $$(M + K_X)^2 = -2$$ gives $$M + K_X = F_1 + F_2$$ with $F_1, F_2$ disjoint $(-1)$-curves. But now $E, F_1, F_2$ can be extended to a set of $10$ disjoint $(-1)$-curves $$E_1, \ldots, E_8, F_1, F_2$$ with $$E = E_1$$. Since also $\overline E$ is disjoint from $F_1, F_2$, its class in $\Pic (X)$ takes the form $$\overline E = d L - k_1 E_1 \cdots - k_8 E_8$$ with $$k_1 = k + 2$$. Since $\overline E$ is a $(-1)$-curve, the coefficients have to satisfy the system $$ \left\{ \begin{array} {lll}
d^2 - \sum_i k_i^2 = -1\\\\
-3 d + \sum_i k_i = -1
\end{array}
\right.
$$
This corresponds to look for $(-1)$-curves in a Del Pezzo surface of degree $1$, but they are finite and well known, and satisfy $k_1 \le 3$. But then $H^2 = 2 E \overline E -2 \le 4$, contradicting Reider hypothesis.\\
Hence the unique possible answer is that $|M|$ is a base point free pencil of rational curves. But such a pencil must have $9$ singular fibers, and each of them consists of a linear chain of two $(-1)$-curves, attached at one point.\\
Remember that $$(E + \overline E) M = 2$$ and that leaves only \begin{eqnarray}\label{duesez}E M = \overline E M = 1\end{eqnarray} Indeed, if we assume $$E M = 2, \overline E M = 0$$ then $\overline E$ must be contained in a fiber of $|M|$, while $E$ is a bisection of $|M|$. This would force $$E \overline E \le 2$$ which contradicts the hypothesis.\\
By reason of the Picard number, $|M|$ must have $9$ singular fibers. Relation \ref{duesez} implies that each of these contains a $(-1)$-curve not touched by $E$. Let $F_1, \cdots, E_9$ be these $(-1)$-curves. The simoultaneous contraction of $E, F_1, \ldots, F_9$ transforms $X$ in $\mathbb{P}^2$, so we can use $L, E, F_1, \ldots, F_9$ as base for $\Pic (X)$. In this basis we must have $$|M| = |L - E|$$ since $M$ is orthogonal to $F_1, \ldots, F_9$. Moreover, we write $$\overline E = d L - (k + 2) E - a_1 F_1 \cdots - a_9 F_9$$ But now we use again \ref{duesez} to deduce that $$d = k + 3$$ Since $k + 2$ is the maximum possible multiplicity for an irreducible curve of degree $k + 3$, this forces $$a_1, \ldots, a_9 \in \{0, 1\}$$ Let $b$ the number of $a_i$'s which are equal to $1$. Since $$\overline E^2 = -1$$ we also have $$b = 2 k + 6$$ which combined with $b \le 9$ gives $$k = 0, 1$$ Both of these two values contradict the Reider assumption $H^2 \ge 9$. Note nonetheless that we producted two expressions for $\overline E$ and $H$, which up to permutations are $$\overline E = 3 L - 2 E - E_1 \cdots - E_6, H = 3 L - E - E_1 \cdots - E_6 {\rm\, if\,} k = 0$$ and $$\overline E = 4 L - 3 E - E_1 \cdots - E_8, H = 4 L - 2 E - E_1 \cdots - E_8 {\rm\, if\,} k = 1$$
\subsection{The tangent behaviour}
On the curve $C \subset X$ we have the short normal exact sequence $$0 \to T_C \to {T_X}|_C \to N_{C / X} \to 0$$ We identify $C \simeq \mathbb{P}^1$, so that the tangent bundle $T_C$ on $C$ can be identified with $$T_C = T_{\mathbb{P}^1} = {\cal O}_{\mathbb{P}^1} (2)$$ and clearly $$N_{C / X} = {\cal O}_C (C) \simeq {\cal O}_{\mathbb{P}^1} (C^2) = {\cal O}_{\mathbb{P}^1} (- 4)$$ Since $$Ext^1 ({\cal O}_{\mathbb{P}^1} (- 4), {\cal O}_{\mathbb{P}^1} (2)) = 0$$ we have $${T_X}|_C = {\cal O}_{\mathbb{P}^1} (2) \oplus  {\cal O}_{\mathbb{P}^1} (- 4)$$
Let ${\cal E}$ denote the rank $2$ vector bundle $${\cal E} = {TX}|_C \otimes {\cal O}_{\mathbb{P}^1} (-2) =  {\cal O}_{\mathbb{P}^1} \oplus  {\cal O}_{\mathbb{P}^1} (-6)$$ Then the space $P = \mathbb{P} ({\cal E})$ is a $\mathbb{P}^1$-fibration over $\mathbb{P}^1$, corresponding to the sixth Hirzebruch surface $P = \mathbb{F}_6$. The standard hyperplane bundle ${\cal O}_P (1)$ defines an isomorphism $$H^0 (P, {\cal O}_P (1)) \simeq H^0 (\mathbb{P}^1, {\cal E}^*) = H^0 (\mathbb{P}^1, {\cal O}) \oplus H^0 ({\cal O}_{\mathbb{P}^1} (6))$$ The last space has dimension $8$ over $\complex$, so the linear system $|{\cal O}_P (1)|$ defines a map $P \to \mathbb{P}^7$. The section\\
Suppose $\alpha : X \to X$ is an automorphism such that $\alpha|_C = \mathbb{1}_C$, and suppose also that $\alpha$ is an involution, $$\alpha^2 = \mathbb{1}$$ For each point $p \in C$, the differential operator $d_p \alpha$ acts on the tangent space $T_p X$, so the operator $d \alpha$ defines a linear endomorphism of the vector bundle $$d \alpha : TX|_C \to TX|_C$$ The line subbundle ${\cal O}_{\mathbb{P}^1} (2) \subset {\cal O}_{\mathbb{P}^1} (2) \oplus {\cal O}_{\mathbb{P}^1} (- 4)$ corresponds to the tangent space of $C$, so it is composed of $(+ 1)$-eigenvectors of  $d \alpha$. Since $\alpha^2 = \mathbb{1}_X$, the other eigenvector on the points of $C$ is $-1$. Let ${\cal L} \subset TX|_C$ denote the line subbundle generated by these $(-1)$-eigenvectors at all points of $C$. Then ${\cal L}$ corresponds to a complementar subspace for $TC$ inside $TX|_C$, so $${\cal L} \simeq {\cal O}_{\mathbb{P}^1} (-4)$$ 
We remember now that the Coble surface $X$ is nothing but a blow up of $\mathbb{P}^2$ at the nodes of a rational sextic curve $\overline C$. Let $$\pi : X \to \mathbb{P}^2$$ the blow-down map, $$$$Let $F \in H^0 ({\cal O}_{\mathbb{P}^2} (6))$ be the equation of $\overline C$. We consider the dual curve $${\overline C}^* \subset {\mathbb{P}^2}^*$$ which is the image of $\overline C$ under the map $$\overline C \to {\mathbb{P}^2}^*$$ $$p \to T_p \overline C$$ In projective coordinates, this map has the form $$x \to [\frac{\partial F}{\partial Z_0} (x), \frac{\partial F}{\partial Z_1} (x), \frac{\partial F}{\partial Z_2} (x)]$$ The partial derivatives $\frac{\partial F}{\partial Z_i}$ form a net of quintic forms with $10$ base points, if $F$ is generic enough. Thus, these form cut over $\overline C$ a divisor $D$ of degree $5 \Deg\, \overline C = 30$, with a base locus of degree $2$ at each node of $\overline C$. Thus the mobile part of $D$ has degree $30 - 2 \cdot 10 =10$, which means that $$\Deg\, {\overline C}^* = 10$$
Now we look at the following rational map: $$\psi : \mathbb{P} ({TX}|_C) \dasharrow {\mathbb{P}^2}^*$$
$$(x, [v]) \to \{{\rm\, the\, unique\, line\, through\,} \pi (x) {\rm\, tangent\, to\,} d\,\pi_x (v)\} $$ 
This map possesses $20$ indeterminacy points: if $E_1, \cdots, E_{10}$ are the exceptional curves of $\pi$, and $$\{x_i, y_i\} = C \cap E_i$$ then ${\rm\, ker\,} d_{x_i}\, \pi : T_{x_i} X\ to T_{\pi (x_i)} \mathbb{P}^2 \ne 0$. Indeed, $${\rm\, ker\,} d_{x_i} = T_{x_i} E_i$$ and the same holds for $y_i$ too. By construction, the map $\psi$ is undefined at the points $(x_i, T_{x_i} E_i), (y_i, T_{y_i} E_i)$. Away from these points, $\psi$ is well defined. Then we blow up $\mathbb{F}_6 = \mathbb{P} ({TX}|_C)$ in these $20$ points, and we get another surface $\tilde {\mathbb{F}_6}$, with a resolution of indeterminacies $$\tilde \psi: \tilde {\mathbb{F}_6} \to {\mathbb{P}^2}^*$$
\newpage
\section{Acknowledgements}
I would like to express my warmest gratitude to my Advisor, Professor Alessandro Verra. This work would have never been possible without his precious support. During these three years, he provided irreplaceable contributions with his comments, inputs, and ability to show links between totally apparently different fields of Algebraic Geometry. His deepest knowledge of the topics we treated were very inspirational to me. Especially when I felt blocked, his human presence and fantasy to find new perspectives provided an incredible support.

\newpage
\addcontentsline{toc}{section}{References}
\bibliographystyle{plain}

\end{document}